\newtheorem{theorem}{Theorem}[section]
\newtheorem*{theorem*}{Main Theorem}
\newtheorem*{corollary*}{Main Corollary}
\newtheorem{corollary}[theorem]{Corollary}
\newtheorem{lemma}[theorem]{Lemma}
\newtheorem{rem}[theorem]{Remark}
\newtheorem{proposition}[theorem]{Proposition}
\theoremstyle{definition}
\newenvironment{remark}[1][Remark]{\begin{trivlist}
\item[\hskip \labelsep {\bfseries #1}]}{\end{trivlist}}
\newcommand{\rr}{\mathbb{R}}
\newcommand{\nn}{\mathbb{N}}
\newcommand{\ee}{\varepsilon}
\newcommand{\fff}{\mathcal{F}}
\newcommand{\hhh}{\mathcal{H}}
\newcommand{\supp}{\mathrm{supp}}
\newcommand{\cat}{^\smallfrown}
\newcommand{\ord}{\textbf{Ord}}
\begin{document}
\title[Power type $\xi$-AUS norms]{Power type $\xi$-asymptotically uniformly smooth norms}
\author{R.M. Causey}
\address{Department of Mathematics, Miami University, Oxford, OH 45056, USA
} \email{causeyrm@miamioh.edu}

\begin{abstract} We extend a precise renorming result of Godefroy, Kalton, and Lancien regarding asymptotically uniformly smooth norms of separable Banach spaces with Szlenk index $\omega$.  For every ordinal $\xi$, we characterize the operators, and therefore the Banach spaces, which admit a $\xi$-asymptotically uniformly smooth norm with power type modulus and compute for those operators the best possible exponent in terms of the values of $Sz_\xi(\cdot, \ee)$.  We also introduce the $\xi$-Szlenk power type and investigate ideal and factorization properties of classes associated with the $\xi$-Szlenk power type.

\end{abstract}

\subjclass[2010]{Primary 46B03; Secondary 46B06}

\keywords{Asplund operators, asymptotic uniform smoothness, asymptotic uniform convexity, renorming}

\maketitle

\section{Introduction}

Since its introduction, the Szlenk index has been closely tied to the problem of asymptotically uniformly smooth renormings of Banach spaces.  Knaust, Odell, and Schlumprecht \cite{KOS} showed that a separable Banach space has an equivalent asymptotically uniformly smooth norm, and even with power type modulus, if and only if the Szlenk index of the space does not exceed the first infinite ordinal, $\omega$.  Raja \cite{Raja} extended this result to non-separable spaces, while Godefroy, Kalton, and Lancien \cite{GKL} gave the optimum renorming result regarding the best power type of an equivalent norm of a Banach space in terms of the Szlenk power type, which we define below.  In \cite{CD}, for every ordinal $\xi$, the definitions of $\xi$-asymptotically uniformly smooth and $w^*$-asymptotically uniformly convex operators were given, as well as moduli associated with these properties.  We define these moduli and properties in the next section. These properties generalize the notions of asymptotically uniformly smooth and $w^*$-asymptotically uniformly convex Banach spaces.  We collect some of the results established in \cite{CD} regarding these notions.

\begin{theorem} Let $A:X\to Y$ be an operator and $\xi$ an ordinal.  \begin{enumerate}[(i)]\item $Y$ admits an equivalent norm making $A$ $\xi$-asymptotically uniformly smooth if and only if $Sz(A)\leqslant \omega^{\xi+1}$.   \item $A$ is $\xi$-asymptotically uniformly smooth if and only if $A^*:Y^*\to X^*$ is $w^*$-asymptotically uniformly convex. \item Given $1<p<\infty$, the modulus of $\xi$-asymptotic uniform smoothness of $A$ is of power type $p$ if and only if the modulus of $w^*$-asymptotic uniform convexity of $A^*$ is of power type $q$, where $1/p+1/q=1$. \end{enumerate}

\label{CDtheorem}
\end{theorem}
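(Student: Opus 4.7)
The plan is to treat the three parts of Theorem \ref{CDtheorem} as a coordinated package, with part (i) carrying the bulk of the technical work while parts (ii) and (iii) follow from a duality argument and a Fenchel--Young type computation. Throughout, I would lean on the Schreier-family machinery $\mathcal{S}_\xi$ as a quantitative proxy for the ordinal derivation bookkeeping hidden inside the estimate $Sz(A) \leqslant \omega^{\xi+1}$.

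For part (i), the first step is to characterize the bound $Sz(A) \leqslant \omega^{\xi+1}$ via uniform quantitative estimates on an $\mathcal{S}_\xi$-indexed derivation index, so that the Szlenk hypothesis translates into an estimate along Schreier trees. For the direction that $\xi$-AUS implies the Szlenk bound, the modulus directly furnishes uniform decay of $w^*$-slice diameters along $\mathcal{S}_\xi$-admissible $w^*$-null trees in $Y^*$, which bounds the relevant $\xi$-level Szlenk derivations. For the converse, I would mimic the Godefroy--Kalton--Lancien renorming procedure by extracting a decreasing family of equivalent seminorms from the $\e$-Szlenk derivatives at level $\xi$, and then forming a geometrically weighted $\ell_p$-style combination whose $\xi$-AUS modulus is controlled by those weights.

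For part (ii), I would exploit Hahn--Banach duality between the $\xi$-AUS modulus of $A$ (a supremum-over-infima along weakly null $\mathcal{S}_\xi$-trees in the unit ball of $X$) and the $w^*$-AUC modulus of $A^*$ (the dual infimum-over-suprema along $w^*$-null $\mathcal{S}_\xi$-trees in $Y^*$). Identifying the norming functional of $y + Ax$ via Hahn--Banach and invoking a minimax/$w^*$-compactness argument to exchange the order of $\sup$ and $\inf$ yields equality (or at least equivalence up to absolute constants) of the two moduli at each parameter value.

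Part (iii) then follows from part (ii) by the classical Fenchel--Young duality between $t \mapsto t^p$ and $\sigma \mapsto \sigma^q$ for $1/p + 1/q = 1$: one direction applies Young's inequality to the modulus relation, while the other chooses near-optimal test vectors that saturate Young's inequality up to absolute constants. The main obstacle lies in part (i): executing the renorming at level $\xi$ requires a Schreier-indexed replacement for the geometric block summation that drives the $\xi = 0$ argument, and verifying that the new norm is genuinely $\xi$-AUS (not merely $\eta$-AUS for some $\eta < \xi$) demands careful tracking of how $\mathcal{S}_\xi$-admissibility interacts with the truncation operations used in constructing it.
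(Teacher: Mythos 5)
The theorem you are proving is quoted from \cite{CD} and is not reproved in this paper, but the key intermediate results used in \cite{CD} (and recalled in Section~4 of the present paper) make it clear that your sketch for part~(ii) has a genuine gap. You propose to exhibit the two moduli as a ``supremum-over-infima'' and an ``infimum-over-suprema'' over the same abstract index set and then swap the quantifiers via a minimax/$w^*$-compactness argument. This cannot work as stated, because the two moduli range over entirely different objects: $\rho_\xi(\cdot;A)$ takes a supremum over weakly null $B$-trees in $\sigma B_X$ and branches thereof, whereas $\delta^{w^*}_\xi(\cdot;A^*)$ takes an infimum over $w^*$-null, $(A^*,\sigma)$-large $B$-trees in $Y^*$. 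There is no common saddle-point variable to exchange; one must explicitly \emph{construct} a $w^*$-null tree in $Y^*$ from a weakly null tree in $X$ (via Hahn--Banach norming functionals followed by a combinatorial stabilization passing to a full-order subtree) and vice versa. This construction is exactly where the $\Gamma_\xi$/$\mathbb{P}_\xi$ machinery or its Schreier analogue enters, and it costs absolute constants: the actual relation, Proposition~\ref{duality} of this paper, reads $\rho_\xi(\sigma;A)\leqslant\sigma\tau \Rightarrow \delta^{w^*}_\xi(6\tau;A^*)\geqslant\sigma\tau$ (and a converse), not any equality of moduli. Indeed equality of the moduli is false in general, so any argument that purports to prove it must be wrong.

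A secondary issue is your reliance on the Schreier families $\mathcal{S}_\xi$ as the indexing device. Theorem~\ref{CDtheorem} is stated for arbitrary operators between arbitrary Banach spaces (no separability hypothesis). The Schreier approach, which indexes weakly null trees by finite subsets of $\mathbb{N}$, is most naturally suited to the separable case; to cover the general case one uses weakly null collections indexed by well-founded $B$-trees of order $\omega^\xi$ together with a weak neighborhood basis $D$ (the sets $\Omega_\xi=\Gamma_\xi D$ of Section~3) or an equivalent non-separable device. This is a technical but real difference. Your outline for part~(i) (translating $Sz(A)\leqslant\omega^{\xi+1}$ into quantitative estimates along $\omega^\xi$-order trees and then running a weighted-sum renorming in the GKL style) and for part~(iii) (deducing the power-type conjugacy from the quantitative form of~(ii)) is in the right spirit, but (iii) only follows once (ii) is established in its correct quantitative form with explicit constants and a scale shift $\sigma\leftrightarrow\tau$, not via a bare Young's-inequality manipulation applied to an exact modulus identity.
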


 It was also discussed in \cite{CD} that for any ordinal $\xi$, there exists an operator $A:X\to Y$ which is $\xi$-asymptotically uniformly smooth such that $Y$ does not admit any equivalent norm such that the modulus of $\xi$-asymptotic uniform smoothness of $A$ has non-trivial power type. Moreover, this operator can be taken to be an identity operator if and only if $\xi>0$.  Thus the question arises of when a given operator with Szlenk index $\omega^{\xi+1}$ can be renormed to be $\xi$-asymptotically uniformly smooth with power type modulus, and in this case what is the best possible exponent. By Theorem \ref{CDtheorem}, answering this question regarding optimal power type of $\xi$-asymptotically uniformly smooth norms for $A$ also solves the problem of $w^*$-$\xi$-asymptotically uniformly convex norms for $A^*$.  

If $A:X\to Y$ is an operator with Szlenk index $\omega^{\xi+1}$, then for every $\ee>0$, there exists a minimum natural number $n$ such that $s_\ee^{\omega^\xi n}(A^*B_{Y^*})$ is empty (we define this notation in the next section). We denote this natural number by $Sz_\xi(A, \ee)$.  We then define $$\textbf{p}_\xi(A)=\underset{\ee\to 0^+}{\lim\sup} \frac{\log  Sz_\xi(A, \ee)}{|\log(\ee)|},$$   noting that this value need not be finite. If $X$ is a Banach space, we write $\textbf{p}_\xi(X)$ in place of $\textbf{p}_\xi(I_X)$. The $\xi=0$ case has been previously studied for spaces, but not for operators, while this value for $\xi>0$ is introduced here for both spaces and operators.    The main results of this work are the following. In the following theorem, $p'$ denotes the conjugate exponent to $p$. 

\begin{theorem*} Let $\xi$ be an ordinal and let $A:X\to Y$ be an operator with $Sz(A)=\omega^{\xi+1}$.   \begin{enumerate}[(i)]\item  $Y$ admits an equivalent norm $|\cdot|$ such that $A:X\to (Y, |\cdot|)$ is $\xi$-asymptotically uniformly smooth norm with power type if and only if $\textbf{\emph{p}}_\xi(A)<\infty$. \item  If $\textbf{\emph{p}}_\xi(A)<\infty$, then $1\leqslant \textbf{\emph{p}}_\xi(A)$, $\textbf{\emph{p}}_\xi(A)$ is the infimum of those $p$ such that $Y$ admits an equivalent norm making $A^*$ $w^*$-$\xi$-asymptotically uniformly convex with power type $p$, and  $\textbf{\emph{p}}_\xi(A)'$ is the supremum of those $p$ such that $Y$ admits an equivalent norm making $A$ $\xi$-asymptotially uniformly smooth with power type $p$. \end{enumerate}

\label{main theorem}
\end{theorem*}

As was shown in \cite{CD}, if $Sz(A)\leqslant \omega^\xi$, $\rho_\xi(\sigma;A:X\to Y)=0$ for all $\sigma>0$, so that $A$ is already $\xi$-AUS with all power types in this case.  Also, if $Sz(A)>\omega^{\xi+1}$, there is no norm $|\cdot|$ on $Y$ making $A:X\to (Y, |\cdot|)$ $\xi$-AUS.  Therefore regarding equivalent norms on $Y$ making $A:X\to Y$ $\xi$-AUS, the Main Theorem  treats the only non-trivial case, $Sz(A)=\omega^{\xi+1}$.  Here we are using the fact shown by Brooker in \cite{BrookerAsplund} that $Sz(A)=\infty$ or there exists an ordinal $\xi$ such that $Sz(A)=\omega^\xi$.

Applying these results to the identity of a Banach space, we obtain the following.

\begin{corollary*}Let $\xi$ be an ordinal and let $X$ be a Banach space with $Sz(X)=\omega^{\xi+1}$.  \begin{enumerate}[(i)]\item  $X$ admits an equivalent $\xi$-asymptotically uniformly smooth norm with power type if and only if $\textbf{\emph{p}}_\xi(X)<\infty$. \item  If $\textbf{\emph{p}}_\xi(X)<\infty$, then $1\leqslant \textbf{\emph{p}}_\xi(X)$, $\textbf{\emph{p}}_\xi(X)$ is the infimum of those $p$ such that $X$ admits a norm making $X^*$ $w^*$-$\xi$-asymptotically uniformly convex with power type $p$,  and $\textbf{\emph{p}}_\xi(X)'$ is the supremum of those $p$ such that $X$ admits an equivalent $\xi$-asymptotically uniformly smooth norm with power type $p$.\end{enumerate}
\label{main corollary}
\end{corollary*}

In the case that $\xi=0$, this corollary recovers the precise renorming result from \cite{GKL}.

In \cite{DK}, it was shown that for an infinite dimensional Banach space $X$ with separable dual, the conjugate $\textbf{p}_0(X)'$ of the Szlenk power type $\textbf{p}_0(X)$ of $X$ is equal to the supremum of those $p>1$ such that $X$ satisfies subsequential $\ell_p$-upper tree estimates.  In this paper, we define for every ordinal $\xi$ and every $1<p<\infty$ what it means for an operator to satisfy $\xi$-$\ell_p$ \emph{upper tree estimates}.  In analogy with the $\xi=0$ case, we prove the following.  

\begin{theorem} Let $\xi$ be an ordinal and let $A:X\to Y$ be an operator such that $1\leqslant \textbf{\emph{p}}(A)<\infty$.  Then $\textbf{\emph{p}}_\xi(A)'$ is the supremum of those $p$ such that $A$ satisfies $\xi$-$\ell_p$ upper tree estimates. 

\end{theorem}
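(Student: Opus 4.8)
The plan is to mimic the $\xi=0$ argument from \cite{DK}, but relativized to the Szlenk derivation index $\omega^\xi$ rather than $\omega$, using the duality from Theorem~\ref{CDtheorem}(ii)--(iii) and the Main Theorem to pass freely between $\xi$-AUS statements about $A$ and $w^*$-$\xi$-AUC statements about $A^*$. The key conceptual object is a Tsirelson-type norm built over the Schreier family $\mathcal{S}_\xi$: for a parameter $p$, one defines on the dual side a norm whose modulus of $w^*$-$\xi$-asymptotic uniform convexity has power type $p$, and whose unit ball interacts with weakly null (in the operator sense, $A$-null) trees indexed by $\mathcal{S}_\xi$ exactly so that good lower $\ell_{p'}$ estimates on such trees in $Y$ correspond to the renorming. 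Concretely, I would take the definition of ``$A$ satisfies $\xi$-$\ell_p$ upper tree estimates'' to mean that there is a constant $C$ so that every normalized weakly null tree in $X$ of order $\mathcal{S}_\xi$ (or its image under $A$, depending on the precise definition given in the body of the paper) admits a branch $C$-dominated by the $\ell_p$ basis; the theorem then asserts $\sup\{p : \text{this holds}\} = \mathbf{p}_\xi(A)'$.

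\smallskip

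\textbf{The two inequalities.} First I would prove that if $A$ satisfies $\xi$-$\ell_p$ upper tree estimates, then $\mathbf{p}_\xi(A)' \geqslant p$, equivalently $\mathbf{p}_\xi(A) \leqslant p'$. For this I would show that $\xi$-$\ell_p$ upper tree estimates force a quantitative bound on $Sz_\xi(A,\ee)$: an $\ee$-separated $w^*$-closed subset of $s_\ee^{\omega^\xi k}(A^*B_{Y^*})$ can be used, via a weak$^*$-to-weak gliding-hump / Schreier-family extraction, to build a weakly null tree of order $\mathcal{S}_\xi$ in $B_X$ on which every branch has large $\ell_{p'}^k$-type norm when paired against the separated functionals; the upper $\ell_p$ tree estimate on the dual side (obtained by duality, Theorem~\ref{CDtheorem}(iii)) then caps $k$ by roughly $(C/\ee)^{p'}$, giving $Sz_\xi(A,\ee) \lesssim \ee^{-p'}$ and hence $\mathbf{p}_\xi(A)\leqslant p'$. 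This is essentially the argument that upper estimates control the Szlenk power type, with the Schreier family $\mathcal{S}_\xi$ replacing singletons and the derivation $s_\ee^{\omega^\xi}$ replacing $s_\ee^\omega$. For the reverse, suppose $p < \mathbf{p}_\xi(A)'$, so $p' > \mathbf{p}_\xi(A)$; by the Main Theorem (applied in the case $Sz(A)=\omega^{\xi+1}$, and trivially otherwise) there is an equivalent norm on $Y$ making $A$ $\xi$-AUS with power type $p$, i.e.\ making $A^*$ $w^*$-$\xi$-AUC with power type $p'$. I would then use the renormed modulus directly: in a space where $A$ is $\xi$-AUS with power type $p$, the standard estimate $\|A x^* \text{-null tree combination}\|$ argument shows every normalized $\mathcal{S}_\xi$-indexed weakly null tree has a branch dominated by the $\ell_p$ basis with constant depending only on the power-type constant. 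This yields $\xi$-$\ell_p$ upper tree estimates (possibly after a fixed renorming, which is harmless since the tree-estimate property for $A$ is a renorming-invariant up to constants), so $p \leqslant \sup\{q : A \text{ has }\xi\text{-}\ell_q\text{ u.t.e.}\}$, and letting $p \uparrow \mathbf{p}_\xi(A)'$ finishes.

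\smallskip

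\textbf{Main obstacle.} The technical heart, and the step I expect to consume the most work, is the $\mathcal{S}_\xi$-indexed tree extraction in the first inequality: turning the combinatorial content of $s_\ee^{\omega^\xi k}(A^*B_{Y^*}) \neq \emptyset$ into a genuine weakly null tree of order $\mathcal{S}_\xi$ in $B_X$ (or $A B_X$) whose branches witness a large $\ell_{p'}$-norm. For $\xi=0$ this is the standard ``$\omega$-iterated Szlenk derivation gives an $\ee/2$-separated finite sequence'' plus a diagonal/gliding-hump argument; for general $\xi$ one must run this through the transfinite Schreier hierarchy, which requires the combinatorial machinery of $\mathcal{S}_\xi$-admissible sequences and the repeated-averages / $\xi$-index lemmas (as developed in \cite{CD}) to organize the $\omega^\xi$-many derivation steps into a single layer of a Schreier tree. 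Ensuring the tree is weakly null in the operator sense — i.e.\ $A$-weakly null at every level, compatible with the Asplund-operator hypothesis $\mathbf{p}(A)<\infty$ — and that the branch selected by the upper tree estimate is the same branch along which the lower pairing estimate holds, is where care is needed; everything else is a bookkeeping translation of the $\xi=0$ proof in \cite{DK} through the $(\xi,\omega^\xi)$ dictionary already established in this paper.
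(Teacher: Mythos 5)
Your two-directional strategy matches the paper's, and the ingredients you list (Szlenk derivation $\leftrightarrow$ weakly null tree dictionary, Main Theorem renorming, $\xi$-AUS power type $\Rightarrow$ branch domination) are the right ones. But you have the difficulty allocated backwards, and that misallocation conceals a genuine gap.

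The direction you flag as the ``technical heart'' -- extracting a weakly null tree of order $\omega^\xi k$ from $s_\ee^{\omega^\xi k}(A^*B_{Y^*})\neq\varnothing$ and using it to bound $k$ -- is in fact the easy direction here: that extraction is exactly the statement of Theorem \ref{illinoistheorem}(ii) from \cite{CIllinois}, already available, and once you have the tree the calculation $\ee\leqslant\|Ak^{-1}\sum_{i=1}^k z_i\|\leqslant Ck^{1/p}/k=C/k^{1/p'}$ from the upper tree estimate finishes it immediately (Remark \ref{remark1}); no duality is needed or helpful, and ``upper tree estimate on the dual side'' is a misstatement -- upper tree estimates for $A$ dualize to \emph{lower} tree estimates for $A^*$, not upper ones.

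What is \emph{not} ``a bookkeeping translation of the $\xi=0$ proof in \cite{DK}'' is the converse: from a $\xi$-AUS norm with power type $p$, producing a single branch of a normally weakly null $\Omega_{\xi,\infty}$-indexed tree whose special convex blocks are $\ell_p$-dominated. In the $\xi=0$ case one stabilizes level by level because each level is a net of single vectors, and a finite stabilization over a finite $\delta$-net of coefficient tuples suffices. For $\xi>0$, each level is itself a $B$-tree of order $\omega^\xi$, and passing to a full-order subtree that simultaneously controls infinitely many partial-sum configurations requires finding one branch down which \emph{all} the relevant averages are small at once. This is the content of the Simultaneity Theorem (Theorem \ref{sim theorem}) and its Corollary \ref{sim corollary}, which are new results of this paper, not pre-existing machinery one can wave at. Without them the inductive construction in Corollary \ref{sim corollary3} does not close: you would get, for each fixed coefficient tuple, a branch on which the estimate holds, but no single branch working for all tuples. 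Your proposal would therefore stall exactly at the step you describe as routine.
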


A  part of the work here follows closely the arguments from \cite{GKL}.  However, the higher ordinal arguments require complex blockings of the branches of weakly null trees not required in the $\xi=0$ case.   For technical reasons, we will rely heavily on special convex combinations and repeated averages introduced in \cite{C}, based on the repeated averages hierarchy introduced in \cite{AMT}.   We will require several combinatorial stabilization results for functions defined on the branches of special trees. Namely, a function defined on a set with a ``leveled'' tree structure having large averages on every branch with respect to special probability measures has a ``full'' subset which is uniformly large on each level.   All of the notions referred to in the following result will be defined in Section $3$.   Here, for every ordinal $\xi$ and every $n\in \nn$, we will define a set $\Gamma_{\xi, n}$ with order $\omega^\xi n$ consisting of $n$ ``levels,'' denoted $\Lambda_{\xi,n,1}, \ldots, \Lambda_{\xi, n,n}$.   The set $\Gamma_{\xi,n}$ is partially ordered by a relation $\preceq$, $MAX(\Gamma_{\xi, n})$ denotes the $\preceq$-maximal members of $\Gamma_{\xi, n}$,  and a function $\mathbb{P}_{\xi, n}:\Gamma_{\xi, n}\to [0,1]$ is defined such that for every  $t\in MAX(\Gamma_{\xi, n})$ and for each $1\leqslant i\leqslant n$, $1=\sum_{\Lambda_{\xi, n, i}\ni s\preceq t}\mathbb{P}_{\xi, n}(s)$. A function $\theta:\Gamma_{\xi, n}\to \Gamma_{\xi,n}$ is called \emph{monotone} if $\theta(s)\prec \theta(t)$ when $s\prec t$. If $\theta:\Gamma_{\xi, n}\to \Gamma_{\xi,n}$ is monotone, an \emph{extension} of $\theta$ is a function $e:MAX(\Gamma_{\xi, n})\to MAX(\Gamma_{\xi, n})$ such that for every $t\in MAX(\Gamma_{\xi, n})$, $\theta(t)\preceq e(t)$.  The following is a simplified version of our main combinatorial result.  

\begin{theorem} For any ordinal $\xi$, any natural number $n$, any $\delta>0$, and any bounded function $f:\Gamma_{\xi, n}\to \rr$, there exist numbers $a_1, \ldots, a_n\in \rr$, a monotone function $\theta:\Gamma_{\xi, n}\to \Gamma_{\xi, n}$, and an extension $e$ of $\theta$ such that \begin{enumerate}[(i)] \item for every $t\in MAX(\Gamma_{\xi, n})$, $\sum_{s\preceq e(t)}\mathbb{P}_{\xi, n}(s)f(s)\leqslant \delta+\sum_{i=1}^n a_i$, and \item for every $1\leqslant i\leqslant n$ and every $s\in \Lambda_{\xi, n, i}$, $f(\theta(s))\geqslant a_i$. \end{enumerate}

\end{theorem}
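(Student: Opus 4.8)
The plan is to proceed by induction on $\xi$, exploiting the self-similar structure of the sets $\Gamma_{\xi,n}$: each $\Gamma_{\xi+1,n}$ is (roughly) a tree of copies of $\Gamma_{\xi,n}$, and $\Gamma_{\lambda,n}$ for limit $\lambda$ is assembled from $\Gamma_{\xi_k,n}$ along a sequence $\xi_k \uparrow \lambda$. The base case $\xi = 0$ should reduce to a finite Ramsey-type stabilization: $\Gamma_{0,n}$ has order $n$, its levels $\Lambda_{0,n,i}$ are finite antichains arranged in a product-like tree, and $\mathbb{P}_{0,n}$ gives the natural uniform branching weights, so one picks $a_i$ to be (essentially) the supremum over a suitably ``full'' sub-tree of the values $f$ takes on level $i$, then uses pigeonhole finitely many times to thin to a monotone $\theta$ on which each $a_i$ is attained up to nothing (or exactly, by compactness of the finite range after an $\varepsilon/n$-net argument). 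The extension $e$ is then forced: along any maximal branch $t$, the weighted sum $\sum_{s \preceq e(t)} \mathbb{P}_{0,n}(s) f(s)$ splits as a sum over the $n$ levels of a convex average of $f$-values on that level, each of which is $\le a_i + \delta/n$ by construction.

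For the successor step, I would write $\Gamma_{\xi+1,n}$ as an outer index set $I$ carrying a tree order, with a copy $\Gamma^{(\iota)}_{\xi,n}$ of $\Gamma_{\xi,n}$ hanging below each node $\iota$, so that $\mathbb{P}_{\xi+1,n}$ factors as an outer branching measure times the inner $\mathbb{P}_{\xi,n}$ on each copy, and the level $\Lambda_{\xi+1,n,i}$ is the union over $\iota \in I$ of the $i$-th level of $\Gamma^{(\iota)}_{\xi,n}$. Apply the inductive hypothesis inside each copy to get local constants $a^{(\iota)}_1,\dots,a^{(\iota)}_n$, local monotone maps $\theta^{(\iota)}$ and extensions $e^{(\iota)}$; the key point is that the vector $(a^{(\iota)}_i)_i$ ranges in a bounded subset of $\rr^n$, so — using a compactness/$\DB^0_2$-type stabilization along the outer tree, exactly the ``full subset uniformly large on each level'' phenomenon flagged in the introduction — one passes to a full sub-collection of copies on which all the $a^{(\iota)}_i$ agree with a single vector $(a_i)_i$ up to $\delta/2$. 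Glue the $\theta^{(\iota)}$ and $e^{(\iota)}$ over this stabilized sub-tree to get global $\theta, e$; a maximal branch of $\Gamma_{\xi+1,n}$ passes through one stabilized copy, where the weighted sum is controlled by the inductive conclusion plus the $\delta/2$ error from stabilization, giving the bound with total error $\delta$. The limit step is similar but one first restricts attention, along the outer tree defining $\Gamma_{\lambda,n}$, to a cofinal set of coordinates $\xi_k$ and stabilizes the constants produced by the (already proven) statement at each $\xi_k$.

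The main obstacle — and the reason the introduction warns about ``complex blockings of the branches of weakly null trees'' — is making the gluing of the monotone maps $\theta^{(\iota)}$ into a globally monotone $\theta$ compatible with a single family of level-constants: monotonicity $\theta(s) \prec \theta(t)$ for $s \prec t$ must be respected both within a copy and across the outer tree order, so one cannot stabilize the copies independently but must do it hereditarily down the outer tree, and the stabilization has to be uniform enough that the constants $a_i$ do not drift as one descends. I expect to handle this by a careful transfinite bookkeeping: process the outer tree top-down, at each outer node choosing the inner data only after the ancestors are fixed, and absorbing all the accumulated stabilization errors into a summable budget bounded by $\delta$ (e.g. error $\delta 2^{-(k+1)}$ at depth $k$). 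The verification of (i) then unwinds along a fixed maximal branch as a finite telescoping sum over the $n$ levels, and (ii) is immediate from the construction since $\theta$ lands in a region where each level-$i$ value of $f$ is $\ge a_i$ by the inductive choice of $a^{(\iota)}_i$ and the stabilization.
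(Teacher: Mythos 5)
There is a genuine gap, and it comes from a misreading of how $\Gamma_{\xi+1,n}$ is assembled. You write $\Gamma_{\xi+1,n}$ as an outer tree of isomorphic copies of $\Gamma_{\xi,n}$, with one copy hanging below each outer node, and with $\mathbb{P}_{\xi+1,n}$ factoring as an outer measure times the inner $\mathbb{P}_{\xi,n}$. But the definitions give no such decomposition: $\Gamma_{\xi+1,n}$ is built by iterating $\Gamma_{\xi+1}$ a total of $n$ times, and $\Gamma_{\xi+1}=\bigcup_{k\in\nn}\Gamma_{\xi,k}$ is a totally incomparable union over \emph{all} $k$. So already the first level $\Lambda_{\xi+1,n,1}$ contains a piece isomorphic to $\Gamma_{\xi,k}$ for every $k\in\nn$, and the weight on the piece from $\Gamma_{\xi,k}$ is $\tfrac1k\mathbb{P}_\xi$. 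An induction on $\xi$ with $n$ held fixed therefore cannot get off the ground: to handle even one level of $\Gamma_{\xi+1,n}$ you need the statement at $(\xi,k)$ for every $k$. The correct induction is on the pair $(\xi,n)$ in lexicographic order; the passage $n\to n+1$ at fixed $\xi$ decomposes $\Gamma_{\xi,n+1}$ as a copy of $\Gamma_\xi$ (level $1$) with copies of $\Gamma_{\xi,n}$ hanging below its maximal nodes, while the passage to $\xi+1$ is done at $n=1$ using the incomparable union over $k$.

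Once the structure is read correctly, the core combinatorial difficulty your proposal elides becomes visible. In the step from $\{(\xi,k):k\in\nn\}$ to $(\xi+1,1)$ one has, on each incomparable piece $\Gamma_{\xi,k}\subset\Gamma_{\xi+1}$, a $k$-tuple of level constants $(a^k_1,\dots,a^k_k)$ from the inductive hypothesis, and must collapse these, simultaneously across all $k$, to a \emph{single} constant $a$ (since $\Gamma_{\xi+1}$ has one level). There is no bounded finite-dimensional parameter space in which to ``stabilize,'' and a summable $\delta 2^{-(k+1)}$ error budget has nothing to grip, because the tuples have unbounded length. What is actually needed is to track the averages $\tfrac1k\sum_{i=1}^k a^k_i$ (exactly what $\mathbb{P}_{\xi+1}$ sees on $\Gamma_{\xi,k}$), pass to a subsequence $n_j$ along which they converge to some $a$, argue by a counting estimate that proportionally many indices $i\le n_j$ satisfy $a^{n_j}_i>a-\delta/2$, and then use a length-preserving monotone map into those levels. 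Your proposal contains neither this step nor a substitute for it. Two smaller remarks: in the undecorated base case, $\Gamma_{0,n}$ is a single chain of length $n$ with all weights $1/n$, so there is nothing to thin and no Ramsey or $\varepsilon/n$-net argument is needed; and the statement the paper actually proves (its Theorem~3.1) is the stronger decorated version on $\Gamma_{\xi,n}D$ for a directed set $D$, with $f$ a function of pairs $(s,t)\in\Pi\Omega_{\xi,n}$ and unit-preserving extended prunings, of which the version you were asked to prove is a corollary.
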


The particular case $n=1$ shows that in order to find a subtree of $\Gamma_\xi$ of full order $\omega^\xi$ which has uniformly large values, it is sufficient to check that one has uniformly large averages on all maximal linearly ordered subsets of $\Gamma_\xi$ (called \emph{branches} of $\Gamma_\xi$) with respect to a specific choice of weights.  One phenomenon which was used several times implicitly in \cite{CD} will be vital to this work.  This phenomenon involves beginning with multiple functions which cannot be made to be uniformly large on a subtree of $\Gamma_\xi$ with full order and finding a single branch such that all of the functions have small averages down this branch.  To that end, for $\ee\in \rr$, let us say a function $f:\Gamma_\xi\to \rr$ is $\ee$-\emph{large} if for any $\delta>0$, there exists a monotone function $\theta:\Gamma_\xi\to \Gamma_\xi$ such that for all $t\in \Gamma_\xi$, $f\circ \theta(t)\geqslant \ee-\delta$.  

\begin{theorem} Let $\xi$ be an ordinal. Fix $\ee_1, \ldots, \ee_n\in \rr$ and bounded functions $f_1, \ldots, f_n:\Gamma_\xi \to \rr$ such that for each $1\leqslant i\leqslant n$, $f_i$ is not $\ee_i$-large.  Then $$\underset{t\in MAX(\Gamma_\xi)}{\inf} \underset{1\leqslant i\leqslant n}{\max}\sum_{s\preceq t} \mathbb{P}_\xi(s)[f_i(s)-\ee_i]\leqslant 0.$$

\end{theorem}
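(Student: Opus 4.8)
The plan is to reduce the multi-function statement (Theorem for general $n$) to the single-function case by an inductive combinatorial argument, using the structure of $\Gamma_\xi$ as a "tree of trees." First I would recall the key relationship between $\Gamma_\xi$ and $\Gamma_{\xi,n}$: the set $\Gamma_{\xi,n}$, built from $n$ copies of $\Gamma_\xi$ stacked in levels, has order $\omega^\xi n$, and every branch of $\Gamma_{\xi,n}$ decomposes as a concatenation of $n$ branches, one through each level $\Lambda_{\xi,n,i}$, with $\mathbb{P}_{\xi,n}$ restricting on each level to (a translate of) $\mathbb{P}_\xi$. The statement to prove concerns $\Gamma_\xi$ directly, but the hypothesis "$f_i$ is not $\e_i$-large" is exactly the failure of the conclusion of the $n=1$ case for each $f_i$ separately, so the content is genuinely about combining $n$ non-largeness conditions into one branch.

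Here is how I would carry it out. Suppose for contradiction that the infimum is some $\eta > 0$, i.e. for every $t \in MAX(\Gamma_\xi)$ there is some index $i(t)$ with $\sum_{s \preceq t}\mathbb{P}_\xi(s)[f_{i(t)}(s) - \e_i] > \eta$. The idea is to build, for a suitable large $m$, a copy of $\Gamma_{\xi, m}$ sitting inside $\Gamma_\xi$ (this is where I would invoke a standard "self-similarity" embedding $\Gamma_{\xi,m} \hookrightarrow \Gamma_\xi$, which should be among the structural facts established in Section 3, since $\omega^\xi m \le \omega^{\xi}\cdot\omega = \omega^{\xi+1}$ wait — one needs $\omega^\xi m < \omega^{\xi+1}$, so this requires replacing $\Gamma_\xi$ by a subtree isomorphic to $\Gamma_{\xi}$ that still contains $\Gamma_{\xi,m}$; alternatively one works with the fact that $\Gamma_\xi$ contains order-preserving images of $\Gamma_{\xi,m}$ for every $m$). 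On this copy of $\Gamma_{\xi,m}$, apply the main combinatorial result (the Theorem just before this one) with $f = f_j$ restricted appropriately, and also use a pigeonhole on the $m$ levels: by the contradiction hypothesis, on each maximal branch some index achieves an average exceeding $\e_{i} + \eta$; choosing $m$ large relative to $n$ and iterating level by level, one index $i_0$ must be "responsible" on a set of levels whose $\mathbb{P}$-weighted contribution forces $f_{i_0}$ to have large averages on a full-order monotone subtree — precisely making $f_{i_0}$ $\e_{i_0}$-large, contradicting the hypothesis.

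More concretely, the mechanism is this: part (ii) of the preceding Theorem tells us that after passing to a monotone $\theta$ we get level-constants $a_i$ with $f_j(\theta(s)) \ge a_i$ on level $i$, while part (i) bounds the branch averages by $\delta + \sum a_i$. If none of the $f_j$ is $\e_j$-large, then on the relevant subtree the value $a_i$ (the level-$i$ guaranteed lower bound for the index assigned to that level) must be $< \e_j + \delta'$ for the responsible index — otherwise a single level already exhibits a full monotone subtree of $\Gamma_\xi$ on which that $f_j$ is $\ge \e_j - \delta'$, i.e. $\e_j$-largeness. Summing these level bounds over the $m$ levels and using that the $\mathbb{P}_{\xi,m}$-mass on each level's branch is $1$, we get that every branch average is at most $\sum_{i} (\e_{j(i)} + \delta')\cdot(\text{mass of level }i) \le \max_j \e_j + \delta'$ modulo the weighting — and then a careful accounting of how the $\mathbb{P}_\xi$ weights distribute across levels yields that the branch average of $f_{i(t)} - \e_{i(t)}$ is at most $\delta'$, contradicting that it exceeds $\eta$ once $\delta' < \eta$.

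The main obstacle I expect is the bookkeeping in the second mechanism above: translating "level $i$ is handled by index $j(i)$ with guaranteed value $a_i < \e_{j(i)}$" into a genuine bound on the single-branch average of a single function, rather than a weighted combination across indices. This is exactly the "finding a single branch such that all functions have small averages" phenomenon the author flags as used implicitly in \cite{CD}, and making it rigorous requires carefully tracking how a branch of $\Gamma_{\xi,m}$ splits its $\mathbb{P}_{\xi,m}$-mass among the $m$ levels, and ensuring the monotone extension $e$ from the preceding Theorem lets us replace "average of $f_j$ over the branch up to $e(t)$" by something controlled by the level-constants. A secondary technical point is justifying the embedding of $\Gamma_{\xi,m}$ as a monotone-image-stable subobject of $\Gamma_\xi$ with compatible measures; I would expect this to follow from the inductive/ordinal-arithmetic construction of $\Gamma_\xi$ and $\mathbb{P}_\xi$ set up in Section 3, and so I would simply cite it once those definitions are in place.
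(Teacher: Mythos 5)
Your plan has a fatal structural error at its core: $\Gamma_{\xi,m}$ does \emph{not} embed monotonely into $\Gamma_\xi$ for $m > 1$. One has $o(\Gamma_{\xi,m}) = \omega^\xi m > \omega^\xi = o(\Gamma_\xi)$, and the paper records that a monotone map $S\to T$ between well-founded $B$-trees exists if and only if $o(S)\leqslant o(T)$; hence there is no ``copy of $\Gamma_{\xi,m}$ sitting inside $\Gamma_\xi$,'' and a subtree of $\Gamma_\xi$ isomorphic to $\Gamma_\xi$ (which still has order $\omega^\xi$) cannot contain one either. You flag this as a ``secondary technical point,'' but it is precisely the load-bearing step of your reduction, and it is not a citeable self-similarity fact. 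The object that admits a level decomposition of every depth is $\Gamma_{\xi+1}=\bigcup_m\Gamma_{\xi,m}$, not $\Gamma_\xi$; and both the hypotheses (non-$\ee_i$-largeness on $\Gamma_\xi$) and the conclusion (a small branch of $\Gamma_\xi$ with its weights $\mathbb{P}_\xi$) are genuinely about $\Gamma_\xi$, so passing to $\Gamma_{\xi+1}$ does not address the statement. The level-by-level pigeonhole you sketch therefore never gets off the ground.

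The actual proof needs no stacking or pigeonhole. Work with the single function $f(s,t)=\max_{1\leqslant i\leqslant n}\bigl[f_i(s,t)-\ee_i\bigr]$ on $\Pi\Omega_\xi$. Since $\sum_{s\preceq t}\mathbb{P}_\xi(s)=1$ for maximal $t$, one has $\max_i\bigl[\sum_{s\preceq t}\mathbb{P}_\xi(s)f_i(s,t)-\ee_i\bigr]\leqslant\sum_{s\preceq t}\mathbb{P}_\xi(s)f(s,t)$, so it suffices to show the right side has nonpositive infimum. Assume for contradiction it equals some $\delta>0$. The $n=1$ case of Theorem \ref{favorite theorem} (the content of the remark following it) then produces an extended pruning $(\theta,e)$ on which $f(\theta(s),e(t))\geqslant 0$ for all $(s,t)\in\Pi\Omega_\xi$. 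Now color each pair by the least index $i$ achieving the max in $f(\theta(s),e(t))$; this is a finite coloring, and Proposition \ref{recall}(iii) (equivalently Lemma \ref{combinatorial} with $n=1$) yields a further pruning on which a single index $j$ is always the one achieving the max. On that subtree $f_j-\ee_j=f\geqslant 0$, i.e.\ $f_j\geqslant\ee_j$, which makes $f_j$ $\ee_j$-large and contradicts the hypothesis. The idea you are missing is to replace the $n$ functions by their pointwise max and then recover a single responsible index by finite Ramsey stabilization, rather than trying to engineer a taller tree with one level per index.
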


\section{Definitions and background}

For a Banach space $X$, a $w^*$-compact subset $K$ of $X^*$, and $\ee>0$, we let $s_\ee(K)$ denote the subset of $K$ consisting of those $x^*\in K$ such that for every $w^*$-neighborhood $V$ of $x^*$, $\text{diam}(V\cap K)>\ee$.   It will be convenient for our purposes to define $s_\ee(K)=K$ for every $\ee\leqslant 0$.  We define the higher order derivatives $s_\ee^\xi(K)$ by transfinite induction by $s_\ee^0(K)=K$, $s_\ee^{\xi+1}(K)=s_\ee(s_\ee^\xi(K))$, and $s_\ee^\xi(K)=\cap_{\zeta<\xi}s^\zeta_\ee(K)$ when $\xi$ is a limit ordinal.  For $\ee>0$, we let $Sz(K, \ee)$ denote the smallest ordinal $\xi$ such that $s^\xi_\ee(K)=\varnothing$, if such a $\xi$ exists.  If $s_\ee^\xi(K)\neq \varnothing$ for all $\xi$, we write $Sz(K, \ee)=\infty$.   We agree to the convention that $\xi<\infty$ for every ordinal $\xi$.  We let $Sz(K)=\sup_{\ee>0}Sz(K, \ee)$.  For a Banach space $X$, we let $Sz(X, \ee)=Sz(B_{X^*}, \ee)$, $Sz(X)=Sz(B_{X^*})$.    If $A:X\to Y$ is an operator, we let $Sz(A, \ee)=Sz(A^*B_{Y^*}, \ee)$ and $Sz(A)=Sz(A^*B_{Y^*})$.  We recall that $A:X\to Y$ is an Asplund operator if and only if $Sz(A)<\infty$, and $X$ is an Asplund space if and only if $Sz(X)<\infty$ (see, for example, \cite{BrookerAsplund} and the references therein). 

We define some other derivations which we will need.  For an ordinal $\xi>0$, $\ee\in \rr$, we define $s_{\xi, \ee}^0(K)=K$, $s_{\xi, \ee}^{\beta+1}(K)=s_\ee^{\omega^\xi}(s_{\xi, \ee}^\beta(K))$, and if $\beta$ is a limit ordinal, $s_{\xi, \ee}^\beta=\cap_{\zeta<\beta}s_{\xi, \ee}^\zeta(K)$.  It follows from standard induction proofs that for every ordinal $\beta$, $s_{\xi, \ee}^\beta(K)=s_\ee^{\omega^\xi \beta}(K)$. We let $Sz_\xi(K, \ee)$ be the smallest $\beta$ such that $s_{\xi, \ee}^\beta(K)=\varnothing$ if such a $\beta$ exists and let $Sz_\xi(K)=\infty$ otherwise.    The symbols $Sz_\xi(K)$, $Sz_\xi(X, \ee)$, $Sz_\xi(A, \ee)$ are defined analogously to the previous paragraph.  If $A:X\to Y$ is  an operator with  $Sz(A)>\omega^\xi$, then $\epsilon_\xi(A):=\sup\{\ee>0: Sz_{\omega^\xi}(A, \ee)>1\}$ is a positive, finite value.

Note that if $K\subset X^*$ is weak$^*$-compact,  $Sz(K)\leqslant \omega^{\xi+1}$ if and only if  $Sz_\xi(K,\ee)$ takes values in the natural numbers for all $\ee>0$.  Therefore if $Sz(K)\leqslant \omega^{\xi+1}$,  we  may define the $\xi$-\emph{Szlenk power type of} $K$ by $$\textbf{p}_\xi(K)=\underset{\ee\to 0^+}{\lim\sup} \frac{\log Sz_\xi(K, \ee)}{|\log(\ee)|}.$$ For convenience, if $Sz(K)>\omega^{\xi+1}$, we define $\textbf{p}_\xi(K)=\infty$. If $A$ is an operator, we let $\textbf{p}_\xi(A)=\textbf{p}_\xi(A^*B_{Y^*})$. If $X$ is a Banach space, we write $\textbf{p}_\xi(X)$ in place of $\textbf{p}_\xi(I_X)$.     In general, $\textbf{p}_\xi(K)$ need not be finite, even if $Sz(K)=\omega^{\xi+1}$.  However, $\textbf{p}_0(X)$ is automatically finite when $X$ is a Banach space with $Sz(X)\leqslant \omega$,  which we discuss in the final section.  We note that if $Sz(K)\leqslant \omega^\xi$, then $Sz_\xi(K,\ee)=1$ for every $\ee>0$, whence $\textbf{p}_\xi(K)=0$.  We will see in Lemma \ref{lemma1} that the converse is also true when $K$ is convex, and in particular if $A$ is an operator with $Sz(A)=\omega^{\xi+1}$, $\textbf{p}_\xi(A)\geqslant 1$.  Since the Szlenk index of an operator cannot take values strictly between $\omega^\xi$ and $\omega^{\xi+1}$, which was shown in \cite{BrookerAsplund}, we will see that $\textbf{p}_\xi(A)=0$ if and only if $Sz(A)\leqslant \omega^\xi$.

We note that $\textbf{p}_\xi(A)$ is invariant under renorming $Y$. Indeed, suppose that $C>0$ and $|\cdot|$ is a norm on $Y$ such that $C^{-1} B_Y \subset B_Y^{|\cdot|}\subset CB_Y$. Then for any $\ee>0$ and any ordinal $\zeta$, $$s^\zeta_\ee(A^*B_{Y^*}^{|\cdot|})\subset s^\zeta_\ee(CA^*B_{Y^*})= Cs^\zeta_{\ee/C}(B_{Y^*}).$$  From this it follows that $Sz_\xi(A^*B_{Y^*}^{|\cdot|}, \ee)\leqslant Sz_\xi(A^*B_{Y^*}, \ee/C)$ and \begin{align*} \underset{\ee\to 0^+}{\lim\sup}\frac{\log Sz_\xi(A^*B_{Y^*}^{|\cdot|}, \ee)}{|\log(\ee)|} & \leqslant \underset{\ee\to 0^+}{\lim\sup}\frac{\log Sz_\xi(A^*B_{Y^*}, \ee/C}{|\log(\ee)|} \\ & = \underset{\ee\to 0^+}{\lim\sup}\frac{\log Sz_\xi(A^*B_{Y^*}, \ee/C}{|\log(\ee/C)|}.\end{align*}

Given a set $\Lambda$, we let $\Lambda^{<\nn}$ denote the finite sequences in $\Lambda$, including the empty sequence $\varnothing$.  We order $\Lambda^{<\nn}$ by $\prec$, where $s\prec t$ if and only if $s$ is a proper initial segment of $t$. Given any set $\Lambda$ and a subset $S$ of $\Lambda^{<\nn}$, we let $MAX(S)$ denote the $\prec$-maximal members of $S$ and $\Pi S=\{(s,t)\in S\times S: s\preceq t\in MAX(S)\}$.   Given $t\in \Lambda^{<\nn}$, we let $|t|$ denote the length of $t$. For $0\leqslant i\leqslant |t|$, we let $t|_i$ denote the initial segment of $t$ having length $i$.  If $t\neq \varnothing$, we let $t^-$ denote the $\prec$-maximal proper initial segment of $t$.  We let $s\cat t$ denote the concatenation of $s$ and $t$. We say $T\subset \Lambda^{<\nn}$ is a \emph{tree} if it is downward closed with respect to $\prec$.

Given a set $\Lambda$ and a subset $T$ of $\Lambda^{<\nn}$, we let $T'=T\setminus MAX(T)$ and note that $T'$ is a tree if $T$ is.  We define the higher order derived trees of $T$ by transfinite induction.  We let $T^0=T$,   $T^{\xi+1}=(T^\xi)'$, and if $\xi$ is a limit ordinal, $T^\xi=\cap_{\zeta<\xi}T^\zeta$.  Note that there exists a smallest ordinal $\xi$ such that $T^\xi=T^{\xi+1}$.  If $T^\xi=\varnothing$ for this $\xi$, we say $T$ is \emph{well-founded} and let $o(T)=\xi$.  

We say a subset $B$ of $\Lambda^{<\nn}\setminus\{\varnothing\}$ is a $B$-\emph{tree} if $B\cup \{\varnothing\}$ is a tree.  All of the notions above regarding trees can be relativized to $B$-trees.

Given a Banach space $X$, a topology $\tau$ on $X$, and a tree $T$, we say a collection $(x_t)_{t\in T}\subset X$ is $\tau$-\emph{closed} if for every ordinal $\xi$ and every $t\in T^{\xi+1}$, $$x_t\in \overline{\{x_s: s\in T^\xi, s^-=t\}}^\tau.$$ If $B$ is a $B$-tree, we say a collection $(x_t)_{t\in B}\subset X$ is $\tau$-\emph{null} provided that for every ordinal $\xi$ and every $t\in (B\cup \{\varnothing\})^{\xi+1}$, $$0\in \overline{\{x_s: s\in B^\xi, s^-=t\}}^\tau.$$    Given $\ee>0$, a tree $T$, and an operator $A:X\to Y$, we say a collection $(x_t)_{t\in T}$ is $(A, \ee)$-\emph{large} provided that for every $\varnothing\neq t\in T$, $\|Ax_t\|>\ee$.

Given an operator $A:X\to Y$, $\sigma\geqslant 0$, and an ordinal $\xi$, we let $$\rho_\xi(\sigma;A)=\sup \inf \{\|y+ A x\|-1: t\in B, x\in \text{co}(x_s:s\preceq t)\},$$ where the supremum is taken over all $y\in B_Y$, all $B$-trees $B$ with $o(B)=\omega^\xi$, and all weakly null collections $(x_t)_{t\in B}\subset \sigma B_X$.  Then $\rho_\xi(\cdot;A)$ is the \emph{modulus of} $\xi$-\emph{asymptotic uniform smoothness of} $A$. Note that either $Y=\{0\}$ or $\rho_\xi(\sigma;A)\geqslant 0$.  We say $A$ is $\xi$-\emph{asymptotically uniformly smooth} (in short, $\xi$-AUS)  if $A=0$ or if $\lim_{\sigma\to 0^+}\rho_\xi(\sigma;A)/\sigma=0$. We say $A:X\to Y$ \emph{admits a} $\xi$-\emph{asymptotically uniformly smooth norm}  if there exists an equivalent norm $|\cdot|$ on $Y$ such that $A:X\to (Y, |\cdot|)$ is $\xi$-AUS.  For $1<p<\infty$, we say $A:X\to Y$ is $\xi$-\emph{AUS with power type} $p$ if there exists a constant $C$ such that for all $0<\sigma\leqslant 1$, $\rho_\xi(\sigma;A)\leqslant C\sigma^p$.   Note that since $\rho(\sigma;A)\leqslant \|A\|\sigma$ for all $\sigma>0$, the existence of some constant $C$ such that $\rho_\xi(\sigma;A)\leqslant C\sigma^p$ for all $\sigma\in (0,1]$ implies the existence of some constant $C$ such that $\rho_\xi(\sigma;A)\leqslant C\sigma^p$ for all $0<\sigma$.   We say $A:X\to Y$ is $\xi$-\emph{AUS with power type} $\infty$ if there exists a constant $C$ such that for any $y\in Y$, any $b>0$, any $B$-tree $B$ with $o(B)=\omega^\xi$, and any weakly null collection $(x_t)_{t\in B}\subset bB_X$, $$\inf\{\|y+Ax\|: t\in B, x\in \text{co}(x_s:s\preceq t)\} \leqslant \max\{\|y\|, Cb\}.$$ 

 We let $$\delta^{w^*}_\xi(\sigma;A^*)=\inf \sup\{\|y^*+\sum_{s\preceq t} z^*_s\|-1: t\in B\},$$ where the infimum is taken over all $y^*$ with $\|y^*\|= 1$, all $B$-trees $B$ with $o(B)=\omega^\xi$,  and all $w^*$-null, $(A^*, \sigma)$-large collections $(z^*_s)_{s\in B}$. We remark that this modulus was defined in \cite{CD} to be the infimum over all $y^*$ with $\|y^*\|\geqslant 1$ rather than all $y^*$ with $\|y^*\|=1$, but each of the arguments from that paper go through unchanged with this definition.   The function $\delta_\xi^{w^*}$ is called the \emph{modulus of} $w^*$-$\xi$-\emph{asymptotic uniform convexity of} $A^*$. We say $A^*$ is $w^*$-$\xi$-\emph{asymptotically uniformly convex} (in short, $w^*$-$\xi$-AUC) provided $\delta^{w^*}_\xi(\sigma;A^*)>0$ for all $\sigma>0$.  We say $A^*:Y^*\to X^*$ \emph{admits a} $w^*$-$\xi$-\emph{asymptotically uniformly convex norm} if there exists an equivalent norm $|\cdot|$ on $Y$ such that $A^*:(Y^*, |\cdot|)\to X^*$ is $w^*$-$\xi$-AUC. Here, $|\cdot|$ denotes both the norm on $Y$ and its dual norm on $Y^*$.   We say $A^*$ is $w^*$-$\xi$-AUC with power type $p$ if there exists $c>0$ such that for all $0<\tau\leqslant 1$, $\delta^{w^*}_\xi(\tau;A^*)\geqslant c\tau^p$. We note that if $A$ is an identity operator and if $\xi=0$, these notions recover the usual notions of asymptotic uniform smoothness and $w^*$-asymptotic uniform convexity. 

It was shown in \cite{CD} that if $A:X\to Y$ is $\xi$-AUS, then for every $\ee>0$, there exists some $\delta\in (0,1)$ such that $s^{\omega^\xi}_\ee(A^*B_{Y^*})\subset (1-\delta)A^*B_{Y^*}$.  A standard homogeneity argument yields that $Sz(A)\leqslant \omega^{\xi+1}$.  From this it follows that the only operators which may admit a $\xi$-AUS norm are those operators with $Sz(A)\leqslant\omega^{\xi+1}$.  We will implicitly use this fact throughout.   

Given an operator $A:X\to Y$, $1<p<\infty$, and an ordinal $\xi$, we let $\textbf{t}_{\xi, p}(A)$ denote the infimum of all $C> 0$ such that for every $y\in Y$, every $\sigma>0$, every $B$-tree $B$ with $o(B)=\omega^\xi$, and every weakly null collection $(x_t)_{t\in B}\subset \sigma B_X$, $$\inf\{\|y+Ax\|^p: t\in B, x\in \text{co}(x_s:s\preceq t)\}\leqslant \|y\|^p+C\sigma^p.$$  We define $\textbf{t}_{\xi, \infty}(A)$ to be the infimum of all $C>0$ such that for every $y\in Y$, every $\sigma>0$, every $B$-tree $B$ with $o(B)=\omega^\xi$, and every weakly null collection $(x_t)_{t\in B}\subset \sigma B_X$, $$\inf\{\|y+Ax\|: t\in B, x\in \text{co}(x_s:s\preceq t)\}\leqslant \max\{\|y\|, C\sigma\}.$$ It follows from the definition that $A$ is $\xi$-AUS with power type $\infty$ if and only if $\textbf{t}_{\xi, \infty}(A)<\infty$.  For future use, we isolate the following elementary proposition, which shows that the same characterization holds for $1<p<\infty$.

\begin{proposition} Let $\xi$ be an ordinal, $A:X\to Y$ an operator, and $1<p<\infty$.  Then $A$ is $\xi$-AUS with power type $p$ if and only if $\textbf{\emph{t}}_{\xi, p}(A)<\infty$.  In this case, each of the quantities $\textbf{\emph{t}}_{\xi, p}(A)$, $\sup_{\sigma\in (0,1)}\rho_\xi(\sigma;A)/\sigma^p$ can be majorized by a quantity depending on the other, $p$, and $\|A\|$.   

\label{mymaxinfo}
\end{proposition}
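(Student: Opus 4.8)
The plan is to prove the two implications separately, extracting quantitative bounds at each step so that the final "majorization" claim comes for free.

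First I would show that $\mathbf{t}_{\xi,p}(A)<\infty$ implies $A$ is $\xi$-AUS with power type $p$. Suppose $\mathbf{t}_{\xi,p}(A)=C<\infty$ and fix $\sigma\in(0,1]$. Take any $y\in B_Y$, any $B$-tree $B$ with $o(B)=\omega^\xi$, and any weakly null $(x_t)_{t\in B}\subset\sigma B_X$. By definition of $\mathbf{t}_{\xi,p}(A)$, for any $\eta>0$ there exist $t\in B$ and $x\in\mathrm{co}(x_s:s\preceq t)$ with $\|y+Ax\|^p\leqslant \|y\|^p+(C+\eta)\sigma^p\leqslant 1+(C+\eta)\sigma^p$. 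Using the elementary inequality $(1+u)^{1/p}\leqslant 1+u/p$ for $u\geqslant 0$, we get $\|y+Ax\|\leqslant 1+(C+\eta)\sigma^p/p$, hence $\|y+Ax\|-1\leqslant (C+\eta)\sigma^p/p$. Taking the infimum over $t,x$ and then the supremum over $y,B,(x_t)$, and letting $\eta\to 0$, yields $\rho_\xi(\sigma;A)\leqslant (C/p)\sigma^p$. This shows $\sup_{\sigma\in(0,1)}\rho_\xi(\sigma;A)/\sigma^p\leqslant \mathbf{t}_{\xi,p}(A)/p$, one half of the majorization statement.

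For the converse, suppose $\rho_\xi(\sigma;A)\leqslant K\sigma^p$ for all $\sigma\in(0,1]$, where $K=\sup_{\sigma\in(0,1)}\rho_\xi(\sigma;A)/\sigma^p$ (which together with $\rho(\sigma;A)\leqslant\|A\|\sigma$ gives a bound for all $\sigma>0$, as noted in the excerpt). Fix $y\in Y\setminus\{0\}$ and $\sigma>0$, a $B$-tree $B$ with $o(B)=\omega^\xi$, and weakly null $(x_t)_{t\in B}\subset\sigma B_X$. By homogeneity (dividing through by $\|y\|$), we may apply the definition of $\rho_\xi$ with $y/\|y\|\in B_Y$ and the collection $(x_t/\|y\|)_{t\in B}\subset(\sigma/\|y\|)B_X$: for any $\eta>0$ there exist $t\in B$, $x\in\mathrm{co}(x_s:s\preceq t)$ with
\[
\Big\|\frac{y}{\|y\|}+A\frac{x}{\|y\|}\Big\|-1\leqslant \rho_\xi\big(\tfrac{\sigma}{\|y\|};A\big)+\eta,
\]
i.e. $\|y+Ax\|\leqslant \|y\|+\|y\|\,\rho_\xi(\sigma/\|y\|;A)+\|y\|\eta$. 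Raising to the $p$-th power and using convexity of $u\mapsto(1+u)^p$ (or just $(1+u)^p\leqslant 1+p u\,2^{p-1}$ for $u$ in a bounded range, splitting into the cases $\sigma/\|y\|\leqslant 1$ and $\sigma/\|y\|>1$ and using $\rho_\xi(\sigma/\|y\|;A)\leqslant\|A\|\sigma/\|y\|$ in the latter) one estimates $\|y+Ax\|^p\leqslant\|y\|^p+C'\sigma^p$ for a constant $C'$ depending only on $K$, $p$, and $\|A\|$. Taking the infimum over $t,x$ gives $\mathbf{t}_{\xi,p}(A)\leqslant C'<\infty$.

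The only mild subtlety — and what I expect to be the main technical annoyance rather than a genuine obstacle — is bookkeeping the bounded-versus-unbounded ranges of $\sigma/\|y\|$ when passing from the additive modulus estimate $\|y+Ax\|\leqslant\|y\|(1+\rho_\xi(\sigma/\|y\|;A))$ to the $p$-th-power estimate, since $(1+u)^p$ is not globally controlled by $1+Cu^p$; one resolves this by treating $u=\sigma/\|y\|\leqslant 1$ via the power-type bound $\rho_\xi(u;A)\leqslant Ku^p$ and $u>1$ via the crude Lipschitz bound $\rho_\xi(u;A)\leqslant\|A\|u$, which makes $\|y+Ax\|\leqslant\|y\|+\|A\|\sigma$ and hence $\|y+Ax\|^p\leqslant(\|y\|+\|A\|\sigma)^p$, absorbable into $\|y\|^p+C'\sigma^p$ after a further case split on whether $\|A\|\sigma\leqslant\|y\|$. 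All constants produced are explicit functions of the originals, $p$, and $\|A\|$, which gives the final sentence of the proposition.
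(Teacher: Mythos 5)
Your proof is correct and follows essentially the same path as the paper's: compare the additive modulus $\rho_\xi$ with the $p$-th-power quantity $\mathbf t_{\xi,p}$ quantitatively, using homogeneity to reduce to $y$ of norm one, elementary power inequalities in the regime where the rescaled $\sigma$ is small, and the crude Lipschitz bound $\rho_\xi(\sigma;A)\leqslant\|A\|\sigma$ in the regime where it is large. Two minor remarks: in the direction $\mathbf t_{\xi,p}(A)<\infty\Rightarrow$ power type $p$, your single concavity inequality $(1+u)^{1/p}\leqslant 1+u/p$ is actually tidier than the paper's two-case power-series argument; and in the other direction, once you are in the regime $\sigma/\|y\|>1$ you already have $\|y\|<\sigma$, so $\|y+Ax\|^p<(1+\|A\|)^p\sigma^p\leqslant\|y\|^p+(1+\|A\|)^p\sigma^p$ holds directly and your proposed further split on $\|A\|\sigma\leqslant\|y\|$ is unnecessary, though harmless.
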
   

\begin{proof} 

Assume $A$ is $\xi$-AUS with power type $p$ and let $C'=\sup_{\sigma\in (0,1)} \rho_\xi(\sigma;A)/\sigma^p<\infty$. If $C'\leqslant 0$, $$\inf\{\|y+Ax\|^p: t\in B, x\in \text{co}(x_s:s\preceq t)\}\leqslant 1,$$ and $\textbf{t}_{\xi, p}(A)=0$. Now suppose $C'>0$.    We will choose $C\geqslant \|A\|^p$ such that for any $y\in S_Y$, any $\sigma>0$, any $B$-tree $B$ with $o(B)=\omega^\xi$, and any weakly null collection $(x_t)_{t\in B}\subset \sigma B_X$, $$\inf\{\|y+Ax\|^p: t\in B, x\in \text{co}(x_s:s\preceq t)\}\leqslant 1+C\sigma^p.$$  By homogeneity, this will handle the case of all $y\neq 0$, and the case $y=0$ is handled by the inequality $C\geqslant \|A\|^p$.   First suppose that $0\leqslant C'\sigma^p\leqslant 1/2$. Then, as one can see by considering the power series of $(1+x)^p$ for $x\in [0,1/2]$ and using the definition of $\rho_\xi(\sigma;A)$, there exists a constant $b$ (depending on $p$) such that  $$\inf\{\|y+Ax\|^p: t\in B, x\in \text{co}(x_s:s\preceq t)\}\leqslant (1+C'\sigma^p)^p\leqslant 1+bC'\sigma^p.$$  If $C'\sigma^p>1/2$, $(2C')^{1/p}\sigma>1$, and by the triangle inequality, \begin{align*} \inf\{\|y+Ax\|^p: t\in B, x\in \text{co}(x_s:s\preceq t)\} & \leqslant (1+\|A\|\sigma)^p <((2C')^{1/p}+\|A\|)^p\sigma^p  \\ & <1+((2C')^{1/p}+\|A\|)^p\sigma^p.\end{align*}  Thus we may take $C=\max\{bC', ((2C')^{1/p}+\|A\|)^p\}$.

Next, suppose $C=\textbf{t}_{\xi, p}(A)<\infty$.   Fix $y\in B_Y$, $\sigma>0$, a $B$-tree $B$ with $o(B)=\omega^\xi$, and a weakly null collection $(x_t)_{t\in B}\subset \sigma B_X$.   Then if $0\leqslant C\sigma^p\leqslant 1/2$, by again considering the power series expansion of $(1+x)^{1/p}-1$,  $$\inf\{\|y+Ax\|-1: t\in B, x\in \text{co}(x_s:s\preceq t)\} \leqslant (\|y\|^p+C\sigma^p)^{1/p}-1\leqslant 2C\sigma^p.$$  If $1/2<C\sigma^p$, \begin{align*}\inf\{\|y+Ax\|-1: t\in B, x\in \text{co}(x_s:s\preceq t)\} & \leqslant (1+C\sigma^p)^{1/p}-1<(3C\sigma^p)^{1/p} \\ & = 3^{1/p}C(C\sigma^p)^{\frac{1-p}{p}}\sigma^p<2^{\frac{p-1}{p}}3^{1/p}C\sigma^p. \end{align*}

\end{proof}

The following was the main theorem of \cite{CIllinois}, and is vital to this work. 

\begin{theorem} Fix an ordinal $\xi$.  \begin{enumerate}[(i)]\item For any operator $A:X\to Y$, $Sz(A)\leqslant \omega^\xi$ if and only if for any $B$-tree $B$ with $o(B)=\omega^\xi$, any bounded, weakly null collection $(x_t)_{t\in B}$, and any $\ee>0$, there exists $t\in B$ and $x\in \text{\emph{co}}(x_s:s\preceq t)$ such that $\|Ax\|<\ee$.  \item There exists a constant $c>1$ such that if $A:X\to Y$ is an operator and $\ee>0$ is such that $Sz(A, c\ee)>\xi$, there exists a $B$-tree $B$ with $o(B)=\xi$ and a weakly null collection $(x_t)_{t\in B}\subset B_X$ such that for every $t\in B$ and every $x\in \text{\emph{co}}(x_s:s\preceq t)$, $\|Ax\|\geqslant \ee$.   \end{enumerate}

\label{illinoistheorem}
\end{theorem}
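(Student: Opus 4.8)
The plan is to extract two substantive implications and deduce the rest formally. Write $K=A^*B_{Y^*}$. The ``only if'' half of (i) is the contrapositive of the statement (C1): \emph{if $o(B)=\omega^\xi$, $(x_t)_{t\in B}\subset B_X$ is weakly null, and $\|Ax\|\geqslant\ee$ for every $t\in B$ and every $x\in\mathrm{co}(x_s:s\preceq t)$, then $s^{\omega^\xi}_{\ee/c_0}(K)\neq\varnothing$ for an absolute constant $c_0$} --- since $s^{\omega^\xi}_{\ee/c_0}(K)\neq\varnothing$ gives $Sz(A)\geqslant Sz(A,\ee/c_0)>\omega^\xi$, and a bounded weakly null collection may be rescaled into $B_X$. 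The ``if'' half of (i) follows from (ii): if $Sz(A)>\omega^\xi$ then $Sz(A,\delta)>\omega^\xi$ for some $\delta>0$, and (ii) applied with the ordinal $\omega^\xi$ and $\ee=\delta/c$ yields a $B$-tree of order $\omega^\xi$ on which $\ee$ cannot be killed, negating the right-hand condition. Thus it remains to prove (C1) and (ii).

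For (C1) I would build, by transfinite recursion following the tree $B$, a $w^*$-closed collection $(\phi_t)_{t\in B'}\subset K$ over a subtree $B'\subseteq B$ which is $(\ee/c_0)$-separated along branches, i.e. $\|\phi_s-\phi_t\|\geqslant\ee/c_0$ whenever $s^-=t$; the standard equivalence between such separated $w^*$-closed trees and non-emptiness of the Szlenk derivative then gives $s^{\omega^\xi}_{\ee/c_0}(K)\neq\varnothing$. At a node $u$ with immediate successors $uv$, pick for each $v$ a convex combination $x$ of $\{x_s:s\preceq uv\}$ that genuinely uses $x_{uv}$ and a norming functional $\mu_{uv}\in B_{Y^*}$ with $\langle\mu_{uv},Ax\rangle\geqslant\ee$; pass to a subnet of the $v$'s so that $\mu_{uv}\xrightarrow{w^*}\mu$ and the relevant weakly null sequence at $u$ tends weakly to $0$, and set $\phi_u:=A^*\mu$, $\phi_{uv}:=A^*\mu_{uv}$. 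Then $\phi_{uv}\xrightarrow{w^*}\phi_u$, while weak nullness forces $\langle\phi_{uv}-\phi_u,x_{uv}\rangle\geqslant\ee-o(1)$, hence $\|\phi_{uv}-\phi_u\|\geqslant\ee/2$ for $v$ in the subnet. The work is in organizing the recursion so that the subnets are nested, the surviving subtree keeps order $\geqslant\omega^\xi$ (using additive indecomposability of $\omega^\xi$), and the limit stages (intersections) are handled --- routine but lengthy.

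For (ii) I would induct on $\xi$, reformulated for an arbitrary weak$^*$-compact $K\subset X^*$ with $\|Ax\|$ replaced by the diameter of $K$ in the direction $x$, so that the inductive step can be applied to the non-convex derivatives $s^\zeta_{c\ee}(K)$ (after passing to $w^*$-closed convex hulls, which costs only a constant). The case $\xi=1$ is the heart of the matter and is exactly where $c>1$ is forced: it asserts that $s_{c\ee}(A^*B_{Y^*})\neq\varnothing$ produces a weakly null net $(x_v)\subset B_X$ with $\|Ax_v\|\geqslant\ee$. Arguing the contrapositive, if $0\notin\overline{\{x\in B_X:\|Ax\|\geqslant\ee\}}^{\,w}$ there is a weak neighbourhood $U$ of $0$ with $A(U\cap B_X)\subset\ee B_Y$; Goldstine and weak$^*$-continuity of $A^{**}$ then give a weak$^*$-neighbourhood $V$ of $0$ in $B_{X^{**}}$ with $A^{**}(V)\subset\ee B_{Y^{**}}$, and polar calculus yields $A^*B_{Y^*}\subset\ee B_{X^*}+Z$ with $Z$ bounded in a finite-dimensional subspace. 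Writing two nearby points of $A^*B_{Y^*}$ as $\ee b_i+z_i$ and testing $z_1-z_2$ against a fixed finite set norming that subspace shows $s_{c\ee}(A^*B_{Y^*})=\varnothing$ as soon as $c>6$. The successor step peels off one weak$^*$-separation (exactly as in the construction for (C1)) to furnish the minimal level of the predual tree and recurses on the residual set based at the limiting functional; the limit step glues cofinally many shorter trees into one of order $\xi$.

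The step I expect to be the main obstacle is part (ii): first, making the inductive framework uniform --- the base case leans on convexity and symmetry of $A^*B_{Y^*}$, whereas the residual sets arising in the recursion are neither, so one must work with a translate and carry a reference point along; second, assembling from the local choices a single weakly null tree \emph{in the predual} $X$ of the exact prescribed order, with nested subnets and correctly handled limit ordinals. The twin recursion in (C1) is genuinely easier because it lives in the dual, where weak$^*$-compactness is automatic; the loss hidden in the constant $c$ of (ii) is precisely the price of transferring the construction back down to $X$ via Goldstine.
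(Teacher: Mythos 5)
The paper does not prove this statement at all: it is the main theorem of \cite{CIllinois}, imported by citation, and the text following it is only a dictionary translating the ``weakly null index'' $o_w(\hhh^{A^*B_{Y^*}}_\ee)$ of \cite{CIllinois} into the $B$-tree language used here (the inequalities $o_w(\hhh^{A^*B_{Y^*}}_{\ee_1})\leqslant Sz(A,\ee)$ and $Sz(A,c\ee)\leqslant o_w(\hhh^{A^*B_{Y^*}}_\ee)$ are quoted from \cite{CIllinois}, not re-derived). So there is no proof in the paper for your sketch to be ``the same as'' or ``different from''; you are attempting a from-scratch reconstruction of an external result, which is a materially different undertaking from what the paper does. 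Your reduction at the start --- extracting (C1) and part (ii), and deducing (i) from them --- is correct bookkeeping, and your (C1) recursion (norm a convex combination using $x_{uv}$, take $w^*$-cluster points, track the separation $\langle\phi_{uv}-\phi_u,x_{uv}\rangle\geqslant\ee-o(1)$, keep the subtree of order $\geqslant\omega^\xi$ via additive indecomposability) is the standard dualization and I believe it goes through.

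The part I would not accept as written is the inductive step for (ii). You generalize the statement to arbitrary $w^*$-compact $K$ precisely so that you can apply it to the non-convex derivatives $s^\zeta_{c\ee}(K)$, but then you say you will pass to $w^*$-closed convex hulls ``which costs only a constant.'' That is exactly the kind of claim that is famously delicate: the Szlenk derivatives of $\overline{\mathrm{co}}^{\,w^*}(L)$ are not controlled by those of $L$ --- the $w^*$-closed convex hull can have a much larger $\ee$-Szlenk index (this is the content of the hard part of \cite{LPR}, which the present paper cites for a reason). Moreover, if you reformulate the target predual statement in terms of ``diameter of $K$ in the direction $x$,'' the base case via Goldstine and polar calculus leans on $A^*B_{Y^*}$ being convex, balanced, and equal to a polar; once you recurse into a derivative $s_{c\ee}(K)$ and must produce a weakly null net with large directional diameter of that derivative, neither of those hypotheses survives, and a translate-plus-reference-point bookkeeping does not obviously restore the polar calculation. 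This is where the real work of \cite{CIllinois} lives, and your sketch does not yet explain how to close it: either you need an inductive statement that avoids convexification altogether (as \cite{CIllinois} does, via an $\ell_1$-index/weakly-null-index comparison for arbitrary $w^*$-compact sets), or you need a genuinely new argument for why the loss in passing to convex hulls of derivatives is uniformly bounded, which you should not assert without proof.
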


The preceding lemma was not presented using this language, but rather in terms of the \emph{weakly null index}.  We explain the equivalence of the statement here to the main theorem of \cite{CIllinois}.     For a collection $\hhh$ of finite sequences in $B_X$, we let $(\hhh)_w'$ denote the collection of all sequences $t\in \hhh$ such that for every weak neighborhood $U$ of $0$ in $X$, there exists $x\in U\cap B_X$ such that $t\cat x\in \hhh$.  We define the higher order derivations $(\hhh)_w^\xi$ and the associated order $o_w(\hhh)$ as usual.  Given an operator $A:X\to Y$ and $\ee>0$,  we let $\hhh^{A^*B_{Y^*}}_\ee$ denote the collection consisting of the empty sequence together with all $(x_i)_{i=1}^n\in B_X^{<\nn}$ such that for every $x\in \text{co}(x_i:1\leqslant i\leqslant n)$, $\|Ax\|\geqslant \ee$.  Then for any $\ee>0$ and any ordinal $\xi$, $o_w(\hhh^{A^*B_{Y^*}}_\ee)>\xi$ if and only if there exists a $B$-tree $B$ with $o(B)=\xi$ and a weakly null collection $(x_t)_{t\in B}\subset B_X$ such that for every $t\in B$, $(x_{t|_i})_{i=1}^{|t|}\in \hhh^{A^*B_{Y^*}}_\ee$ (that is, for every $t\in B$ and every $x\in \text{co}(x_s:s\preceq t)$, $\|Ax\|\geqslant \ee$).  Indeed, it was shown in \cite{CIllinois} that if $o_w(\hhh^{A^*B_{Y^*}}_\ee)>\xi$, then such a $B$ and $(x_t)_{t\in B}\subset B_X$ exist.  For the converse, a straightforward proof by induction shows that if such a $B$ and weakly null $(x_t)_{t\in B}\subset B_X$ exist, then for every $\zeta<\xi$ and $t\in B^\zeta$, $(x_{t|_i})_{i=1}^{|t|}\in (\hhh^{A^*B_{Y^*}}_\ee)^\zeta_w$.  In fact, the definition of weakly null collection is given precisely to facilitate this inductive proof.   It follows that $\varnothing\in (\hhh_\ee^{A^*B_{Y^*}})_w^\xi$ and $o_w(\hhh^{A^*B_{Y^*}}_\ee)>\xi$.  This yields the indicated equivalence.   In \cite{CIllinois}, it was shown that there exists a constant $c>1$ such that for every $0<\ee<\ee_1$, $$o_w(\hhh^{A^*B_{Y^*}}_{\ee_1})\leqslant Sz(A, \ee), \hspace{5mm} Sz(A, c\ee)\leqslant o_w(\hhh^{A^*B_{Y^*}}_\ee).$$  The second inequality and the preceding discussion yield Theorem \ref{illinoistheorem}$(ii)$.

\begin{corollary} Fix an ordinal $\xi$. \begin{enumerate}[(i)]\item For any operator $A:X\to Y$, $Sz(A)\leqslant \omega^\xi$ if and only if $\textbf{\emph{t}}_{\xi, \infty}(A)=0$.  \item For any $1<p<\infty$ and any operator $A:X\to Y$ with $Sz(A)\leqslant \omega^{\xi+1}$, $\textbf{\emph{p}}_\xi(A)\leqslant p$ if and only if for every $p<q<\infty$, there exists a constant $C$ such that if $0<\ee<1$ and $k\in \nn$ are such that there exists a $B$-tree $B$ with $o(B)=\omega^\xi k$ and a weakly null collection $(x_t)_{t\in B}\subset B_X$ such that for every $t\in B$ and every $x\in \text{\emph{co}}(x_s:s\preceq t)$, $\|Ax\|\geqslant \ee$, then $\ee\leqslant C/k^{1/q}$.  \end{enumerate}

\label{illinoiscorollary}
\end{corollary}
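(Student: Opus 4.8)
The plan is to handle (i) and (ii) separately; throughout we may assume $A\neq0$, the case $A=0$ being trivial. For (i): if $Sz(A)\leqslant\omega^\xi$, fix arbitrary $C>0$, $y\in Y$, $\sigma>0$, a $B$-tree $B$ with $o(B)=\omega^\xi$, and a weakly null $(x_t)_{t\in B}\subset\sigma B_X$; since this collection is bounded and weakly null, Theorem \ref{illinoistheorem}(i) gives, for every $\delta>0$, some $t\in B$ and $x\in\text{co}(x_s:s\preceq t)$ with $\|Ax\|<\delta$, whence $\inf\{\|y+Ax\|:t\in B,\ x\in\text{co}(x_s:s\preceq t)\}\leqslant\|y\|\leqslant\max\{\|y\|,C\sigma\}$; as $C>0$ was arbitrary, $\textbf{t}_{\xi,\infty}(A)=0$. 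Conversely, arguing contrapositively, if $Sz(A)>\omega^\xi$ then $Sz(A,\ee_1)>\omega^\xi$ for some $\ee_1>0$, and with $c>1$ the constant of Theorem \ref{illinoistheorem}(ii) and $\ee:=\ee_1/c$ we get $Sz(A,c\ee)>\omega^\xi$, so Theorem \ref{illinoistheorem}(ii) applied with the ordinal $\omega^\xi$ yields a $B$-tree $B$ of order $\omega^\xi$ and a weakly null $(x_t)_{t\in B}\subset B_X$ with $\|Ax\|\geqslant\ee$ for all $t\in B$, $x\in\text{co}(x_s:s\preceq t)$; testing the definition of $\textbf{t}_{\xi,\infty}$ at $y=0$, $\sigma=1$ shows its defining inequality fails whenever $C<\ee$, so $\textbf{t}_{\xi,\infty}(A)\geqslant\ee>0$.

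For (ii), I would introduce, for $0<\ee<1$, the quantity $N_\xi(A,\ee)$: the largest $k\in\nn$ (and $0$ if there is no such $k$) for which there exist a $B$-tree $B$ with $o(B)=\omega^\xi k$ and a weakly null $(x_t)_{t\in B}\subset B_X$ with $\|Ax\|\geqslant\ee$ for all $t\in B$, $x\in\text{co}(x_s:s\preceq t)$; this is finite because $Sz(A)\leqslant\omega^{\xi+1}$. A routine rearrangement of inequalities shows that the right-hand condition of (ii) is equivalent to: for every $q>p$ there is a constant with $N_\xi(A,\ee)=O(\ee^{-q})$ on $(0,1)$. On the other side, since $Sz_\xi(A,\cdot)$ is non-increasing, a standard manipulation of the $\limsup$ shows that $\textbf{p}_\xi(A)\leqslant p$ is equivalent to: for every $q>p$ there is a constant with $Sz_\xi(A,\ee)=O(\ee^{-q})$ on $(0,1)$. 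Thus it suffices to compare $N_\xi(A,\cdot)$ with $Sz_\xi(A,\cdot)$.

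The comparison has two halves. (a) $Sz_\xi(A,c\ee)-1\leqslant N_\xi(A,\ee)$ for all $\ee>0$: writing $Sz_\xi(A,c\ee)=n$, this is trivial if $n=1$, and if $n\geqslant2$ then $s_{c\ee}^{\omega^\xi(n-1)}(A^*B_{Y^*})\neq\varnothing$, so $Sz(A,c\ee)>\omega^\xi(n-1)$, and Theorem \ref{illinoistheorem}(ii) with the ordinal $\omega^\xi(n-1)$ produces a tree witnessing $N_\xi(A,\ee)\geqslant n-1$. (b) $N_\xi(A,\ee)\leqslant Sz_\xi(A,\ee')-1$ for all $0<\ee'<\ee$: a tree witnessing $N_\xi(A,\ee)\geqslant k$ gives $o_w(\hhh^{A^*B_{Y^*}}_\ee)>\omega^\xi k$ by the discussion preceding this corollary, and combined with $o_w(\hhh^{A^*B_{Y^*}}_\ee)\leqslant Sz(A,\ee')$ for $\ee'<\ee$ this yields $Sz(A,\ee')>\omega^\xi k$, i.e.\ $s_{\ee'}^{\omega^\xi k}(A^*B_{Y^*})\neq\varnothing$, i.e.\ $Sz_\xi(A,\ee')>k$.

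Combining: a bound $Sz_\xi(A,\ee)=O(\ee^{-q})$ on $(0,1)$ fed into (b) with $\ee'=\ee/2$ yields $N_\xi(A,\ee)=O(\ee^{-q})$, giving the implication from $\textbf{p}_\xi(A)\leqslant p$ to the right-hand side of~(ii); and a bound $N_\xi(A,\ee)=O(\ee^{-q})$ fed into (a), after the substitution $\ee\mapsto\ee/c$, yields $Sz_\xi(A,\ee)=O(\ee^{-q})$ on $(0,1)$, giving the reverse implication. The extra factors $2^q$ and $c^q$ are harmless since $q$ ranges over all of $(p,\infty)$. I expect the only real work to be bookkeeping: faithfully matching the transfinite derivations $s_\ee^{\omega^\xi k}$ with the finitary ``$B$-tree of order $\omega^\xi k$'' statements, and carrying the universal constant $c$ together with the off-by-one shifts through both reformulations without degrading the exponent $p$. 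Theorem \ref{illinoistheorem} and the $o_w$-discussion are precisely the tools that make these losses inessential, so no deeper obstacle is anticipated.
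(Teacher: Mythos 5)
Your proposal is correct and follows essentially the same route as the paper: part (i) reduces to comparing two sup-inf expressions via Theorem \ref{illinoistheorem}, and part (ii) shows $\textbf{p}_\xi(A)\leqslant p$ and the tree-estimate condition are both controlled by the same power-type growth, using Theorem \ref{illinoistheorem}(ii) in one direction and the $o_w(\hhh^{A^*B_{Y^*}}_\ee)\leqslant Sz(A,\ee')$ inequality in the other. The only difference is organizational --- you package the two-sided comparison into an explicit intermediate quantity $N_\xi(A,\ee)$, whereas the paper inlines the same estimates directly; the lemmas invoked and the bookkeeping with the universal constant $c$ and the factor $2$ are identical.
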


\begin{proof}
$(i)$ If $\mathcal{B}$ denotes the collection of all weakly null collections $(x_t)_{t\in B}\subset B_X$, where $B$ is a $B$-tree with $o(B)=\omega^\xi$, it is quite clear that $$\sup_{y\in Y,(x_t)_{t\in B}\in \mathcal{B}} \inf \{\|y+Ax\|-\|y\|: t\in B, x\in \text{co}(x_s:s\preceq t)\}\leqslant 0$$ if and only if $$\sup_{(x_t)_{t\in B}\in \mathcal{B}}\inf\{\|Ax\|: t\in B, x\in \text{co}(x_s:s\preceq t)\}\leqslant 0.$$   By homogeneity, the former is equivalent to $\textbf{t}_{\xi, \infty}(A)=0$, while the latter is equivalent to $Sz(A)\leqslant \omega^\xi$ by Theorem \ref{illinoistheorem}.

$(ii)$ Suppose $\textbf{p}_\xi(A)\leqslant p$ and fix $p<q<\infty$.   Then $C=\sup_{0<\ee<1}\ee^q Sz_\xi(A, \ee)<\infty$.   Suppose $o(B)=\omega^\xi k$ and $(x_t)_{t\in B}\subset B_X$ are such that $\|Ax\|\geqslant \ee$ for all $t\in B$ and $x\in \text{co}(x_s:s\preceq t)$. Then $Sz_\xi(A, \ee/2)> k$ by Theorem \ref{illinoistheorem} and the discussion preceding the corollary.  From this it follows that $(\ee/2)^q\leqslant C/k$, and  $\ee\leqslant 2C^{1/q}/k^{1/q}$.   Next, let $c>1$ be the constant from Theorem \ref{illinoistheorem}$(ii)$. Suppose that for any $p<q<\infty$, there exists a constant $C$ as in item $(ii)$ of the corollary and fix such $q$ and $C$.   Suppose that $Sz_\xi(A,\ee)=k+1$ for some $\ee\in (0,1)$. If $k=0$, then $\log Sz_\xi(A, \ee)/|\log(\ee)|=0$.  Otherwise by Theorem \ref{illinoistheorem}, there exists a $B$-tree $B$ with $o(B)=\omega^\xi k$ such that for all $t\in B$ and $x\in\text{co}(x_s:s\preceq t)$, $\|Ax\|\geqslant \ee/c$.   Then $\ee\leqslant cC/k^{1/q}$, and $Sz_\xi(A, \ee)=k+1\leqslant 1+\ee^{-q} c^qC^q<(1+c^qC^q)\ee^{-q}$.  From this it follows that $\textbf{p}_\xi(A)\leqslant q$.  Since this holds for all $p<q<\infty$, $\textbf{p}_\xi(A)\leqslant p$.    

\end{proof}

\section{Combinatorics}

We recall some important $B$-trees from \cite{C}.  Given a sequence $(\zeta_i)_{i=1}^n$ of ordinals and an ordinal $\zeta$, we let $\zeta+(\zeta_i)_{i=1}^n=(\zeta+\zeta_i)_{i=1}^n$.  Given a set $S$ of sequences of ordinals and an ordinal $\zeta$, we let $\zeta+S=\{\zeta+s: s\in S\}$.  We will define the $B$-tree $\Gamma_\xi$ on $[0, \omega^\xi)$ by induction on $\xi$. We let $\Gamma_0=\{(0)\}$.  If $\Gamma_\xi$ has been defined, we let $\Gamma_{\xi, 1}=\Gamma_\xi$ and let $$\Gamma_{\xi, n+1}=(\omega^\xi n+\Gamma_\xi)\cup \{t\cat u:  t\in MAX(\omega^\xi n+\Gamma_\xi), u\in \Gamma_{\xi, n}\Bigr\}.$$ We let $\Gamma_{\xi+1}=\cup_{n=1}^\infty \Gamma_{\xi, n}$ and note that this is a totally incomparable union. For example, for each $n\in \nn$, $$\Gamma_{0, n}= \{(n-1, \ldots, n-k): 1\leqslant k\leqslant n\}$$ and $$\Gamma_1= \bigcup_{n\in \nn} \Gamma_{0,n}= \{(n-1, \ldots, n-k): n\in \nn, 1\leqslant k\leqslant n\}.$$   Last, if $\xi$ is a limit ordinal, we let $$\Gamma_\xi=\cup_{\zeta<\xi}(\omega^\zeta+\Gamma_{\zeta+1}),$$ and note that this is also a totally incomparable union. For any ordinal $\xi$ and any natural number $n$, $o(\Gamma_{\xi, n})=\omega^\xi n$.

Note that we may canonically identify sequences of pairs with pairs of sequences having the same length.  That is, we identify the sequence $((a_i, b_i))_{i=1}^n$ with the pair of sequences $((a_i)_{i=1}^n, (b_i)_{i=1}^n)$.  We use this identification freely throughout.  Given a directed set $D$, a set $\Lambda$,  and a subset $S$ of $\Lambda^{<\nn}$, we let $$SD=\{(\zeta_i, U_i)_{i=1}^n: n\in \nn, (\zeta_i)_{i=1}^n\in S, U_i\in D\}.$$   If $S$ is a $B$-tree, so is $SD$.  Moreover, for any ordinal, it is easy to see that $(SD)^\xi=S^\xi D$, thus $SD$ is well-founded if $S$ is.   Given $B$-trees $S,T$, a function $\theta:S\to T$ is called \emph{monotone} provided that for any $s,s'\in S$ with $s\prec s'$, $\theta(s)\prec \theta(s')$.  Given sets $\Lambda_1, \Lambda_2$ and  subsets $S_1, S_2$ of $\Lambda_1^{<\nn}$, $\Lambda_2^{<\nn}$, we say a function $\theta:S_1D\to S_2D$ is a \emph{pruning} provided that it is monotone and if $s=s_1\cat (\zeta, U)\in S_1D$ and $\theta(s)=s_2\cat (\zeta', U')$, $U\leqslant_D U'$.    If $\theta:S_1D\to S_2D$ is a pruning and $e:MAX(S_1D)\to MAX(S_2D)$ is a function such that for any $t\in MAX(S_1D)$, $\theta(t)\preceq e(t)$, we say the pair $(\theta, e)$ is an \emph{extended pruning}.  We write $(\theta,e):S_1D\to S_2D$.  We remark that if $S,T$ are well-founded $B$-trees, then there exists a monotone function $\theta:S\to T$ if and only if there exists a length-preserving monotone function $\theta:S\to T$ if and only if $o(S)\leqslant o(T)$.  Thus if $o(S)\leqslant o(T)$ for well-founded $B$-trees,  then for any directed set $D$, we may define an extended pruning from $SD$ to $TD$ by first fixing a length-preserving monotone function $\theta:S\to T$ and then define $\phi:SD\to TD$ by $\phi((\zeta_i, U_i)_{i=1}^n)=(\mu_i, U_i)_{i=1}^n$, where $(\mu_i)_{i=1}^n=\phi((\zeta_i)_{i=1}^n)$.  Since $T$ and $TD$ are well-founded, for any $s\in MAX(SD)$, $\phi(s)$ has some maximal extension, say $e(s)$.   Then $(\phi,e):SD\to TD$ is an extended pruning.  The purpose of the sets  $TD$ is to easily index weakly null collections.  Let $X$ be a Banach space and let $D$ be a weak neighborhood basis at $0$ in $X$.  Let us say that a collection of vectors $(x_t)_{t\in TD}\subset X$ is \emph{normally weakly null} provided that for any $t=t_1\cat (\zeta, U)\in TD$, $x_t\in U$.  It is straightforward to check that $(x_t)_{t\in TD}$ is weakly null if it is normally weakly null.  Furthermore, if $\theta:ST\to TD$ is a pruning and if $(x_t)_{t\in TD}$ is normally weakly null, then the collection $(u_t)_{t\in SD}:=(x_{\theta(t)})_{t\in SD}$ is also normally weakly null. 

For convenience, once a directed set $D$ is fixed, we will write $\Omega_\xi$ in place of $\Gamma_\xi D$ and $\Omega_{\xi, n}$ in place of $\Gamma_{\xi, n}D$. For the remainder of this section, we will assume that $D$ is some fixed directed set and $\Omega_{\xi, n}=\Gamma_{\xi, n}D$.  We note that every member $t$ of $\Omega_{\xi+1}$ admits a unique representation of the form $$t=\bigl((\omega^\xi(n-1)+t_1)\cat \ldots \cat (\omega^\xi(n- m)+t_{n-m}), \sigma\bigr),$$ where $1\leqslant m\leqslant n$,  $t_i\in \Gamma_\xi$, and for each $1\leqslant i<n-m$, $t_i\in MAX(\Gamma_\xi)$, and $\sigma$ is a sequence in $D$.   In this case, we say $t$ is on \emph{level} $m$.   We let $\Lambda_{\xi, n, m}$ denote the members of $\Omega_{\xi, n}$ which are on level $m$. For example, $$(\omega\cdot 2+2, \omega\cdot 2+1, \omega\cdot 2)=\omega\cdot 2+(2,1,0)$$ is a maximal member of $\Lambda_{1, 3,1}$, the first level of $\Gamma_{1,3}\subset \Gamma_2$, and $$(\omega\cdot 2+2, \omega\cdot 2+1, \omega\cdot 2, \omega +3, \omega+2) = (\omega\cdot 2+(2,1,0))\cat (\omega+(3,2))$$ is a (non-maximal) member of $\Lambda_{2, 3, 2}$, the second level of $\Gamma_{1,3}\subset \Gamma_2$.

If $s=\varnothing$ (resp. if $s$ is any maximal member of $\Lambda_{\xi, n, m}$ for some $1\leqslant m<n$), the set of proper extensions of $s$ which are on level $1$ (resp. $m+1$) is canonically identified with $\Omega_\xi$. Indeed, this set, say $U$,  of proper extensions of $s$ which lie on level $1$ (resp. level $m+1$) can be written as $$U=\{\omega^\xi(n-1)+t: t\in \Omega_\xi\}$$ $$(\text{resp.\ }U=\{s\cat (\omega^\xi(n-m-1)+t): t\in \Omega_\xi\}).$$   Here we are using the convention that if $t=(\zeta_i, u_i)_{i=1}^j\in \Omega_\xi=\Gamma_\xi D$, then for an ordinal $\zeta$, $\zeta+t=(\zeta+\zeta_i,  u_i)_{i=1}^j$. A   subsets of $\Omega_{\xi, n}$ which has the form of this set $U$ will be referred to as a \emph{unit}.  We define a function $\iota_{\xi,n}:\Omega_{\xi,n}\to \Omega_\xi$ by defining it on each unit. To that end, suppose $U$ is a unit and has one of the two forms given above. Then we define $\iota_{\xi,n}$ on $U$ by letting $$\iota_{\xi,n}(\omega^\xi(n-1)+t)=t$$ if $U=\{\omega^\xi(n-1)+t:t\in \Omega_\xi\}$ and $$\iota_{\xi,n}(s\cat (\omega^\xi(n-m-1)+t))=t$$ if $U=\{s\cat (\omega^\xi(n-m-1)+t): t\in \Omega_\xi\}$.

 We also recall the function $\mathbb{P}_\xi:\Omega_\xi\to [0,1]$ defined in \cite{C}. We let  $\mathbb{P}_0((0,U))=1$. If $\mathbb{P}_\xi$ has been defined,  for $t\in \Omega_{\xi, n}$, we let $\mathbb{P}_{\xi+1}(t)=\frac{1}{n}\mathbb{P}_\xi(\iota_{\xi, n}(t))$. If $\xi$ is a limit ordinal and $\mathbb{P}_\zeta$ has been defined for every $\zeta<\xi$,  and if $(t,\sigma)\in (\omega^\zeta+\Gamma_{\zeta+1})D$ for $\zeta<\xi$, we write $t=\omega^\zeta+t'$ for $t'\in \Gamma_{\zeta+1}$ and let $\mathbb{P}_\xi((t, \sigma))=\mathbb{P}_{\zeta+1}((t',\sigma))$.    It follows that for any ordinal $\xi$ and any maximal member $t$ of $\Omega_\xi$, $\sum_{s\preceq t}\mathbb{P}_\xi(s)=1$. 

For an ordinal $\xi$ and a natural number $n\in \nn$, we say an extended pruning $(\theta, e):\Omega_{\xi, n}\to \Omega_{\xi,n}$ is \emph{unit preserving} provided that for any unit $U$ of $\Omega_{\xi, n}$, the image $\theta(U)$ is contained within a single unit of $\Omega_{\xi, n}$, and if $s\prec t$, $s,t\in \Omega_{\xi, n}$, and $s,t$ lie on different levels of $\Omega_{\xi, n}$, $\theta(s), \theta(t)$ lie on different levels of $\Omega_{\xi, n}$.  We note that since in the case $n=1$, $\Omega_{\xi, n}=\Omega_\xi$ has only one unit and only one level, any extended pruning of $\Omega_{\xi, 1}$ into $\Omega_{\xi, 1}$ is unit preserving.

The main difficulty of adapting the argument from \cite{GKL} lies in the combinatorics.  As in the definition of the modulus $\rho_\xi$, we will need to check the norms of certain convex combinations.  In some arguments below, we will need one convex combination which simultaneously satisfies several different inequalities, and so it becomes necessary to check convex combinations with prescribed coefficients.  In the case that $\xi=0$, all sequences to be checked have legnth $n$ for some $n\in \nn$, and the prescribed coefficients needed are $1/n$.  However, the situation is more complicated when $\xi>0$, since the sequences in the higher order trees do not all have the same length. The $B$-trees and coefficients from $\mathbb{P}_\xi$ serve as an analogue to the repeated averages hierarchy from \cite{AMT}.  As we will see, in order to check the value of $\rho_\xi$, it is not required to check the supremum over all weakly null collections, but normally weakly null collections indexed by $\Omega_\xi$, where $D$ is a weak neighborhood basis at $0$ in $X$. Moreover, it is not required to check all convex combinations of all branches of the weakly null collection, but only those convex combinations the coefficients of which are given by the function $\mathbb{P}_\xi$.  This phenomenon actually has several applications, some of which are vital to this work.   Our main combinatorial tool of this work is the following, which is an improvement on the main combinatorial result from \cite{C}.  

\begin{theorem} For any ordinal $\xi$, any $n\in \nn$, any $\delta\in (0,1)$, and any function $f:\Pi\Omega_{\xi, n}\to [-1,1]$, there exist a unit preserving extended pruning $(\theta, e):\Omega_{\xi, n}\to \Omega_{\xi, n}$ and an $n$-tuple $(a_i)_{i=1}^n\subset [-1,1]$ such that for each $1\leqslant i\leqslant n$, \begin{enumerate}[(i)]\item for each $\Lambda_{\xi,n,i}\ni s\preceq t\in MAX(\Omega_{\xi, n})$, $f(\theta(s), e(t)) \geqslant a_i-\delta $, \item for each $t\in MAX(\Omega_{\xi, n})$, $$\sum_{\Lambda_{\xi,n,i}\ni s\preceq e(t)} \mathbb{P}_\xi(\iota_{\xi, n}(s))f(s, e(t)) \leqslant \delta/n+ a_i.$$   \end{enumerate}

In particular, for every $t\in MAX(\Omega_{\xi,n})$, $\sum_{s\preceq e(t)} \mathbb{P}_\xi(\iota_{\xi,n}(s))f(s,e(t))\leqslant \delta+\sum_{i=1}^n a_i$.

\label{favorite theorem}
\end{theorem}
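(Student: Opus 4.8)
The plan is to prove Theorem \ref{favorite theorem} by induction on the ordinal $\xi$, with a nested induction on $n$ for successor ordinals, mirroring the recursive structure of the $B$-trees $\Gamma_{\xi,n}$ and the weight function $\mathbb{P}_\xi$. The base case $\xi = 0$ reduces to a finite, elementary statement: $\Omega_{0,n}$ consists of decreasing sequences (paired with the directed set), each maximal branch having length $n$ with weights $1/n$, and levels $\Lambda_{0,n,i}$ are singletons along each branch; here one simply chooses, for each $i$, the value $a_i$ to be (roughly) the supremum of attainable values of $f$ on level $i$ after a suitable finite pruning stabilizing things to within $\delta$, and a standard pigeonhole/Ramsey argument on the directed set produces the pruning. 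The key point making (ii) follow is that once $f(\theta(s), e(t)) \geq a_i - \delta$ is arranged \emph{and} $a_i$ is chosen near the sup, the branch-average $\sum_{\Lambda_{0,n,i} \ni s \preceq e(t)} \mathbb{P}_0(\iota(s)) f(s, e(t))$, being an average over the single level-$i$ node on that branch, is within $\delta/n$ of $a_i$.

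For the successor step $\xi \to \xi+1$, I would recall that $\Omega_{\xi+1,n}$ is built from copies of $\Omega_{\xi}$ glued along units: a unit at the top is a copy of $\Omega_\xi$, and above each of its maximal nodes sits a copy of $\Omega_{\xi,n-1}$. The induction hypothesis at level $\xi$ (applied with parameter $\delta' \ll \delta$) furnishes, on each such $\Omega_\xi$-unit, a unit-preserving extended pruning and a single constant (since $\Omega_{\xi,1} = \Omega_\xi$ has one level), and the induction hypothesis at $(\xi+1, n-1)$ handles the $\Omega_{\xi,n-1}$-pieces hanging above. The subtlety is that the constants produced on the various lower units must be stabilized to a common value before they can be amalgamated; this is where one uses a Ramsey-type argument over the directed-set indices $D$ (exactly as the pruning/extended-pruning formalism on $SD$ is designed to allow), selecting a sub-collection of units on which the lower-level constant is (approximately) a fixed number, which becomes $a_1$. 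Iterating downward through the $n$ levels gives $a_2, \dots, a_n$ from the $(\xi+1,n-1)$-tree, and then one checks that the weights $\mathbb{P}_{\xi+1} = \frac{1}{n}\mathbb{P}_\xi \circ \iota_{\xi,n}$ make the level-$i$ branch average split as $\frac1n$ times a level-$\xi$ branch average (controlled by the $\Omega_\xi$ induction hypothesis to be $\leq \delta'/1 + a_i$) plus the contributions from the upper $(n-1)$-level tree, so (ii) accumulates the right way. The limit-ordinal case $\xi = \sup \zeta$ uses $\Gamma_\xi = \cup_{\zeta<\xi}(\omega^\zeta + \Gamma_{\zeta+1})$ as a totally incomparable union: each maximal branch lies entirely in one summand $\omega^\zeta + \Gamma_{\zeta+1}$, so the statement for $\xi$ follows by applying the successor case to $\Gamma_{\zeta+1}$ for the relevant $\zeta$, with the weights matching by the definition $\mathbb{P}_\xi((t,\sigma)) = \mathbb{P}_{\zeta+1}((t',\sigma))$; a mild extra argument is needed to see that a single $n$-tuple $(a_i)$ works across the summands, again by stabilizing over which summand is used (using that there are only countably many and the values lie in the compact $[-1,1]^n$).

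I expect the main obstacle to be bookkeeping the interaction between the three recursions: the vertical recursion down the levels $n, n-1, \dots, 1$ within a fixed $\xi+1$, the recursion in $\xi$ itself, and the Ramsey stabilization over $D$ needed to glue the constants $a_i$ coming from combinatorially distinct sub-units into genuinely uniform constants. In particular, one must choose the error budgets carefully — the $\delta'$ fed to the $\Omega_\xi$-hypothesis and the $\delta''$ fed to the $\Omega_{\xi,n-1}$-hypothesis must both be small enough (e.g. $\delta' = \delta'' = \delta/(n+1)$) that the two sources of error in (ii), namely the $\delta'/1$ from each lower unit's branch-average bound and the accumulated error from the upper tree, sum to at most $\delta/n$ per level and hence to $\delta$ overall in the ``in particular'' conclusion. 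The claim that the final inequality $\sum_{s \preceq e(t)} \mathbb{P}_\xi(\iota_{\xi,n}(s)) f(s, e(t)) \leq \delta + \sum_{i=1}^n a_i$ follows is then immediate: the branch $e(t)$ meets each level $\Lambda_{\xi,n,i}$ in a segment, so summing the $n$ inequalities from (ii) over $i$ telescopes the left side into the full branch sum and the right side into $\sum_i (\delta/n + a_i) = \delta + \sum_i a_i$.
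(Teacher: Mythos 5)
Your induction structure misses the hardest step of the argument. The paper inducts on $\mathbf{Ord}\times\nn$ lexicographically, and the delicate case is passing from the pairs $(\xi,n)$ for \emph{all} $n\in\nn$ to the pair $(\xi+1,1)$: by construction $\Omega_{\xi+1}=\bigcup_{n}\Omega_{\xi,n}$ is a totally incomparable union over \emph{infinitely many} pieces with a \emph{varying} number of levels, and the weight on the $n$-th piece is $\mathbb{P}_{\xi+1}=\tfrac1n\,\mathbb{P}_\xi\circ\iota_{\xi,n}$, so applying the inductive hypothesis gives, for each $n$, an $n$-tuple $(a_i^n)_{i=1}^n$ and a bound involving $\tfrac1n\sum_{i=1}^n a_i^n$. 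These tuples live in different spaces $[-1,1]^n$ as $n$ varies, so there is no compactness/Ramsey stabilization of the type you invoke for the limit case (where each $\Theta_{\zeta+1}$ contributes a \emph{single} scalar $a_\zeta$, and one stabilizes in the fixed compact $[-1,1]$). The paper instead extracts a subsequence $n_j$ with $\tfrac1{n_j}\sum_i a_i^{n_j}\to a$, and then the crux is a genuine counting argument showing $|A_j|=|\{i\leqslant n_j: a_i^{n_j}>a-\delta/2\}|$ is unbounded, after which Proposition~\ref{easy} lets one prune each $\Omega_{\xi,j}$ into the ``good'' levels $\cup_{i\in A_j}\Lambda_{\xi,n_j,i}$. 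Your proposal never addresses this: your ``successor step'' describes the decomposition of $\Omega_{\xi,n+1}$ (written erroneously as $\Omega_{\xi+1,n}$, then inconsistently applying the hypothesis at ``$(\xi+1,n-1)$'' to ``$\Omega_{\xi,n-1}$-pieces''), which is the nested $n$-recursion, not the passage to $\xi+1$. Without the averaging argument and the unboundedness of $|A_j|$, the $\delta$-budget and the amalgamation across the $\Omega_{\xi,n}$ cannot be closed.

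The remaining steps you sketch are broadly in the right spirit --- the base case is as you say (the paper takes $(0,1)$ rather than $(0,n)$, which is cleaner since $(0,n)$ then follows from the $n$-recursion), the $(\xi,n)\to(\xi,n+1)$ step correctly stabilizes the first-level copy of $\Omega_\xi$ and the $\Omega_{\xi,n}$-pieces hanging above it using the Ramsey-type stabilizations (these are Proposition~\ref{recall} in the paper), and the limit case with its $[-1,1]^n$ stabilization is essentially correct. But without the $(\xi,n)\to(\xi+1,1)$ step the induction does not close, and that step requires an idea not present in your proposal.
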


\begin{remark}   

Note that in the case $n=1$, $\Omega_{\xi,n}=\Omega_\xi$ and $\iota_{\xi,n}$ is the identity on $\Omega_\xi$.  Therefore in the case that $n=1$ the conclusion of the theorem  above can be simplified to: There exist an extended pruning $(\theta,e):\Omega_\xi\to \Omega_\xi$ and a number $a\in [-1,1]$ such that  \begin{enumerate}[(i)]\item for each $(s,t)\in \Pi\Omega_\xi$, $f(\theta(s), e(t))\geqslant a-\delta$, \item for each $t\in MAX(\Omega_\xi)$, $$\sum_{s\preceq e(t)} \mathbb{P}_\xi(s)f(s, e(t))\leqslant a+\delta.$$  \end{enumerate}

By homogeneity, the assumption that the function $f$ maps into $[-1,1]$ may be relaxed.  The theorem holds just as well for any bounded function $f:\Pi\Omega_{\xi, n}\to \rr$, with the $a_i$ lying in $[-\|f\|_\infty, \|f\|_\infty]$.  

We also note that if we are willing to allow the numbers $a_i$ lie in $(-\infty,\|f\|_\infty]$ rather than in $[-\|f\|_\infty, \|f\|_\infty]$,  we may obtain for any bounded function $f:\Pi\Omega_{\xi, n}\to \rr$ and any $\delta>0$ some $a_i\in \rr$ as in the conclusion of the theorem with condition $(i)$ replaced by $f(\theta(s), e(t))\geqslant a_i$.  We will use this formulation throughout, although it is more convenient to prove the theorem as stated.  

Finally, we observe the following consequence of Theorem \ref{favorite theorem}.  If $b<b_1\in \rr$ and $f:\Pi\Omega_{\xi, n}\to \rr$ is a bounded function having the property that for every $t\in MAX(\Omega_{\xi, n})$, $\sum_{s\preceq t}\mathbb{P}_\xi(\iota_{\xi, n}(s))f(s,t)\geqslant b_1$, then there exist a unit preserving extended pruning $(\theta, e):\Omega_{\xi, n}\to \Omega_{\xi, n}$ and numbers $a_1, \ldots, a_n$ such that $\sum_{i=1}^n a_i>b$ and for any $(s,t)\in \Pi\Omega_{\xi, n}$ such that $s\in \Lambda_{\xi, n, i}$, $f(\theta(s), e(t))\geqslant a_i$.  Indeed, we may choose $\delta>0$ such that $(n+1)\delta+b<b_1$ and obtain a unit preserving extended pruning $(\theta, e):\Omega_{\xi, n}\to \Omega_{\xi, n}$ and numbers $a_1', \ldots ,a_n'$ as in the conclusion of the theorem.  Taking $a_i=a_i'-\delta$ and noting that, for any $t\in MAX(\Omega_{\xi, n})$, $$b_1\leqslant \sum_{s\preceq e(t)}\mathbb{P}_\xi(\iota_{\xi, n}(s))f(s, e(t))\leqslant \delta+\sum_{i=1}^n a_i'\leqslant (n+1)\delta +\sum_{i=1}^n a_i$$ yields that $b< \sum_{i=1}^n a_i$.

\end{remark}

We will need the following results from \cite{CIllinois} and \cite{C} before completing the proof of Theorem \ref{favorite theorem}.  In the following proposition, for $t\in MAX(\Lambda_{\xi, n+1, 1})$, $B_t=\{s\in \Omega_{\xi, n+1}: t<s\}$. 

\begin{proposition} \begin{enumerate}[(i)]\item For any ordinal $\xi$,  $n\in \nn$, any finite set $S$, any $t\in MAX(\Lambda_{\xi, n+1, 1})$, and any function $f:\Pi\Omega_{\xi, n+1}\to S$, there exists a unit preserving extended pruning $(\theta,e):B_t\to B_t$ such that for every $s\preceq t$, $f(s, e(\cdot))$ is constant on $MAX(B_t)$.       \item  For any ordinal $\xi$, any natural number  $n\in \nn$, any bounded function  $f:\Pi\Omega_{\xi, n+1}\to \rr$, any $t\in MAX(\Lambda_{\xi, n+1, 1})$, and any $\delta>0$, there exists a unit preserving extended pruning $(\theta, e):B_t\to B_t$ such that for every $s\preceq t$, $$\text{\emph{diam}}\{f(s, e(u)):(t,u)\in \Pi\Omega_{\xi, n+1}\}<\delta.$$ \item For any ordinal $\xi$, $n\in \nn$, any finite set $S$, and any function $f:\Lambda_{\xi, n,1}\to S$, there exists an extended pruning $(\theta, e):\Lambda_{\xi, n,1}\to \Lambda_{\xi, n,1}$ such that $f(\theta(\cdot), e(\cdot))$ is constant on $MAX(\Lambda_{\xi, n,1})$.    \item For any ordinal $\xi$, $n\in \nn$, any bounded $f:\Pi\Lambda_{\xi, n,1}\to \rr$, and any $\delta>0$, there exists an extended pruning $(\theta, e):\Lambda_{\xi, n, 1}\to \Lambda_{\xi, n,1}$ such that $$\text{\emph{diam}}\{f(\theta(s),e(t)): (s,t)\in \Pi\Lambda_{\xi, n,1}\}<\delta.$$   \end{enumerate}

\label{recall}
\end{proposition}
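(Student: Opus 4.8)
These are exactly the Ramsey-type stabilization facts for the canonical $B$-trees of \cite{C} and \cite{CIllinois}, so I would cite them from there; here is the scheme I would follow to prove them. First, statements (ii) and (iv) reduce immediately to (i) and (iii): given a bounded $f$ and $\delta>0$, cut the range of $f$ into finitely many half-open intervals of length $<\delta$, let $S$ be this finite family, and let $g$ send a pair to the interval containing its $f$-value; applying (i) (resp.\ (iii)) to $g$ yields an extended pruning along which $g$ is constant, hence along which all the relevant $f$-values lie in a single interval, giving the diameter bound. So only (i) and (iii) require work. Moreover, unwinding (i) a little, for $t\in MAX(\Lambda_{\xi,n+1,1})$ the $B$-tree $B_t$ is order-isomorphic — via a unit-preserving isomorphism — to $\Omega_{\xi,n}$, and, since $t$ has finite length, the functions $u\mapsto f(s,e(u))$ (one for each of the finitely many $s\preceq t$) are finitely many finite colorings of $MAX(\Omega_{\xi,n})$; stabilizing them one at a time and composing the resulting prunings shows that the real content is: \emph{every finite coloring of $MAX(\Omega_{\xi,n})$ is rendered constant by some unit-preserving extended pruning of $\Omega_{\xi,n}$}. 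Statement (iii) is the analogous assertion for the single level $\Lambda_{\xi,n,1}\cong\Omega_\xi$, but now one must also stabilize the dependence on the initial segment $s$, which is handled by an inner recursion over the well-founded tree.

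The whole scheme is a transfinite induction on $\xi$ with an inner induction on $n$, organized along the recursive construction of $\Gamma_\xi$. The base case $\xi=0$ is elementary: $\Omega_0=\{(0,U):U\in D\}$ consists of pairwise $\prec$-incomparable maximal nodes, so a finite coloring of it is just a finite coloring of the directed set $D$, and one color class must be cofinal in $D$ (otherwise each class has an element not above it, and an upper bound of these finitely many elements, which exists by directedness, falls in some class, contradicting that element's choice); mapping $\theta$ and $e$ into a cofinal color class gives the conclusion, with monotonicity and the $\leqslant_D$-condition automatic because $\Omega_0$ has no internal structure.

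For the inductive step, the successor identity $\Gamma_{\xi,n+1}=(\omega^\xi n+\Gamma_\xi)\cup\{t\cat u:t\in MAX(\omega^\xi n+\Gamma_\xi),\ u\in\Gamma_{\xi,n}\}$ presents $\Omega_{\xi,n+1}$ as a copy of $\Lambda_{\xi,n+1,1}\cong\Omega_\xi$ with a copy of $\Omega_{\xi,n}$ glued above each of its maximal nodes. Given a finite coloring, I would apply the induction hypothesis for $n$ above each maximal node of the first level to absorb all the ``above'' structure into a single local color, producing a finite coloring of $MAX(\Lambda_{\xi,n+1,1})\cong MAX(\Omega_\xi)$, and then apply the induction hypothesis for $\Gamma_\xi$ to make that constant; composing, and restoring from the local colors, gives the required pruning. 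The $n=1$ case at level $\xi$, i.e.\ $\Gamma_{\xi,1}=\Gamma_\xi$, unfolds through the totally-incomparable unions $\Gamma_{\eta+1}=\cup_n\Gamma_{\eta,n}$ (successor $\xi=\eta+1$) and $\Gamma_\xi=\cup_{\zeta<\xi}(\omega^\zeta+\Gamma_{\zeta+1})$ (limit $\xi$): stabilize on each summand by the appropriate lower induction hypothesis, then diagonalize over the index of the union together with the outermost $D$-coordinate, again using directedness.

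The main obstacle is the bookkeeping in the successor step: assembling one unit-preserving extended pruning of $\Omega_{\xi,n+1}$ out of the pruning of the first level and the whole family of prunings of the pieces above its (cofinally many) maximal nodes, while keeping the composite monotone, satisfying the $\leqslant_D$-condition of a pruning, sending units into units, and keeping nodes on distinct levels on distinct levels. This forces the assembly to be carried out as an internal recursion over the well-founded tree $\Lambda_{\xi,n+1,1}$, choosing at each $D$-branching a $D$-index large enough to dominate all demands coming from the already-chosen part — which is legitimate precisely because $D$ is directed, so any well-ordered family of ``sufficiently large'' requests can be met. Everything else, including the deduction of (ii) and (iv), is routine once this is in place.
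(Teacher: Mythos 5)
Your proposal matches the paper's proof exactly: the paper simply cites item (i) from \cite[Lemma $6.2$]{C} and item (iii) from \cite[Proposition $4.2$]{CIllinois}, and derives (ii) and (iv) by partitioning $[-\|f\|_\infty,\|f\|_\infty]$ into finitely many sets of diameter less than $\delta$ and stabilizing the induced finite coloring via (i) and (iii) --- precisely the reduction you describe. Your additional sketch of the inductive proof of (i) and (iii) (transfinite induction on $\xi$ with inner induction on $n$, base case by cofinality in $D$, successor step by gluing copies of $\Omega_{\xi,n}$ above the first level) goes beyond what this paper writes, which is just the citation, but is consistent with the arguments in those references.
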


\begin{proof} Item $(i)$ is contained within the proof of \cite[Lemma $6.2$]{C} and item $(iii)$ in \cite[Proposition $4.2$]{CIllinois}.  Items $(ii)$ and $(iv)$ follow from applying $(i)$ and $(iii)$, respectively.  Indeed, after fixing $f:\Pi\Omega_{\xi,n+1}\to \rr$, we may fix a finite partition $S$ of $[-\|f\|_\infty, \|f\|_\infty]$ by sets of diameter less than $\delta$ and define $g:\Pi\Omega_{\xi, n+1}\to S$ by letting $g(s,t)$ be the member of $S$ containing $f(s,t)$.  

\end{proof}

We will also need the following easy observation.

\begin{proposition} Fix an ordinal $\xi$ and a directed set $D$. Let $\Omega_{\xi, n}=\Gamma_{\xi, n} D$ for each $n\in \nn$.   For any finite, non-empty subset $A$ of $\nn$ and any $\max A\leqslant n\in \nn$, there exists an extended pruning $(\theta,e):\Omega_{\xi, |A|}\to \Omega_{\xi, n}$ such that $\theta(\Omega_{\xi, |A|})\subset \cup_{i\in A} \Lambda_{\xi, n, i}$.   

\label{easy}
\end{proposition}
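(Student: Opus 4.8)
The plan is to write $(\theta,e)$ down explicitly, exploiting the ``block'' structure of the trees $\Omega_{\xi,n}=\Gamma_{\xi,n}D$. List $A=\{i_1<\dots<i_k\}$ with $k=|A|$, so that $i_k=\max A\le n$. Recall from the recursive construction of $\Gamma_{\xi,n}$ that every $s\in\Omega_{\xi,n}$ on level $m$ decomposes uniquely as a concatenation of $m$ consecutive blocks: the $m$-th block is an arbitrary member of $\Omega_\xi$ shifted by $\omega^\xi(n-m)$, and the earlier blocks are \emph{maximal} members of $\Omega_\xi$ shifted by $\omega^\xi(n-1),\dots,\omega^\xi(n-m+1)$. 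Here $\zeta+u$ means adding $\zeta$ to every ordinal coordinate of $u$, as in the text; since the ordinal coordinates of a block lie in the interval $[\omega^\xi(n-m),\omega^\xi(n-m+1))$ which depends only on $m$, the shift of a block is recoverable from the block, and ``re-shifting'' a block to a different level is a well-defined operation. The idea is to send the $j$-th block of a source element into level $i_j$ of $\Omega_{\xi,n}$ (re-shifting its ordinal coordinates from $\omega^\xi(k-j)$ to $\omega^\xi(n-i_j)$ and leaving the $D$-coordinates untouched), and to fill each unused level $m\in\{1,\dots,n\}\setminus A$ with one fixed maximal branch: pick $\tau=(\eta_1,\dots,\eta_p)\in MAX(\Gamma_\xi)$, which is nonempty since $\Gamma_\xi$ is a nonempty well-founded $B$-tree, and $U_*\in D$, and let $w=((\eta_1,U_*),\dots,(\eta_p,U_*))\in MAX(\Omega_\xi)$.

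Concretely, for $s\in\Omega_{\xi,k}$ on level $\ell$ with block decomposition $s=b_1\cat\dots\cat b_\ell$, set $\theta(s)=v_1\cat\dots\cat v_{i_\ell}$, where $v_{i_j}$ is $b_j$ re-shifted to level $i_j$ (for $1\le j\le\ell$) and $v_m=\omega^\xi(n-m)+w$ for $m\in\{1,\dots,i_\ell\}\setminus A$. Because $b_1,\dots,b_{\ell-1}$ and the padding $w$ are maximal in $\Omega_\xi$ while $b_\ell\in\Omega_\xi$ is arbitrary, the recursive description of $\Gamma_{\xi,n}$ shows that $\theta(s)$ is a genuine member of $\Omega_{\xi,n}$ lying on level $i_\ell$; hence $\theta(\Omega_{\xi,|A|})\subset\bigcup_{i\in A}\Lambda_{\xi,n,i}$. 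For $s\in MAX(\Omega_{\xi,k})$ (so $\ell=k$ and every block of $\theta(s)$ is maximal) define $e(s)$ by appending to $\theta(s)$ the padding blocks $\omega^\xi(n-m)+w$ for $m=i_k+1,\dots,n$; this is a maximal member of $\Omega_{\xi,n}$ with $\theta(s)\preceq e(s)$.

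What remains is to check that $\theta$ is a pruning, and this is routine bookkeeping --- which is where I expect the only (mild) friction. Monotonicity is immediate: if $s\prec s'$ then their block decompositions agree up to the last block of $s$, which is a proper $\preceq$-initial segment of (or is equal to, with $s'$ carrying further blocks) the corresponding block of $s'$, and the padding blocks form a fixed string, so $\theta(s)\prec\theta(s')$. The one substantive point --- and the only place the directed-set order enters --- is the pruning condition on $D$-coordinates: writing $s=s_1\cat(\zeta,U)$, the pair $(\zeta,U)$ is the final entry of the last block $b_\ell$ of $s$, and since $\theta$ alters only the ordinal coordinates inside a block, the final entry of $\theta(s)$ is $(\zeta',U)$ for some ordinal $\zeta'$; thus the required inequality is just $U\leqslant_D U$. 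We deliberately do not route this through the ``length-preserving monotone map plus tensoring with $D$'' shortcut recalled earlier, since $\theta$ necessarily increases lengths; constructing $\theta$ directly on $\Omega_{\xi,k}$ is precisely what makes the $D$-coordinate condition trivial. Together with $\theta(t)\preceq e(t)$ for $t\in MAX(\Omega_{\xi,k})$, this shows $(\theta,e)\colon\Omega_{\xi,|A|}\to\Omega_{\xi,n}$ is an extended pruning with $\theta(\Omega_{\xi,|A|})\subset\bigcup_{i\in A}\Lambda_{\xi,n,i}$, as desired.
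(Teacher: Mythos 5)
Your construction is correct and is, in substance, the same as the paper's: both place the $j$-th block of a source element (a shifted copy of a member of $\Omega_\xi$) into level $i_j$ of $\Omega_{\xi,n}$ via the canonical ordinal re-shift, and pad the unused levels with a fixed maximal branch; the only difference is presentational, as the paper realizes the map by induction on $|A|$ (picking a unit for the first level and recursing on $B_t$ and $C_t$) whereas you write the same map down in closed form by unrolling the recursion.
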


\begin{proof} We may define a pruning $\theta$ to have the desired property and let $e$ be any extension.  Such an extension exists since all trees involved are well-founded. Thus we do not need to specify the function $e$.  We induct on $|A|$.  If $|A|=1$, say $A=\{k\}$,  and $n\geqslant k$, let $U$ be any unit on level 
$k$ of $\Omega_{\xi, n}$.    Recall that $\iota_{\xi, n}|_U:U\to \Omega_\xi=\Omega_{\xi, |A|}$ is the canonical identification, and we may take $\theta= \iota_{\xi,n}|_U^{-1}$ in this case.   It is easy to check that this is a pruning.

Next, assume $|A|=r+1$ and $n\geqslant \max A$.   Let $k=\min A$ and let $A_1=\{j-k: j\in A\setminus\{k\}\}$.   Let $U$ be any unit on level $k$ of $\Omega_{\xi, n}$ and define $\theta|_{\Lambda_{\xi, |A|,1}}=\iota_{\xi, n}|_U^{-1}\circ \iota_{\xi, |A|}$.   For each $t\in MAX(\Lambda_{\xi, |A|, 1})$, let $B_t$ denote the collection of proper extensions of $t$ in $\Omega_{\xi, |A|}$. Note that $t$ is mapped by $\theta$ to some $s_t\in MAX(\Lambda_{\xi, n, k})$.  Let $C_t$ denote the proper extensions of $s_t$ in $\Omega_{\xi,n}$.    Note that $B_t$ is identifiable with $\Omega_{\xi, r}$ via a function $j_t:\Omega_{\xi, r}\to B_t$ given by  $$((\omega^\xi(r-1)+t_1)\cat\ldots \cat(\omega^\xi(r-i)+t_i), \sigma)\underset{j_t}{\mapsto}t\cat ((\omega^\xi(r-1)+ t_1)\cat\ldots \cat(\omega^\xi(r-i)+t_i), \sigma),$$   and this is a pruning.  Similarly, $C_t$ is identifiable with $\Omega_{\xi, n-k}$ via $i_t:\Omega_{\xi, n-k}\to C_t$ given by $$((\omega^\xi(n-k-1)+t_1)\cat\ldots \cat(\omega^\xi(n-k-i)+t_i), \sigma)\underset{i_t}{\mapsto} s_t\cat (\omega^\xi(n-k-1)+t_1)\cat \ldots \cat(\omega^\xi(n-k-i)+t_i), \sigma).$$  Note that $i_t$ maps $\Lambda_{\xi, n-k, i}$ into $\Lambda_{\xi, n, i+k}$.  Apply the inductive hypothesis to $n-k$ and $A_1$ to obtain a pruning $\theta_1:\Omega_{\xi, r}\to \Omega_{\xi, n-k}$ such that $\theta_1(\Omega_{\xi, r})\subset \cup_{i\in A_1} \Lambda_{\xi, n, i}$.  Now define $\theta$ on $B_t$ by setting $\theta= i_t\circ \theta_1\circ j_t^{-1}$. It is easy to see that this is a pruning, and $$\theta(B_t)\subset i_t(\cup_{i\in A_1}\Lambda_{\xi, n-k,i})\subset \cup_{i\in A_1}\Lambda_{\xi, n, i+k}\subset \cup_{i\in A}\Lambda_{\xi, n, i}.$$

\end{proof}

\begin{proof}[Proof of Theorem \ref{favorite theorem}] We prove the result by induction on $\ord\times \nn$ ordered lexicographically.  We first prove the $(0,1)$ case.  Note that $\Omega_{1,0}=\Omega_0=\{(0,U):U\in D\}$ and $\Pi\Omega_{\xi,n}= \{(t,t): t\in \Omega_0\}$ in this case.  By compactness of $[-1,1]$, there exists $a\in [-1,1]$ such that $$M:=\{U\in D:|a-f((0,U), (0,U))|<\delta\}$$ is cofinal in $D$. We define $\theta,e:\Omega_0\to \Omega_0$ by  letting $$\theta((0,U))=e((0,U))=(0,V),$$ where $V\in M$ is such that $U\leqslant_D V$.  It is straightforward to check that the conclusions are satisfied in this case.

Next, suppose that $\xi$ is a limit ordinal and the conclusion holds for every pair $(\zeta, 1)$ with $\zeta<\xi$.   For each $\zeta<\xi$, let $\Theta_{\zeta+1}=(\omega^\zeta+\Gamma_{\zeta+1})D$, so that $\Omega_{\xi,1}=\Omega_\xi=\cup_{\zeta<\xi}\Theta_{\zeta+1}$.  Note that $(\Theta_{\zeta+1}, \mathbb{P}_\xi)$ is canonically identifiable with $(\Omega_{\zeta+1}, \mathbb{P}_{\zeta+1})$.  Therefore for each $\zeta<\xi$, we may apply the inductive hypothesis to $f|_{\Pi\Theta_{\zeta+1}}$ to obtain $a_\zeta\in [-1,1]$ and an extended pruning $(\theta_\zeta, e_\zeta):\Theta_{\zeta+1}\to \Theta_{\zeta+1}$ such that for all $(s,t)\in \Pi\Theta_{\zeta+1}$, $f(\theta_\zeta(s), e_\zeta(t))\geqslant a_\zeta-\delta/2$, and for each $t\in MAX(\Theta_{\zeta+1})$, $$\sum_{s\preceq e_\zeta(t)}\mathbb{P}_\xi(s) f(s, e_\zeta(t))\leqslant \delta/2+a_\zeta.$$  There exists $a\in [-1,1]$ such that the set $$M=\{\zeta<\xi: |a_\zeta-a|<\delta/2\}$$ is cofinal in $[0,\xi)$.  For every $\zeta<\xi$, fix $\eta_\zeta\in M$ such that $\zeta\leqslant \eta_\zeta$ and fix an extended pruning $(\varphi_\zeta, f_\zeta):\Theta_{\zeta+1}\to \Theta_{\eta_\zeta+1}$.   Let $\theta|_{\Theta_{\zeta+1}}=\theta_{\eta_\zeta}\circ\varphi_\zeta$ and let $e|_{MAX(\Theta_{\zeta+1})}=e_{\eta_\zeta}\circ f_\zeta$.  It is straightforward to check that the conclusions are satisfied in this case.

Next, suppose that for some ordinal $\xi$ and every $n\in \nn$, the result holds for the pair $(\xi, n)$.   Fix $f:\Pi\Omega_{\xi+1, 1}\to [-1,1]$.  Apply the inductive hypothesis to $f|_{\Omega_{\xi, n}}$ to obtain a unit preserving extended pruning $(\theta_n, e_n):\Omega_{\xi, n}\to \Omega_{\xi, n}$ and $(a^n_i)_{i=1}^n\subset [-1,1]$ such that for every $(s,t)\in \Pi\Omega_{\xi, n}$, $f(\theta_n(s), e_n(t))\geqslant a^n_i-\delta/2$ and for every $t\in MAX(\Omega_{\xi, n})$, $$\sum_{s\preceq e_n(t)}\mathbb{P}_\xi(\iota_{\xi, n}(s)) f(s, e_n(t)) \leqslant \delta/2+\sum_{i=1}^n a_i^n.$$  Fix $n_1<n_2<\ldots$ and $a\in [-1,1]$ such that $$\frac{1}{n_j}\sum_{i=1}^{n_j} a_i^{n_j} \underset{j}{\to} a.$$ By passing to another subsequence, we may assume that for every $j\in \nn$, $\frac{1}{n_j}\sum_{i=1}^{n_j} a_i^{n_j}\leqslant a+\delta/2$.     For each $j\in \nn$, let $$A_j=\{i\leqslant n_j: a^{n_j}_i>a-\delta/2\}.$$  We claim that $(|A_j|)_{j=1}^\infty$ is unbounded.  Indeed, if $|A_j|\leqslant k$ for all $j\in\nn$, then $$a\underset{j}{\leftarrow} \frac{1}{n_j}\sum_{i=1}^{n_j} a_i^{n_j} \leqslant \frac{1}{n_j} \sum_{i\in A_j}1 + \frac{1}{n_j}\sum_{A_j\not\ni \text{\ } i\leqslant n_j} a_i^{n_j} \leqslant \frac{k}{n_j}+(a-\delta/2)\cdot\frac{n_j-|A_j|}{n_j}\underset{j}{\to} a-\delta/2,$$ a contradiction.   By passing once more to a subsequence, we may assume that for all $j\in \nn$, $j\leqslant |A_j|$.   For every $j\in \nn$, by Proposition \ref{easy}, we may choose a unit preserving extended pruning $(\varphi_j, f_j):\Omega_{\xi, j}\to \Omega_{\xi, n_j}$ such that $\varphi_j(\Omega_{\xi, j})\subset \cup_{i\in A_j} \Lambda_{\xi, n_j, i}$.   Then define $\theta:\Omega_{\xi+1}\to \Omega_{\xi+1}$ and $e:MAX(\Omega_{\xi+1})\to MAX(\Omega_{\xi+1})$ by $\theta|_{\Omega_{\xi, j}}= \theta_{n_j}\circ\varphi_j$ and $e|_{MAX(\Omega_{\xi, j})}=e_{n_j}\circ f_j$.  For every $(s,t)\in\Omega_{\xi+1}$, there exist $j\in \nn$ and $i\in A_j$ such that $\theta(s)\in \Lambda_{\xi, n_j, i}$, so that $$f(\theta(s), e(t)) \geqslant a^{n_j}_i-\delta/2>a-\delta/2-\delta/2=a-\delta.$$   For any $t\in MAX(\Omega_{\xi+1})$, if $e(t)\in \Omega_{\xi, n_j}$, then for any $s\preceq e(t)$, $\mathbb{P}_{\xi+1}(s)= \frac{1}{n_j}\mathbb{P}_\xi(\iota_{\xi, n_j}(s))$.   Therefore \begin{align*} \sum_{s\preceq e(t)}\mathbb{P}_{\xi+1}(s)f(s, e(t)) &  =\frac{1}{n_j}\sum_{s\preceq t}\mathbb{P}_\xi(\iota_{\xi, n_j}(s))f(s, e(t)) \leqslant \frac{1}{n_j}\Bigl(\delta/2+\sum_{i=1}^{n_j} a_i^{n_j}\Bigr) \\ & \leqslant \delta/2n_j+\frac{1}{n_j}\sum_{i=1}^{n_j} a_i^{n_j} \leqslant \delta/2+\delta/2+a=a+\delta. \end{align*}

Last, assume that for some ordinal $\xi$ and some $n\in \nn$, the result holds for every pair $(\xi, k)$ with $1\leqslant k\leqslant n$.  Fix a function $f:\Pi\Omega_{\xi, n+1}\to [-1,1]$.   For each $t\in MAX(\Lambda_{\xi, n, 1})$, let $$B_t=\{s\in \Omega_{\xi, n+1}: t\prec s\}.$$  Let $\kappa:B_t\to \Omega_{\xi, n}$ be the canonical identification. Note that for any $s\in B_t$, $\mathbb{P}_\xi(\iota_{\xi, n+1}(s))=\mathbb{P}_\xi(\iota_{\xi, n}(\kappa(s)))$.     We may therefore identify $f|_{\Pi B_t}$ with a function on $\Pi\Omega_{\xi, n}$ to obtain an $n$-tuple $(\alpha^t_i)_{i=2}^{n+1}\subset [-1,1]$ and a unit preserving extended pruning $(\theta_t, e_t):B_t\to B_t$ such that for every $(s,s_1)\in \Pi B_t$, if $s\in \Lambda_{\xi, n+1, i}$, $f(\theta_t(s), e_t(s_1)) \geqslant \alpha^t_i-\delta/4$, and such that for every $t_1\in MAX(B_t)$, $$\sum_{t\prec s\preceq t_1} \mathbb{P}_\xi(\iota_{\xi, n+1}(s))f(s, e_t(t_1)) \leqslant \delta/4+\sum_{i=2}^{n+1} \alpha^t_i.$$ Apply Proposition \ref{recall}$(ii)$ to choose another unit preserving extended pruning $(\varphi_t, f_t):B_t\to B_t$ such that for every $s\preceq t$, $$\text{diam}\{f(s, e_t\circ f_t(t_1)): (t, t_1)\in \Pi\Omega_{\xi, n+1}\}<\delta/4.$$  Let $T\subset [-1,1]^n$ be a finite $\delta/4$-net in $[-1,1]^n$ (endowed with the $\ell_1^n$ distance) and for each $t\in MAX(\Lambda_{\xi, n,1})$, fix $(a^t_i)_{i=2}^{n+1}\in T$ such that $\|(a_i^t)_{i=2}^{n+1}-(\alpha_i^t)_{i=2}^{n+1}\|_{\ell_1^n}<\delta/4$.   Note that for each $t\in MAX(\Lambda_{\xi, n+1, 1})$, for each $(s, s_1)\in \Pi B_t$, if $s\in \Lambda_{\xi, n+1, i}$, $$f(\theta_t\circ \varphi_t (s), e_t\circ f_t(s_1))\geqslant \alpha^t_i-\delta/4\geqslant a^t_i-\delta/2,$$ and if $t_1\in MAX(B_t)$, $$\sum_{t\prec s\preceq t_1} \mathbb{P}_\xi(\iota_{\xi, n+1}(s))f(s, e_t\circ f_t(t_1))\leqslant \delta/4+\sum_{i=2}^{n+1}\alpha^t_i \leqslant \delta/2+\sum_{i=2}^{n+1}a_i^t.$$   Define $F:\Pi\Lambda_{\xi, n+1, 1}\to [-1,1]$ by letting $$F(s,t)=\inf\{f(s, e_t\circ f_t(t_1)): (t, t_1)\in \Pi\Omega_{\xi, n+1}\}.$$   Note that by our choice of $(\varphi_t, f_t)$, for any $(s,t)\in \Pi\Lambda_{\xi, n+1,1}$, $$\sup\{f(s, e_t\circ f_t(t_1)): (t, t_1)\in \Pi\Omega_{\xi, n+1}\} \leqslant \delta/4+F(s,t).$$   Identifying $(\Lambda_{\xi, n+1, 1}, \mathbb{P}_\xi\circ \iota_{\xi, n})$ with $(\Omega_\xi, \mathbb{P}_\xi)$, we may apply the inductive hypothesis to find an extended pruning $(\theta', e'):\Lambda_{\xi, n+1, 1}\to \Lambda_{\xi, n+1, 1}$ and a number $a_1\in [-1,1]$ such that for every $(s,t)\in \Pi\Lambda_{\xi, n+1, 1}$, $F(\theta'(s), e'(t))\geqslant a_1-\delta/4$ and such that for every $t\in MAX(\Lambda_{\xi, n+1, 1})$, $$\sum_{s\preceq e'(t)} \mathbb{P}_\xi(\iota_{\xi, n+1}(s)) F(s, e'(t)) \leqslant \delta/4+a_1.$$  Then for any $t\in MAX(\Lambda_{\xi, n+1, 1})$ and any maximal extension $t_1$ of $t$, we deduce that $$\sum_{s\preceq e'(t)} \mathbb{P}_\xi(\iota_{\xi, n+1}(s))f(s, e_{e'(t)}\circ f_{e'(t)}(t_1)) \leqslant \sum_{s\preceq e'(t)} \mathbb{P}_\xi(\iota_{\xi, n+1}(s)) (\delta/4+F(s, e'(t))) \leqslant \delta/2+a_1.$$ Last, applying Proposition \ref{recall}$(iii)$, we may choose another extended pruning $(\theta'', e''):\Lambda_{\xi, n+1, 1}\to \Lambda_{\xi, n+1, 1}$ and $(a_i)_{i=2}^{n+1}\in T$ such that for every $t\in MAX(\Lambda_{\xi, n+1, 1})$, $(a_i)_{i=2}^{n+1}=(a^{e'\circ e''(t)}_i)_{i=2}^{n+1}$.  For each $t\in MAX(\Lambda_{\xi, n+1, 1})$, let $\kappa_t:B_t\to B_{e'\circ e''(t)}$ be the canonical identification.   Define $\theta:\Omega_{\xi, n+1}\to \Omega_{\xi, n+1}$ and $e:MAX(\Omega_{\xi, n+1})\to MAX(\Omega_{\xi, n+1})$ by letting $$\theta|_{\Lambda_{\xi, n+1, 1}}= \theta\circ \theta'',$$ $$\theta|_{B_t}= \theta_{e'\circ e''(t)}\circ \varphi_{e'\circ e''(t)}\circ \kappa_t,$$ $$e|_{MAX(B_t)}=e_{e'\circ e''(t)}\circ f_{e'\circ e''(t)}\circ \kappa_t.$$  We last show that the conclusions are satisfied in this case.   First fix $(s, s_1)\in \Pi\Omega_{\xi, n+1}$ such that $s\in \Lambda_{\xi, n+1, 1}$.  Let $t$ be the member of $MAX(\Lambda_{\xi, n+1, 1})$ such that $s\preceq t\prec s_1$.  Let $s'=\theta''(s)$ and $t'=e''(t)$.   Since $e(s_1)\in B_{e'\circ e''(t)}=B_{e'(t')}$, $$(e'(t'), e(s_1))\in \Pi\Omega_{\xi, n+1},$$ and \begin{align*} a_1-\delta & < F(\theta'(s'), e'(t')) =\inf \{f(\theta'\circ \theta''(s), e_{e'(t')}\circ f_{e'(t')}(t_1)): (e'(t'), t_1)\in \Pi\Omega_{\xi, n+1}\} \\ & \leqslant f(\theta'\circ \theta''(s), e(s_1))= f(\theta(s), e(s_1)).\end{align*}

 Next, fix $(s,s_1)\in \Pi\Omega_{\xi, n+1}$ such that $s\in \Lambda_{\xi, n+1, i}$ for some $2\leqslant i\leqslant n+1$.   Fix $t\in MAX(\Lambda_{\xi, n+1, 1})$ such that $s\in B_t$. Let $s'=\varphi_{e'\circ e''(t)}\circ \kappa_t(s)$ and $s_1'=f_{e'\circ e''(t)}\circ \kappa_t(s_1)$.   Then \begin{align*} a_i-\delta & = a_i^{e'\circ e''(t)}-\delta < f(\theta_{e'\circ e''(t)}(s'), e_{e'\circ e''(t)}(s_1')) = f(\theta(s), e(s_1)).\end{align*}

 Finally, fix $t_1\in MAX(\Omega_{\xi, n+1})$ and let $t$ be the member of $MAX(\Lambda_{\xi, n+1, 1})$ such that $t\prec t_1$.  Recall that  $$\sum_{s\preceq e'\circ e''(t)}\mathbb{P}_\xi(\iota_{\xi, n+1}(s))f(s, e(t_1))\leqslant \delta/2+a_1$$ and $$\sum_{e'\circ e''(t)\prec s\preceq e(t_1)} \mathbb{P}_\xi(\iota_{\xi, n+1}(s))f(s, e(t_1))<\delta/2+\sum_{i=2}^{n+1} a_i^{e'\circ e''(t)} = \delta/2+\sum_{i=2}^{n+1}a_i.$$   Thus $$\sum_{s\preceq t}\mathbb{P}_{\xi, n+1}(s) f(s, e(t_1)) \leqslant \delta+\sum_{i=1}^{n+1} a_i$$ for every $t_1\in MAX(\Omega_{\xi, n+1})$.

\end{proof}

We have another combinatorial result which extends work from \cite{CIllinois}.  

\begin{lemma} Fix an ordinal $\xi$.  For any natural number, any finite set $\mathcal{P}$, and any function $f:\Pi\Omega_{\xi, n}\to \mathcal{P}$, there exists a unit preserving extended pruning $(\theta, e):\Omega_{\xi, n}\to \Omega_{\xi, n}$ and an $n$-tuple $(p_i)_{i=1}^n\in \mathcal{P}^n$ such that for any $(s,t)\in \mathcal{P}$, if $s\in \Lambda_{\xi, n, i}$, $f(\theta(s), e(t))=p_i$.  
\label{combinatorial}
\end{lemma}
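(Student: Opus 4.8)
The plan is to prove this by induction on $\ord\times\nn$ ordered lexicographically, following the scheme of the proof of Theorem~\ref{favorite theorem} but replacing every appeal to compactness of $[-1,1]$ or to a finite net in a Euclidean cube by an appeal to the pigeonhole principle for the finite set $\mathcal{P}$. Since we only wish to force finitely many function values to stabilize rather than to control weighted averages, the resulting argument is in fact a simplification of that proof. For the base pair $(0,1)$, $\Omega_0=\{(0,U):U\in D\}$ and $\Pi\Omega_0=\{((0,U),(0,U)):U\in D\}$, so by pigeonhole there is $p\in\mathcal{P}$ with $M=\{U\in D:f((0,U),(0,U))=p\}$ cofinal in $D$; I take $\theta=e$ to send $(0,U)$ to any $(0,V)$ with $V\in M$ and $U\leqslant_D V$, and $p_1=p$.

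For a limit ordinal $\xi$ and the pair $(\xi,1)$, I write $\Omega_\xi=\bigcup_{\zeta<\xi}\Theta_{\zeta+1}$ as in the proof of Theorem~\ref{favorite theorem}, apply the inductive hypothesis for $(\zeta,1)$ to each $f|_{\Pi\Theta_{\zeta+1}}$ (via the canonical identification of $\Theta_{\zeta+1}$ with $\Omega_{\zeta+1}$) to obtain values $p_\zeta\in\mathcal{P}$ and extended prunings $(\theta_\zeta,e_\zeta)$, use pigeonhole to pick $p\in\mathcal{P}$ with $M=\{\zeta<\xi:p_\zeta=p\}$ cofinal in $[0,\xi)$, fix for each $\zeta$ some $\eta_\zeta\in M$ with $\zeta\leqslant\eta_\zeta$ and an extended pruning $(\varphi_\zeta,f_\zeta):\Theta_{\zeta+1}\to\Theta_{\eta_\zeta+1}$, and set $\theta|_{\Theta_{\zeta+1}}=\theta_{\eta_\zeta}\circ\varphi_\zeta$, $e|_{MAX(\Theta_{\zeta+1})}=e_{\eta_\zeta}\circ f_\zeta$, $p_1=p$. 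For the pair $(\xi+1,1)$ I use that $\Omega_{\xi+1}=\bigcup_m\Omega_{\xi,m}$ is a totally incomparable union: applying the inductive hypothesis for $(\xi,m)$ to $f|_{\Pi\Omega_{\xi,m}}$ gives $(\theta_m,e_m):\Omega_{\xi,m}\to\Omega_{\xi,m}$ and $(p^m_i)_{i=1}^m\in\mathcal{P}^m$, and a double application of pigeonhole --- first that for each $m$ some element of $\mathcal{P}$ occurs at least $m/|\mathcal{P}|$ times among $p^m_1,\dots,p^m_m$, then that one element $p$ is such for infinitely many $m$ --- produces $p\in\mathcal{P}$ and $m_1<m_2<\cdots$ with $|A_j|\to\infty$, where $A_j=\{i\leqslant m_j:p^{m_j}_i=p\}$; passing to a further subsequence so that $|A_j|\geqslant j$, I use Proposition~\ref{easy} to choose a unit preserving extended pruning $(\varphi_j,f_j):\Omega_{\xi,j}\to\Omega_{\xi,m_j}$ with $\varphi_j(\Omega_{\xi,j})\subseteq\bigcup_{i\in A_j}\Lambda_{\xi,m_j,i}$, set $\theta|_{\Omega_{\xi,j}}=\theta_{m_j}\circ\varphi_j$, $e|_{MAX(\Omega_{\xi,j})}=e_{m_j}\circ f_j$, $p_1=p$, and observe that for $(s,t)\in\Pi\Omega_{\xi,j}$ one has $\varphi_j(s)\in\Lambda_{\xi,m_j,i}$ for some $i\in A_j$, whence $f(\theta(s),e(t))=p^{m_j}_i=p$. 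In all of these cases every extended pruning of $\Omega_{\xi,1}$ into itself is automatically unit preserving, so no extra care is needed.

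The pair $(\xi,n+1)$ is the substantive case, and here I follow the last case of the proof of Theorem~\ref{favorite theorem} almost verbatim, with one simplification. For each $t\in MAX(\Lambda_{\xi,n+1,1})$ let $B_t=\{s\in\Omega_{\xi,n+1}:t\prec s\}$, canonically identified with $\Omega_{\xi,n}$ (sending level $i$ of $B_t$, for $2\leqslant i\leqslant n+1$, to level $i-1$ of $\Omega_{\xi,n}$). Applying the inductive hypothesis for $(\xi,n)$ to $f|_{\Pi B_t}$ and then Proposition~\ref{recall}(i) (exactly as Proposition~\ref{recall}(ii) is used in the proof of Theorem~\ref{favorite theorem}), I obtain unit preserving extended prunings $(\theta_t,e_t)$ and $(\varphi_t,f_t)$ of $B_t$ and a tuple $(p^t_i)_{i=2}^{n+1}\in\mathcal{P}^n$ such that, after composition, $f(\theta_t\circ\varphi_t(s),e_t\circ f_t(s_1))=p^t_i$ whenever $(s,s_1)\in\Pi B_t$ and $s\in\Lambda_{\xi,n+1,i}$, and such that for each $s\preceq t$ the value $f(s,e_t\circ f_t(t_1))$ is independent of $t_1\in MAX(B_t)$; call it $F(s,t)$. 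The key observation is that, $\mathcal{P}$ being finite, the tuples $(p^t_i)_{i=2}^{n+1}$ already range over the finite set $\mathcal{P}^n$, so no net is needed: I define $\Phi:\Pi\Lambda_{\xi,n+1,1}\to\mathcal{P}\times\mathcal{P}^n$ by $\Phi(s,t)=\bigl(F(s,t),(p^t_i)_{i=2}^{n+1}\bigr)$, and, identifying $\Lambda_{\xi,n+1,1}$ with $\Omega_\xi=\Omega_{\xi,1}$, apply the inductive hypothesis for $(\xi,1)$ to $\Phi$ to get an extended pruning $(\theta',e')$ of $\Lambda_{\xi,n+1,1}$ and a constant value $\bigl(p_1,(p_i)_{i=2}^{n+1}\bigr)$ with $\Phi(\theta'(s),e'(t))=\bigl(p_1,(p_i)_{i=2}^{n+1}\bigr)$ for all $(s,t)\in\Pi\Lambda_{\xi,n+1,1}$; this stabilizes $F$ and the tuples simultaneously, so the extra appeal to Proposition~\ref{recall}(iii) at the end of the proof of Theorem~\ref{favorite theorem} is unnecessary. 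Finally I glue as there: $\theta=\theta'$ on $\Lambda_{\xi,n+1,1}$, and for $t\in MAX(\Lambda_{\xi,n+1,1})$ with canonical identification $\kappa_t:B_t\to B_{e'(t)}$, $\theta|_{B_t}=\theta_{e'(t)}\circ\varphi_{e'(t)}\circ\kappa_t$ and $e|_{MAX(B_t)}=e_{e'(t)}\circ f_{e'(t)}\circ\kappa_t$, with $e$ then determined on $MAX(\Omega_{\xi,n+1})$; a chase through the definitions shows that for $(s,t_1)\in\Pi\Omega_{\xi,n+1}$ one gets $f(\theta(s),e(t_1))=p_1$ when $s\in\Lambda_{\xi,n+1,1}$ (using the definition of $F$ and that $F(\theta'(s),e'(t))=p_1$) and $f(\theta(s),e(t_1))=p^{e'(t)}_i=p_i$ when $s\in\Lambda_{\xi,n+1,i}$, $i\geqslant 2$, where $t$ is the maximal member of $\Lambda_{\xi,n+1,1}$ below $s$.

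I expect the only real work to be the bookkeeping in this last case --- tracking the several canonical identifications among $B_t$, $\Omega_{\xi,n}$, and $B_{e'(t)}$, and verifying that the glued pair $(\theta,e)$ is genuinely a \emph{unit preserving} extended pruning (units map into units, and vertices on distinct levels have images on distinct levels). Each of these verifications is the exact analogue of one already carried out in the proof of Theorem~\ref{favorite theorem}, and the new conceptual input --- folding the tuples $(p^t_i)$ into one finite-valued function in place of the net argument, and the double pigeonhole of the $(\xi+1,1)$ case --- is elementary, so I do not anticipate any genuine difficulty.
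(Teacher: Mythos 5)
Your proof is correct, but it takes a genuinely different route from the paper's on two points, and both are worth noting. The paper's proof of Lemma~\ref{combinatorial} is a simple induction on $n$ alone: the $n=1$ case is cited directly to \cite[Proposition $4.2$]{CIllinois} (equivalently Proposition~\ref{recall}$(iii)$), and only the $n\to n+1$ step is carried out. You instead re-derive the $n=1$ statement from scratch by transfinite induction on $\xi$, reproducing the base, limit, and successor--in--$\xi$ cases of Theorem~\ref{favorite theorem}'s proof with the compactness-of-$[-1,1]$ and finite-net appeals replaced by pigeonhole for $\mathcal{P}$; this turns the argument into a full $\ord\times\nn$ lexicographic induction and buys self-containedness at the cost of repeating a result the paper already has. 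The second difference is a genuine simplification of the $(\xi,n+1)$ step: the paper makes two separate stabilization passes, first applying the $n=1$ result to make $F$ constant on $\Pi\Lambda_{\xi,n+1,1}$ and then applying Proposition~\ref{recall} again to make the tuple map $t\mapsto\overline{p}_t$ constant, whereas you observe that both targets are finite and collapse the two passes into a single application of the $n=1$ result to the product-valued function $\Phi(s,t)=\bigl(F(s,t),(p^t_i)_{i=2}^{n+1}\bigr)$ taking values in $\mathcal{P}\times\mathcal{P}^n$. That merge is exactly what is unavailable in Theorem~\ref{favorite theorem}'s $\rr$-valued setting (where one must pass to a finite net and pay a $\delta$) but is perfectly legitimate here and shaves a step off the argument. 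Each of your remaining verifications --- that the $(\xi+1,1)$ double-pigeonhole produces $|A_j|\to\infty$, that the glued pair is a unit-preserving extended pruning, that the lexicographic induction hypothesis is available at each appeal --- is sound, though in the limit case you refer to the inductive hypothesis for $(\zeta,1)$ when, after identifying $\Theta_{\zeta+1}$ with $\Omega_{\zeta+1}$, it is the case $(\zeta+1,1)$ you are invoking (still strictly below $(\xi,1)$ since $\xi$ is limit, so no harm is done).
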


\begin{proof} We induct on $n$.  The $n=1$ case follows from \cite[Proposition $4.2$]{CIllinois}.  Next, fix a finite set $\mathcal{P}$ and a function $f:\Pi\Omega_{\xi, n+1}\to \mathcal{P}$.  For every $t\in MAX(\Lambda_{\xi, n+1, 1})$, let $B_t$ denote the proper extensions of $t$.  Identifying $B_t$ with $\Omega_{\xi, n}$, we may find an $n$-tuple $(p^t_i)_{i=2}^{n+1}\in \mathcal{P}^n$ and a unit preserving extended pruning $(\theta_t, e_t):B_t\to B_t$ such that for any $t\prec s\preceq t_1\in MAX(B_t)$, if $s\in \Lambda_{\xi, n+1, i}$, $f(\theta_t(s), e_t(t_1))=p^t_i$.  Let $\overline{p}_t=(p^t_i)_{i=2}^{n+1}$.  

Next, by Proposition \ref{recall}$(i)$, we may find another unit preserving extended pruning $(\varphi_t, f_t):B_t\to B_t$ such that for every $s\preceq t$ and any maximal extensions $t_1, t_2$ of $t$, $f(s, e_t\circ f_t(t_1))=f(s, e_t\circ f_t(t_2))$.  Define $F:\Pi\Lambda_{\xi, n+1, 1}\to \mathcal{P}$ by letting $F(s, t)$ be the common value of $f(s, e_t\circ f_t(t_1))$ as $t_1$ ranges through all maximal extensions of $t$.  Fix an extended pruning $(\theta', e'):\Lambda_{\xi, n+1, 1}\to \Lambda_{\xi, n+1,1}$ and $p_1\in \mathcal{P}$ such that for every $(s,t)\in \Pi\Lambda_{\xi, n+1, 1}$, $f(\theta'(s), e'(t))=p_1$.   Next, using Proposition \ref{recall}$(i)$ again, we may fix another extended pruning $(\varphi, f):\Lambda_{\xi, n+1, 1}\to \Lambda_{\xi, n+1,1}$ and $(p_i)_{i=2}^{n+1}\in \mathcal{P}^n$ such that for every $t\in MAX(\Lambda_{\xi, n+1, 1})$, $\overline{p}_{e'\circ f(t)}=(p_i)_{i=2}^{n+1}$.  For each $t\in MAX(\Lambda_{\xi, n+1, 1})$, let $j_t:B_t\to B_{e'\circ f(t)}$ be the canonical identification.  We let $\theta|_{\Lambda_{\xi, n+1,1}}=\theta'\circ \varphi$, $\theta|_{B_t}=\theta_{e'\circ f(t)}\circ \varphi_{e'\circ f(t)}\circ j_t$ and $e|_{MAX(B_t)}=e_{e'\circ f(t)}\circ f_{e'\circ f(t)}\circ j_t$.

\end{proof}

\begin{proposition} Let $\varnothing\neq K\subset X^*$ be $w^*$-compact and let $\xi$ be an ordinal. Let $D$ be a weak neighborhood basis at $0$ in $X$ directed by reverse inclusion and let $\Omega_\xi=\Gamma_\xi D$.   \begin{enumerate}[(i)]\item Suppose $\sigma, \ee,a>0$, $a/\sigma>\ee$, $(x_\lambda)_{\lambda\in D}\subset \sigma B_X$ is a weakly null net, and $(x^*_\lambda)_{\lambda\in D}\subset K$ is such that for every $\lambda$, $\text{\emph{Re}\ }x^*_\lambda(x_\lambda)\geqslant a$.  Then any $w^*$-cluster point of $(x^*_\lambda)_{\lambda\in D}$ lies in $s_\ee(K)$. \item If $\sigma, \ee, a>0$, $a/\sigma>\ee$,  $(x_t)_{t\in \Omega_\xi}\subset \sigma B_X$ is normally weakly null, and $(x^*_t)_{t\in MAX(\Omega_\xi)}\subset K$ is such that for every $s\preceq t\in MAX(\Omega_\xi)$, $\text{\emph{Re}\ }x^*_t(x_s)\geqslant a$, then $$s_\ee^{\omega^\xi}(K)\cap \overline{\{x^*_t:t\in MAX(\Omega_\xi)\}}^{w^*}\neq \varnothing.$$ \item For $N\in \nn$, $\sigma>0$, and $\ee_1, \ldots, \ee_N\in \rr$, in order that $s_{\xi, \ee_1}\ldots s_{\xi, \ee_N}(K)\neq \varnothing$, it is sufficient that there exist a normally weakly null $(x_t)_{t\in \Omega_{\xi, N}}\subset \sigma B_X$, numbers $a_1, \ldots, a_N$ with $a_i/\sigma>\ee_i$ if $\ee_i> 0$, and for every $t\in MAX(\Omega_{\xi, N})$ some $x^*_t\in K$ such that for every $s\preceq t$, if $s\in \Lambda_{\xi, N, i}$ and if $\ee_i>0$,  $\text{\emph{Re}\ }x^*_t(x_s)\geqslant a_i$. \end{enumerate}

\label{characterize}
\end{proposition}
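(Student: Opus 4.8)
The plan is to prove the three items in order: $(i)$ directly, $(ii)$ by transfinite induction on $\xi$ with $(i)$ furnishing the base case, and $(iii)$ by induction on $N$ that reuses the inductive step of $(ii)$. For $(i)$, I would fix a $w^*$-cluster point $x^*$ of $(x^*_\lambda)_{\lambda\in D}$ (so $x^*\in K$, as $K$ is $w^*$-compact), a $w^*$-neighborhood $V$ of $x^*$, and $\delta>0$ with $(a-\delta)/\sigma>\ee$. Since $x_\lambda\to 0$ weakly, $|x^*(x_\lambda)|<\delta$ for $\lambda$ past some $\lambda_0$, while $\{\lambda:x^*_\lambda\in V\}$ is cofinal because $x^*$ is a cluster point; choosing $\lambda\geqslant\lambda_0$ with $x^*_\lambda\in V$ gives $\sigma\|x^*_\lambda-x^*\|\geqslant\mathrm{Re}\,(x^*_\lambda-x^*)(x_\lambda)\geqslant a-\delta$, hence $\|x^*_\lambda-x^*\|>\ee$. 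As $x^*_\lambda,x^*\in V\cap K$, this gives $\mathrm{diam}(V\cap K)>\ee$; since $V$ was arbitrary, $x^*\in s_\ee(K)$.

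For $(ii)$ I would prove the a priori stronger assertion $(\star_n)$: for every $n$, any normally weakly null $(x_t)_{t\in\Omega_{\xi,n}}\subset\sigma B_X$ and $(x^*_t)_{t\in MAX(\Omega_{\xi,n})}\subset K$ with $\mathrm{Re}\,x^*_t(x_s)\geqslant a$ whenever $s\preceq t$ satisfy $s^{\omega^\xi n}_\ee(K)\cap\overline{\{x^*_t:t\in MAX(\Omega_{\xi,n})\}}^{w^*}\neq\varnothing$ (the case $n=1$ being $(ii)$). The heart is a level-peeling step deducing $(\star_{n+1})$ from $(\star_n)$ and $(\star_1)$: writing $\Lambda_{\xi,n+1,1}$ as a single unit canonically identified with $\Omega_\xi$ and, for each $t_1\in MAX(\Lambda_{\xi,n+1,1})$, the proper extensions $B_{t_1}$ as a copy of $\Omega_{\xi,n}$ (its levels $1,\dots,n$ matching levels $2,\dots,n+1$), both identifications preserving normal weak nullity, I would apply $(\star_n)$ inside each $B_{t_1}$ to get $y^*_{t_1}\in s^{\omega^\xi n}_\ee(K)\cap\overline{\{x^*_s:s\in MAX(B_{t_1})\}}^{w^*}$; since each set $\{z^*:\mathrm{Re}\,z^*(x_u)\geqslant a\}$ is $w^*$-closed, $\mathrm{Re}\,y^*_{t_1}(x_u)\geqslant a$ for all $u\preceq t_1$ too, so $(\star_1)$ applied to $(x_u)_{u\in\Lambda_{\xi,n+1,1}}$, $(y^*_{t_1})_{t_1}$ inside the $w^*$-compact set $K':=s^{\omega^\xi n}_\ee(K)$ yields a point of $s^{\omega^\xi}_\ee(K')=s^{\omega^\xi n+\omega^\xi}_\ee(K)=s^{\omega^\xi(n+1)}_\ee(K)$ lying in $\overline{\{x^*_s:s\in MAX(\Omega_{\xi,n+1})\}}^{w^*}$. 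With this step in hand, the induction on $\xi$ runs as follows. For $\xi=0$, $\Omega_0=\{(0,U):U\in D\}$ with all members maximal, so $(i)$ applied to the nets $(x_{(0,U)})_U,(x^*_{(0,U)})_U$ gives $(\star_1)$, and the level-peeling step gives all $(\star_n)$. For limit $\xi$, decompose $\Omega_\xi=\bigcup_{\zeta<\xi}\Theta_{\zeta+1}$ with $\Theta_{\zeta+1}\cong\Omega_{\zeta+1}$, apply $(ii)$ for $\zeta+1<\xi$ (inductive hypothesis) to each piece to get $z^*_\zeta\in s^{\omega^{\zeta+1}}_\ee(K)\cap\overline{\{x^*_t:t\in MAX(\Omega_\xi)\}}^{w^*}$, and take a $w^*$-cluster point of $(z^*_\zeta)_\zeta$, which lands in $\bigcap_{\zeta<\xi}s^{\omega^{\zeta+1}}_\ee(K)=s^{\omega^\xi}_\ee(K)$ since $\{\omega^{\zeta+1}\}_{\zeta<\xi}$ is cofinal in the limit ordinal $\omega^\xi$. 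For successor $\xi=\eta+1$, use $\Omega_{\eta+1}=\bigcup_n\Omega_{\eta,n}$ and apply $(\star_n)$ for $\eta$ (which follows from $(ii)$ for $\eta$ via the level-peeling step) to each piece to get $w^*_n\in s^{\omega^\eta n}_\ee(K)\cap\overline{\{x^*_t\}}^{w^*}$; a $w^*$-cluster point of $(w^*_n)_n$ lands in $\bigcap_n s^{\omega^\eta n}_\ee(K)=s^{\omega^{\eta+1}}_\ee(K)$ and in the closure.

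For $(iii)$ I would induct on $N$, proving under the stated hypotheses that $s_{\xi,\ee_1}\cdots s_{\xi,\ee_N}(K)\cap\overline{\{x^*_t:t\in MAX(\Omega_{\xi,N})\}}^{w^*}\neq\varnothing$. For $N=1$: if $\ee_1>0$ this is $(ii)$ (recall $s_{\xi,\ee_1}(K)=s^{\omega^\xi}_{\ee_1}(K)$), and if $\ee_1\leqslant 0$ then $s_{\xi,\ee_1}(K)=K$ and any $w^*$-cluster point of $(x^*_t)$ works. The step $N\to N+1$ repeats the level-peeling of $(ii)$: peel level $1$ (carrying $\ee_1$), identify each $B_{t_1}$ with $\Omega_{\xi,N}$ carrying $\ee_2,\dots,\ee_{N+1}$, apply the inductive hypothesis there to obtain $y^*_{t_1}\in s_{\xi,\ee_2}\cdots s_{\xi,\ee_{N+1}}(K)\cap\overline{\{x^*_s:s\in MAX(B_{t_1})\}}^{w^*}$, then finish by applying $(ii)$ to level $1$ inside $K':=s_{\xi,\ee_2}\cdots s_{\xi,\ee_{N+1}}(K)$ when $\ee_1>0$ (using $\mathrm{Re}\,y^*_{t_1}(x_u)\geqslant a_1$ for $u\preceq t_1$, valid by $w^*$-closedness) or simply noting $s_{\xi,\ee_1}(K')=K'\ni y^*_{t_1}$ when $\ee_1\leqslant 0$; either way the output lies in $s_{\xi,\ee_1}(K')=s_{\xi,\ee_1}\cdots s_{\xi,\ee_{N+1}}(K)$ and in $\overline{\{x^*_t:t\in MAX(\Omega_{\xi,N+1})\}}^{w^*}$. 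The hard part will not be the (routine) cluster-point and $w^*$-closedness arguments but the bookkeeping: checking that the canonical identifications $\Lambda_{\xi,n,1}\cong\Omega_\xi$ and $B_{t_1}\cong\Omega_{\xi,n-1}$ match levels with the intended $\ee$-indices and preserve normal weak nullity, keeping straight the ordinal identities $s^{\omega^\xi n+\omega^\xi}_\ee=s^{\omega^\xi(n+1)}_\ee$, $\bigcap_n s^{\omega^\xi n}_\ee(K)=s^{\omega^{\xi+1}}_\ee(K)$ and $\bigcap_{\zeta<\xi}s^{\omega^{\zeta+1}}_\ee(K)=s^{\omega^\xi}_\ee(K)$, and verifying at each cluster-point passage that the relevant sets ($s^\beta_\ee(K)$ and the half-spaces $\{\mathrm{Re}\,z^*(x_s)\geqslant a_i\}$) are $w^*$-closed so that membership persists.
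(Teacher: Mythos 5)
Your proof is correct, but for part $(ii)$ it takes a genuinely different route from the paper's. The paper never decomposes $\Omega_\xi$ according to whether $\xi$ is zero, a successor, or a limit; instead it works directly inside the single $B$-tree $\Omega_\xi$. It defines a rank function $h:\Omega_\xi\to[0,\omega^\xi)$ by $h(t)=\max\{\zeta<\omega^\xi:t\in\Omega_\xi^\zeta\}$ and proves, by transfinite induction on $\zeta<\omega^\xi$, that whenever $h(t)\geqslant\zeta$ there is $x^*\in s_\ee^\zeta(K)\cap\overline{\{x^*_u:u\in MAX(\Omega_\xi)\}}^{w^*}$ with $\mathrm{Re}\,x^*(x_s)\geqslant a$ for all $s\preceq t$; the successor step is one application of $(i)$ to the net of children $t_U=t\cat(\gamma,U)$, and the limit step is a $w^*$-cluster point. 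The nonemptiness of $L\cap s_\ee^{\omega^\xi}(K)$ then follows by compactness of the nested sets $L\cap s_\ee^\zeta(K)$. This is uniform in $\xi$ and requires no auxiliary statement like your $(\star_n)$. Your version, by contrast, is a transfinite induction on $\xi$ exploiting the structural unions $\Omega_{\eta+1}=\bigcup_n\Omega_{\eta,n}$ and $\Omega_\xi=\bigcup_{\zeta<\xi}\Theta_{\zeta+1}$, with the inner induction on $n$ supplied by the level-peeling lemma. That lemma, incidentally, is essentially what the paper proves as its part $(iii)$: your peeling step and the paper's inductive claim there (on $N-i$, processing one level at a time via $(ii)$ inside the appropriate derived set $K'$ and using $w^*$-closedness of half-spaces to carry the pairing inequalities downward) are the same argument written with the induction variable reversed. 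So your plan is sound end to end; the paper's treatment of $(ii)$ via the rank function $h$ is tighter and avoids the $(\star_n)$ scaffolding, but you would need that scaffolding anyway (through $(iii)$), so the total bookkeeping burden is comparable. The one place to be scrupulous, as you flagged, is checking that the canonical identifications $\Lambda_{\xi,n+1,1}\cong\Omega_\xi$ and $B_{t_1}\cong\Omega_{\xi,n}$, and the totally-incomparable unions at successor and limit stages, really do carry normally weakly null collections to normally weakly null collections and preserve the level structure; these are immediate from the paper's definitions but should be said.
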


\begin{rem}\upshape In $(iii)$, we are witnessing the non-emptiness of a certain derived set of a given set using weakly null trees of prescribed order in $X$. The exact origin of this method is perhaps difficult to specify. The germ of this idea appears in \cite[Lemma $3.4$]{Lancien1}. This particular result can be thought of as a transfinite version of \cite[Lemma $4.3$]{GKL} combined with the characterization of the Szlenk index of an arbitrary weak$^*$-compact set given in \cite{CIllinois}, which was itself a generalization of the characterization of the Szlenk index of $B_{X^*}$ when $X$ is separable and not containing $\ell_1$ given in \cite{AJO}.

\end{rem}

\begin{proof}$(i)$ Suppose $x^*=w^*-\lim_{\lambda\in D_0} x^*_\lambda\in K$, where $(x^*_\lambda)_{\lambda\in D_0}$ is a subnet of $(x^*_\lambda)_{\lambda\in D}$.  Then $$\underset{\lambda\in D_0}{\lim\inf} \|x^*_\lambda-x^*\| \geqslant \underset{\lambda\in D_0}{\lim\inf\ }\text{Re\ }(x^*_\lambda-x^*)(x_\lambda)/\sigma\geqslant a/\sigma>\ee,$$  whence $s\in s_\ee(K)$.

$(ii)$ Let $L=\overline{\{x^*_t: t\in MAX(\Omega_\xi)\}}^{w^*}\subset K$. If $\xi=0$, the result follows from $(i)$, since $(x_t)_{t\in \Omega_0}$ is a single weakly null net. Assume $\xi>0$.  By $w^*$-compactness, it is sufficient to show that for every $\zeta<\omega^\xi$, $L\cap s^\zeta_\ee(K)\neq \varnothing$.  Define $h:\Omega_\xi \to [0, \omega^\xi)$ by letting $h(t)=\max\{\zeta< \omega^\xi: t\in \Omega_\xi^\zeta\}$.    We claim that for any $0\leqslant \zeta<\omega^\xi$, if $h(t)\geqslant \zeta$, there exists $x^*\in s^\zeta_\ee(K)\cap L$ such that for every $s\preceq t$, $\text{Re\ }x^*(x_s)\geqslant a$.  We prove this claim by induction on $\zeta$.  If $\zeta=0$, fix any $t\in \Omega_\xi$ and any maximal extension $u$ of $t$. By hypothesis, $x^*_u\in K\cap L$ is such that for every $s\preceq u$, and therefore every $s\preceq t$, $\text{Re\ }x^*_u(x_s)\geqslant a$.  Next, assume the result holds for $\zeta$ and $h(t)\geqslant \zeta+1$.  This means $t$ admits an extension, and therefore an extension by $1$, in $\Omega_\xi^\zeta$.  Then there exists $\gamma\in [0, \omega^\xi)$ such that for every $U\in D$, $t_U:= t\cat (\gamma, U)\in \Omega_\xi^\zeta$.  This means $h(t_U)\geqslant \zeta$, and by the inductive hypothesis, there exists $z^*_U\in s^\zeta_\ee(K)\cap L$ such that for every $s\preceq t_U$, $\text{Re\ }z^*_U(x_s)\geqslant a$.  Let $x^*$ be any $w^*$-limit of a subnet of $(z^*_U)_{U\in D}$.  By applying $(i)$ to the nets $(x_{t_U})_{U\in D}$ and $(z^*_U)_{U\in D}$, we deduce that $x^*\in s^{\zeta+1}_\ee(K)\cap L$.  Last, suppose $\zeta<\omega^\xi$ is a limit ordinal and the result holds for every ordinal less than $\zeta$.  Suppose $h(t)\geqslant \zeta$.  Therefore $h(t)>\eta$ for every $0\leqslant \eta<\zeta$, and by the inductive hypothesis, there exists $z^*_\eta\in s^\eta_\ee(K)\cap L$ such that for every $s\preceq t$, $\text{Re\ }z^*_\eta(x_s)\geqslant a$.   Then we may let $x^*$ be any $w^*$-limit of a subnet of $(z^*_\eta)_{\eta<\zeta}$.

$(iii)$ We claim that for every $0\leqslant i\leqslant N$, for every $t\in MAX(\Lambda_{\xi, N, i})$, there exists $x^*\in s_{\xi, \ee_{i+1}}\ldots s_{\xi, \ee_N}(K)$ such that for each $0< j\leqslant i$ such that $\ee_j>0$ and each $s\preceq t$ with $s\in \Lambda_{\xi, N, j}$, $\text{Re\ }x^*(x_s) \geqslant a_j$. Here, we agree to the convention that $\Lambda_{\xi, N, 0}=\{\varnothing\}$.  In the case that $i=N$, it is understood that $s_{\xi, \ee_{i+1}}\ldots s_{\xi, \ee_N}(K)=K$.  We induct on $N-i$. The case $i=N$ is the hypothesis.   Assume the result holds for some $i$ such that $0<i\leqslant N$. Fix $t\in MAX(\Lambda_{\xi, N, i-1})$ and let  $U$ denote the set of proper extensions of $t$ which lie in $\Lambda_{\xi, N, i}$. Note that $U$ is a unit and therefore canonically identifiable with $\Omega_\xi$.  For each maximal member $s$ of $U$, there exists $z^*_s\in s_{\xi, \ee_{i+1}}\ldots s_{\xi, \ee_N}(K)$ as in the claim.  Let $L=\overline{\{z^*_s: s\in MAX(U)\}}^{w^*}$.  If $\ee_i\leqslant 0$, we may let $x^*$ be any member of $L$.  If $\ee_i>0$, we apply $(ii)$ to the set $s_{\xi, \ee_{i+1}}\ldots s_{\xi, \ee_N}(K)$ to deduce the existence of some $x^*\in s_{\xi, \ee_i}\ldots s_{\xi,\ee_N}(K)\cap L$. Since $x^*\in L$, for any $0<j \leqslant i-1$ such that $\ee_j>0$ and each $s\preceq t$ such that $s\in \Lambda_{\xi, N, j}$, $\text{Re\ }x^*(x_s)\geqslant a_j$.  This is because $x^*\in L$ and $L$ is the $w^*$-closure of a set all of the members of which satisfy these inequalities.   This finishes the induction.  The case that $i=0$ gives the non-emptiness of $s_{\xi, \ee_1}\ldots s_{\xi, \ee_N}(K)$.

\end{proof}

Given $N\in \nn$, an ordinal $\xi$, and a collection of vectors $(x_t)_{t\in \Omega_{\xi, N}}$, for each $t\in MAX(\Omega_{\xi, N})$, we let $(z^t_i)_{i=1}^N$ denote the convex block of $(x_s:s\preceq t)$ given by $z^t_i=\sum_{t_{i-1}\prec s\preceq t_i}\mathbb{P}_\xi(\iota_{\xi, N}(s))x_s$.  Here, $\varnothing=t_0\prec \ldots \prec t_N=t$ are such that $t_i\in MAX(\Lambda_{\xi, N, i})$ for each $1\leqslant i\leqslant N$.  This notation does not reference the underlying set $(x_t)_{t\in \Omega_{\xi, N}}$, but this will cause no confusion.  We will call each sequence of the type $(z_i^t)_{i=1}^N$ a \emph{special convex block} of the collection $(x_t)_{t\in \Omega_{\xi, N}}$. 

Given an operator $A:X\to Y$ and $\sigma>0$, let $N_\xi(\sigma)$ denote the smallest $N\in \nn\cup \{0\}$ such that there exists a normally weakly null $(x_t)_{t\in \Omega_{\xi,N+1}}\subset \sigma B_X$ such that for every $t\in MAX(\Omega_{\xi, N+1})$, $\|\sum_{i=1}^N Az_i^t\|>1$.   We write $N_\xi(\sigma)=\infty$ if no such $N$ exists. If there is a need to specify the operator, we write $N_\xi(\sigma;A)$.  If $A=0$, then $N_\xi\equiv \infty$, and otherwise $N_\xi(\sigma)>0$ for $0<\sigma\leqslant \|A\|^{-1}$.   Given $n\in \nn$, $y\in Y$, and $\sigma\in (0,1)$, let $\varrho_\xi(y, n, \sigma)$ denote the infimum of all $\lambda\in \rr$ such that for every normally weakly null collection $(x_t)_{t\in \Omega_{\xi, n}}\subset \sigma B_X$, there exists $t\in MAX(\Omega_{\xi, n})$ such that $\|y+\sum_{i=1}^n Az_i^t\|\leqslant \lambda$. We let $\varrho_\xi(y, 0, \sigma)=\|y\|$.     

\begin{lemma}Fix $\sigma>0$ and an operator $A:X\to Y$. \begin{enumerate}[(i)]\item If $N=N_\xi(\sigma)<\infty$, there exist $\ee_1, \ldots, \ee_{N+1}\in \rr$ with $\ee_i\leqslant \|A\|$ such that $\sum_{n=1}^{N+1}\ee_n>1/3\sigma$ and $s_{\xi,\ee_1}\ldots s_{\xi,\ee_{N+1}}(A^*B_{Y^*})\neq \varnothing$.   \item For every $n\in \nn\cup \{0\}$, the function $\varrho_\xi(\cdot, n, \sigma)$ is convex, radial, $1$-Lipschitz,  and if $\nn\ni n\leqslant N_\xi(\sigma)$, $\varrho_\xi(y, n, \sigma)\leqslant \|y\|+1$. Moreover, if $y\in Y$ is fixed, the sequence $(\varrho_\xi(y, n, \sigma))_{n=0}^\infty$ is non-decreasing.  \item For any $N\in \nn$, any $y\in Y$, any normally weakly null collection $(x_t)_{t\in \Omega_\xi}\subset \sigma B_X$, and any $\delta>0$, there exist $t\in \Omega_\xi$ and $x\in \text{\emph{co}}(x_s:s\preceq t)$ such that for every $0\leqslant n< N$, $\varrho_\xi(y+Ax, n, \sigma)<\delta +\varrho_\xi(y, n+1, u)$.  \end{enumerate}

\label{everything}
\end{lemma}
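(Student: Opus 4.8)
We treat the three parts in turn; parts (i) and (ii) are essentially independent, and part (iii) rests on (ii) together with the combinatorial apparatus of Section 3.

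\emph{Part (i).} Fix a normally weakly null $(x_t)_{t\in\Omega_{\xi,N+1}}\subset\sigma B_X$ realizing $N=N_\xi(\sigma)$, so that $\|\sum_{i=1}^N Az_i^t\|>1$ for every $t\in MAX(\Omega_{\xi,N+1})$. For each such $t$ pick $y_t^*\in B_{Y^*}$ with $\text{Re}\,y_t^*(\sum_{i=1}^N Az_i^t)>1$ and put $x_t^*=A^*y_t^*\in A^*B_{Y^*}$; since the weights $\mathbb{P}_\xi(\iota_{\xi,N+1}(\cdot))$ along each of the first $N$ levels of a branch sum to $1$, this says $\sum_{\varnothing\prec s\preceq t_N}\mathbb{P}_\xi(\iota_{\xi,N+1}(s))\,\text{Re}\,x_t^*(x_s)>1$, where $t_N$ is the level-$N$ truncation of $t$. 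Define $f:\Pi\Omega_{\xi,N+1}\to\rr$ by $f(s,t)=\text{Re}\,x_t^*(x_s)$ on levels $1,\dots,N$ and $f(s,t)=\max\{\text{Re}\,x_t^*(x_s),0\}$ on level $N+1$; then $\|f\|_\infty\le\|A\|\sigma$. Apply Theorem \ref{favorite theorem} (in the relaxed form of the Remark, allowing the $a_i$ to be negative): summing its estimate (ii) over the first $N$ levels, together with the displayed inequality above, forces $\sum_{i=1}^{N+1}a_i>1-2\delta$, while estimate (i) gives $f(\theta(s),e(t))\ge a_i$ for $s\in\Lambda_{\xi,N+1,i}$, $s\preceq t$. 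The pruned collection $(x_{\theta(t)})_t$ is normally weakly null in $\sigma B_X$, and the functionals $x_{e(t)}^*\in A^*B_{Y^*}$ satisfy $\text{Re}\,x_{e(t)}^*(x_{\theta(s)})\ge a_i$ whenever $a_i>0$ (the truncation on the top level removes any sign obstruction). Hence, with $\ee_i:=a_i/\sigma-\eta$ for small $\eta>0$, Proposition \ref{characterize}(iii) applied to $K=A^*B_{Y^*}$ with $N+1$ levels yields $s_{\xi,\ee_1}\ldots s_{\xi,\ee_{N+1}}(A^*B_{Y^*})\ne\varnothing$; moreover $\ee_i<a_i/\sigma\le\|A\|$, and $\sum_{i=1}^{N+1}\ee_i>(1-2\delta)/\sigma-(N+1)\eta>1/(3\sigma)$ once $\delta,\eta$ are small.

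\emph{Part (ii).} Write $\varrho_\xi(y,n,\sigma)=\sup\inf_{t\in MAX(\Omega_{\xi,n})}\|y+\sum_{i=1}^n Az_i^t\|$, the supremum over normally weakly null collections in $\sigma B_X$. The inequalities $\|y\|-n\|A\|\sigma\le\varrho_\xi(y,n,\sigma)$ (whence radiality), the $1$-Lipschitz estimate, and $\varrho_\xi(y,0,\sigma)=\|y\|$ are immediate from this formula and the triangle inequality. Monotonicity in $n$, and the bound $\varrho_\xi(y,n,\sigma)\le\|y\|+1$ for $n\le N_\xi(\sigma)$, follow by padding a collection on $\Omega_{\xi,n}$ with a layer of zeros to obtain one on $\Omega_{\xi,n+1}$ with identical block sums (using the canonical identification of the first $n$ levels of $\Gamma_{\xi,n+1}$ with $\Gamma_{\xi,n}$), the bound being the contrapositive of the minimality in the definition of $N_\xi(\sigma)$: a collection forcing $\|y+\sum_{i=1}^n Az_i^t\|>\|y\|+1$ on every branch forces $\|\sum_{i=1}^n Az_i^t\|>1$ on every branch. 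Convexity in $y$ is the only point needing the combinatorics: fix a collection $(x_t)$ nearly realizing $\varrho_\xi(y,n,\sigma)$ and functionals $y_t^*\in B_{Y^*}$ nearly norming $y+\sum_i Az_i^t$; by Theorem \ref{favorite theorem} (applied once to stabilize $\text{Re}\,y_t^*(y_1)$ and $\text{Re}\,y_t^*(y_2)$ over the maximal nodes, and once to the function $(s,t)\mapsto\text{Re}(A^*y_t^*)(x_s)$) pass to a unit-preserving pruning $(\theta,e)$ on which $\text{Re}\,y_t^*(y_j)$ is nearly a constant $b_j$ and $\text{Re}(A^*y_t^*)(x_{\theta(s)})\ge a_i$. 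Applying the $y_j$-property of $\varrho_\xi$ to the pruned collection and pairing with $y_{e(t)}^*$ gives $\sum_i a_i\le\varrho_\xi(y_j,n,\sigma)-b_j+\epsilon$ for $j=1,2$, whereas estimate (ii) of Theorem \ref{favorite theorem}, combined with the near-optimality of $(x_t)$ and the stabilization, gives $\sum_i a_i+\lambda b_1+(1-\lambda)b_2\ge\varrho_\xi(y,n,\sigma)-\epsilon$; the $\lambda$-convex combination of the first estimates against the second yields $\varrho_\xi(\lambda y_1+(1-\lambda)y_2,n,\sigma)\le\lambda\varrho_\xi(y_1,n,\sigma)+(1-\lambda)\varrho_\xi(y_2,n,\sigma)$.

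\emph{Part (iii).} The case $N=1$ (i.e.\ $n=0$) is immediate: the given net is one competitor in the supremum defining $\varrho_\xi(y,1,\sigma)$, so $\inf_{t\in MAX(\Omega_\xi)}\|y+Az_1^t\|\le\varrho_\xi(y,1,\sigma)$ and one takes $x=z_1^t$ for a near-minimizing branch $t$. In general the structural identity $\varrho_\xi(y,n+1,\sigma)=\sup_{(v_u)}\inf_{t_1\in MAX(\Omega_\xi)}\varrho_\xi(y+Az_1^{t_1},n,\sigma)$, proved by decomposing a collection on $\Omega_{\xi,n+1}$ into its first level (a net on $\Omega_\xi$) and the $\Omega_{\xi,n}$-collections hanging below each level-$1$ maximal node and then optimizing the latter, shows that for the given net and every $n<N$, $\inf_{t_1}[\varrho_\xi(y+Az_1^{t_1},n,\sigma)-\varrho_\xi(y,n+1,\sigma)]\le 0$, and the same holds for every sub-net. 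By convexity (part (ii)), $\varrho_\xi(y+Az_1^{t_1},n,\sigma)\le\sum_{s\preceq t_1}\mathbb{P}_\xi(s)\,\varrho_\xi(y+Ax_s,n,\sigma)$, so it suffices to produce one branch $t_1$ along which the $\mathbb{P}_\xi$-averages of the bounded functions $f_n(s):=\varrho_\xi(y+Ax_s,n,\sigma)$ all stay below $\varrho_\xi(y,n+1,\sigma)+\delta$; this is supplied by the combinatorial result on simultaneously small averages along a common branch (the many-function companion of Theorem \ref{favorite theorem}), applied with thresholds $\ee_n=\varrho_\xi(y,n+1,\sigma)+\delta/2$, whose hypothesis that no $f_n$ is $\ee_n$-large is to be extracted from the recursion above. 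Then $t=t_1$ and $x=z_1^{t_1}\in\text{co}(x_s:s\preceq t_1)$ are as required.

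The main obstacle is the last step of part (iii): engineering the combinatorial input so that the averaged quantities genuinely dominate $\varrho_\xi(y+Az_1^{t_1},n,\sigma)$ while simultaneously failing to be $\ee_n$-large. The tension is that passing to a pruning alters the special convex blocks, and that $\varrho_\xi(\cdot,n,\sigma)$ is only convex (not concave), so a naive use of Jensen gives the estimate in one direction but not the non-largeness; in practice one expects to replace the bare $\varrho_\xi$-values $f_n$ by norming-functional surrogates attached to near-optimal collections, for which the quantities along a branch are additive, and to deduce non-largeness by feeding such a putative uniformly-large sub-net back into the recursive identity to contradict the value of $\varrho_\xi(y,n+1,\sigma)$.
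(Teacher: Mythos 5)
Parts (i) and (ii) are correct and follow the paper's arguments closely. In (i) your top-level truncation of $f$ is superfluous: Proposition~\ref{characterize}$(iii)$ already makes no demand on levels with $\ee_i\leqslant 0$, so the paper simply sets $\ee_i=a_i$ when $a_i\leqslant 0$ and $0<\ee_i<a_i/\sigma$ when $a_i>0$. In (ii) you stabilize the two scalars $\text{Re\ }y^*_t(y_1)$ and $\text{Re\ }y^*_t(y_2)$ over maximal nodes and then pair $\sum_i a_i\leqslant\varrho_j-b_j$ (for $j=1,2$) against $\sum_i a_i+\lambda b_1+(1-\lambda)b_2\geqslant\varrho$; the paper instead stabilizes the pair $\bigl(\text{Re\ }y^*_{e(t)}(y_1/n+Ax_{\theta(s)}),\text{Re\ }y^*_{e(t)}(y_2/n+Ax_{\theta(s)})\bigr)$ per level via Lemma~\ref{combinatorial} to force a ``one of $\varrho_1,\varrho_2$ is beaten'' dichotomy. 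Both are valid; yours is a clean alternative route to the same contradiction.

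Part (iii) has a genuine gap, which you yourself have located precisely. You wish to apply the simultaneity statement (the paper's Theorem~\ref{sim theorem}, which it formulates and uses only in Section $5$) to $f_n(s):=\varrho_\xi(y+Ax_s,n,\sigma)$ at thresholds $\ee_n=\varrho_\xi(y,n+1,\sigma)+\delta/2$, but its hypothesis that no $f_n$ is $\ee_n$-large does not follow from your recursion. Convexity gives $\varrho_\xi(y+Az_1^{t_1},n,\sigma)\leqslant\sum_{s\preceq t_1}\mathbb{P}_\xi(s)f_n(s)$, which bounds the decomposed quantity from \emph{above}; a putative pruned branch on which every $f_n(s)$ exceeds $\ee_n$ cannot be turned into a collection on $\Omega_{\xi,n+1}$ contradicting $\varrho_\xi(y,n+1,\sigma)$, because any near-optimal $(n,\sigma)$-subcollection you attach below a pruned $x_s$ witnesses largeness of quantities involving $y+Ax_s$, not of those involving $y+Az_1^{t_1}$, and the average $z_1^{t_1}$ mixes \emph{all} the $x_s$ along the branch while $\varrho_\xi(\cdot,n,\sigma)$ is only convex, not concave.

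The paper's proof of (iii) does not invoke the simultaneity theorem at all. It argues by contradiction directly on $\Omega_{\xi,N}$: assuming (iii) fails, it builds a normally weakly null $(x_t)_{t\in\Omega_{\xi,N}}\subset\sigma B_X$ whose first level realizes the hypothesized bad data $\varrho_\xi(y+Az^u,n_u,\sigma)>\delta+\varrho_\xi(y,n_u+1,\sigma)$ for some level-dependent $n_u\in\{0,\ldots,N-1\}$, whose sublevels below each $u\in MAX(\Lambda_{\xi,N,1})$ through level $n_u+1$ witness that inequality and are zero-padded thereafter, and which carries norming functionals $y^*_t$ at the top. The function $f(s,t)$ fed into Theorem~\ref{favorite theorem} is built from $\text{Re\ }y^*_t(y/(n_{t_1}+1)+Ax_s)$ minus the constant $\varrho_\xi(y,n_{t_1}+1,\sigma)/(n_{t_1}+1)$ on the first $n_{t_1}+1$ levels and $0$ above, so that its $\mathbb{P}_\xi$-average along every branch exceeds $\delta$; it is \emph{additive} along a branch in precisely the way $\varrho_\xi$ is not. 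After Theorem~\ref{favorite theorem} produces the $a_i$'s, one further application of Proposition~\ref{recall}$(iii)$ stabilizes the $n_u$-value of the pruned level-one node to a single $n$, leaves $a_i\leqslant 0$ for $i>n+1$, and hence $\sum_{i=1}^{n+1}a_i>\delta/2$; testing this against the definition of $\varrho_\xi(y,n+1,\sigma)$ yields the contradiction. This is exactly the ``norming-functional surrogate'' you suggest in your closing remark but do not carry out.
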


\begin{proof} 

$(i)$ Assume $N=N_\xi(\sigma)<\infty$. Note that this means $A$ is not the zero operator.  Fix $(x_t)_{t\in \Omega_{\xi, N+1}}\subset \sigma B_X$ normally weakly null such that for every $t\in MAX(\Omega_{\xi, n})$, $\|\sum_{i=1}^{N+1}Az_i^t\|>1$. For every $t\in MAX(\Omega_{\xi, N+1})$, fix $y^*_t\in B_{Y^*}$ such that $\text{Re\ }y^*_t(\sum_{i=1}^{N+1}Az_i^t)=\|\sum_{i=1}^{N+1}Az_i^t\|$.  Define $f:\Pi\Omega_{\xi, N+1}\to [-\sigma \|A\|,\sigma \|A\|]$ by $f(s,t)=\text{Re\ }A^*y^*_t(x_s)$.  By Theorem \ref{favorite theorem} and the remarks following it, there exist $a_1, \ldots, a_{N+1}\in \rr$ with $a_i\leqslant \sigma \|A\|$ and an extended pruning $(\theta, e):\Omega_{\xi, N+1}\to \Omega_{\xi, N+1}$ such that for every $(s,t)\in \Pi\Omega_{\xi, N+1}$, if $s\in \Lambda_{\xi, N+1, i}$, $f(\theta(s), e(t))\geqslant a_i$ and for each $t\in MAX(\Omega_{\xi, N+1})$, $\sum_{s\preceq t} \mathbb{P}_\xi(\iota_{\xi, N+1}(s))f(s, e(t)) \leqslant 1/3+\sum_{i=1}^{N+1}a_i$.  But for any $t\in MAX(\Omega_{\xi, N+1})$, $$1<\text{Re\ }y^*_t(\sum_{i=1}^{N+1} Az_i^t)=\sum_{s\preceq t}\mathbb{P}_\xi(\iota_{\xi, N+1}(s))f(s, e(t)) \leqslant 1/3+\sum_{i=1}^{N+1} a_i,$$ and $2/3\leqslant \sum_{i=1}^{N+1}a_i$.   Fix $\ee_1, \ldots, \ee_{N+1}\in \rr$ such that $\ee_i=a_i$ if $a_i\leqslant 0$, $0<\ee_i<a_i/\sigma$ if $a_i>0$, and $\sum_{i=1}^{N+1} \ee_i > 1/3\sigma$.  Then by Proposition \ref{characterize}$(iii)$, $s_{\xi, \ee_1}\ldots s_{\xi, \ee_{N+1}}(A^*B_{Y^*})\neq \varnothing$.  Indeed, $(x_{\theta(t)})_{t\in \Omega_{\xi, N+1}}\subset \sigma B_X$ is normally weakly null and for any $t\in MAX(\Omega_{\xi, N+1})$, $A^*y^*_{e(t)}\in A^*B_{Y^*}$ is such that for any $s\preceq t$, if $s\in \Lambda_{\xi, N+1, i}$, $\text{Re\ }A^*y^*_{e(t)}(x_{\theta(s)})=f(\theta(s), e(t))\geqslant a_i$.   

$(ii)$ By \emph{radial}, we mean that for any $y\in Y$ and any unimodular scalar $\ee$, $\varrho_\xi(y,n,\sigma)=\varrho_\xi(\ee y, n, \sigma)$. It is clear that $\varrho_\xi(\cdot, n, \sigma)$ is radial and $1$-Lipschitz.  Note that for any $n\in \nn$, any normally weakly null collection $(x_t)_{t\in \Omega_{\xi, n}}\subset \sigma B_X$ can be extended to a collection $(z_t)_{t\in \Omega_{\xi, n+1}}\subset \sigma B_X$ by letting the first $n$ levels of $(z_t)_{t\in \Omega_{\xi, n+1}}$ be the tree $(x_t)_{t\in \Omega_{\xi, n+1}}$ and by letting the $n+1^{st}$ level be filled with zeros.  This yields that $\varrho_\xi(\cdot, n, \sigma)\leqslant \varrho_\xi(\cdot, n+1, \sigma)$. We similarly deduce that $\varrho_\xi(\cdot, 0, \sigma)\leqslant \varrho_\xi(\cdot, 1, \sigma)$ by considering the collection $(x_t)_{t\in \Omega_{\xi, 1}}$ consisting of all zeros.   By definition of $N_\xi(\sigma)$, for any $n\leqslant N_\xi(\sigma)$ and any normally weakly null collection $(x_t)_{t\in \Omega_{\xi, n}}\subset \sigma B_X$, there exists $t\in MAX(\Omega_{\xi, n})$ such that $\|\sum_{i=1}^n Az_i^t\|\leqslant 1$, from which it follows that  $\varrho_\xi(y, n, \sigma)\leqslant \|y\|+1$ for any $y\in Y$ and $\nn\ni n\leqslant N_\xi(\sigma)$.  We show that $\varrho_\xi(\cdot, n, \sigma)$ is convex.   To obtain a contradiction, assume that for some $y_1, y_2\in Y$ and $0<\alpha<1$, with $y=\alpha y_1+(1-\alpha)y_2$,  $$\varrho_\xi(y, n, \sigma) > \delta+\alpha \varrho_\xi(y_1, n, \sigma)+(1-\alpha)\varrho_\xi(y_2, n, \sigma)$$ for some $\delta\in (0,1)$. Let $\varrho_1=\varrho_\xi(y_1, n, \sigma)$ and let $\varrho_2=\varrho_\xi(y_2, n, \sigma)$.  By the definition of $\varrho_\xi(y, n, \sigma)$, there exists a normally weakly null $(x_t)_{t\in \Omega_{\xi, n}}\subset \sigma B_X$ such that $$\inf_{t\in MAX(\Omega_{\xi, n})}\|y+\sum_{i=1}^n Az^t_i\|>\delta+\alpha \varrho_1+(1-\alpha)\varrho_2.$$   For each $t\in MAX(\Omega_{\xi, n})$, fix $y^*_t\in B_{Y^*}$ such that $y^*_t(y+\sum_{i=1}^n Az_i^t)=\|y+\sum_{i=1}^n Az_i^t\|$.   Define $f:\Pi\Omega_{\xi, n}\to \rr$ by $f(s, t)=\text{Re\ }y^*_t(y/n+Ax_s)$. Then \begin{align*} \inf_{t\in MAX(\Omega_{\xi, n})}\sum_{s\preceq t} \mathbb{P}_\xi(\iota_{\xi, n}(s))f(s,t) & = \inf_{t\in MAX(\Omega_{\xi, n})}\sum_{i=1}^n \sum_{t_{i-1}\prec s\preceq t_i}\mathbb{P}_\xi(\iota_{\xi, n}(s))\text{Re\ }y^*_t(y/n+Ax_s) \\ & =  \inf_{t\in MAX(\Omega_{\xi, n})}\|y+\sum_{i=1}^n Az^t_i\|  > \delta+\alpha \varrho_1+(1-\alpha)\varrho_2.\end{align*} Here, $\varnothing=t_0\prec \ldots \prec t_n=t$ are the initial segments of $t$ such that for each $1\leqslant i\leqslant n$, $t_i\in MAX(\Lambda_{\xi, n,i})$.    Then by Theorem \ref{favorite theorem} and the following remarks, there exist $a_1, \ldots, a_n\in \rr$ and a unit preserving extended pruning $(\theta, e):\Omega_{\xi, n}\to \Omega_{\xi, n}$ such that for every $(s,t)\in \Pi\Omega_{\xi, n}$, if $s\in \Lambda_{\xi, n, i}$, $f(\theta(s), e(t))\geqslant a_i$ and such that $$\delta+\alpha \varrho_1 + (1-\alpha)\varrho_2< \sum_{i=1}^n a_i.$$  

Next, let $\mathcal{P}$ be a finite partition of $[-r_1, r_1]\times [-r_2, r_2]$ into sets of diameter (with respect to the $\ell_1^2$ metric) less than $\delta/2n$, where $r_1=\|y_1\|/n+\sigma\|A\|$ and $r_2=\|y_2\|/n+\sigma\|A\|$.   Define the function $g:\Pi\Omega_{\xi, n}\to \mathcal{P}$ by letting $g(s,t)$ be the member $S$ of $\mathcal{P}$ such that $$(\text{Re\ }y^*_{e(t)}(y_1/n+Ax_{\theta(s)}), \text{Re\ }y^*_{e(t)}(y_2/n+ Ax_{\theta(s)}))\in S.$$  By applying Lemma \ref{combinatorial}, we may fix an $n$-tuple $(S_i)_{i=1}^n\in \mathcal{P}^n$ and a unit preserving extended pruning $(\theta', e'):\Omega_{\xi, n}\to \Omega_{\xi, n}$ such that for every $(s,t)\in \Pi\Omega_{\xi, n}$, if $s\in \Lambda_{\xi, n,i}$, $$(\text{Re\ }y^*_{e\circ e'(t)}(y_1/n+Ax_{\theta\circ \theta'(s)}), \text{Re\ }y^*_{e\circ e'(t)}(y_2/n+ Ax_{\theta\circ \theta'(s)}))\in S_i.$$ For each $1\leqslant i\leqslant n$, fix $(b_i, c_i)\in S_i$.   For each $s\in \Omega_{\xi, n}$, let $u_s=x_{\theta\circ \theta'(s)}$.  For each $t\in MAX(\Omega_{\xi, n})$, let $u^*_t=y^*_{e\circ e'(t)}$.   Fix $1\leqslant i\leqslant n$ and $(s,t)\in \Pi\Omega_{\xi, n}$ with $s\in \Lambda_{\xi, n, i}$.  Then $$a_i\leqslant \text{Re\ }u^*_t(y/n+Au_s)= \alpha \text{Re\ }u^*_t(y_1/n+Au_s)+(1-\alpha)\text{Re\ }u^*_t(y_2/n+Au_s) < \alpha b_i+(1-\alpha) c_i+\delta/2n,$$ and  $$ \delta+\alpha \varrho_1+(1-\alpha)\varrho_2 \leqslant \sum_{i=1}^n a_i < \delta/2+\alpha\sum_{i=1}^n b_i+(1-\alpha)\sum_{i=1}^n c_i.$$ From this it follows that either $\delta/2+\varrho_1< \sum_{i=1}^n b_i$ or $\delta/2+\varrho_2<\sum_{i=1}^n c_i$.   We assume $\delta/2+\varrho_1<\sum_{i=1}^n  b_i$, with the other case being identical.  Let $\varrho=\sum_{i=1}^n b_i- (\delta/2+\varrho_1)>0$.   Then since $(u_t)_{t\in \Omega_{\xi, n}}\subset \sigma B_X$ is normally weakly null, we deduce that \begin{align*} \varrho_1 & \geqslant \inf_{t\in MAX(\Omega_{\xi, n})} \|y+\sum_{i=1}^n Az_i^t\| \geqslant \inf_{t\in MAX(\Omega_{\xi, n})} \text{Re\ }u^*_t(y+\sum_{i=1}^n Az_i^t) \\ & = \inf_{t\in MAX(\Omega_{\xi, n})} \sum_{i=1}^n\sum_{t_{i-1}\prec s\preceq t_i} \mathbb{P}_\xi(\iota_{\xi, n}(s)) \text{Re\ }u^*_t(y_1/n+Au_s) \\ &  \geqslant -n(\delta/2n)+ \sum_{i=1}^n  b_i = \varrho_1+\varrho+\delta/2-\delta/2>\varrho_1.\end{align*} This contradiction finishes $(ii)$.

$(iii)$  For $u\in MAX(\Lambda_{\xi, N,1})$, we let $B_u=\{s\in \Omega_{\xi, N}: u\prec s\}$.  To obtain a contradiction, assume $(iii)$ fails.  Then there exist $y\in Y$, $\delta>0$, a normally weakly null collection $(x_t)_{t\in \Omega_{\xi, N}}\subset \sigma B_X$, natural numbers $(n_u)_{u\in MAX(\Lambda_{\xi, N, 1})}\subset \{0, \ldots N-1\}$, and functionals $(y^*_t)_{t\in MAX(\Omega_{\xi, N})}\subset B_{Y^*}$ such that \begin{enumerate}[(i)]\item for every $u\in MAX(\Lambda_{\xi, N, 1})$, $z^u:=\sum_{s\preceq u}\mathbb{P}_\xi(\iota_{\xi, N}(s))x_s$ is such that $\varrho_\xi(y+Az^u, n_u, \sigma)>\delta+\varrho_\xi(y, n_u+1, \sigma)$, \item for each $u\in MAX(\Lambda_{\xi, N, 1})$, $x_s=0$ for every $s\in B_u\cap (\cup_{i=n_u+2}^N \Lambda_{\xi, N, i})$,  \item for every $t\in MAX(\Omega_{\xi, N})$, $$y^*_t(y+\sum_{i=1}^N Az^t_i)=y^*_t(y+\sum_{i=1}^{n_{t_1}+1} Az^t_i)=\|y+\sum_{i=1}^{n_{t_1}+1} Az_i^t\|>\delta+\varrho_\xi(y, n_{t_1}+1, \sigma).$$ \end{enumerate}  To see why these collections exist, the vectors $(x_t)_{t\in \Lambda_{\xi, N, 1}}\subset \sigma B_X $ and $n_u\in \{0, \ldots N-1\}$ are chosen so that for every $u\in MAX(\Lambda_{\xi, N, 1})$, $\varrho_\xi(y+Az^u, n_u, \sigma)>\delta +\varrho_\xi(y, n_u+1, \sigma)$, as we may do, since $\Lambda_{\xi, N, 1}$ is identifiable with $\Omega_\xi$.    Then for each $u\in MAX(\Lambda_{\xi, N, 1})$, the vectors $(x_s)_{s\in B_u\cap (\cup_{i=2}^{n_u+1} \Lambda_{\xi, N, i})}\subset \sigma B_X$ are chosen to witness that $\varrho_\xi(y+Az^u, n_u, \sigma)>\delta +\varrho_\xi(y, n_u+1, \sigma)$, and $(x_s)_{s\in B_u\cap (\cup_{i=n_u+2}^N\Lambda_{\xi, N, i})}$ are chosen to be zero.  We then choose for each $t\in MAX(\Omega_{\xi, N})$ some $y^*_t\in B_{Y^*}$ to norm $y+\sum_{i=1}^N Az_i^t=y+\sum_{i=1}^{n_{t_1}+1} Az_i^t$.  

Define $f:\Pi\Omega_{\xi, N}\to \rr$ by \begin{displaymath}
   f(s,t) = \left\{
     \begin{array}{lr}
       \text{Re\ }y^*_t\Bigl(\frac{y}{n_{t_1}+1}+Ax_s\Bigr) - \frac{\varrho_\xi(y, n_{t_1}+1, \sigma)}{n_{t_1}+1} & : s\in \cup_{i=1}^{n_{t_1}+1} \Lambda_{\xi, N,i}\\
       0 & : s\in \cup_{i=n_{t_1+2}}^N \Lambda_{\xi, N, i}
     \end{array}
   \right.
\end{displaymath}

Note that for any $t\in MAX(\Omega_{\xi, N})$, $$\sum_{s\preceq t}\mathbb{P}_\xi(\iota_{\xi, N}(s))f(s,t)= y^*_t(y+\sum_{i=1}^{n_{t_1}+1} Az_i^t)-\varrho_\xi(y, n_{t_1}+1, \sigma)>\delta.$$  From this, Theorem \ref{favorite theorem}, and the remarks following it, there exist $a_1, \ldots, a_N$ and a unit preserving extended pruning $(\theta', e'):\Omega_{\xi, N}\to \Omega_{\xi, N}$ such that $\delta/2<\sum_{i=1}^N a_i$ and such that for every $(s,t)\in \Pi\Omega_{\xi, N}$ with $s\in \Lambda_{\xi, N, i}$, $f(\theta'(s), e'(t))\geqslant a_i$.   Note that for each $u\in MAX(\Lambda_{\xi, N, 1})$, there exists a unique $e_0(u)\in MAX(\Lambda_{\xi, N, 1})$ such that $\theta'(B_u)\subset B_{e_0(u)}$.  Indeed, $B_u\cap \Lambda_{\xi, N, 2}$ is a single unit, which must be mapped into a single unit in $\Lambda_{\xi, N, 2}$, and therefore into $B_{e_0(u)}$ for a unique $e_0(u)\in MAX(\Lambda_{\xi, N,1})$.  Define $m:MAX(\Lambda_{\xi, N,1})\to \{0, \ldots, N-1\}$ by letting $m(u)=n_{e_0(u)}$.  Then by Proposition \ref{recall}, there exists an extended pruning $(\theta'', e''):\Lambda_{\xi, N, 1}\to \Lambda_{\xi, N,1}$ and $n\in \{0, \ldots, N-1\}$ such that for every $u\in MAX(\Lambda_{\xi, N, 1})$, $n=m(e''(u))=n_{e_0\circ e''(u)}$.   Define $\theta:\Omega_{\xi,N}\to \Omega_{\xi, N}$ and $e:MAX(\Omega_{\xi, N})\to \Omega_{\xi, N}$ by letting $$\theta|_{\Lambda_{\xi, N, 1}}= \theta'\circ \theta'',$$ for $u\in MAX(\Lambda_{\xi, N, 1})$, let $j_u:B_u\to B_{e_0\circ e''(u)}$ be the canonical identification,  $$\theta|_{B_u}=\theta'\circ j_u,$$ $$e|_{MAX(B_u)}=e'\circ j_u.$$ 

Note that for any $(s, t)\in \Pi\Omega_{\xi, N}$ with $s\in \Lambda_{\xi, N, i}$, $f(\theta(s), e(t))\geqslant a_i$.  

We also note that $\sum_{i=1}^{n+1} a_i\geqslant \sum_{i=1}^N a_i>\delta/2$.  If $n+1=N$, this is clear.  Otherwise fix $n+1<i\leqslant N$ and $(s,t)\in \Pi\Omega_{\xi, N}$ such that $s\in \Lambda_{\xi, N, i}$.  Fix $u\in MAX(\Lambda_{\xi, N, 1})$ such that $u\preceq s$ and note that, since $n+1=n_{e_0\circ e''(u)}+1<i$, $$a_i\leqslant f(\theta(s), e(t))=0$$ by the definition of $f$.  Thus $a_i\leqslant 0$ for each $n+1<i\leqslant N$, and $\sum_{i=1}^{n+1} a_i\geqslant \sum_{i=1}^N a_i>\delta/2$.

Let $\kappa:\Omega_{\xi, n+1}\to \Omega_{\xi, N}$ be the canonical identification of $\Omega_{\xi, n+1}$ with the first $n+1$ levels of $\Omega_{\xi, N}$ and let $\kappa':MAX(\Omega_{\xi, n+1})\to MAX(\Omega_{\xi, N})$ be any function such that $(\kappa, \kappa')$ is an extended pruning.     For each $s\in \Omega_{\xi, n+1}$, let $v_s= x_{\theta(\kappa(s))}$ and for each $t\in MAX(\Omega_{\xi, n+1})$, let $v^*_t=y^*_{e(\kappa'(t))}$.    By our remark above, for any $1\leqslant i\leqslant n+1$ and each $(s,t)\in \Omega_{\xi, n+1}$ such that $s\in \Lambda_{\xi, n+1, i}$, $$\text{Re\ }v^*_t(\frac{y}{n+1} +Av_s) -\frac{\varrho_\xi(y, n+1, \sigma)}{n+1} = f(\theta(\kappa(s)), e(\kappa'(t)))\geqslant a_i.$$  From this it and the definition of $\varrho_\xi(y, n+1, \sigma)$, it follows that \begin{align*} 0 & \geqslant \inf_{t\in MAX(\Omega_{\xi, n+1})} \|y+\sum_{i=1}^{n+1}\sum_{t_{i-1}\prec s\preceq t_i} \mathbb{P}_\xi(\iota_{\xi, n+1}(s))Av_s\| -\varrho_\xi(y, n+1, \sigma) \\ & \geqslant \inf_{t\in MAX(\Omega_{\xi, n+1})} \text{Re\ }v^*_t\Bigl(y+\sum_{i=1}^{n+1}\sum_{t_{i-1}\prec s\preceq t_i} \mathbb{P}_\xi(\iota_{\xi, n+1}(s))Av_s\Bigr) -\varrho_\xi(y, n+1, \sigma) \\ & =\inf_{t\in MAX(\Omega_{\xi, n+1})} \sum_{i=1}^{n+1}\sum_{t_{i-1}\prec s\preceq t_i}\mathbb{P}_\xi(\iota_{\xi, n+1}(s))\Bigl[\text{Re\ }v^*_t\bigl(\frac{y}{n+1}+Av_s\bigr)- \frac{\varrho_\xi(y, n+1,\sigma)}{n+1}\Bigr] \\ & \geqslant \sum_{i=1}^{n+1}\sum_{t_{i-1}\prec s\preceq t_i}\mathbb{P}_\xi(\iota_{\xi, n+1}(s))a_i=\sum_{i=1}^{n+1}a_i>\delta/2.\end{align*} This contradiction finishes $(iii)$.

\end{proof}

The next lemma is a modification of a result of Lancien \cite{Lancien}, which was proved in the case $K=B_{X^*}$.   

\begin{lemma}Let $X$ be a Banach space, $K,L,M\subset X^*$ $w^*$-compact with $K$ convex.    \begin{enumerate}[(i)]\item For any $\ee>0$ and any ordinal $\xi$, $L+s_\ee^\xi(M)\subset s_\ee^\xi(L+M)$. \item For any ordinal $\xi$ and for any $n\in \nn$, if $Sz(K, \ee)>\xi$, $Sz(K, \ee/n)>\xi n$.   \item If $Sz(K)=\omega^{\xi+1}$, $\textbf{\emph{p}}_\xi(K)\geqslant 1$.  \end{enumerate}

\label{lemma1}
\end{lemma}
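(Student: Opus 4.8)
The plan is to treat the three parts in order, with (i) feeding into (ii) and (ii) feeding into (iii).

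For (i) I would prove $L+s_\ee^\xi(M)\subseteq s_\ee^\xi(L+M)$ by transfinite induction on $\xi$, the base case $\xi=0$ being the trivial inclusion $L+M\subseteq L+M$. The engine of the induction is the one-step inclusion: for arbitrary $w^*$-compact $L,N\subseteq X^*$ one has $L+s_\ee(N)\subseteq s_\ee(L+N)$. This is immediate from the definition of $s_\ee$: if $x^*\in s_\ee(N)$, $\ell^*\in L$, and $W$ is a $w^*$-neighbourhood of $\ell^*+x^*$, then $W-\ell^*$ is a $w^*$-neighbourhood of $x^*$, hence meets $N$ in a set of diameter exceeding $\ee$, and translating back by $\ell^*$ produces two points of $W\cap(L+N)$ at distance exceeding $\ee$. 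Together with the obvious monotonicity $A\subseteq B\Rightarrow s_\ee(A)\subseteq s_\ee(B)$ and its transfinite iterate, the successor step becomes $L+s_\ee^{\xi+1}(M)\subseteq s_\ee(L+s_\ee^\xi(M))\subseteq s_\ee(s_\ee^\xi(L+M))=s_\ee^{\xi+1}(L+M)$, the inductive hypothesis being used in the middle; the limit step is simply $L+\bigcap_{\zeta<\xi}s_\ee^\zeta(M)\subseteq\bigcap_{\zeta<\xi}(L+s_\ee^\zeta(M))\subseteq\bigcap_{\zeta<\xi}s_\ee^\zeta(L+M)$. Throughout I would use that the sum of two $w^*$-compact sets is $w^*$-compact, that every $s_\ee^\zeta(M)$ is a $w^*$-compact subset of $M$, and the homogeneity rule $s_\ee^\zeta(cK)=c\,s_{\ee/c}^\zeta(K)$ for $c>0$, which I will record once at the outset.

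For (ii), where convexity of $K$ enters, the key is an auxiliary claim: for $K$ convex $w^*$-compact, any ordinal $\zeta$, and any $n\geq 1$,
$$\underbrace{s_\ee^\zeta(K)+\cdots+s_\ee^\zeta(K)}_{n}\ \subseteq\ s_\ee^{\zeta n}(nK),$$
proved by induction on $n$ (the case $n=1$ being trivial). For the step $n\to n+1$ I split the $(n+1)$-fold sum as $s_\ee^\zeta(K)$ plus the $n$-fold sum, apply the inductive hypothesis to land in $s_\ee^\zeta(K)+s_\ee^{\zeta n}(nK)$, and then invoke (i) twice: first with $L=s_\ee^\zeta(K)$ and $M=nK$ at the ordinal $\zeta n$, landing in $s_\ee^{\zeta n}(s_\ee^\zeta(K)+nK)$; then, by convexity, $s_\ee^\zeta(K)+nK\subseteq s_\ee^\zeta(nK+K)=s_\ee^\zeta((n+1)K)$ (this is (i) with $L=nK$, $M=K$), so that monotonicity of $s_\ee^{\zeta n}(\cdot)$ and composition of derivations give $s_\ee^{\zeta n}(s_\ee^\zeta((n+1)K))=s_\ee^{\zeta+\zeta n}((n+1)K)=s_\ee^{\zeta(n+1)}((n+1)K)$, using $\zeta+\zeta n=\zeta(1+n)=\zeta(n+1)$ by left distributivity of ordinal multiplication. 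Granting the claim, if $Sz(K,\ee)>\zeta$, i.e. $s_\ee^\zeta(K)\neq\varnothing$, then any $x^*$ in it gives $nx^*\in s_\ee^{\zeta n}(nK)=n\,s_{\ee/n}^{\zeta n}(K)$, so $s_{\ee/n}^{\zeta n}(K)\neq\varnothing$ and $Sz(K,\ee/n)>\zeta n$.

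For (iii), since $Sz(K)=\sup_{\ee>0}Sz(K,\ee)=\omega^{\xi+1}$ while a supremum of ordinals all $\leq\omega^\xi$ cannot exceed $\omega^\xi$, there is some $\ee_0>0$ with $Sz(K,\ee_0)>\omega^\xi$; by monotonicity of $\ee\mapsto Sz(K,\ee)$ I may take $\ee_0<1$. Applying (ii) with the ordinal $\omega^\xi$ gives, for every $n\in\nn$, $Sz(K,\ee_0/n)>\omega^\xi n$, i.e. $s_{\ee_0/n}^{\omega^\xi n}(K)\neq\varnothing$, i.e. $Sz_\xi(K,\ee_0/n)\geq n+1$. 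Hence
$$\textbf{p}_\xi(K)\ \geq\ \limsup_{n\to\infty}\frac{\log Sz_\xi(K,\ee_0/n)}{|\log(\ee_0/n)|}\ \geq\ \limsup_{n\to\infty}\frac{\log(n+1)}{\log n+|\log\ee_0|}\ =\ 1.$$
The one genuinely non-routine point is the auxiliary claim in (ii): the right object to iterate is the $n$-fold sum of the \emph{derived} set $s_\ee^\zeta(K)$, not $K+s_\ee^\zeta(K)$, so that one spare copy of $s_\ee^\zeta(K)$ can always be fed into (i) in the $L$-slot at the higher level $\zeta n$, which is what buys the extra $\zeta$ derivations at each step; once that is in place, the convexity identity $nK+K=(n+1)K$ and the ordinal arithmetic $\zeta n+\zeta=\zeta(n+1)$ finish the induction, and (i) together with (iii) are routine.
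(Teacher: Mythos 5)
Your argument is correct and follows essentially the same route as the paper: part (i) is the routine transfinite induction (the paper leaves it as ``a standard inductive proof''), part (ii) is exactly the paper's iterated application of (i) — the paper writes the chain $s_{\ee/n}^{\xi n}(K)=s_\ee^{\xi n}(nK)\supset s_\ee^{\xi(n-1)}(K^{(n-1)}+s_\ee^\xi(K))\supset\cdots\supset s_\ee^\xi(K)+\cdots+s_\ee^\xi(K)$, which is your auxiliary claim $\sum_n s_\ee^\zeta(K)\subseteq s_\ee^{\zeta n}(nK)$ read right-to-left, with your explicit induction on $n$ just repackaging that chain — and part (iii) is the same deduction (choosing $\ee_0$ with $Sz(K,\ee_0)>\omega^\xi$, applying (ii), and taking the $\limsup$ along $\ee_0/n\to 0$). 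The only surface difference is presentational: you use (i) twice per inductive step where the paper unrolls the chain in one pass, and your treatment of (iii) spells out the limsup computation that the paper abbreviates to ``whence we easily deduce.''
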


\begin{proof}$(i)$ This is a standard inductive proof. 

$(ii)$ For $\ee>0$ and $n\in \nn$, applying $(i)$ successively yields that \begin{align*} s^{\xi n}_{\ee/n}(K) & = s^{\xi n}_\ee(nK)\supset s^{\xi n}_\ee (K+\ldots +K)\supset s^{\xi(n-1)}_\ee(K+\ldots +K+s^\xi_\ee(K)) \\ & \supset s^{\xi(n-2)}_\ee(K+\ldots +K+s^\xi_\ee(K)+s^\xi_\ee(K)) \supset \ldots \supset s^\xi_\ee(K)+\ldots +s^\xi_\ee(K) .\end{align*}

  If $Sz(K, \ee)>\xi$, this last sum is non-empty, and so is $s^{\xi n}_{\ee/n}(K)$, whence $Sz(K, \ee/n)>\xi n$.

$(iii)$ If $Sz(K)=\omega^{\xi+1}$, there exists $\ee_1>0$ such that $Sz(K, \ee_1)>1$.  Then for any $n\in \nn$, $Sz(K, \ee_1/n)>n$, whence we easily deduce $(iii)$.

\end{proof}

We recall the following from \cite{CD}. 

\begin{proposition} Fix an operator $A:X\to Y$ and $0<\sigma, \tau\leqslant 1$, and an ordinal $\xi$. \begin{enumerate}[(i)]\item If $\rho_\xi(\sigma;A)\leqslant \sigma\tau$, then $\delta^{w^*}_\xi(6\tau;A^*)\geqslant \sigma\tau$.  \item If $\delta^{w^*}_\xi(\tau;A^*) \geqslant \sigma\tau$, then $\rho_\xi(\sigma;A)\leqslant \sigma\tau$.  \end{enumerate}
\label{duality}
\end{proposition}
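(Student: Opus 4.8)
The plan is to establish this as the transfinite, operator version of the Godefroy--Kalton--Lancien smoothness/convexity duality \cite{GKL}; this is how it is proved in \cite{CD}, and both implications are obtained by transporting a ``violation'' of one modulus into a ``violation'' of the other through a Hahn--Banach pairing, with a loss of the factor $6$ only in $(i)$. Throughout I would use the reduction discussed in Section 3 that $\rho_\xi(\cdot;A)$ may be computed over normally weakly null collections indexed by $\Omega_\xi=\Gamma_\xi D$ together with the $\mathbb{P}_\xi$-weighted special convex blocks $z^t=\sum_{s\preceq t}\mathbb{P}_\xi(s)x_s$, so that Theorem \ref{favorite theorem} and Proposition \ref{recall} are available as branchwise stabilization tools.

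\emph{Direction $(ii)$.} Argue by contraposition: assume $\rho_\xi(\sigma;A)>\sigma\tau$. After the reduction there are $y\in B_Y$, a normally weakly null $(x_t)_{t\in\Omega_\xi}\subset\sigma B_X$, and $c>\sigma\tau$ with $\|y+Az^t\|\geqslant 1+c$ for every $t\in MAX(\Omega_\xi)$. Fix $y^*_t\in S_{Y^*}$ norming $y+Az^t$; since $\mathrm{Re}\,y^*_t(y)\leqslant 1$ this gives $\sum_{s\preceq t}\mathbb{P}_\xi(s)\,\mathrm{Re}\,A^*y^*_t(x_s)\geqslant c$. Applying the $n=1$ case of Theorem \ref{favorite theorem} to $f(s,t)=\mathrm{Re}\,A^*y^*_t(x_s)$ with $\delta=(c-\sigma\tau)/2$ produces an extended pruning $(\theta,e):\Omega_\xi\to\Omega_\xi$ and a number $a>\sigma\tau$ with $\mathrm{Re}\,A^*y^*_{e(t)}(x_{\theta(s)})\geqslant a$ for all $(s,t)\in\Pi\Omega_\xi$. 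Then I would descend through the levels of the pruned tree, replacing at each node the relevant net of children's functionals by a $w^*$-cluster point, obtaining functionals $(\widehat y^*_s)_{s\in\Omega_\xi}$ with $\widehat y^*_t=y^*_{e(t)}$ at maximal $t$; setting $z^*_s=\widehat y^*_s-\widehat y^*_{s^-}$, the collection $(z^*_s)$ is $w^*$-null. Since $\|x_{\theta(s)}\|\leqslant\sigma$ and the estimate $a>\sigma\tau$ survives the $w^*$-limit (the correction term $\mathrm{Re}\,A^*\widehat y^*_{s^-}(x_{\theta(s)})$ tends to $0$ by weak nullity), one gets $\|A^*z^*_s\|>\tau$ on a full subtree, i.e.\ $(z^*_s)$ is $(A^*,\tau)$-large. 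Finally $\widehat y^*_\varnothing+\sum_{s\preceq t}z^*_s=\widehat y^*_t$ has norm $1$ for every maximal $t$; after the routine normalization of the root functional (as in \cite{CD}) this contradicts $\delta^{w^*}_\xi(\tau;A^*)\geqslant\sigma\tau$.

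\emph{Direction $(i)$.} Again by contraposition: assume $\delta^{w^*}_\xi(6\tau;A^*)<\sigma\tau$, and fix $y^*_0\in S_{Y^*}$, a $B$-tree $B$ with $o(B)=\omega^\xi$, a $w^*$-null, $(A^*,6\tau)$-large $(z^*_s)_{s\in B}$, and $\eta>0$ with $\|y^*_0+\sum_{s\preceq t}z^*_s\|\leqslant 1+\sigma\tau-\eta$ for all $t$. For each $s$ pick $u_s\in B_X$ with $\mathrm{Re}\,z^*_s(Au_s)>6\tau$. The $(u_s)$ need not be weakly null, so I would descend through the tree subtracting at each node a weak cluster point of the children, replacing $(u_s)$ by $(v_s)$ which is weakly null on a full subtree, satisfies $\|v_s\|\leqslant 2$, and still has $\mathrm{Re}\,z^*_s(Av_s)>6\tau-\varepsilon$ (the correction $z^*_s(A\cdot)$ on the fixed cluster point vanishes in the limit because $(z^*_s)$ is $w^*$-null). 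Put $w_s=\tfrac{\sigma}{2}v_s\in\sigma B_X$, so $\mathrm{Re}\,z^*_s(Aw_s)>3\sigma\tau-\varepsilon'$. Choosing $y\in B_Y$ with $\mathrm{Re}\,y^*_0(y)>1-\varepsilon$ and pruning once more so that the cross terms $y^*_0(Aw_s)$, $z^*_s(y)$, and $z^*_s(Aw_{s'})$ for $s\neq s'$ are negligible, one obtains for every branch $t$ and every $x\in\text{co}(w_s:s\preceq t)$
$$\|y+Ax\|\ \geqslant\ \frac{\mathrm{Re}\,\bigl(y^*_0+\textstyle\sum_{s\preceq t}z^*_s\bigr)(y+Ax)}{\bigl\|y^*_0+\sum_{s\preceq t}z^*_s\bigr\|}\ \geqslant\ \frac{1+3\sigma\tau-\varepsilon''}{1+\sigma\tau-\eta}\ >\ 1+\sigma\tau,$$
the last inequality holding for small $\varepsilon''$ because $\sigma\tau(1-\sigma\tau)+\eta(1+\sigma\tau)>0$. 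As this holds on every branch of a weakly null collection of order $\omega^\xi$ in $\sigma B_X$ with base point $y\in B_Y$, it contradicts $\rho_\xi(\sigma;A)\leqslant\sigma\tau$. The factor $6=2\cdot 3$ records the two losses: the $2$ from replacing $u_s$ by $v_s=u_s-(\text{cluster point})$, and the $3$ needed so that the gain $3\sigma\tau$ in the numerator dominates the denominator.

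\emph{Main obstacle.} The conceptual scheme is exactly the GKL pairing; the real work is the ``full subtree'' extractions. At each of the infinitely many levels of $\Gamma_\xi$ one must simultaneously pass to a cofinal family of children (to form $w^*$-cluster points of functionals in $(ii)$, resp.\ weak cluster points of vectors in $(i)$) and force all off-diagonal pairings to be negligible, all while keeping the order of the pruned tree equal to $\omega^\xi$. This is precisely the stabilization phenomenon automated by the machinery of Section 3 -- the trees $\Gamma_\xi D$, prunings, Proposition \ref{recall}, and Theorem \ref{favorite theorem} -- so in the present framework these steps are routine; the remaining delicate point is the bookkeeping of the numerical constants, in particular the normalization of the root functional in $(ii)$ (which is what makes $(ii)$ lossless) and the $\tfrac{\sigma}{2}$-rescaling in $(i)$ that forces the $6$.
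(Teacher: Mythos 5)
The paper does not actually prove this proposition: it is recalled verbatim from \cite{CD}, so there is no internal argument to compare your proposal against. That said, your two-directional GKL-style pairing is the right general scheme, and your identification of the losses that produce the $6$ in $(i)$ is sensible. Two steps, however, are glossed over in a way that leaves genuine gaps.

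In direction $(ii)$ you construct the functionals $\widehat y^*_s$ as iterated $w^*$-cluster points of the norming functionals $y^*_{e(t)}\in S_{Y^*}$, and you claim that ``routine normalization of the root functional'' makes $\widehat y^*_\varnothing$ admissible. But a $w^*$-cluster point of unit functionals only satisfies $\|\widehat y^*_\varnothing\|\leqslant 1$, and there is no lower bound: from $\mathrm{Re}\,y^*_t(y+Az^t)=\|y+Az^t\|>1+\sigma\tau$ one only gets $\mathrm{Re}\,y^*_t(y)>1+\sigma\tau-\mathrm{Re}\,y^*_t(Az^t)$, and $\mathrm{Re}\,y^*_t(Az^t)$ is not controlled from above in terms of $\sigma\tau$ (take $y=0$ and $\|A\|$ large, in which case $\widehat y^*_\varnothing$ could even vanish, or take $\|y\|<1$). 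Since $\delta^{w^*}_\xi$ takes the infimum over $\|y^*\|=1$ (resp.\ $\|y^*\|\geqslant 1$ in the \cite{CD} version), the root you produce is simply not an admissible competitor, and naive rescaling $\widehat y^*_\varnothing\mapsto\widehat y^*_\varnothing/\|\widehat y^*_\varnothing\|$ only yields $\sup_t\|\widehat y^*_\varnothing/\|\widehat y^*_\varnothing\|+\sum_{s\preceq t}z^*_s\|\leqslant 2-\|\widehat y^*_\varnothing\|$, which does not beat $1+\sigma\tau$ unless $\|\widehat y^*_\varnothing\|>1-\sigma\tau$. Something additional is needed (either a different choice of norming functionals, a pre-stabilization that pins $\mathrm{Re}\,y^*_t(y)$ close to $1$, or a reduction of $\rho_\xi$ to the case $\|y\|=1$), and until that is supplied, the claim that $(ii)$ is ``lossless'' is unearned.

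In direction $(i)$, the step ``subtracting at each node a weak cluster point of the children'' is not available as stated: the witnesses $u_s\in B_X$ have no reason to have a weak cluster point in $X$ unless $B_X$ is weakly compact. The standard repair (replace $u_s$ by differences $u_s-u_{s'}$ of nearby siblings, or pass through $B_{X^{**}}$ with its $w^*$-compactness and come back via local reflexivity) is routine, and it is also exactly the source of your factor $2$; but it should be stated, because it is different from what you wrote. The remaining pruning to kill the cross terms on a full-order subtree of $\Gamma_\xi D$ is indeed the kind of stabilization that Section 3 is built for and is not a gap.
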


\begin{theorem} Let $A:X\to Y$ be an operator with norm not exceeding $1$ and fix $0<\sigma\leqslant 1$.   \begin{enumerate}[(i)]\item If $N=N_\xi(\sigma)<\infty$, there exists a $2$-equivalent norm $|\cdot|$ on $Y$ such that $\rho_\xi(\sigma;A:X\to (Y, |\cdot|))\leqslant 1/N$.  \item If $N_\xi(\sigma)=\infty$, then for any $N\in \nn$, there exists a $2$-equivalent norm $|\cdot|$ on $Y$ such that $\rho_\xi(\sigma;A:X\to (Y, |\cdot|))\leqslant 1/N$.  \end{enumerate}

\label{renorming1}
\end{theorem}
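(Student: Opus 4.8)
The plan is to read the norm off the functions $\varrho_\xi(\cdot,n,\sigma)$ of Lemma~\ref{everything} via a Minkowski‑gauge construction, and to run the same argument in both cases. Put $N=N_\xi(\sigma)$ in case (i), and let $N$ be the prescribed natural number in case (ii); in the latter situation $N<N_\xi(\sigma)$, so in both cases Lemma~\ref{everything}(ii) applies to every index $0\le n\le N$: each $\varrho_\xi(\cdot,n,\sigma)$ is convex, radial, $1$-Lipschitz, non-decreasing in $n$, and satisfies $\|y\|\le \varrho_\xi(y,n,\sigma)\le \|y\|+1$ and $\varrho_\xi(0,n,\sigma)\le 1$. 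Set $c_n=1+(1+\tfrac1N)^n$ for $0\le n\le N$, so that $c_0=2$ and $c_{n+1}-1=(1+\tfrac1N)(c_n-1)$, let $B=\bigcap_{n=0}^N\{y\in Y:\varrho_\xi(y,n,\sigma)\le c_n\}$, and let $|\cdot|$ be the gauge of $B$. Since the $\varrho_\xi(\cdot,n,\sigma)$ are continuous and convex and $\varrho_\xi(0,n,\sigma)\le 1<c_n$, the set $B$ is closed, convex, balanced and bounded with $0$ in its interior, so $|\cdot|$ is an equivalent norm. The inclusion $B\subseteq 2B_Y$ is immediate from the $n=0$ constraint (as $\varrho_\xi(y,0,\sigma)=\|y\|$), and $B_Y\subseteq B$ because $\|y\|\le 1$ forces $\varrho_\xi(y,n,\sigma)\le \|y\|+1\le 2\le c_n$; hence $\tfrac12\|y\|\le |y|\le \|y\|$, so $|\cdot|$ is $2$-equivalent to $\|\cdot\|$.

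Next comes the smoothness bound. Fix $y$ with $|y|\le 1$ (so $y\in B$), a weakly null collection $(x_t)_{t\in T}\subset\sigma B_X$ with $o(T)=\omega^\xi$, and $\varepsilon>0$. Since $o(T)=\omega^\xi=o(\Gamma_\xi)$, prune $T$ to a normally weakly null collection indexed by $\Omega_\xi$ sitting inside $T$ (choosing children inside the weak neighbourhoods of $0$ prescribed by a weak neighbourhood basis $D$), so that the special convex blocks of this collection are convex combinations along branches of $T$. Apply Lemma~\ref{everything}(iii) to this collection once, with a parameter $\delta\in(0,\min\{\varepsilon,\tfrac12\})$: there exist $t$ and $x\in\text{co}(x_s:s\preceq t)$ with $\varrho_\xi(y+Ax,n,\sigma)<\delta+\varrho_\xi(y,n+1,\sigma)\le \delta+c_{n+1}$ for all $0\le n<N$, while at the top level $\varrho_\xi(y+Ax,N,\sigma)\le \|y+Ax\|+1<\delta+c_1+1$. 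Put $\lambda=1+\tfrac1N+\varepsilon$. By convexity and $\varrho_\xi(0,n,\sigma)\le 1$ one has $\varrho_\xi((y+Ax)/\lambda,n,\sigma)\le \tfrac1\lambda\varrho_\xi(y+Ax,n,\sigma)+(1-\tfrac1\lambda)$, and the choice $c_n-1=(1+\tfrac1N)^n$ makes this at most $c_n$ for every $0\le n\le N$: for $0\le n<N$ it reduces to $\delta+c_{n+1}-1\le\lambda(c_n-1)$, i.e.\ $\delta/(c_n-1)+1+\tfrac1N\le\lambda$, which holds since $c_n-1\ge 1$ and $\delta\le\varepsilon$; for $n=N$ it reduces to $\delta+2+\tfrac1N\le\lambda(1+\tfrac1N)^N$, which holds with room because $(1+\tfrac1N)^{N+1}>2+\tfrac1N$ for all $N\ge 1$. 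Hence $y+Ax\in\lambda B$, so $|y+Ax|-1\le \tfrac1N+\varepsilon$; letting $\varepsilon\to 0^+$ and taking the supremum over $y$ and over the trees gives $\rho_\xi(\sigma;A:X\to(Y,|\cdot|))\le 1/N$. This settles (i); case (ii) is proved identically, only the admissible range of $N$ changing.

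The step I expect to be delicate is the choice of the weights $c_n$. The functions $\varrho_\xi(\cdot,n,\sigma)$ are convex but not positively homogeneous, so a gauge-of-a-body construction interacts with them only through the convexity inequality above, which carries an unavoidable additive error controlled by $\varrho_\xi(0,n,\sigma)\le 1$. One is thereby forced to grow the $c_n$ fast enough that the one-step estimate of Lemma~\ref{everything}(iii) propagates to precisely the factor $1+1/N$, yet slowly enough that $B$ stays squeezed between $B_Y$ and $2B_Y$; the geometric sequence with $c_0=2$ and ratio $1+1/N$ on the shifted weights $c_n-1$ is essentially the only choice meeting both constraints at once. The remaining ingredients — extraction of the normally weakly null subcollection and the convexity bookkeeping — are routine given Lemma~\ref{everything} and the combinatorial framework of Section~3.
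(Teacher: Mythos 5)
Your proof is correct, and it takes a genuinely different route from the paper's. The paper builds the ball $B=\{y:g(y)\leqslant 2\}$ for the \emph{single} averaged function $g=N^{-1}\sum_{n=0}^{N-1}\varrho_\xi(\cdot,n,\sigma)$; then Lemma~\ref{everything}(iii) gives the shift $g(y+Ax)\leqslant N^{-1}\sum_{n=1}^{N}\varrho_\xi(y,n,\sigma)\leqslant g(y)+(\varrho_\xi(y,N,\sigma)-\|y\|)/N\leqslant 2+1/N$, after which the factor $N/(N+1)$ and convexity of $g$ pull $y+Ax$ back inside $B$. You instead carve $B$ as the intersection $\bigcap_{n=0}^{N}\{\varrho_\xi(\cdot,n,\sigma)\leqslant c_n\}$ with geometrically growing weights $c_n-1=(1+1/N)^n$, so that the one-step recursion $\varrho_\xi(y+Ax,n,\sigma)\lesssim \varrho_\xi(y,n+1,\sigma)\leqslant c_{n+1}=1+(1+1/N)(c_n-1)$ is exactly absorbed by contracting by $\lambda=1+1/N+\varepsilon$. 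The averaged construction collapses the $N$ constraints into one and makes the $1/N$ bound appear by telescoping; your level-set construction keeps the constraints separate and recovers $1/N$ from the geometric ratio. Both lean on the same ingredients — the convexity, radiality, Lipschitz and monotonicity properties from Lemma~\ref{everything}(ii), the one-step inequality from Lemma~\ref{everything}(iii), and the standard reduction from general weakly null trees over an arbitrary $B$-tree of order $\omega^\xi$ to normally weakly null collections over $\Omega_\xi$ — and both give the same $2$-equivalence. One small bookkeeping remark: in verifying the $n=N$ inequality $\delta+2+1/N\leqslant\lambda(1+1/N)^N$, the slack you invoke, $(1+1/N)^{N+1}\geqslant 2+2/N$, indeed gives room $\geqslant 1/N+\varepsilon(1+1/N)^N>\varepsilon>\delta$, so the constraint $\delta<\varepsilon$ suffices and $\min\{\varepsilon,1/2\}$ is harmless; spelling this out would tighten the write-up, but the argument stands.
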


\begin{proof} We prove $(i)$ and $(ii)$ simultaneously.  In the case of $(i)$, let $N=N_\xi(\sigma)$ and in the case of $(ii)$, fix $N\in \nn$ arbitrary.  Let $g(y)=N^{-1}\sum_{n=0}^{N-1}\varrho_\xi(y, n, \sigma)$.   Let $B=\{y\in Y: g(y)\leqslant 2\}$.  It follows from Lemma \ref{everything} that $B$ is the unit ball of an equivalent norm $|\cdot|$ on $Y$.  Furthermore, still using Lemma \ref{everything}, for any normally weakly null $(x_t)_{t\in \Omega_\xi}\subset B_X$ and any $y\in B$, \begin{align*} \inf_{t\in \Omega_\xi, x\in \text{co}(x_s:s\preceq t)} g(y+\sigma Ax)  & = \inf_{t\in \Omega_\xi, x\in \text{co}(x_s:s\preceq t)} N^{-1}\sum_{n=0}^{N-1}\varrho_\xi(y+\sigma Ax, n, \sigma) \\ & \leqslant N^{-1}\sum_{n=1}^N \varrho_\xi(y, n, \sigma) \leqslant g(y)+ \frac{\varrho_\xi(y, N, \sigma)-\|y\|}{N} \\ & \leqslant 2 + \frac{1}{N}.\end{align*} Since $g$ is convex and $g(0)<1$, it follows that $$\inf_{t\in \Omega_\xi, x\in \text{co}(x_s:s\preceq t)} g\Bigl(\frac{N}{N+1}(y+\sigma Ax)\Bigr)<2,$$ whence $$\inf_{t\in \Omega_\xi, x\in \text{co}(x_s:s\preceq t)} |y+\sigma Ax| \leqslant 1+1/N.$$  

Now, for a general $B$-tree $B$ with $o(B)=\omega^\xi$ and a weakly null collection $(x_t)_{t\in B}\subset \sigma B_X$, by \cite[Proposition $2.1$]{CD}, there exists a $B$-tree $B_{\omega^\xi}$ with $o(B_{\omega^\xi})=\omega^\xi$ and a monotone map $\theta:B_{\omega^\xi}D\to B$ such that $(x_{\theta(t)})_{t\in B_{\omega^\xi}D}$ is normally weakly null. Here, $D$ is our fixed weak neighborhood basis at $0$ in $X$. Since $o(\Gamma_\xi)=\omega^\xi$, there exists a monotone $\phi:\Gamma_\xi\to B_{\omega^\xi}$ which preserves lengths.  Now we define $\phi_0:\Omega_\xi\to B_{\omega^\xi}D$ by letting $\phi_0((\zeta_i, U_i)_{i=1}^n)=(\mu_i, U_i)_{i=1}^n$, where $(\mu_i)_{i=1}^n=\phi((\zeta_i)_{i=1}^n)$.  Then the collection $(x_{\theta\circ \phi_0(t)})_{t\in \Omega_\xi}\subset \sigma B_X$ is normally weakly null.  Thus by the previous argument, for any $\delta>0$, there exists $t\in \Omega_\xi$ and $$x\in \text{co}(x_{\theta\circ \phi_0(s)}:s\preceq t)\subset \text{co}(x_s: s\preceq \theta\circ \phi_0(t))$$ such that $|x|\leqslant 1+1/N$.   

\end{proof}

\begin{theorem} Fix $p,q,r$ such that $1/p+1/q=1$, $1\leqslant p<\infty$, $1< r<q$.  Then for any ordinal $\xi$ and any operator $A:X\to Y$ with $\|A\|\leqslant 1$ and $\sup_{0<\ee\leqslant 1} \ee^p Sz_\xi(A, \ee)=\alpha\leqslant\infty$, there exists a constant $\beta=\beta(\alpha, p,r)$ such that for any $n\in \nn$, any positive scalars $(a_i)_{i=1}^n$, and any normally weakly null $(x_t)_{t\in \Omega_{\xi, n}}\subset B_X$, $$\inf_{t\in MAX(\Omega_{\xi,n})} \|\sum_{i=1}^n a_i A z^t_i\|\leqslant \beta\bigl(\sum_{i=1}^n a_i^r\bigr)^{1/r}.$$   

\label{cronenberg}
\end{theorem}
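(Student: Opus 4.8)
The plan is to follow the template of the $\xi=0$ argument in \cite{GKL}, substituting the branch‑averaging machinery of Theorem \ref{favorite theorem} together with Propositions \ref{easy} and \ref{characterize} for the elementary averaging available when $\xi=0$. If $\alpha=\infty$ the assertion is vacuous, so I would assume $\alpha<\infty$, fix $n$, positive scalars $(a_i)_{i=1}^n$ with $a:=\max_i a_i$, and a normally weakly null $(x_t)_{t\in\Omega_{\xi,n}}\subset B_X$, set $\lambda:=\inf_{t\in MAX(\Omega_{\xi,n})}\|\sum_{i=1}^n a_iAz_i^t\|$, and fix $\delta>0$; it is enough to bound $\lambda-\delta$ by a multiple of $\bigl(\sum_i a_i^r\bigr)^{1/r}$ depending only on $\alpha,p,r$.

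First I would pick, for each $t\in MAX(\Omega_{\xi,n})$, a functional $y^*_t\in B_{Y^*}$ norming $\sum_i a_iAz_i^t$, and define $f:\Pi\Omega_{\xi,n}\to[-a,a]$ by $f(s,t)=a_{i(s)}\,\mathrm{Re}\,y^*_t(Ax_s)$, where $i(s)$ is the level of $s$. Applying Theorem \ref{favorite theorem} in the relaxed form described in the remark following it, and replacing each of the numbers it provides by its maximum with the corresponding $-a_i$, I obtain a unit preserving extended pruning $(\theta,e):\Omega_{\xi,n}\to\Omega_{\xi,n}$ and reals $c_i\in[-a_i,a]$ such that $f(\theta(s),e(t))\ge c_i$ for every $(s,t)\in\Pi\Omega_{\xi,n}$ with $s\in\Lambda_{\xi,n,i}$, and $\sum_{s\preceq e(t)}\mathbb{P}_\xi(\iota_{\xi,n}(s))f(s,e(t))\le\delta+\sum_i c_i$ for every $t\in MAX(\Omega_{\xi,n})$. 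Since the left side of the last inequality equals $\mathrm{Re}\,y^*_{e(t)}\bigl(\sum_i a_iAz_i^{e(t)}\bigr)=\|\sum_i a_iAz_i^{e(t)}\|\ge\lambda$, this gives $\sum_i c_i\ge\lambda-\delta$; writing $b_i:=c_i/a_i\in[-1,1]$, the first property says $\mathrm{Re}\,y^*_{e(t)}(Ax_{\theta(s)})\ge b_i$ for all $s\in\Lambda_{\xi,n,i}$ with $s\preceq t\in MAX(\Omega_{\xi,n})$.

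Next I would extract a Szlenk bound on the $b_i$. Fix a permutation $\pi$ of $\{1,\dots,n\}$ with $b_{\pi(1)}\ge\cdots\ge b_{\pi(n)}$, and let $m$ be the number of indices with $b_i>0$. For each $k\le m$, Proposition \ref{easy} applied to $\{\pi(1),\dots,\pi(k)\}$ supplies an extended pruning of $\Omega_{\xi,k}$ into $\Omega_{\xi,n}$ whose range meets only those $k$ levels and which sends each $\Lambda_{\xi,k,j}$ into a single one of them; composing with $(\theta,e)$ and using the functionals $A^*y^*_{e(\cdot)}\in A^*B_{Y^*}$ produces a normally weakly null collection in $B_X$ on which, level by level, these functionals take values $\ge b_{\pi(k)}$. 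Proposition \ref{characterize}(iii), with all thresholds equal to any number in $(0,b_{\pi(k)})$, then gives $s_{\xi,\ee}^k(A^*B_{Y^*})\ne\varnothing$ for all $\ee<b_{\pi(k)}$, that is, $Sz_\xi(A,\ee)>k$; since $\ee^pSz_\xi(A,\ee)\le\alpha$ on $(0,1]$ and $b_{\pi(k)}\le1$, letting $\ee\uparrow b_{\pi(k)}$ yields $b_{\pi(k)}\le\alpha^{1/p}k^{-1/p}$.

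Finally I would combine the two halves:
\[
\lambda-\delta\le\sum_{i=1}^n a_ib_i=\sum_{j=1}^n a_{\pi(j)}b_{\pi(j)}\le\sum_{j=1}^m a_{\pi(j)}b_{\pi(j)}\le\alpha^{1/p}\sum_{j=1}^m a_{\pi(j)}\,j^{-1/p},
\]
the second inequality using $b_{\pi(j)}\le0$ for $j>m$, and then apply H\"older's inequality with exponents $r$ and $r'=r/(r-1)$ to majorize the last sum by $\alpha^{1/p}\bigl(\sum_i a_i^r\bigr)^{1/r}\bigl(\sum_{j\ge1}j^{-r'/p}\bigr)^{1/r'}$; the series converges exactly because $r<q$ forces $r'/p>1$. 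Letting $\delta\to0$ gives the theorem with $\beta(\alpha,p,r)=\alpha^{1/p}\bigl(\sum_{j\ge1}j^{-r'/p}\bigr)^{1/r'}$. The hard part will be the third paragraph: Theorem \ref{favorite theorem} must be run on the \emph{weighted} function $f$ so that the quantities $\sum_i a_iAz_i^t$, rather than the bare $\mathbb{P}_\xi$-averages, are what gets controlled, and the level‑preserving pruning of Proposition \ref{easy} must be dovetailed with $(\theta,e)$ so that restricting to the $k$ top‑ranked levels still witnesses non-emptiness of the $k$-th $\xi$-Szlenk derivative of $A^*B_{Y^*}$; checking that the level indices produced by Proposition \ref{easy} line up with the inequalities $\mathrm{Re}\,y^*_{e(t)}(Ax_{\theta(s)})\ge b_i$ is the delicate bookkeeping. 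The limit $\ee\uparrow b_{\pi(k)}$ causes no trouble because $Sz_\xi(A,\cdot)$ is integer‑valued and monotone.
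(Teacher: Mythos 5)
Your proof is correct, and it takes a genuinely different route from the paper's.  Both arguments begin identically: reduce to bounding $\lambda-\delta$, pick norming functionals $y^*_t$, define $f(s,t)=a_{i(s)}\,\mathrm{Re}\,y^*_t(Ax_s)$, and invoke Theorem \ref{favorite theorem} to obtain per-level lower bounds and a matching control of the $\mathbb{P}_\xi$-average along branches.  From there the paths diverge.  The paper composes with the stabilisation Lemma \ref{combinatorial} to get two-sided approximations $d_i$, groups the levels into dyadic bands $B_j=\{i: d_i-\delta\in(2^{-j/p},2^{-(j-1)/p}]\}$, bounds $|B_j|\le\lceil\alpha\rceil 2^j$ by Proposition \ref{characterize}, and then closes the estimate by building a Tsirelson-type norm $\|\cdot\|_T\le\|\cdot\|_{\ell_r}$ on $c_{00}$ whose dual unit ball absorbs the grouped functional $f=\sum_j 2^{-(j-1)/p}\sum_{i\in B_j}e_i^*$; the constant comes out as $\lceil\alpha\rceil 2^{1/p+1/r'}/(2^{1/p}-2^{1/r'})$.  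You instead work directly with the lower bounds $b_i=c_i/a_i\in[-1,1]$, sort them via a permutation $\pi$, and observe that the single Szlenk hypothesis, applied through the composition of the Theorem~\ref{favorite theorem} pruning with the level-selecting pruning of Proposition \ref{easy} and then Proposition \ref{characterize}(iii), yields the termwise rearrangement bound $b_{\pi(k)}\le\alpha^{1/p}k^{-1/p}$; H\"older with exponents $r$ and $r'$ then does the rest, using $r<q\Leftrightarrow r'>p$ to make $\sum_k k^{-r'/p}$ converge.  Your route is more elementary — no auxiliary Banach space, no second stabilisation via Lemma \ref{combinatorial} (the one-sided bounds $b_i$ suffice), and a cleaner constant $\alpha^{1/p}\bigl(\sum_k k^{-r'/p}\bigr)^{1/r'}$ that scales like $\alpha^{1/p}$ rather than $\lceil\alpha\rceil$ — while the paper's Tsirelson formulation gives a marginally sharper intermediate estimate ($\|\sum a_ie_i\|_T$ in place of $\|\sum a_ie_i\|_{\ell_r}$) that is discarded in the final statement anyway.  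The one piece of bookkeeping you flag as delicate — that the composed pruning sends $\Lambda_{\xi,k,j}$ into a single prescribed level $\Lambda_{\xi,n,\sigma(j)}$ with $\{\sigma(1),\dots,\sigma(k)\}=\{\pi(1),\dots,\pi(k)\}$, so each branch sees values $\ge b_{\pi(k)}$ on every level — does hold, because a unit-preserving extended pruning of $\Omega_{\xi,n}$ into itself is forced to preserve each level, and Proposition \ref{easy}'s pruning is built level-by-level by construction.
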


\begin{proof} Fix $n\in\nn$, a normally weakly null $(x_t)_{t\in \Omega_{\xi,n}}\subset B_X$, and positive scalars $(a_i)_{i=1}^n$. By homogeneity, we may assume $\max_{1\leqslant i\leqslant n}a_i=1$.  Fix $$\lambda<\inf_{t\in MAX(\Omega_{\xi,n})} \|\sum_{i=1}^n a_i Az_i^t\|$$ and fix $\delta>0$ such that $$\lambda+(n+1)\delta < \inf_{t\in MAX(\Omega_{\xi,n})} \|\sum_{i=1}^n a_i A z_i^t\|.$$   For every $t\in MAX(\Omega_{\xi,n})$, fix $y^*_t\in B_{Y^*}$ such that $\text{Re\ }A^*y^*_t(\sum_{i=1}^n a_i z^t_i)=\|\sum_{i=1}^n a_i A z_i^t\|$.  Let $f:\Pi \Omega_{\xi,n}\to [-1,1]$ be given by $f(s,t)=a_i\text{Re\ }A^*y^*(x_s)$, where $s\in \Lambda_{\xi,n,i}$.  Then $$\lambda+(n+1)\delta< \inf_{t\in MAX(\Omega_{\xi,n})} \|\sum_{i=1}^n a_i A z_i^t\|= \inf_{t\in MAX(\Omega_{\xi,n})} \text{Re\ }\sum_{i=1}^n A^*y^*_t(z^t_i)=\inf_{t\in MAX(\Omega_{\xi,n})} \sum_{s\preceq t}f(s,t).$$  By Theorem \ref{favorite theorem} and the remarks following it, we may fix $b_1, \ldots, b_n\in [-1,1]$ and a unit preserving extended pruning $(\theta, e):\Omega_{\xi,n}\to \Omega_{\xi,n}$ such that $\lambda+n\delta<\sum_{i=1}^n b_i$ and for each $1\leqslant i\leqslant n$, $(s,t)\in \Pi \Omega_{\xi,n}$ with $s\in \Lambda_{\xi,n,i}$, $f(\theta(s), e(t))\geqslant b_i-\delta$.   By applying Theorem \ref{combinatorial}, we may fix a unit preserving extended pruning $(\theta', e'):\Omega_{\xi, n}\to \Omega_{\xi,n}$ and numbers $d_i$ such that for every $1\leqslant i\leqslant n$ and $(s,t)\in \Pi\Omega_{\xi,n}$ with $s\in \Lambda_{\xi,n,i}$, $d_i-\delta\leqslant \text{Re\ }A^*y^*_{e\circ e'(t)}(x_{\theta\circ \theta'(s)})\leqslant d_i$.  Note that $b_i-\delta\leqslant a_id_i$ for each $1\leqslant i\leqslant n$, so that $\lambda<\sum_{i=1}^n a_i d_i$.

For each $j\in \nn$, let $$B_j=\{i\leqslant n: d_i-\delta\in (2^{-\frac{j}{p}}, 2^{-\frac{j-1}{p}}]\}.$$  We claim that $|B_j|\leqslant \alpha 2^{-j} $.    Indeed, for $1\leqslant i\leqslant n$, let $\ee_i=2^{-\frac{j}{p}}$ if $i\in B_j$, and let $\ee_i=0$ otherwise.  By Proposition \ref{characterize} applied with $\sigma=1$, and $d_i-\delta>\ee_i$ for those $i\in B_j$, we deduce that $\varnothing\neq s_{\xi, \ee_1}\ldots s_{\xi, \ee_n}(A^*B_{Y^*})=s^{\xi |B_j|}_{2^{-\frac{j}{p}}}(A^*B_{Y^*})$, so that $Sz_\xi(A, 2^{-\frac{j}{p}})>|B_j|>\alpha 2^j$.  However, $$Sz_\xi(A, 2^{-\frac{j}{p}})\leqslant \alpha (2^{-\frac{j}{p}})^{-p} = \alpha 2^j,$$  a contradiction.   Thus $|B_j|\leqslant \alpha 2^j$.  Note also that $\{i\leqslant n: \delta_i-\delta>0\}=\cup_j B_j$.

Let $s$ be such that $1/r+1/s=1$. Let $N_0=\{\ee e_i^*:|\ee|=1, i\in \nn\}\subset c_{00}$ and $$N_{k+1}=N_k\cup \{2^{-1/s}(f+g): f,g\in N_k, \max\supp(f)<\min \supp(g)\}\subset c_{00},$$ $N=\cup_{k=0}^\infty N_k$. Define the norm $\|\cdot\|_T$ on $c_{00}$ by $\|x\|_T=\sup_{f\in N}|f(x)|$.  An easy proof by induction yields that $N\subset B_{\ell_s}$, so that $\|\cdot\|_T\leqslant \|\cdot\|_{\ell_r}$. Actually, it was shown in \cite{AT} that this space $T$ is isomorphic to $\ell_r$, but we do not need this.    Let $m=\lceil \alpha\rceil$ and note that for any subset $B$ of $\nn$ with $|B|\leqslant m 2^j$, $\|\sum_{i\in B} e_i^*\|_{T^*} \leqslant m 2^{\frac{j}{s}}$. From this we deduce that, with $f=\sum_j 2^{-\frac{j-1}{p}}\sum_{i\in B_j}e_i^*$, $$\|f\|_{T^*}\leqslant 2^\frac{1}{p}m\sum_{j=1}^\infty (2^{\frac{1}{s}-\frac{1}{p}})^j= \frac{m 2^{\frac{1}{p}+\frac{1}{s}}}{2^{\frac{1}{p}}-2^{\frac{1}{s}}}.$$  Therefore \begin{align*} \lambda & <\sum_{i=1}^n a_i d_i \leqslant \delta n+\sum_{i=1}^n a_i (d_i-\delta) \leqslant \delta n +\sum_j \sum_{i\in B_j} a_i(d_i-\delta) \\ & \leqslant \delta n +\sum_j \sum_{i\in B_j} a_i 2^{- \frac{j-1}{p}} =  \delta n+f(\sum_{i=1}^n a_ie_i)\\ & \leqslant \delta n + \frac{m 2^{\frac{1}{p}+\frac{1}{s}}}{2^{\frac{1}{p}}-2^{\frac{1}{s}}}\|\sum_{i=1}^n a_i e_i\|_T \leqslant \delta n+\frac{m 2^{\frac{1}{p}+\frac{1}{s}}}{2^{\frac{1}{p}}-2^{\frac{1}{s}}}\bigl(\sum_{i=1}^n a_i^r\bigr)^{1/r}.  \end{align*}

Since $\delta>0$ and $\lambda<\inf_{t\in MAX(\Omega_{\xi,n})} \|\sum_{i=1}^n a_i Az_i^t\|$ were arbitrary, we deduce that the conclusion is satisfied with $$\beta(\alpha,p,r) = \frac{\lceil \alpha\rceil 2^{\frac{1}{p}+\frac{1}{s}}}{2^{\frac{1}{p}}-2^{\frac{1}{s}}}.$$

\end{proof}

\begin{corollary} For any $p,q,r$ such that $1\leqslant p<\infty$, $1/p+1/q=1$, and $1\leqslant r<q$, for any constants $c, C>0$, there exists a number $\gamma=\gamma(c,C,p,r)$ such that if $A:X\to Y$ is an operator and $\xi$ is an ordinal such that $\|A\|\leqslant c$ and $\sup_{0<\ee< \|A\|}\ee^p Sz_\xi(A, \ee)\leqslant C$, then for any $0<\sigma<\|A\|$ there exists a $2$-equivalent norm $|\cdot|$ on $Y$ such that $\rho_\xi(\sigma;A:X\to (Y, |\cdot|))\leqslant \gamma \sigma^r$. 

\label{things}
\end{corollary}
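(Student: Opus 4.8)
The plan is to combine Theorem~\ref{cronenberg} with Theorem~\ref{renorming1}, together with one refinement. Theorem~\ref{cronenberg} as stated requires control of $\ee^p Sz_\xi(A,\ee)$ for all $\ee\le 1$, while here we only control it for $\ee<\|A\|$; the observation that makes this enough — and, more importantly, that keeps the final constant free of $\|A\|$ — is that once the weakly null collections are rescaled to lie in $\sigma B_X$ with $\sigma<\|A\|$, every separation parameter that actually enters the proof of Theorem~\ref{cronenberg} lies in $(0,\|A\|)$. I would then use Theorem~\ref{renorming1} to convert a lower bound on $N_\xi(\sigma;A)$ into the desired renorming.

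I would first dispose of the easy cases. If $r=1$, then $\rho_\xi(\sigma;A)\le\|A\|\sigma\le c\sigma$ already for the given norm, so $\gamma=c$ works; assume $1<r<q$. If $\|A\|>1$ and $\sigma\ge 1/\|A\|$, then $\rho_\xi(\sigma;A)\le\|A\|\sigma\le\|A\|^r\sigma^r\le c^r\sigma^r$ for the given norm. If $\|A\|>1$ and $\sigma<1/\|A\|$, replace $A$ by $A_1=A/\|A\|$ and $\sigma$ by $\sigma_1=\|A\|\sigma$: then $\|A_1\|=1\le c$, $\sigma_1<1=\|A_1\|$, and since $Sz_\xi(A_1,\ee)=Sz_\xi(A,\|A\|\ee)$ we get $\sup_{0<\ee<\|A_1\|}\ee^p Sz_\xi(A_1,\ee)=\|A\|^{-p}\sup_{0<\ee<\|A\|}\ee^p Sz_\xi(A,\ee)\le C$; moreover, by the homogeneity $\rho_\xi(\sigma;tA)=\rho_\xi(t\sigma;A)$ (valid for every equivalent norm on $Y$), $\rho_\xi(\sigma;A:X\to(Y,|\cdot|))=\rho_\xi(\sigma_1;A_1:X\to(Y,|\cdot|))$. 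Hence it suffices to prove the statement assuming $\|A\|\le 1$, at the cost of replacing $\gamma$ by $c^r\gamma$.

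So assume $\|A\|\le 1$ and $0<\sigma<\|A\|$. The heart of the matter is the inequality: for every $n$, every normally weakly null $(x_t)_{t\in\Omega_{\xi,n}}\subset\sigma B_X$, and all positive scalars $(a_i)_{i=1}^n$,
\[
\inf_{t\in MAX(\Omega_{\xi,n})}\Bigl\|\sum_{i=1}^n a_i A z_i^t\Bigr\|\le\gamma_0\,\sigma\Bigl(\sum_{i=1}^n a_i^r\Bigr)^{1/r},\qquad\gamma_0=\gamma_0(C,p,r).
\]
I would prove this by running the proof of Theorem~\ref{cronenberg} verbatim with $B_X$ replaced by $\sigma B_X$ (legitimate since $\sigma<1$) and tracking the constants. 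The Szlenk hypothesis is used only to bound $|B_j|$: now Proposition~\ref{characterize}$(iii)$ is applied with $\sigma$ in place of $1$, and for $i\in B_j$ one has $2^{-j/p}<d_i-\delta\le\|A\|\sigma$, so the relevant derivation is $s_{2^{-j/p}/\sigma}^{\omega^\xi|B_j|}(A^*B_{Y^*})\neq\varnothing$ with $2^{-j/p}/\sigma<\|A\|$; hence $|B_j|<Sz_\xi(A,2^{-j/p}/\sigma)\le C\sigma^p 2^j$, so $B_j=\varnothing$ unless $2^j>(C\sigma^p)^{-1}$. Writing $j_*$ for the least such $j$ (so $2^{-j_*/p}\le(C\sigma^p)^{1/p}=C^{1/p}\sigma$ and $C\sigma^p 2^{j_*}\le 2$, whence $|B_j|\le 2\cdot 2^{\,j-j_*}$ for $j\ge j_*$), Cronenberg's estimate of $\|f\|_{T^*}$ becomes a geometric series ($1/s<1/p$ because $r<q$) with
\[
\|f\|_{T^*}\le\frac{2^{1+1/p}}{1-2^{1/s-1/p}}\,2^{-j_*/p}\le\frac{2^{1+1/p}}{1-2^{1/s-1/p}}\,C^{1/p}\sigma=:\gamma_0\sigma,\qquad \tfrac1r+\tfrac1s=1,
\]
and the displayed inequality follows as in that proof after letting $\delta\to 0$.

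Finally, taking $a_i=1$ above: for any normally weakly null $(x_t)_{t\in\Omega_{\xi,N+1}}\subset\sigma B_X$, its restriction to the first $N$ levels (canonically identified with $\Omega_{\xi,N}$, the weights $\mathbb{P}_\xi\circ\iota$ matching up) has a branch $t$ with $\|\sum_{i=1}^N A z_i^t\|\le\gamma_0\sigma N^{1/r}$. Thus if $\gamma_0\sigma N^{1/r}<1$, this collection fails the defining property of $N_\xi(\sigma;A)$ at level $N$, and likewise at every level $\le N$; hence $N_\xi(\sigma;A)>N$ whenever $N<(\gamma_0\sigma)^{-r}$. If $\gamma_0\sigma\ge 1$, then $\rho_\xi(\sigma;A)\le\|A\|\sigma\le\gamma_0^{\,r-1}\sigma^r$ for the given norm; otherwise set $N_0=\max\{1,\lfloor\tfrac12(\gamma_0\sigma)^{-r}\rfloor\}$, which satisfies $1\le N_0\le N_\xi(\sigma;A)$ and $1/N_0\le c_r(\gamma_0\sigma)^{r}$ for a constant $c_r$, and apply Theorem~\ref{renorming1} (part $(i)$ if $N_\xi(\sigma;A)<\infty$, part $(ii)$ with $N=N_0$ otherwise) to obtain a $2$-equivalent norm $|\cdot|$ on $Y$ with $\rho_\xi(\sigma;A:X\to(Y,|\cdot|))\le 1/N_0$. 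In all cases $\rho_\xi(\sigma;A:X\to(Y,|\cdot|))\le\gamma\sigma^r$ with $\gamma=\gamma(C,p,r)$, and undoing the reductions replaces $\gamma$ by $c^r\gamma$. The main obstacle is the third paragraph: one must verify that feeding the improved bound $|B_j|\le C\sigma^p 2^j$ through Cronenberg's Tsirelson-norm estimate really does extract exactly one factor of $\sigma$ together with a constant depending only on $C,p,r$ — this is precisely what makes $\gamma$ uniform over all operators $A$ satisfying the hypotheses.
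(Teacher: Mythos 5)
Your proof is correct and follows essentially the same route as the paper's: normalize so that the norm of the operator is at most one, feed Theorem~\ref{cronenberg}'s Tsirelson-norm estimate into a lower bound on $N_\xi(\sigma)$, and then invoke Theorem~\ref{renorming1}. The only real deviation is that you re-run Cronenberg's internal argument with trees in $\sigma B_X$ so that the derivation parameters stay below $\|A\|$, whereas the paper applies Theorem~\ref{cronenberg} as a black box and is more cavalier about the fact that the corollary's hypothesis only controls $\ee^p Sz_\xi(A,\ee)$ for $\ee<\|A\|$ rather than $\ee\leqslant 1$; your version is the more careful one on that point, but it is not a different method.
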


\begin{proof} Let $B= c^{-1}A$, so that $\|B\|\leqslant 1$. Note that $$\sup_{0<\ee<1} \ee^p Sz_\xi(B, \ee) = \sup_{0<\ee<1}\ee^p Sz_\xi(A, c\ee) = c^{-p}\sup_{0<\ee<c} \ee^p Sz_\xi(A, \ee)\leqslant c^{-p} C.$$   Then if $\beta=\beta(c^{-p}C, p, r)$ is the number from Theorem \ref{cronenberg}, for any $n\in \nn$, any positive scalars $(a_i)_{i=1}^n$, and any normally weakly null $(x_t)_{t\in \Omega_{\xi,n}}\subset B_X$, $$\inf_{t\in MAX(\Omega_{\xi,n})} \|\sum_{i=1}^n a_i B z_i^t\|\leqslant \beta\bigl(\sum_{i=1}^n a_i^r\bigr)^{1/r}.$$  From this it easily follows that for any $0<\sigma<1$, $N_\xi(\sigma;B) \geqslant 2^{-1}(\beta \sigma)^{-r}$. Indeed, if $(x_t)_{t\in \Omega_{\xi, n}}\subset \sigma B_X$ is normally weakly null and $1 <\|\sum_{i=1}^{n+1} B z_i^t\|$ for all $t\in MAX(\Omega_{\xi,n})$, then set $x'_t=\sigma^{-1} x_t$. Since $\|B\|\leqslant 1$, $n>0$.   By passing to a subtree and relabeling, we may assume $(x'_t)_{t\in \Omega_{\xi,n+1}}\subset B_X$ is itself normally weakly null, whence $$1\leqslant \inf_{t\in MAX(\Omega_{\xi,n})} \|\sum_{i=1}^{n+1} \sigma\sum_{\Lambda_{\xi,n,i}\ni s\preceq t} \mathbb{P}_\xi(\iota_{\xi,n}(s))Bx_s\| \leqslant \beta\bigl(\sum_{i=1}^{n+1} \sigma^r\bigr)^{1/r}= \beta\sigma (n+1)^{1/r}.$$   Then $$\frac{1}{\beta^r\sigma^r} \leqslant n+1 \leqslant 2n.$$  From this we deduce the existence of a $2$-equivalent norm $|\cdot|$ on $Y$ such that $\rho_\xi(\sigma;B:X\to (Y, |\cdot|))\leqslant 1/N_\xi(\sigma;B)\leqslant 2\beta^r \sigma^r$.  Then for $0<\sigma_1<\|A\|$, let $\sigma:=c^{-1}\sigma_1\in (0,1)$. Then by Theorem \ref{renorming1}, exists a $2$-equivalent norm $|\cdot|$ on $Y$ such that \begin{align*} \rho_\xi(\sigma_1;A:X\to (Y, |\cdot|)) & = \rho_\xi(c\sigma_1; B:X\to (Y, |\cdot|)) \leqslant 2\beta^r c^r\sigma_1^r.\end{align*}

Here we have used the obvious fact that for any $\sigma>0$, $\rho_\xi(\sigma_1;A:X\to (Y, |\cdot|))=\rho_\xi(c\sigma_1; c^{-1}A:X\to (Y, |\cdot|))$.

\end{proof}

\section{Proof of the Main Theorem}

\begin{lemma} Let $\xi$ be an ordinal and $A:X\to Y$ an operator such that $\delta=\delta^{w^*}_\xi(\ee;A^*:Y^*\to X^*)>0$. Then for any $0<r\leqslant 1$ and any $w^*$-compact $K\subset r B_{Y^*}$, $s_{2\ee}^{\omega^\xi}(A^*K)\subset (r-\delta)A^*B_{Y^*}$ if $r-\delta\geqslant 0$, and $s_{2\ee}^{\omega^\xi}(A^*K)=\varnothing$ if $r-\delta< 0$. 
\label{stranger}

\end{lemma}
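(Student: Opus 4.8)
The plan is to push the statement onto the predual: from a point of $s^{\omega^\xi}_{2\ee}(A^*K)$ I want to manufacture a preimage $y^*\in K$ together with a $w^*$-null, $(A^*,\ee)$-large collection indexed by a $B$-tree of order $\omega^\xi$ whose partial sums, recentered at $y^*$, remain inside $K$, and then feed this configuration into the definition of $\delta^{w^*}_\xi$. The lifting itself I would obtain by transfinite induction on an ordinal $\zeta$, proving: for every $w^*$-compact $K\subset rB_{Y^*}$ and every $x^*\in s^\zeta_{2\ee}(A^*K)$ there are $y^*\in K$ with $A^*y^*=x^*$, a $B$-tree $B$ with $o(B)=\zeta$, and a $w^*$-null, $(A^*,\ee)$-large collection $(z^*_s)_{s\in B}\subset Y^*$ with $y^*+\sum_{s\preceq t}z^*_s\in K$ for all $t\in B$. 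The case $\zeta=0$ is trivial. For $\zeta+1$: if $x^*\in s_{2\ee}(s^\zeta_{2\ee}(A^*K))$, then every $w^*$-neighbourhood of $x^*$ meets $s^\zeta_{2\ee}(A^*K)$ in a set of diameter $>2\ee$, so the triangle inequality yields a net $(u^*_V)$ in $s^\zeta_{2\ee}(A^*K)$ with $u^*_V\to x^*$ weak$^*$ and $\|u^*_V-x^*\|>\ee$; applying the inductive hypothesis to each $u^*_V$ and using $w^*$-compactness of $K$ to pass to a subnet with $p^*_V\to y^*$ (whence $A^*y^*=x^*$), one grafts the trees attached to the $p^*_V$ below a fresh root for each $V$, assigning to that root the increment $p^*_V-y^*$. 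The result is $w^*$-null (since $p^*_V\to y^*$ weak$^*$), is $(A^*,\ee)$-large (since $\|A^*(p^*_V-y^*)\|=\|u^*_V-x^*\|>\ee$), has all partial sums in $K$, and sits on a $B$-tree of order $\zeta+1$. At a limit $\zeta$, a $w^*$-compactness/finite-intersection argument produces a single preimage $y^*\in K$ of $x^*$ admitting trees of all orders $<\zeta$, which are then combined by a totally incomparable union, exactly as in the construction of $\Gamma_\xi$, into one of order $\zeta$. Taking $\zeta=\omega^\xi$ gives the lifting.

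Given $x^*\in s^{\omega^\xi}_{2\ee}(A^*K)$ and the lifting data $y^*,B,(z^*_s)_{s\in B}$, set $\lambda=\|y^*\|\le r\le1$. If $\lambda>0$, apply $\delta=\delta^{w^*}_\xi(\ee;A^*)$ to the norm-one functional $y^*/\lambda$ and the collection $(z^*_s)_{s\in B}$: for each $\eta>0$ some $t\in B$ satisfies $\|y^*/\lambda+\sum_{s\preceq t}z^*_s\|\ge1+\delta-\eta$. The key observation is that $y^*+\lambda\sum_{s\preceq t}z^*_s=(1-\lambda)y^*+\lambda\bigl(y^*+\sum_{s\preceq t}z^*_s\bigr)$ is a convex combination of $y^*$ and a point of $K$ — this is exactly where $0\le\lambda\le1$ is used — so its norm is at most $(1-\lambda)\lambda+\lambda r$; dividing by $\lambda$ and letting $\eta\to0$ gives $\|y^*\|\le r-\delta$. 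If $\lambda=0$, then $x^*=A^*y^*=0$, and applying the modulus with any norm-one $\hat y\in Y^*$ and the same collection, together with $\|\hat y+\sum_{s\preceq t}z^*_s\|\le1+\|\sum_{s\preceq t}z^*_s\|\le1+r$, forces $\delta\le r$. Consequently, when $r-\delta\ge0$ every $x^*\in s^{\omega^\xi}_{2\ee}(A^*K)$ is of the form $A^*y^*$ with $\|y^*\|\le r-\delta$ (trivially so when $\lambda=0$), hence lies in $(r-\delta)A^*B_{Y^*}$; and when $r-\delta<0$, neither alternative above can hold, so $s^{\omega^\xi}_{2\ee}(A^*K)=\varnothing$.

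The substance of the argument is the lifting step: one must verify that the grafted collections are genuinely $w^*$-null in the sense of Section 2 (at every node and every derived level), keep precise track of the derived-tree orders, and carry out the $w^*$-compactness argument at limit stages that produces a single preimage carrying trees of all lower orders. This bookkeeping is delicate but essentially routine, and it runs parallel to the construction behind Proposition \ref{characterize} and the tree manipulations of \cite{CD} and \cite{C}; in particular the totally incomparable unions invoked at limit ordinals are the ones built into the definition of $\Gamma_\xi$. The case $\xi=0$ of the lemma is precisely the argument above with $B$ a single $w^*$-null net and no limit stages.
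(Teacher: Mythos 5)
Your proof is correct and follows essentially the paper's route. The lifting you reconstruct by transfinite induction---a preimage $y^*\in K$ of $x^*$ carrying a $w^*$-null, $(A^*,\ee)$-large $B$-tree of order $\omega^\xi$ whose re-centered partial sums stay in $K$---is exactly the content of \cite[Lemma 4.2]{CD}, which the paper invokes as a black box (stated there in the equivalent form of a $w^*$-closed, $(A^*,\ee)$-separated family $(y^*_t)_{t\in T}\subset K$ with $A^*y^*_\varnothing=x^*$, your increments being $z^*_t=\ee^{-1}(y^*_t-y^*_{t^-})$ up to a scalar), and the convex-combination estimate you use to close the argument---write $y^*+\lambda\sum_{s\preceq t}z^*_s$ as a convex combination of $y^*$ and a point of $K$, apply the modulus at $y^*/\lambda$, rearrange to $\lambda\leqslant r-\delta$, handle $\lambda=0$ separately---is the paper's computation verbatim. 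The one place you deviate is in re-deriving the lifting rather than citing it: your successor step is sound, but the limit step requires more than a bare finite-intersection appeal, since the set of $y^*\in K$ admitting an anchored tree of a prescribed order is not obviously $w^*$-closed. That is precisely the delicacy \cite[Lemma 4.2]{CD} is designed to absorb, so citing it (as the paper does) is the cleaner choice.
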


\begin{proof} Fix $x^*\in s_{2\ee}^{\omega^\xi}(A^*K)$.  By \cite[Lemma $4.2$]{CD}, there exists a tree $T$ with $o(T)=\omega^\xi+1$ and a $w^*$-closed, $(A^*, \ee)$-separated collection $(y_t^*)_{t\in T}\subset K$ such that $A^*y^*_\varnothing= x^*$. Let $B=T\setminus\{\varnothing\}$ and let $z_t^*=\ee^{-1}(y^*_t-y^*_{t^-})$ for $t\in B$.  Then $(z^*_t)_{t\in B}$ is $w^*$-null and $(A^*, 1)$-large.  

First suppose that $r-\delta\geqslant 0$.  If $y^*_\varnothing=0$,  $x^*=0\in (r-\delta)A^*B_{Y^*}$ as desired. Assume $\|y^*_\varnothing\|\neq 0$.  Then by homogeneity together with the fact that $\|y^*_\varnothing\|\leqslant 1$, so $(\|y^*_\varnothing\|^{-1}z_t^*)_{t\in B}$ is still $(A^*, 1)$-large,  \begin{align*} (1+\delta)\|y^*_\varnothing\| & \leqslant  \sup_{t\in B}\|y^*_\varnothing +\ee \|y^*_\varnothing\|\sum_{s\preceq t} z^*_s\| \\ & \leqslant \sup_{t\in B}\Bigl\|\bigl(1-\|y^*_\varnothing\|)y^*_\varnothing+ \|y^*_\varnothing\|(y^*_\varnothing+\sum_{s\preceq t}\ee z_s)\Bigr\| \\ & \leqslant \sup_{t\in B} (1-\|y^*_\varnothing\|)\|y^*_\varnothing\|+ \|y^*_\varnothing\| \|y^*_t\| \leqslant (1-\|y^*_\varnothing\|)\|y^*_\varnothing\|+ \|y^*_\varnothing\| r.\end{align*} Rearranging yields $\|y^*_\varnothing\|\leqslant r-\delta$, finishing the $r-\delta\geqslant 0$ case.

Now suppose that $r-\delta< 0$. If $y^*_\varnothing\neq 0$, we may run the same proof as in the previous paragraph to arrive at the contradiction $\|y^*_\varnothing\|\leqslant r-\delta<0$.  If $y^*_\varnothing$, fix any $y^*\in S_{Y^*}$.  Then $$1+\delta  \leqslant \sup_{t\in B}\|y^*+\ee\sum_{s\preceq t}z^*_s\| = \sup_{t\in B}\|y^*+y^*_t\| \leqslant 1+r,$$   a contradiction.

\end{proof}

\begin{proof}[Proof of Theorem \ref{main theorem}]

First suppose that for some $1\leqslant p<\infty$ and all $0<\ee<1$, $\delta^{w^*}_\xi(\ee;A^*:Y^*\to X^*)\geqslant c \delta^p$.   Then by Lemma \ref{stranger}, for every $0<\ee<1$, if $\delta=\delta^{w^*}_\xi(\ee;A^*:Y^*\to X^*)\geqslant c\ee^p$, $s_{2\ee}^{\omega^\xi n}(A^*B_{Y^*})\subset (1-\delta n)A^*B_{Y^*}$ for any $n\in \nn$ such that $1-\delta n \geqslant 0$. Therefore $Sz_\xi(A^*B_{Y^*}, 2\ee) \leqslant 1+1/\delta \leqslant 1+1/c\ee^p$.  From this we easily deduce that $\textbf{p}_\xi(A)\leqslant p<\infty$.   

   Since $\textbf{p}_\xi(A)$ is invariant under renorming $Y$, if there exists an equivalent norm $|\cdot|$ on $Y$ such that $A^*:(Y^*, |\cdot|)\to X^*$ is $w^*$-$\xi$-AUC with power type (resp. with power type $p$), $\textbf{p}_\xi(A)<\infty$ (resp. $\textbf{p}_\xi(A)\leqslant p$).

Now suppose $1\leqslant \textbf{p}_\xi(A)<\infty$ and fix $\textbf{p}_\xi(A)<p_1<\infty$.  Let $p=\frac{2\textbf{p}_\xi(A) +p_1}{3}$ and let $s=\frac{\textbf{p}_\xi(A)+2p_1}{3}$.  Let $r,q$ be such that $1/p+1/q=1/r+1/s=1$.   By Corollary \ref{things}, there exists a constant $\gamma\geqslant 1$ such that for every $n\in\nn$, there exists a $2$-equivalent norm $|\cdot|_n$ on $Y$ such that $\rho_\xi((2^{-n}/6\gamma)^{\frac{1}{r-1}};A:X\to (Y, |\cdot|_n)) \leqslant \gamma (2^{-n}/6\gamma)^s$.  Here, we use $|\cdot|_n$ to denote the equivalent norm as well as the corresponding dual norm. Let $\tau=2^{-n}/6$ and let $\sigma=(\tau/\gamma)^\frac{1}{r-1}$. Note that $\gamma \sigma^r =\sigma\tau$, so that $\rho_\xi((\tau/\gamma)^\frac{1}{r-1};A:X\to (Y, |\cdot|_n))\leqslant \gamma \sigma^n=\sigma \tau$.    Note also that $$\sigma\tau = \frac{\tau^{\frac{1}{r-1}+r}}{\gamma^\frac{1}{r-1}} = \frac{\tau^s}{\gamma^{\frac{1}{r-1}}}= \frac{1}{6^s \gamma^{\frac{1}{r-1}}} 2^{-ns} = \gamma_1 2^{-n2},$$ where $\gamma_1= \frac{1}{6^s \gamma^\frac{1}{r-1}}$.   By \cite[Proposition $3.2$]{CD},  $\delta^{w^*}_\xi(2^{-n};A^*:(Y^*, |\cdot|_n)\to X^*)\geqslant \sigma \tau= \gamma_1 2^{-ns}$.    

Define the equivalent norm $|\cdot|=\sum_{n=1}^\infty 2^{n(s-p_1)}|\cdot|_n$   on $Y^*$ and note that this is the dual norm to an equivalent norm on $Y$.  Indeed, this norm is induced by the embedding $y^*\mapsto (2^{n(s-p_1)}y^*)_n\in (\oplus_n (Y^*, |\cdot|_n))_{\ell_1}$, which is the adjoint to the surjection $q:(\oplus_n (Y, |\cdot|_n))_{c_0}\to Y$ given by $q((y_n))=\sum_n 2^{n(s-p_1)}y_n$, and the norm $|y|=\inf\{\|(y_n)\|: q((y_n))=y\}$ has $\sum_{n=1}^\infty 2^{n(s-p_1)}|\cdot|_n$ as its dual norm. Let $b\geqslant 1$ be a constant such that for every $n\in \nn$, $b^{-1}|\cdot|\leqslant |\cdot|_n \leqslant b|\cdot|$.

 Fix $0<\ee<1$ and $m\in \nn$ such that $2^{-m}\leqslant \ee< 2^{-m+1}$.  Fix $y^*\in S_{Y^*}$ such that $|y^*|\geqslant 1$, a $B$-tree $T$ with $o(T)=\omega^\xi$, and a $w^*$-null, $(A^*, \ee)$-large collection.  Fix $l> m$ and $\delta>0$.    By passing to a subtree and using $w^*$-nullity, we may assume that for every $t\in B$, $\sup_{t\in B}|y^*+\sum_{s\preceq t} y^*_s|_n\geqslant (1-\delta)|y^*|_n$ for every $1\leqslant n\leqslant l$. Then \begin{align*} \sup_{t\in B} |y^*+\sum_{s\preceq t} y^*_s| & \geqslant \sup_{t\in B}\Bigl[\sum_{n=1}^{m-1}2^{-n(s-p_1)}|y+\sum_{s\preceq t}y_s^*|_n \\ & + 2^{-m(s-p_1)}|y^*+\sum_{s\preceq t}y^*_s|_m + \sum_{n=m+1}^l 2^{-n(s-p_1)}|y^*+\sum_{s\preceq t} y^*_s|_n\Bigr]  \\ & \geqslant (1-\delta)\sum_{m\neq n=1}^l 2^{-n(s-p_1)}|y^*|_n + \sup_{t\in B}2^{-m(s-p_1)}|y^*+\sum_{s\preceq t}y^*_s|_m \\ & \geqslant (1-\delta)\sum_{m\neq n=1}^l 2^{-n(s-p_1)}|y^*|_n + 2^{-m(s-p_1)}|y^*|_m(1+\gamma_1 2^{-ms}) \\ & \geqslant (1-\delta)\sum_{n=1}^l 2^{-n(s-p_1)}|y^*|_n + \gamma_12^{-mp_1}/b \\ & \geqslant (1-\delta)\sum_{n=1}^l 2^{-n(s-p_1)}|y^*|_n+\frac{\gamma_1 \ee^{p_1}}{2^{p_1}b}. \end{align*}  Since $\delta>0$ and $m<l\in\nn$ were arbitrary, $$\sup_{t\in B}|y^*+\sum_{s\preceq t} y^*_s| \geqslant \sum_{n=1}^\infty 2^{-n(s-p_1)}|y^*|_n +\gamma_1\ee^{p_1}/2^{p_1}/b= 1+ \gamma_1\ee^{p_1}/2^{p_1}b.$$   From this we deduce that $\delta^{w^*}_\xi(A^*:(Y^*, |\cdot|)\to X^*)\geqslant \frac{\gamma_1}{2^{p_1}b} \ee^{p_1}$. Therefore $A^*:(Y^*, |\cdot|)\to X^*$ is $w^*$-$\xi$-AUC with power type $p_1$.  By the duality established in \cite{CD}, $A:X\to (Y, |\cdot|)$ is $\xi$-AUS with power type $\frac{p_1}{p_1-1}$.

\end{proof}

\begin{remark} Note that the constant $b$ in the previous proof and the equivalence constant of $|\cdot|$ to the original norm of $Y$ can both be estimated as a function of only $p_1$, while the constant $\gamma$ can be estimated as a function of $\sup_{0<\ee<\|A\|} \ee^p Sz_\xi(A, \ee)$, $\|A\|$, and $p_1$.   

\end{remark}

\begin{proof}[Proof of Corollary \ref{main corollary}]

If $|\cdot|$ is an equivalent norm on $X$ and $0<a\leqslant b<\infty$ are such that $a|\cdot|\leqslant \|\cdot\|\leqslant b|\cdot|$, then for any ordinal $\xi$ and any $0<\sigma<\infty$, $$\rho_\xi(\sigma;I_X:(X, |\cdot|)\to (X, |\cdot|))\leqslant \rho_\xi(\sigma b;I_X:X\to (X, |\cdot|)).$$  Indeed, for any $y\in B_X^{|\cdot|}$, any $0<\sigma<\infty$, any tree $T$ with $o(T)=\omega^\xi$, and any weakly null $(x_t)_{t\in T}\subset B_X^{|\cdot|}$, $(b^{-1}x_t)_{t\in T}\subset B_X$ is weakly null, whence \begin{align*} \inf\{|y+\sigma x|-1: t\in T, x\in \text{co}(x_s:s\preceq t)\}&  = \inf\{|y + \sigma b x|-1: t\in T, x\in \text{co}(x_s/b:s\preceq t)\} \\ & \leqslant \rho_\xi(\sigma b; I_X:X\to (X, |\cdot|)).\end{align*}

The remainder of the proof follows immediately from Theorem \ref{main theorem}.   

\end{proof}

\section{Simultaneity Theorem and applications}
We have another combinatorial result, which is a rather important and useful phenomenon. Indeed, this phenomenon was used twice implicitly in \cite{CD}, and we use it below. Given an ordinal $\xi$, a bounded function $f:\Pi\Omega_\xi \to \rr$, and $\ee\in \rr$, we say $f$ is $\ee$-\emph{large} provided that for any $\delta>0$, there exists an extended pruning $(\theta, e):\Omega_\xi \to \Omega_\xi$ such that for every $(s,t)\in \Pi\Omega_\xi$, $f(\theta(s), e(t))\geqslant \ee-\delta$.

\begin{theorem}[Simultaneity theorem] Fix an ordinal $\xi$, a natural number $n$, and collections $\fff_1, \ldots, \fff_n$ of bounded functions from $\Pi\Omega_\xi$ into $\rr$.  Suppose that $\ee_1, \ldots, \ee_n\in \rr$ are such that for each $1\leqslant i\leqslant n$, no member of $\fff_i$ is $\ee_i$-large.  Then for any $f_1\in \fff_1$, $\ldots$, $f_n\in \fff_n$, $$\inf_{t\in MAX(\Omega_\xi)} \max_{1\leqslant i\leqslant n}\Bigl[\sum_{s\preceq t}\mathbb{P}_\xi(s)f_i(s,t)-\ee_i\Bigr]\leqslant 0.$$   

\label{sim theorem}
\end{theorem}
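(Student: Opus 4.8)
The plan is to argue by contradiction, after collapsing the $n$ functions into the single bounded function $F:\Pi\Omega_\xi\to\rr$ given by $F(s,t)=\max_{1\le i\le n}\bigl[f_i(s,t)-\ee_i\bigr]$. Suppose the displayed inequality fails. Then $v:=\inf_{t\in MAX(\Omega_\xi)}\max_i\bigl[\sum_{s\preceq t}\mathbb{P}_\xi(s)f_i(s,t)-\ee_i\bigr]$ is strictly positive (and finite, since the $f_i$ are bounded); put $3\eta=v$. For maximal $t$ one has $\sum_{s\preceq t}\mathbb{P}_\xi(s)=1$, so $\sum_{s\preceq t}\mathbb{P}_\xi(s)f_i(s,t)-\ee_i=\sum_{s\preceq t}\mathbb{P}_\xi(s)\bigl[f_i(s,t)-\ee_i\bigr]$, and Jensen's inequality for the convex function $\max$ gives $\sum_{s\preceq t}\mathbb{P}_\xi(s)F(s,t)\ge\max_i\bigl[\sum_{s\preceq t}\mathbb{P}_\xi(s)f_i(s,t)-\ee_i\bigr]\ge 3\eta$ for every $t\in MAX(\Omega_\xi)$.

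Next I would apply the consequence of Theorem \ref{favorite theorem} recorded in the Remark after it, in the case $n=1$ (where $\iota_{\xi,1}$ is the identity and unit-preservation is automatic), to $F$ with $b=0<b_1=3\eta$. This yields an extended pruning $(\theta_1,e_1):\Omega_\xi\to\Omega_\xi$ and a number $a>0$ with $F(\theta_1(s),e_1(t))\ge a$ for every $(s,t)\in\Pi\Omega_\xi$. Turning a uniform lower bound on the $\mathbb{P}_\xi$-averages along branches into a uniform pointwise lower bound on a pruned copy of $\Omega_\xi$ is the crux, and it is precisely here that the combinatorics of Theorem \ref{favorite theorem} does its work. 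Note that the more naive route -- prune for $f_1$, then for $f_2$ inside that, and so on -- fails, because prunings of $\Omega_\xi$ need not preserve the weights $\mathbb{P}_\xi$ (the weight of a branch depends on which component $\Gamma_{\zeta,m}$ it traverses, not merely on its length), so branch-averages before and after a pruning are not comparable; passing through $F$ circumvents this.

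Then I stabilize which index realizes the maximum. Let $c:\Pi\Omega_\xi\to\{1,\dots,n\}$ send $(s,t)$ to the least $i$ with $f_i(\theta_1(s),e_1(t))-\ee_i=F(\theta_1(s),e_1(t))$, and apply Lemma \ref{combinatorial} (again $n=1$) to get an extended pruning $(\theta_2,e_2):\Omega_\xi\to\Omega_\xi$ and an index $i_0$ with $c(\theta_2(s),e_2(t))=i_0$ for every $(s,t)\in\Pi\Omega_\xi$. Set $\Theta=\theta_1\circ\theta_2$ and $E=e_1\circ e_2$; a composition of extended prunings is again one, since monotonicity and the $D$-coordinate condition are stable under composition and $\Theta(t)\preceq\theta_1(e_2(t))\preceq E(t)$. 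For $(s,t)\in\Pi\Omega_\xi$ we have $(\theta_2(s),e_2(t))\in\Pi\Omega_\xi$, so the bound from the previous step gives $F(\Theta(s),E(t))\ge a$, while the definition of $c$ gives $f_{i_0}(\Theta(s),E(t))-\ee_{i_0}=F(\Theta(s),E(t))$; hence $f_{i_0}(\Theta(s),E(t))\ge\ee_{i_0}+a$ for every $(s,t)\in\Pi\Omega_\xi$. Since $a>0$, the single pruning $(\Theta,E)$ witnesses $f_{i_0}(\Theta(s),E(t))\ge\ee_{i_0}-\delta$ for every $\delta>0$, so $f_{i_0}$ is $\ee_{i_0}$-large, contradicting the assumption that no member of $\fff_{i_0}$ is $\ee_{i_0}$-large. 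Therefore the displayed inequality holds.

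The only remaining routine point is that a single pruning realizing a pointwise bound $\ge\ee_{i_0}+a$ with $a>0$ realizes $\ee_{i_0}$-largeness immediately (it beats $\ee_{i_0}-\delta$ for all $\delta>0$ at once). Everything substantial sits in the choice of $F$, the convexity inequality, and the averages-to-pointwise passage recorded in the Remark after Theorem \ref{favorite theorem}; I expect no obstacle beyond arranging that passage in the form used above, and I note that the hypothesis enters only through the single function $f_{i_0}$, so the collections $\fff_i$ serve merely as convenient packaging for applications.
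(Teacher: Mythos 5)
Your proof is correct and follows essentially the same route as the paper's: define the envelope $F(s,t)=\max_i[f_i(s,t)-\ee_i]$, pass from branch-averages to pointwise bounds on a pruned copy via the remark after Theorem \ref{favorite theorem}, stabilize the maximizing index with a finite-partition pruning, and compose the two prunings to exhibit a single $f_{i_0}$ as $\ee_{i_0}$-large. (The paper reaches the intermediate pointwise bound $F\geqslant 0$ rather than $F\geqslant a>0$ and cites Proposition \ref{recall}$(iii)$ rather than Lemma \ref{combinatorial} for the stabilization step, but these are cosmetic differences.)
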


\begin{proof} Fix $f_1\in \fff_1$, $\ldots$, $f_n\in \fff_n$.  Define $f:\Pi\Omega_\xi\to \rr$ by letting $f(s,t)=\max_{1\leqslant i\leqslant n}[f_i(s,t)-\ee_i]$.  Note that $$\inf_{t\in MAX(\Omega_\xi)} \max_{1\leqslant i\leqslant n}\Bigl[\sum_{s\preceq t}\mathbb{P}_\xi(s)f_i(s,t)-\ee_i\Bigr]\leqslant \inf_{t\in MAX(\Omega_\xi)}\sum_{s\preceq t}\mathbb{P}_\xi(s)f(s,t),$$ so that it suffices to show that the infimum on the right is non-positive.  To obtain a contradiction, assume that $$\inf_{t\in MAX(\Omega_\xi)}\sum_{s\preceq t}\mathbb{P}_\xi(s)f(s,t)=\delta>0.$$   Then by the remarks following Theorem \ref{favorite theorem}, there exists an extended pruning $(\theta, e):\Omega_\xi \to \Omega_\xi$ such that for every $(s,t)\in \Pi\Omega_\xi$, $f(\theta(s), e(t))\geqslant 0$.   Now define $N:\Omega_\xi\to \{1, \ldots, n\} $ by letting $N(s,t)$ denote the minimum $1\leqslant i\leqslant n$ such that $$f(\theta(s), e(t))=f_i(\theta(s), e(t))-\ee_i.$$  By Proposition \ref{recall}$(iii)$, there exists $1\leqslant j\leqslant n$ and an extended pruning $(\theta', e'):\Omega_\xi \to \Omega_\xi$ such that $N(\theta'(\cdot), e'(\cdot))\equiv j$ on $\Pi\Omega_\xi$.   Let $\theta_0=\theta\circ \theta'$ and $e_0=e\circ e'$.   Then for every $(s,t)\in \Pi\Omega_\xi$, $$0\leqslant f(\theta_0(s), e_0(t)) = f_j(\theta_0(s), e_0(t))-\ee_j,$$ from which it follows that $f_j$ is $\ee_j$-large, a contradiction.

\end{proof}

For the next corollaries, for a directed set $D$, we define the $B$-tree $\Gamma_{\xi, \infty}$ by $$\Gamma_{\xi, \infty}=\Bigl\{t_1\cat(\omega^\xi +t_2)\cat \ldots \cat(\omega^\xi(n-1)+t_n): t_i\neq \varnothing, t_i\in \Gamma_\xi, t_i\in MAX(\Gamma_\xi) \text{\ for}1\leqslant i<n\Bigr\}.$$ Given a directed set $D$, we let $\Omega_{\xi, \infty}=\Gamma_{\xi, \infty}D$. We may stratify $\Omega_{\xi, \infty}$ into levels $\Lambda_{\xi, \infty, n}$, $n\in \nn$, as we did with $\Omega_{\xi, N}$.  If $X$ is a Banach space and if $D$ is a weak neighborhood basis at $0$ in $X$, as in the $\Omega_{\xi, n}$ case, a collection $(x_t)_{t\in \Omega_{\xi, \infty}}\subset B_X$ is called normally weakly null if for any $t=t_1\cat (\zeta, U)\in \Omega_{\xi, \infty}$, $x_t\in U$.  

\begin{corollary} Fix an ordinal $\xi$, $1<p\leqslant \infty$, and a norm $1$ operator $A:X\to Y$.  If $1<p\leqslant \infty$ and $A$ is $\xi$-AUS with power type $p$, for any compact subset $C$ of $Y$ a compact subset $T$ of the scalar field, and any $\ee>0$, and any normally weakly null collection $(x_t)_{t\in \Omega_\xi}\subset B_X$, there exists $t\in MAX(\Omega_\xi)$ such that with  $x=\sum_{s\preceq t}\mathbb{P}_\xi(s)x_s\in\text{\emph{co}}(x_s:s\preceq t)$, for any $z\in C$ and any $b\in T$, $$\|z+bA x\|^p\leqslant \|z\|^p+\textbf{\emph{t}}_{\xi, p}(A)b^p+\ee.$$ 

\label{sim corollary}
\end{corollary}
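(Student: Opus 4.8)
The plan is to deduce the statement from the Simultaneity Theorem (Theorem~\ref{sim theorem}), applied to a well-chosen family of linearizations of the norm. Since $A$ is $\xi$-AUS with power type $p$, Proposition~\ref{mymaxinfo} (or the remark preceding it, when $p=\infty$) gives $\textbf{t}_{\xi, p}(A)<\infty$. By compactness of $C$ and $T$ and a routine approximation argument --- using $\|A\|\leqslant 1$ and that every convex combination of vectors from $B_X$ has norm at most $1$ --- it suffices to treat the case in which $C$ and $T$ are finite, at the price of replacing $\ee$ by a smaller positive number, still denoted $\ee$. Enumerate the finitely many pairs in $C\times T$ as $(y_1,\beta_1),\ldots,(y_N,\beta_N)$, discarding those with $\beta_i=0$ (which impose no restriction). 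We must produce a single $t\in MAX(\Omega_\xi)$ such that the special convex combination $x^t:=\sum_{s\preceq t}\mathbb{P}_\xi(s)x_s$ satisfies $\|y_i+\beta_i Ax^t\|^p\leqslant\|y_i\|^p+\textbf{t}_{\xi, p}(A)\beta_i^p+\ee$ for every $1\leqslant i\leqslant N$; then $x=x^t$ works in the corollary.

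For $1\leqslant i\leqslant N$ and an arbitrary map $w^*:MAX(\Omega_\xi)\to B_{Y^*}$, set $f^{w^*}_i:\Pi\Omega_\xi\to\rr$, $f^{w^*}_i(s,t)=\text{Re}\,w^*(t)(y_i)+\beta_i\,\text{Re}\,(A^*w^*(t))(x_s)$, a function bounded by $\|y_i\|+\beta_i$. Since $\sum_{s\preceq t}\mathbb{P}_\xi(s)=1$ for $t\in MAX(\Omega_\xi)$, we have $\sum_{s\preceq t}\mathbb{P}_\xi(s)f^{w^*}_i(s,t)=\text{Re}\,w^*(t)(y_i+\beta_i Ax^t)$, which equals $\|y_i+\beta_i Ax^t\|$ whenever $w^*(t)$ norms $y_i+\beta_i Ax^t$. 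Let $\fff_i=\{f^{w^*}_i:w^*:MAX(\Omega_\xi)\to B_{Y^*}\}$ and $\ee_i=\bigl(\|y_i\|^p+\textbf{t}_{\xi, p}(A)\beta_i^p+\ee/2\bigr)^{1/p}$, with the convention $\ee_i=\max\{\|y_i\|,\textbf{t}_{\xi, \infty}(A)\beta_i\}+\ee/2$ when $p=\infty$.

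The crux is to verify that no member of $\fff_i$ is $\ee_i$-large. Assume the contrary: $f^{w^*}_i$ is $\ee_i$-large, so fixing $\delta\in(0,\ee_i)$ there is an extended pruning $(\theta,e):\Omega_\xi\to\Omega_\xi$ with $f^{w^*}_i(\theta(s),e(t))\geqslant\ee_i-\delta$ for all $(s,t)\in\Pi\Omega_\xi$. For $t\in MAX(\Omega_\xi)$ and any probability vector $(\lambda_s)_{s\preceq t}$, averaging this inequality and using the linearity of $\text{Re}\,w^*(e(t))$ together with $\|w^*(e(t))\|\leqslant 1$ gives $\|y_i+\beta_i Av\|\geqslant\ee_i-\delta$ for $v=\sum_{s\preceq t}\lambda_s x_{\theta(s)}$; as every branch of $\Omega_\xi$ lies in a maximal one, the same bound holds for $v$ in the convex hull of $(x_{\theta(s)})$ along an arbitrary branch of $\Omega_\xi$. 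But $(x_{\theta(s)})_{s\in\Omega_\xi}$ is (normally) weakly null, takes values in $B_X$, and $o(\Omega_\xi)=\omega^\xi$; so applying the defining inequality of $\textbf{t}_{\xi, p}(A)$ to $y_i$ and to $(\beta_i x_{\theta(s)})_{s\in\Omega_\xi}\subset\beta_i B_X$ produces a branch and a $v$ in its convex hull with $\|y_i+\beta_i Av\|^p\leqslant\|y_i\|^p+\textbf{t}_{\xi, p}(A)\beta_i^p<\ee_i^p$ (respectively $\|y_i+\beta_i Av\|\leqslant\max\{\|y_i\|,\textbf{t}_{\xi, \infty}(A)\beta_i\}<\ee_i$ when $p=\infty$); letting $\delta\to0^+$ contradicts the lower bound just obtained. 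I expect this to be the main obstacle: one must pass from pointwise largeness of $f^{w^*}_i$ along branches --- which a priori controls only $\mathbb{P}_\xi$-weighted averages --- to a bound on \emph{arbitrary} convex combinations down branches, which is precisely the form in which $\textbf{t}_{\xi, p}(A)<\infty$ can be used.

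Finally, for each $i$ fix $w^*_i:MAX(\Omega_\xi)\to B_{Y^*}$ with $w^*_i(t)$ norming $y_i+\beta_i Ax^t$ for every $t$, and put $f_i=f^{w^*_i}_i\in\fff_i$. By the previous paragraph the hypotheses of Theorem~\ref{sim theorem} are met, so $\inf_{t\in MAX(\Omega_\xi)}\max_{1\leqslant i\leqslant N}\bigl[\|y_i+\beta_i Ax^t\|-\ee_i\bigr]\leqslant 0$. Hence for any $\tau>0$ there exists $t\in MAX(\Omega_\xi)$ with $\|y_i+\beta_i Ax^t\|\leqslant\ee_i+\tau$ for all $i$; taking $\tau$ small enough that $(\ee_i+\tau)^p\leqslant\ee_i^p+\ee/2$ for each of the finitely many $i$ (and the obvious analogue when $p=\infty$) gives $\|y_i+\beta_i Ax^t\|^p\leqslant\|y_i\|^p+\textbf{t}_{\xi, p}(A)\beta_i^p+\ee$ for all $i$. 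This settles the finite case, and undoing the approximation finishes the proof.
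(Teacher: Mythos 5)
Your proof is correct and follows essentially the same route as the paper: reduce to the finite case via nets, for each pair $(z,b)$ introduce a family of linearizations $f(s,t)=\mathrm{Re}\,y^*_t(z+bAx_s)$, show via the definition of $\textbf{t}_{\xi,p}(A)$ that none is $\bigl(\|z\|^p+\textbf{t}_{\xi,p}(A)b^p+\ee/2\bigr)^{1/p}$-large, and then invoke the Simultaneity Theorem. The only cosmetic difference is that you fix the given tree $(x_t)$ and vary only the norming functionals in defining $\fff_i$, whereas the paper allows arbitrary normally weakly null trees in $\fff_{z,b}$; and in the contradiction step you should note that, $\textbf{t}_{\xi,p}(A)$ being an infimum, one passes through a constant $C$ slightly above $\textbf{t}_{\xi,p}(A)$ (but still with $\|y_i\|^p+C\beta_i^p<\ee_i^p$) before letting $\delta\to0^+$ --- a trivially repairable looseness that the paper's $\ee/2$ versus $\ee/4$ bookkeeping handles explicitly.
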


\begin{proof} We perform the proof for $p<\infty$, with the $p=\infty$ case being similar. Of course, we assume $C,T\neq 0$.  For each $z\in C$ and $b\in T$, let $\fff_{z,b}$ consist of all functions $f:\Pi\Omega_\xi\to \rr$ such that there exist a normally weakly null collection $(x_t)_{t\in \Omega_\xi}\subset B_X$ and a collection $(y^*_t)_{t\in MAX(\Omega_\xi)}\subset B_{Y^*}$ such that for every $(s,t)\in \Pi\Omega_\xi$,  $$f(s,t)=\text{Re\ }y^*_t(z+b Ax_s).$$  For any $\ee>0$, no member of $\fff_{z,b}$ is $[\|z\|^p+\textbf{t}_{\xi, p}(A) b^p+\ee/2]^{1/p}$-large. Indeed, if $f\in \fff_{z,b}$ is $[\|z\|^p+\textbf{t}_{\xi, p}(A)b^p+\ee/2]^{1/p}$-large, then there exists a normally weakly null $(x_t)_{t\in \Omega_\xi}\subset B_X$, a collection $(y^*_t)_{t\in MAX(\Omega_\xi)}\subset B_{Y^*}$, and an extended pruning $(\theta, e):\Omega_\xi \to \Omega_\xi$ such that for every $(s,t)\in \Pi\Omega_\xi$, $\text{Re\ }y^*_{e(t)}(z+bAx_{\theta(s)})\geqslant [\|z\|^p+\textbf{t}_{\xi, p}(A) b^p+\ee/4]^{1/p}$.  By relabeling, we may assume $s=\theta(s)$ and $t=e(t)$ for every $s\in \Omega_\xi$ and $t\in MAX(\Omega_\xi)$.   Then it follows that for every $t\in MAX(\Omega_\xi)$, every $x\in \text{co}(x_s:s\preceq t)$, $$\|z+bAx\|^p\geqslant [y^*_t(z+bAx)]^p\geqslant \|z\|^p+\textbf{t}_{\xi, p}(A)b^p+\ee/4.$$  This is a contradiction.

   Let $\delta>0$ be a positive number, let $F\subset C$ be a finite $\delta$-net of $C$, and let $S$ be a finite $\delta$-net of $T$.    Fix $(x_t)_{t\in \Omega_\xi}\subset B_X$ normally weakly null.  For every $z\in F$, $b\in S$, and $t\in MAX(\Omega_\xi)$, fix a functional $y^*_{z,b,t}\in B_{Y^*}$ such that $$\|z+bA\sum_{s\preceq t}\mathbb{P}_\xi(s)x_s\|=y^*_{z,b,t}(Az+bA\sum_{s\preceq t}\mathbb{P}_\xi(s)x_s).$$   Let $f_{z,b}(s,t)=\text{Re\ }y^*_t(z+bAx_s)$.  Then $f_{z,b}\in \fff_{z,b}$, and $f_{z,b}$ is not $[\|z\|^p+\textbf{t}_{\xi, p}(A)b^p+\ee/2]^{1/p}$-large.   Thus by Theorem \ref{sim theorem}, there exists $t\in \Omega_\xi$ such that with $x=\sum_{s\preceq t}\mathbb{P}_\xi(s) x_s$, for every $z\in F$ and $b\in S$, $$\|z+bAx\|^p= \Bigl[\sum_{s\preceq t}\mathbb{P}_\xi(s)f_{z,b}(s,t)\Bigr]^p\leqslant \|z\|^p+\textbf{t}_{\xi, p}(A)b^p+3\ee/4.$$   If $\delta>0$ was chosen small enough, $$\|z+bAx\|^p\leqslant \|z\|^p+\textbf{t}_{\xi, p}(A)b^p +\ee$$ for all $z\in C$ and $b\in T$.

\end{proof}

It is known that, in the case $\xi=0$, if the norm of $X$ is asymptotically uniformly smooth with power type $p$, then every normalized, weakly null tree $(x_t)_{t\in \Omega_{0, \infty}}$ in $X$ admits a branch which is dominated by the $\ell_p$ (resp. $c_0$ if $p=\infty$) basis \cite{JLPS}. This is typically done in the case that $X$ is separable, in which case so is $X^*$, and the set $\Omega_{0, \infty}$ is usually replaced by finite sequences of natural numbers.   In the case that $X^*$ is separable, we may take $D$ to be countable and order isomorphic to $\nn$, recovering the use of that index set.  The next result extends this fact.  

\begin{corollary} Fix an ordinal $\xi$ and $1< p\leqslant \infty$.  If the operator $A:X\to Y$ is $\xi$-AUS with power type $p$, then for any normally weakly null $(x_t)_{t\in \Omega_{\xi, \infty}}\subset B_X$ and for any $C_1>\textbf{\emph{t}}_{\xi, p}(A)$, there exists a sequence $(t_n)_{n=0}^\infty$ such that $\varnothing=t_0\prec t_1\prec t_2\prec \ldots$, for every $n\in \nn$, $t_n\in MAX(\Lambda_{\xi, \infty, n})$,  and with  $z_n=\sum_{t_{n-1}\prec t\preceq t_n}\mathbb{P}_\xi(\iota_{\xi, \infty}(t))x_t\in \text{\emph{co}}(x_t: t_{n-1}\prec t\preceq t_n)$,   $(Az_n)$ is $C_1$-dominated by the $\ell_p$ (resp. $c_0$ if $p=\infty$) basis.   

\label{sim corollary3}
\end{corollary}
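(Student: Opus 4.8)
The plan is to build the sequence $(t_n)$ one block at a time, applying Corollary~\ref{sim corollary} repeatedly, with the compact sets $C$ and $T$ chosen at each stage to absorb all the partial sums already constructed. Concretely, fix $C_1 > \textbf{t}_{\xi, p}(A)$ and choose $\ee > 0$ small enough that the geometric-series estimates below close up; say, pick $p$-th-power tolerances $\ee_n$ with $\sum_n \ee_n$ small relative to $C_1^p - \textbf{t}_{\xi, p}(A)$. Having chosen $\varnothing = t_0 \prec t_1 \prec \ldots \prec t_{n-1}$ with the corresponding block averages $z_1, \ldots, z_{n-1}$, the proper extensions of $t_{n-1}$ lying in $\Lambda_{\xi, \infty, n}$ form a unit, hence are canonically identifiable with $\Omega_\xi$, and the restriction of $(x_t)$ to this unit is normally weakly null after that identification. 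I would then apply Corollary~\ref{sim corollary} to the operator $A$ on that copy of $\Omega_\xi$, taking $T = \{b \in \text{scalar field}: |b| \leqslant 1\}$ (a fixed compact set, since $\ell_p$-domination is a statement about finitely many scalars at a time and by homogeneity it suffices to control unit-modulus or unit-ball coefficients) and $C$ equal to a suitably large finite-dimensional-looking compact set — namely all vectors of the form $\sum_{i=1}^{n-1} b_i A z_i$ with $(b_i)$ ranging over the closed unit ball of $\ell_p^{n-1}$ (resp. $\ell_\infty^{n-1}$), which is compact because it is the image of a compact set under a continuous linear map. This yields $t_n \in MAX(\Lambda_{\xi, \infty, n})$ and a block average $z_n = \sum_{t_{n-1} \prec t \preceq t_n} \mathbb{P}_\xi(\iota_{\xi, \infty}(t)) x_t$ such that for all $(b_i)_{i=1}^{n-1}$ in the relevant unit ball and all $|b_n| \leqslant 1$,
\[
\Bigl\| \sum_{i=1}^{n-1} b_i A z_i + b_n A z_n \Bigr\|^p \leqslant \Bigl\| \sum_{i=1}^{n-1} b_i A z_i \Bigr\|^p + \textbf{t}_{\xi, p}(A) |b_n|^p + \ee_n.
\]

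Iterating and using homogeneity to pass from unit-ball coefficients to arbitrary finitely-supported scalars, one gets by induction on $N$ that for any scalars $(b_i)_{i=1}^N$,
\[
\Bigl\| \sum_{i=1}^N b_i A z_i \Bigr\|^p \leqslant \textbf{t}_{\xi, p}(A) \sum_{i=1}^N |b_i|^p + \Bigl(\sum_{i=1}^N \ee_i\Bigr) \Bigl(\max_i |b_i|\Bigr)^p,
\]
after normalizing so that $\max_i |b_i| = \bigl(\sum_i |b_i|^p\bigr)^{1/p}$ is, say, at most $1$; choosing $\sum_i \ee_i \leqslant C_1^p - \textbf{t}_{\xi, p}(A)$ gives $\|\sum b_i A z_i\| \leqslant C_1 (\sum |b_i|^p)^{1/p}$, which is exactly $C_1$-domination by the $\ell_p$ basis. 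For $p = \infty$ one runs the same argument with the $\max$ in place of the $\ell_p$ sum and with $\ell_\infty^{n-1}$-balls, obtaining $C_1$-domination by the $c_0$ basis. The sequence $(t_n)$ is increasing by construction since at stage $n$ we only extend $t_{n-1}$, and $t_n \in MAX(\Lambda_{\xi, \infty, n})$ as required; the block averages $z_n$ lie in the stated convex hulls by definition of $\mathbb{P}_\xi$ and $\iota_{\xi, \infty}$.

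The main obstacle I expect is bookkeeping the compact sets correctly across the inductive step: one needs that at stage $n$ the set of all partial sums $\sum_{i<n} b_i A z_i$ (over a bounded coefficient range) is genuinely compact — which it is, being a continuous image of a closed ball in $\ell_p^{n-1}$ — and that Corollary~\ref{sim corollary} applies to the unit of extensions of $t_{n-1}$, i.e.\ that the canonical identification of this unit with $\Omega_\xi$ really does carry normally weakly null collections to normally weakly null collections and carries $\mathbb{P}_\xi \circ \iota_{\xi, \infty}$ to $\mathbb{P}_\xi$; both are recorded in Section~3. A secondary subtlety is the homogeneity reduction, since Corollary~\ref{sim corollary} controls $\|z + bAx\|^p \leqslant \|z\|^p + \textbf{t}_{\xi, p}(A) b^p + \ee$ only for $(z,b)$ in prescribed compact sets, so one must scale: given target scalars $(b_i)$, factor out $\max_i |b_i|$ to land in the unit ball before invoking the corollary, and then the additive error $\ee_n$ gets multiplied by $(\max_i |b_i|)^p$, which is precisely why the geometric control on $\sum \ee_n$ suffices. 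Once these points are arranged the estimate is a routine induction.
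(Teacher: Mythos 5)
Your proposal is correct and matches the paper's proof essentially step for step: fix summable $\ee_n$, build the blocks $z_n$ recursively by applying Corollary~\ref{sim corollary} on the unit of extensions of $t_{n-1}$ with $T$ the unit disc and $C$ the compact set of linear combinations $\{\sum_{i<n} b_i Az_i: (b_i)\in B_{\ell_p^{n-1}}\}$, then telescope the $p$-th power estimates over $(b_i)\in S_{\ell_p^N}$ and use homogeneity. The only cosmetic difference is that you phrase the final telescope with a $(\max_i|b_i|)^p$ factor and a slightly garbled normalization remark, whereas the paper simply restricts to $S_{\ell_p^n}$ coefficients so that the partial sums automatically land in the compact sets $C_{n-1}$ before rescaling; both reach the same inequality.
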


\begin{proof}  We treat the $1<p<\infty$ case, with the $p=\infty$ case being similar. Fix $C_1>\textbf{t}_{\xi, p}(A)$.  Fix positive numbers $\ee_1, \ee_2, \ee_3, \ldots$ such that $\sum_{n=1}^\infty \ee_n<\infty$.  Fix a normally weakly null tree $(x_t)_{t\in MAX(\Lambda_{\xi, \infty})}\subset B_X$. Let $t_0=\varnothing$. Let $T=\{b\in \mathbb{K}: |b|\leqslant 1\}$ and let $C_0=\{0\}$.  Fix any $t_1\in MAX(\Lambda_{\xi, \infty, 1})$ such that, with $z_1=\sum_{s\preceq t}\mathbb{P}_\xi(\iota_{\xi, \infty}(s))x_s$, $\|Az_1\|^p\leqslant \textbf{t}_{\xi, p}(A)^p+\ee_1$. We may do this by the definition of $\textbf{t}_{\xi, p}(A)$ applied with $y=0$.   Next, assume that $t_1, \ldots, t_n$ and $z_i=\sum_{t_{i-1}\prec t\preceq t_i}\mathbb{P}_\xi(\iota_{\xi, \infty}(t))x_t\in \text{co}(x_t: t_{i-1}\prec t\preceq t_i)$ have been defined.  Let $C_n=\{A\sum_{i=1}^n a_iz_i:(a_i)_{i=1}^n\in B_{\ell_p^n}\}$.   Note that the proper extensions of $t_n$ which lie on $\Lambda_{\xi, \infty, n+1}$ can be naturally identified with $\Omega_\xi$, so that Corollary \ref{sim corollary} implies the existence of some $t_{n+1}\in MAX(\Lambda_{\xi, \infty, n+1})$ and $z_{n+1}=\sum_{t_n\prec s\preceq t_{n+1}}\mathbb{P}_\xi(\iota_{\xi, \infty}(s))x_s\in \text{co}(x_t: t_n\prec t\preceq t_{n+1})$ such that for every $y\in C_{n+1}$ and $b\in T$, $\|y+bAz_{n+1}\|^p \leqslant \|y\|^p +Cb^p+\ee_{n+1}$.  This completes the recursive construction.  

Now for any $n\in \nn$ and any $(a_i)_{i=1}^n\in S_{\ell_p^n}$, \begin{align*} \|\sum_{i=1}^n a_iAz_i\|^p & \leqslant \|\sum_{i=1}^{n-1}a_iAz_i\|^p+\textbf{t}_{\xi, p}(A)^p|a_n|^p +\ee_n \\ & \leqslant \|\sum_{i=1}^{n-2} a_i Az_i\|^p+ \textbf{t}_{\xi, p}(A)^p(|a_{n-1}|^p+|a_n|^p)+\ee_{n-1}+\ee_n \\ & \leqslant \ldots \leqslant \textbf{t}_{\xi, p}(A)^p\sum_{i=1}^n |a_i|^p + \sum_{i=1}^n\ee_i \\ & < \textbf{t}_{\xi, p}(A)^p+\sum_{i=1}^\infty \ee_i. \end{align*} Since $(\ee_i)_{i=1}^\infty\subset (0, \infty)$ was arbitrary, we are done.

\end{proof}

\begin{rem}\upshape For convenience, let us say that an operator $A$ satisfying the conclusion of the previous corollary \emph{satisfies} $\xi$-$\ell_p$ (resp. $c_0$)-\emph{upper tree estimates with constant} $C$. The inclusion of the constant $C$ in the corollary is unnecessary.  Let us say that an operator $A:X\to Y$ \emph{satisfies} $\xi$-$\ell_p$ (resp. $c_0$)-\emph{upper tree estimates} if for every normally weakly null $(x_t)_{t\in \Omega_{\xi, \infty}}\subset B_X$, there exists for each $n\in \nn$ some $t_n\in MAX(\Lambda_{\xi, \infty, n})$ such that $\varnothing=t_0\prec t_1\prec t_2\prec \ldots$ and, with  $z_n=\sum_{t_{n-1}\prec t\preceq t_n} \mathbb{P}_\xi(\iota_{\xi, \infty}(t))x_t\in \text{co}(x_t: t_{n-1}\prec t\preceq t_n)$, $(Az_n)$ is dominated by the $\ell_p$ (resp. $c_0$) basis.  Here, it is not assumed that there is a constant $C$ so that the image of the convex block is $C$ dominated by the $\ell_p$ or $c_0$ basis.  Standard techniques, however, yield the existence of the constant.  That is, $A$ satisfies $\xi$-$\ell_p$ (resp. $c_0$) upper tree estimates if and only if there exists a $C$ such that $A$ satisfies $\xi$-$\ell_p$ (resp. $c_0$) upper tree estimates with constant $C$.  We only sketch the proof.  Suppose that $B_1, B_2\subset \Omega_{\xi, \infty}$ are well-founded sub-$B$-trees such that $MAX(B_1),MAX(B_2)\subset \cup_{n=1}^\infty MAX(\Lambda_{\xi, \infty, n})$.  Let us define $$B_1+B_2=\{s, t\cat (\omega^\xi k+u):s\in B_1, t\in MAX(B_1)\cap \Lambda_{\xi, \infty, k}, u\in B_2\}.$$    Then $B_1+B_2$ is well-founded and $MAX(B_1+B_2)\subset \cup_{n=1}^\infty MAX(\Lambda_{\xi, \infty, n})$.  Moreover, if $\cup_{n=1}^N \Lambda_{\xi, \infty, n}\subset B_1$ and $\Lambda_{\xi, \infty, 1}\subset B_2$, then $\cup_{n=1}^{N+1}\Lambda_{\xi, \infty, n}\subset B_1+B_2$.  

Similarly, if $B_1, B_2, \ldots\subset \Omega_{\xi, \infty}$ are well-founded sub-$B$-trees such that $\Lambda_{\xi, \infty, 1}\subset B_i$ for all $i\in \nn$, we may let $\sum_{i=1}^n B_i=B_1$ if $n=1$ and $\sum_{i=1}^n B_i= (\sum_{i=1}^{n-1} B_i)+B_n$ if $n>1$.  Then $\sum_{i=1}^n B_i\subset \sum_{i=1}^{n+1}B_i$ for all $n\in \nn$ and $$\bigcup_{n=1}^\infty \sum_{i=1}^n B_i=\Omega_{\xi, \infty}.$$  Finally, suppose that $A:X\to Y$ and $1<p<\infty$ are such that $\|A\|\leqslant 1$ and there is no constant $C$ such that $A$ satisfies $\xi$-$\ell_p$ upper tree estimates.  Let $\partial\Omega_{\xi, \infty}$ denote the infinite sequences all of whose finite initial segments lie in $\Omega_{\xi, \infty}$.   Then for every $n\in \nn$, there exists a normally weakly null collection $(x^n_t)_{t\in \Omega_{\xi, \infty}}\subset B_X$ such that for every $\tau\in \partial \Omega_{\xi, \infty}$, if $(z_i)_{i=1}^\infty$ is the convex block of $(x^n_s)_{s\prec \tau}$ the coefficients of which come from $\mathbb{P}_\xi\circ \iota_{\xi, \infty}$, $(Az_i)_{i=1}^\infty$ fails to be $n$-dominated by the $\ell_p$ basis.  This means there exists a minimal $k_\tau\in \nn$ and $(a_i)_{i=1}^{k_\tau}\in S_{\ell_p^{k_\tau}}$ such that $\|\sum_{i=1}^{k_\tau}a_iAz_i\|>n$.  Of course, $k_\tau>1$, since $\|A\|\leqslant 1$. Let $B_n(\tau)=\{s\in \Omega_{\xi, \infty}: s\prec \tau, s\in \cup_{i=1}^{k_\tau} \Lambda_{\xi, \infty, i}\}$ and $B_n=\cup_{\tau\in \partial\Omega_{\xi, \infty}}B_n(\tau)$.  This defines $B_n$ for each $n\in \nn$.  Note that each $B_n$ is well-founded.  

Next, for any $s\in \Omega_{\xi, \infty}$, there exists a unique $n\in \nn$ such that $s\in \Bigl(\sum_{i=1}^n B_i\Bigr)\setminus \Bigl(\sum_{i=1}^{n-1}B_i\Bigr)$, where  $\sum_{i=1}^0B_i=\varnothing$. In this case, either $n=1$, in which case we set $u=s$, or there exist $k\in \nn$, $t\in MAX(\sum_{i=1}^{n-1} B_i)$, and $u\in B_n$ such that $s=t\cat (\omega^\xi k+u)$. We let $x_s=x^n_u$.   This defines a normally weakly null collection $(x_s)_{s\in \Omega_{\xi, \infty}}\subset B_X$.  Furthermore, for any $\tau\in \partial\Omega_{\xi, \infty}$, if $(z_i)_{i=1}^\infty$ is the special convex block of $(x_s:s\prec \tau)$,  there exist $0=k_0<k_1<\ldots$ such that for all $n\in \nn$, $(z_i)_{i=k_{n-1}+1}^{k_n}$ is a convex block of a branch of $(x^n_t)_{t\in \Omega_{\xi, \infty}} $ which is not $n$-dominated by the $\ell_p$ basis. Therefore the collection $(x_s)_{s\in \Omega_{\xi, \infty}}$ witnesses the fact that $A$ does not satisfy $\xi$-$\ell_p$ (resp. $\xi$-$c_0$) upper tree estimates.

\end{rem}

\begin{rem}\upshape

Note that there is a partial converse to Corollary \ref{sim corollary3}.  Suppose that $A$ satisfies $\xi$-$\ell_p$-upper tree estimates with constant $C$. Then for any $k\in \nn$, $B$-tree $B$ with $o(B)=\omega^\xi k$, $\ee>0$, and any weakly null collection $(x_t)_{t\in B}\subset B_X$ such that for all $t\in B$ and $x\in \text{co}(x_s:s\preceq t)$, $\|Ax\|\geqslant \ee$, $\ee\leqslant C/k^{1/p'}$, and we may appeal to Corollary \ref{illinoiscorollary} to deduce that $\textbf{p}_\xi(A)\leqslant p'$.  Indeed, we first note that by standard techniques given in \cite{CD}, we may first fix $\theta:\Omega_{\xi, k}\to B$ such that $(u_t)_{t\in \Omega_{\xi, k}}:=(x_{\theta(t)})_{t\in \Omega_{\xi, k}}$ is normally weakly null and then use this collection to fill out the first $k$ levels of a normally weakly null collection $(z_t)_{t\in \Omega_{\xi, \infty}}$, the rest of which is filled by zeros.  Then there exists a branch and $(z_i)_{i=1}^k$ a convex block of a branch of the $(u_t)_{t\in \Omega{\xi, k}}$, and therefore the $(x_t)_{t\in B}$ vectors, the image under $A$ of which is $C$-dominated by the $\ell_p$ basis.  Thus $$\|Ak^{-1}\sum_{i=1}^k z_i\|\leqslant Ck^{1/p}/k= C/k^{1/p'}.$$  

\label{remark1}
\end{rem}

\begin{rem}\upshape In the previous corollary and remark, $D$ was a weak neighborhood basis at $0$ in $X$.  If we take $D$ to be a $w^*$-neighborhood basis at $0$ in $Y^*$, we may use dualization results from \cite{CD} to deduce that if $A^*$ is $w^*$-$\xi$-AUC with power type $q$, there exists a constant $C'$ such that for every normally $w^*$-null, $(A^*, 1)$-large collection $(y^*_t)_{t\in \Omega_{\xi, \infty}}$, there exist $\varnothing =t_0\prec t_1\prec \ldots$ such that for each $n\in \nn$, $t_n\in MAX(\Lambda_{\xi, \infty, n})$ and such that, with $z^*_n=\sum_{t_{n-1}\prec t\preceq t_n}y^*_t$, $(z^*_n)$ $C$-dominates the $\ell_{p'}$ basis. Let us say in this case that $A^*$ satisfies $w^*$-$\xi$-$\ell_q$ \emph{lower tree estimates} (\emph{with constant} $C$, if we are concerned with the constant). One may also deduce by this fact that $\textbf{p}_\xi(A)\leqslant p'$, using Corollary \ref{illinoiscorollary}$(ii)$.   We summarize this discussion with the previous renorming theorems to deduce the following, which is the higher ordinal version of quantitative results from \cite{DK} characterizing the non-operator, $\xi=0$ case.

\label{remark2}
\end{rem}

\begin{theorem} Fix an ordinal $\xi$, an operator $A:X\to Y$, and $1<p<\infty$.  The following are equivalent. \begin{enumerate}[(i)]\item $\textbf{\emph{p}}_\xi(A)\leqslant p'$. \item For every $1<r<p$, $A$ satisfies $\xi$-$\ell_r$ upper tree estimates.  \item For every $p'<r<\infty$, $A^*$ satisfies $w^*$-$\xi$-$\ell_r$ lower tree estimates. \item For every $1<r<p$, $A$ admits a $\xi$-AUS norm with power type $r$. \item For every $p'<r<\infty$, $A$ admits a norm making $A^*$ $w^*$-$\xi$-AUC with power type $r$.   \end{enumerate}

\label{cross}

\end{theorem}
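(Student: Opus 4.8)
The plan is to derive Theorem~\ref{cross} by assembling the renorming results of Section~4 with the tree-estimate machinery built around Corollary~\ref{sim corollary3}; no genuinely new argument is required, only careful conjugate-exponent bookkeeping. First I dispose of the two degenerate cases. If $Sz(A)\leqslant \omega^\xi$, then $\rho_\xi(\cdot\,;A)\equiv 0$ and $\textbf{p}_\xi(A)=0$, so $(i)$ holds, $A$ is $\xi$-AUS with every power type in its original norm (whence $(iv)$), $A^*$ is $w^*$-$\xi$-AUC with every power type by Theorem~\ref{CDtheorem}$(iii)$ (whence $(v)$), and $(ii)$, $(iii)$ then follow from $(iv)$, $(v)$ via Corollary~\ref{sim corollary3} and the dualization in Remark~\ref{remark2}; thus all five statements hold. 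If $Sz(A)>\omega^{\xi+1}$, then $\textbf{p}_\xi(A)=\infty$ so $(i)$ fails, no equivalent norm on $Y$ makes $A$ $\xi$-AUS so $(iv)$ fails, Theorem~\ref{CDtheorem}$(ii)$ then rules out $(v)$, and if $A$ satisfied $\xi$-$\ell_r$ upper tree estimates---equivalently, by the remark following Corollary~\ref{sim corollary3}, with some constant $C$---then the estimate of Remark~\ref{remark1} combined with Theorem~\ref{illinoistheorem}$(ii)$ would force $Sz_\xi(A,\ee)\leqslant (C/\ee)^{r'}+1$ for every $\ee>0$, contradicting $Sz(A)>\omega^{\xi+1}$; the $w^*$-dual argument rules out $(iii)$. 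So, by Brooker's dichotomy, it suffices to treat $Sz(A)=\omega^{\xi+1}$, where $1\leqslant \textbf{p}_\xi(A)<\infty$ by Lemma~\ref{lemma1}$(iii)$, and we may assume $\|A\|\leqslant 1$ after rescaling, which affects none of $(i)$--$(v)$.

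In this case I will establish the six implications $(i)\Rightarrow(iv)\Rightarrow(ii)\Rightarrow(i)$ and $(iv)\Rightarrow(v)\Rightarrow(iii)\Rightarrow(i)$, which together yield the equivalence. For $(i)\Rightarrow(iv)$: by part $(ii)$ of the Main Theorem the supremum of those $r\in(1,\infty)$ for which $Y$ admits an equivalent norm making $A$ $\xi$-AUS with power type $r$ equals $\textbf{p}_\xi(A)'$, which is $\geqslant p$ since $\textbf{p}_\xi(A)\leqslant p'$; since the set of attainable exponents is a down-set in $(1,\infty)$ (a power-type-$r$ bound implies a power-type-$s$ bound for $1<s\leqslant r$), it contains $(1,p)$, which is $(iv)$. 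For $(iv)\Rightarrow(v)$: by Theorem~\ref{CDtheorem}$(iii)$ an equivalent norm on $Y$ makes $A$ $\xi$-AUS with power type $r$ exactly when the dual norm makes $A^*$ $w^*$-$\xi$-AUC with power type $r'$, and $r\mapsto r'$ carries $(1,p)$ onto $(p',\infty)$. For $(iv)\Rightarrow(ii)$: given $1<r<p$, choose an equivalent norm $|\cdot|$ on $Y$ making $A$ $\xi$-AUS with power type $r$; then $\textbf{t}_{\xi,r}(A\colon X\to(Y,|\cdot|))<\infty$ by Proposition~\ref{mymaxinfo}, so $A\colon X\to(Y,|\cdot|)$ satisfies $\xi$-$\ell_r$ upper tree estimates by Corollary~\ref{sim corollary3}; since this property of $A$ is insensitive to replacing the norm of $Y$ by an equivalent one (only the domination constants change), $A\colon X\to Y$ satisfies $\xi$-$\ell_r$ upper tree estimates. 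The implication $(v)\Rightarrow(iii)$ is symmetric, using the dualization of Remark~\ref{remark2} ($w^*$-$\xi$-AUC with power type $r$ implies $w^*$-$\xi$-$\ell_r$ lower tree estimates for $A^*$) and renorming invariance. Finally, for $(ii)\Rightarrow(i)$ (resp.\ $(iii)\Rightarrow(i)$): for each $1<r<p$ (resp.\ $p'<r<\infty$) the remark following Corollary~\ref{sim corollary3} upgrades the tree estimates to hold with a constant, and Remark~\ref{remark1} (resp.\ Remark~\ref{remark2}), which rests on Corollary~\ref{illinoiscorollary}$(ii)$, then gives $\textbf{p}_\xi(A)\leqslant r'$ (resp.\ $\textbf{p}_\xi(A)\leqslant r$); letting $r\uparrow p$ (resp.\ $r\downarrow p'$) yields $\textbf{p}_\xi(A)\leqslant p'$, which is $(i)$.

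I do not anticipate a real obstacle: the substantive inputs---the renorming Main Theorem, the passage from $\xi$-AUS with power type $r$ to $\xi$-$\ell_r$ upper tree estimates (Corollary~\ref{sim corollary3}, via the Simultaneity Theorem), its $w^*$-dual, and the reverse estimate from tree estimates with a constant to polynomial growth of $Sz_\xi(A,\cdot)$ (Remarks~\ref{remark1}--\ref{remark2} via Theorem~\ref{illinoistheorem})---are already available, so the work is organizational. The points needing care are keeping the conjugate-exponent conversions and the order-reversing identity $\textbf{p}_\xi(A)\leftrightarrow\textbf{p}_\xi(A)'$ straight across the AUS/AUC duality, verifying that each of $(i)$--$(v)$ is stable under replacing the norm of $Y$ by an equivalent one, and handling the degenerate Szlenk-index cases so that the statement is correct for every $p\in(1,\infty)$.
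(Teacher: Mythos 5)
Your proposal is correct and follows essentially the same path the paper takes: the paper offers no separate proof of Theorem~\ref{cross}, introducing it with ``We summarize this discussion with the previous renorming theorems to deduce the following,'' and you have simply made explicit the assembly of the Main Theorem, Theorem~\ref{CDtheorem}$(iii)$, Proposition~\ref{mymaxinfo}, Corollary~\ref{sim corollary3}, and Remarks~\ref{remark1}--\ref{remark2} that the paper leaves implicit. The only addition beyond the paper's presentation is your careful disposal of the degenerate cases $Sz(A)\leqslant\omega^\xi$ and $Sz(A)>\omega^{\xi+1}$ and the small observation that the set of attainable AUS power types is a down-set; both are sound and do not constitute a different approach.
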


We recall the following shown in \cite{CD}, the proof of which implicitly uses Theorem \ref{sim theorem}.  

\begin{corollary} Fix an ordinal $\xi$ and $1<p\leqslant \infty$. Then for any $\sigma>0$, $2\rho_\xi(\sigma;A+B)\leqslant \rho_\xi(2\sigma;A)+\rho_\xi(2\sigma;B)$.  Consequently, if $A,B:X\to Y$ are $\xi$-AUS with power type $p$, so is $A+B$.  

\label{sim corollary 2}
\end{corollary}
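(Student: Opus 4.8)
The plan is to first establish the modulus inequality $2\rho_\xi(\sigma;A+B)\leqslant\rho_\xi(2\sigma;A)+\rho_\xi(2\sigma;B)$ and then deduce the power-type statement by a routine rescaling. The algebraic identity behind everything is
\[
y+(A+B)x=\tfrac12\bigl(y+A(2x)\bigr)+\tfrac12\bigl(y+B(2x)\bigr),
\]
so that $\|y+(A+B)x\|\leqslant\tfrac12\|y+A(2x)\|+\tfrac12\|y+B(2x)\|$. Since $2x$ ranges over convex combinations of the doubled vectors $(2x_t)$, which still form a weakly null collection inside $2\sigma B_X$, each term on the right is individually governed by $\rho_\xi(2\sigma;\cdot)$; the real task is to select \emph{one} convex combination $x$, down \emph{one} branch, that is near-optimal for $A$ and for $B$ at once. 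This is exactly what the Simultaneity Theorem (Theorem \ref{sim theorem}) delivers.

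First I would reduce to normally weakly null collections indexed by $\Omega_\xi$, just as in the proof of Theorem \ref{renorming1}: given a $B$-tree $T$ with $o(T)=\omega^\xi$ and a weakly null $(x_t)_{t\in T}\subset\sigma B_X$, \cite[Proposition 2.1]{CD} together with a length-preserving monotone map $\Gamma_\xi\to B_{\omega^\xi}$ produces a normally weakly null collection $(x_t)_{t\in\Omega_\xi}\subset\sigma B_X$ (with $D$ a weak neighborhood basis at $0$ in $X$) whose branch convex hulls embed into those of the original. Hence it is enough to prove: for every $y\in B_Y$, every normally weakly null $(x_t)_{t\in\Omega_\xi}\subset\sigma B_X$, and all $\ee,\eta>0$, there is $t\in MAX(\Omega_\xi)$ such that, with $x=\sum_{s\preceq t}\mathbb{P}_\xi(s)x_s\in\text{co}(x_s:s\preceq t)$,
\[
\|y+(A+B)x\|\leqslant 1+\tfrac12\rho_\xi(2\sigma;A)+\tfrac12\rho_\xi(2\sigma;B)+\ee+\eta .
\]

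Next I would set up two families of functions. Let $\fff_A$ consist of all $f:\Pi\Omega_\xi\to\rr$ of the form $f(s,t)=\mathrm{Re}\,u^*_t(y+2Au_s)$ for some normally weakly null $(u_t)_{t\in\Omega_\xi}\subset\sigma B_X$ and some $(u^*_t)_{t\in MAX(\Omega_\xi)}\subset B_{Y^*}$, and let $\fff_B$ be the same with $A$ replaced by $B$. The key claim is that no member of $\fff_A$ is $\bigl(1+\rho_\xi(2\sigma;A)+\ee\bigr)$-large, and likewise no member of $\fff_B$ is $\bigl(1+\rho_\xi(2\sigma;B)+\ee\bigr)$-large. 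Indeed, if some $f\in\fff_A$ were $\bigl(1+\rho_\xi(2\sigma;A)+\ee\bigr)$-large, applying the definition with $\delta=\ee/2$ produces an extended pruning $(\theta,e):\Omega_\xi\to\Omega_\xi$; relabelling through $(\theta,e)$ sends the underlying collection to another normally weakly null collection and the functionals to another subset of $B_{Y^*}$, and gives $\mathrm{Re}\,u^*_t(y+2Au_s)\geqslant 1+\rho_\xi(2\sigma;A)+\ee/2$ for every $(s,t)\in\Pi\Omega_\xi$. Taking a convex combination over $s\preceq t$ then forces $\|y+A(2x)\|\geqslant 1+\rho_\xi(2\sigma;A)+\ee/2$ for every $t\in MAX(\Omega_\xi)$ and every $x\in\text{co}(u_s:s\preceq t)$; but $(2u_t)\subset 2\sigma B_X$ is weakly null, $o(\Omega_\xi)=\omega^\xi$, and $y\in B_Y$, so this contradicts the definition of $\rho_\xi(2\sigma;A)$. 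Now apply Theorem \ref{sim theorem} with $n=2$, $\fff_1=\fff_A$, $\fff_2=\fff_B$, $\ee_1=1+\rho_\xi(2\sigma;A)+\ee$, $\ee_2=1+\rho_\xi(2\sigma;B)+\ee$, choosing $f_1\in\fff_A$ built from the given $(x_t)_{t\in\Omega_\xi}$ with each $u^*_t$ norming $y+2Az^t$ (where $z^t=\sum_{s\preceq t}\mathbb{P}_\xi(s)x_s$), and $f_2\in\fff_B$ built from the same $(x_t)$ with functionals norming $y+2Bz^t$. Because $\sum_{s\preceq t}\mathbb{P}_\xi(s)=1$, we get $\sum_{s\preceq t}\mathbb{P}_\xi(s)f_1(s,t)=\|y+2Az^t\|$ and $\sum_{s\preceq t}\mathbb{P}_\xi(s)f_2(s,t)=\|y+2Bz^t\|$, so the theorem produces $t\in MAX(\Omega_\xi)$ with $\|y+A(2z^t)\|\leqslant 1+\rho_\xi(2\sigma;A)+\ee+\eta$ and $\|y+B(2z^t)\|\leqslant 1+\rho_\xi(2\sigma;B)+\ee+\eta$. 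Feeding $x=z^t$ into the convexity inequality of the first paragraph yields the displayed bound; letting $\ee,\eta\to0$ and taking the supremum over all configurations gives $2\rho_\xi(\sigma;A+B)\leqslant\rho_\xi(2\sigma;A)+\rho_\xi(2\sigma;B)$.

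For the last assertion, if $\rho_\xi(\sigma;A),\rho_\xi(\sigma;B)\leqslant C\sigma^p$ for all $\sigma\in(0,1]$ (case $1<p<\infty$), then $\rho_\xi(\sigma;A+B)\leqslant C2^p\sigma^p$ for $\sigma\in(0,\tfrac12]$, while $\rho_\xi(\sigma;A+B)\leqslant\|A+B\|\sigma\leqslant 2^p\|A+B\|\sigma^p$ for $\sigma\in(\tfrac12,1]$, so $A+B$ is $\xi$-AUS with power type $p$. The case $p=\infty$ is identical, with each threshold $1+\rho_\xi(2\sigma;\cdot)+\ee$ replaced by $\max\{\|y\|,2\textbf{t}_{\xi,\infty}(\cdot)\sigma\}+\ee$ and the $\max$-form definition of $\textbf{t}_{\xi,\infty}$ used in the key claim, yielding $\textbf{t}_{\xi,\infty}(A+B)\leqslant 2\max\{\textbf{t}_{\xi,\infty}(A),\textbf{t}_{\xi,\infty}(B)\}<\infty$. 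I expect the main obstacle to be exactly the key claim that no member of $\fff_A$ is $\bigl(1+\rho_\xi(2\sigma;A)+\ee\bigr)$-large, i.e. translating the ``extended pruning / $\ee$-large'' language of Theorem \ref{sim theorem} back into the supremum-infimum over configurations that defines $\rho_\xi$; this rests on prunings preserving normal weak-nullity and on the normalisation $\sum_{s\preceq t}\mathbb{P}_\xi(s)=1$.
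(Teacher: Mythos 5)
Your proof is correct. The paper states this corollary as a recall from \cite{CD} without giving a proof, noting only that the argument there implicitly uses Theorem \ref{sim theorem}; your reduction to normally weakly null $\Omega_\xi$-indexed trees, the ``not $(1+\rho_\xi(2\sigma;\cdot)+\ee)$-large'' verification for the families $\fff_A$ and $\fff_B$, and the two-fold invocation of the Simultaneity Theorem (with the $\max$-form variant for $p=\infty$) make that implicit argument explicit and match the approach the author indicates.
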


We have one more application of Theorem \ref{sim theorem}.

\begin{corollary} Fix an ordinal $\xi$.  Let $\Lambda$ be a non-empty set and suppose that for every $\lambda\in \Lambda$, $A_\lambda:X_\lambda\to Y_\lambda$ is an operator with $Sz(A)\leqslant \omega^{\xi+1}$. Suppose also that $\sup_\lambda \|A_\lambda\|<\infty$.  \begin{enumerate}[(i)]\item For any $1<p\leqslant q< \infty$, the operator $\oplus_\lambda A_\lambda: (\oplus_{\lambda\in \Lambda} X_\lambda)_{\ell_q(\Lambda)}\to (\oplus_{\lambda \in \Lambda}Y_\lambda)_{\ell_q(\Lambda)}$ is $\xi$-AUS with power type $p$ if and only if $\sup_{\lambda\in \Lambda} \sup_{\sigma\in (0,1)} \frac{\rho_\xi(\sigma;A_\lambda)}{\sigma^p}$ if and only if $\sup_{\lambda\in \Lambda}\textbf{\emph{t}}_{\xi,p}(A_\lambda)<\infty$. More precisely, $\textbf{\emph{t}}_{\xi, p}(\oplus A_\lambda)\leqslant \sup_{\lambda\in \Lambda} \max\{\|A_\lambda\|^p, \textbf{\emph{t}}_{\xi, p}(A_\lambda)\}$.  \item $\oplus A_\lambda:(\oplus_{\lambda\in \Lambda}X_\lambda)_{c_0(\Lambda)}\to (\oplus_{\lambda\in \Lambda}Y_\lambda)_{c_0(\Lambda)}$ is $\xi$-AUS with power type $\infty$ if and only if $\sup_{\lambda\in \Lambda}\textbf{\emph{t}}_{\xi, \infty}(A_\lambda)<\infty$, and in this case, $\textbf{\emph{t}}_{\xi, \infty}(\oplus A_\lambda)\leqslant \sup_{\lambda\in \Lambda}\max\{\|A_\lambda\|, \textbf{\emph{t}}_{\xi, \infty}(A_\lambda)\}$. 

\end{enumerate}   

\label{narcor}

\end{corollary}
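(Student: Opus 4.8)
The plan is to reduce both parts, via Proposition~\ref{mymaxinfo}, to a single quantitative estimate for the parameter $\textbf{t}_{\xi,\cdot}$, and to prove that estimate by applying the Simultaneity Theorem (Theorem~\ref{sim theorem}) to finitely many function classes, one for each coordinate on which a fixed vector $y$ is supported. Write $X=(\oplus_\lambda X_\lambda)_{\ell_q(\Lambda)}$, $Y=(\oplus_\lambda Y_\lambda)_{\ell_q(\Lambda)}$, $A=\oplus_\lambda A_\lambda$, let $P_\mu\colon X\to X_\mu$ and $Q_\mu\colon Y\to Y_\mu$ be the coordinate projections, and set $C=\sup_\lambda\max\{\|A_\lambda\|^p,\textbf{t}_{\xi,p}(A_\lambda)\}$. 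Proposition~\ref{mymaxinfo} identifies, for every single operator, the properties ``$\xi$-AUS with power type $p$'', ``$\textbf{t}_{\xi,p}<\infty$'' and ``$\sup_{\sigma\in(0,1)}\rho_\xi(\sigma;\cdot)/\sigma^p<\infty$'', with mutual bounds depending only on $p$ and the norm; since $\sup_\lambda\|A_\lambda\|<\infty$, this reduces part (i) to the estimate $\textbf{t}_{\xi,p}(A)\leqslant C$ (which we may assume finite) together with the converse bound $\textbf{t}_{\xi,p}(A_\lambda)\leqslant\textbf{t}_{\xi,p}(A)$ for every $\lambda$. The converse is immediate from the isometric coordinate inclusions $X_\lambda\hookrightarrow X$ and $Y_\lambda\hookrightarrow Y$, which turn a weakly null collection in $\sigma B_{X_\lambda}$ into one in $\sigma B_X$ without changing the relevant norms.

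For the estimate $\textbf{t}_{\xi,p}(A)\leqslant C$, I would first use the reduction at the end of the proof of Theorem~\ref{renorming1} (based on \cite[Proposition~2.1]{CD}): it suffices to show that for every $y\in Y$, every $\sigma>0$, every normally weakly null $(x_t)_{t\in\Omega_\xi}\subset\sigma B_X$, and every $\ee>0$, there is a maximal $t$ with $x=\sum_{s\preceq t}\mathbb{P}_\xi(s)x_s$ such that $\|y+Ax\|^p\leqslant\|y\|^p+C\sigma^p+\ee$. Since the data depend continuously on $y$, I may assume $y$ is supported on a finite set $F\subset\Lambda$. Applying Proposition~\ref{recall}(iv) with $n=1$ (so $\Lambda_{\xi,1,1}=\Omega_\xi$) successively to the bounded functions $s\mapsto\|P_\mu x_s\|$, $\mu\in F$, and composing the prunings, I may further assume there are $\sigma_\mu\geqslant 0$ with $\sigma_\mu\leqslant\|P_\mu x_s\|\leqslant\sigma_\mu+\delta$ for all $s$ and all $\mu\in F$, $\delta>0$ a small parameter; note $\sum_{\mu\in F}\sigma_\mu^q\leqslant\sigma^q$ and $\sum_{\mu\in F}((\sigma_\mu+\delta)^q-\sigma_\mu^q)\to 0$ as $\delta\to 0^+$. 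For $\mu\in F$ let $\mathcal{F}_\mu$ be the class of functions $g\colon\Pi\Omega_\xi\to\rr$ of the form $g(s,t)=\mathrm{Re}\,z^*_t(Q_\mu y+A_\mu P_\mu w_s)$, where $(w_s)_{s\in\Omega_\xi}\subset X$ is normally weakly null with $\sup_s\|P_\mu w_s\|\leqslant\sigma_\mu+\delta$ and $(z^*_t)_{t\in MAX(\Omega_\xi)}\subset B_{Y_\mu^*}$. Since $(P_\mu w_s)_{s\in\Omega_\xi}$ is then a weakly null collection in $(\sigma_\mu+\delta)B_{X_\mu}$ indexed by a $B$-tree of order $\omega^\xi$, the argument in the proof of Corollary~\ref{sim corollary} shows that no member of $\mathcal{F}_\mu$ is $M$-large for any $M$ with $M^p>\|Q_\mu y\|^p+C(\sigma_\mu+\delta)^p$: an $M$-large member would, after pruning and relabelling, force $\|Q_\mu y+A_\mu u\|\geqslant M-\gamma$ for every convex combination $u$ down every branch and every $\gamma>0$, contradicting $\textbf{t}_{\xi,p}(A_\mu)\leqslant C$.

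I then feed the classes $\mathcal{F}_\mu$, $\mu\in F$, to the Simultaneity Theorem~\ref{sim theorem}, applied with the concrete functions $f_\mu(s,t)=\mathrm{Re}\,z^*_{\mu,t}(Q_\mu y+A_\mu P_\mu x_s)$, where $z^*_{\mu,t}$ norms $Q_\mu y+A_\mu P_\mu x^{(t)}$ and $x^{(t)}=\sum_{s\preceq t}\mathbb{P}_\xi(s)x_s$, so that $\sum_{s\preceq t}\mathbb{P}_\xi(s)f_\mu(s,t)=\|Q_\mu y+A_\mu P_\mu x^{(t)}\|$; this produces a maximal $t$ for which $x:=x^{(t)}$ satisfies $\|(y+Ax)_\mu\|^p\leqslant\|y_\mu\|^p+C(\sigma_\mu+\delta)^p+\ee''$ for all $\mu\in F$, with $\ee''$ as small as desired. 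For $\mu\notin F$ I would use only $\|(y+Ax)_\mu\|=\|A_\mu P_\mu x\|\leqslant C^{1/p}\|P_\mu x\|$ together with the estimate $\sum_{\mu\notin F}\|P_\mu x\|^q\leqslant\sigma^q-\sum_{\mu\in F}\sigma_\mu^q$, which follows by dominating the vector $(\|P_\mu x\|)_{\mu\notin F}$ coordinatewise by $\sum_{s\preceq t}\mathbb{P}_\xi(s)(\|P_\mu x_s\|)_{\mu\notin F}$ and taking $\ell_q$-norms. Raising the coordinate norms of $y+Ax$ to the $p$-th power and applying the triangle inequality in $\ell_{q/p}$ (a genuine norm since $p\leqslant q$) to these per-coordinate estimates gives $\|y+Ax\|_Y^p\leqslant\|y\|^p+C\bigl(\sum_{\mu\in F}(\sigma_\mu+\delta)^q+\sum_{\mu\notin F}\|P_\mu x\|^q\bigr)^{p/q}+\ee''|F|^{p/q}\leqslant\|y\|^p+C\bigl(\sigma^q+\sum_{\mu\in F}((\sigma_\mu+\delta)^q-\sigma_\mu^q)\bigr)^{p/q}+\ee''|F|^{p/q}$, and the right-hand side tends to $\|y\|^p+C\sigma^p$ as $\delta,\ee''\to 0$; choosing $\delta$ and then $\ee''$ small enough yields the reduced inequality, hence $\textbf{t}_{\xi,p}(A)\leqslant C$. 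Part (ii) runs along the same lines with $c_0$-sums and the supremum norm: ``$\xi$-AUS with power type $\infty$'' is by definition ``$\textbf{t}_{\xi,\infty}<\infty$'', the coordinate-embedding argument again supplies the ``only if'', and for $\textbf{t}_{\xi,\infty}(A)\leqslant\sup_\lambda\max\{\|A_\lambda\|,\textbf{t}_{\xi,\infty}(A_\lambda)\}$ no stabilization is needed (since $\|P_\mu x_s\|\leqslant\sigma$ automatically), the Simultaneity Theorem handles $\mu\in F$ against the threshold $\max\{\|Q_\mu y\|,C\sigma\}$, and for $\mu\notin F$ one simply uses $\|A_\mu P_\mu x\|\leqslant\|A_\mu\|\sigma\leqslant C\sigma\leqslant\max\{\|y\|,C\sigma\}$, so that passing to the supremum over all coordinates costs nothing.

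The step I expect to be the main obstacle is the $\ell_q$ ``budget'' bookkeeping in part (i): a naive use of the per-coordinate estimates would spend $C\sigma^p$ on each of the $|F|$ coordinates and again on the tail, giving only a constant-times-$|F|$ bound. The resolution is precisely the norm stabilization — charging coordinate $\mu\in F$ the sharp scale $(\sigma_\mu+\delta)^p$ with $\sum_{\mu\in F}\sigma_\mu^q\leqslant\sigma^q$ — combined with the convexity estimate bounding the tail by $\sigma^q-\sum_{\mu\in F}\sigma_\mu^q$, so that $F$ and its complement share the single budget $\sigma^q$, the $\ell_{q/p}$ triangle inequality then closing the computation. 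A secondary technical point is the choice of the classes $\mathcal{F}_\mu$: by defining them through normally weakly null collections in the \emph{ambient} space $X$ post-composed with $P_\mu$, one only needs weak nullity — not a normal form — on the coordinate side, which is exactly what $\textbf{t}_{\xi,p}(A_\mu)$ controls; this avoids any reindexing of projected trees. The remaining reductions — general weakly null collections to normally weakly null ones, and general $y$ to finitely supported $y$ — are routine.
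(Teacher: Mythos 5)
Your proof is correct and takes essentially the same route as the paper's: reduce to finitely supported $y$, stabilize the coordinate norms on the tree, apply the Simultaneity Theorem coordinate-by-coordinate over the support $F$, and combine using $p\leqslant q$. The only difference is in the bookkeeping for part (i): you stabilize the $q$-th power coordinate norms and close the computation with the triangle inequality in $\ell_{q/p}$, whereas the paper invokes \cite[Lemma 4.2]{CD} to obtain a stabilized bound expressed in $p$-th powers and then simply sums; your accounting is actually the cleaner one, since the naive $p$-th power addition does not directly respect the $\ell_q$-norm on either the $y$-side or the tail, while the $\ell_{q/p}$ argument handles both at once.
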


\begin{proof} By Proposition \ref{mymaxinfo}, $\sup_{\lambda\in \Lambda} \sup_{0<\sigma<1}\rho_\xi(\sigma;A_\lambda)/\sigma^p<\infty$ if and only if $\sup_{\lambda\in \Lambda}\textbf{t}_{\xi, p}(A_\lambda)<\infty$.  Let $X=(\oplus_{\lambda\in \Lambda}X_\lambda)_{\ell_q(\Lambda)}$, $Y=(\oplus_{\lambda\in \Lambda}Y_\lambda)_{\ell_q(\Lambda)}$, and $A=\oplus A_\lambda$. If $A$ is $\xi$-AUS with power type $p$, then clearly $\sup_{\lambda\in \Lambda}\sup_{\sigma\in (0,1)}\frac{\rho_\xi(\sigma;A_\lambda)}{\sigma^p}<\infty$, since $\sup_{\sigma\in (0,1)}\frac{\rho_\xi(\sigma;A)}{\sigma^p}<\infty$ and $\rho_\xi(\sigma;A_\lambda)\leqslant \rho_\xi(\sigma;A)$ for any $\sigma\in (0,1)$ and $\lambda\in \Lambda$.   

Suppose $\sup_{\lambda\in \Lambda}\sup_{\sigma\in (0,1)}\frac{\rho_\xi(\sigma;A_\lambda)}{\sigma^p}<\infty$. For each $\lambda_0\in \Lambda$, let $P_{\lambda_0}:(\oplus_{\lambda\in \Lambda}X_\lambda)_{\ell_q(\Lambda)}\to X_{\lambda_0}$, $Q_{\lambda_0}:(\oplus_{\lambda\in \Lambda}Y_\lambda)\to Y_{\lambda_0}$ be the projections.  It is quite clear that for any $\lambda\in \Lambda$, $\rho_\xi(\cdot;Q_\lambda AP_\lambda:X\to Y_\lambda)=\rho_\xi(\cdot;A_\lambda)$. Then $C=\sup_{\lambda\in \Lambda}\sup \max\{\|A_\lambda\|^p, \textbf{t}_{\xi, p}(A_\lambda)\}$ is finite.  Fix $y\in (\oplus_\lambda Y_\lambda)_{\ell_q(\Lambda)}$ and a normally weakly null $(x_t)_{t\in \Omega_\xi}$. We may assume that $F:=\{\lambda\in \Lambda: Q_\lambda y\neq 0\}$ is finite, by the density of such members of $Y$.     Fix $\ee>0$.   By \cite[Lemma $4.2$]{CD}, we may pass to a subtree and assume that there exist non-negative scalars $a$ and $a_\lambda$, $\lambda\in F$, such that for any $t\in \Omega_\xi$, $\|P_\lambda x\|\leqslant a_\lambda$, $\|x_t-P_Fx_t\|\leqslant a$, and $a^p+\sum_{\lambda\in F}a_\lambda^p\leqslant \sigma^p+\ee/2$.   Using Theorem \ref{sim theorem} as before applied to $Q_\lambda AP_\lambda$, we may fix $t\in MAX(\Omega_\xi)$ and $z\in \text{co}(x_s:s\preceq t)$ such that for every $\lambda\in F$, $$\|Q_\lambda(y+Az)\|^p \leqslant \|Q_\lambda y\|^p + Ca_\lambda^p+\ee/2|F|.$$   Since $p\leqslant q$,   \begin{align*} \|y+Az\|^p & \leqslant \sum_{\lambda\in F}\|Q_\lambda(y+Az)\|^p+ \|Q_{\Lambda\setminus F}Az\|^p  \\ & \leqslant \|y\|^p+C\sum_{\lambda\in F}a_\lambda^p+Ca^p+\ee/2 \leqslant \ee + \|y\|^p+C\sigma^p.  \end{align*} This implies that $A$ is $\xi$-AUS with power type $p$.

\end{proof}

\section{Ideal properties}

In this section, we make some observations about sums and factorization properties of the $\xi$-Szlenk power type of an operator.  In what follows, $\oplus$ denotes the \emph{Hessenberg} sum of two ordinals.  A full introduction of the Hessenberg sum would be unnecessarily technical, so we only indicate the only properties we will need here: Given an ordinal $\xi$ and natural numbers $m,n$, $\omega^\xi m \oplus \omega^\xi n=\omega^\xi (m+n)$, and if $\zeta\leqslant \zeta_1$, $\xi\leqslant \xi_1$, $\zeta\oplus \xi\leqslant \zeta_1\oplus \xi_1$.

The following results can be found in \cite{BrookerAsplund} and \cite{CIllinois}.  

\begin{proposition} There exists a constant $c>0$ such that the following hold.  \begin{enumerate}[(i)]\item If $A:Y\to Z$, $B:X\to Y$, $C:W\to X$ are non-zero operators, then for any $\ee>0$, $Sz(ABC, \ee)\leqslant Sz(B, \|A\|\|C\|c\ee)$, \item if $A:X\to Y$ is an operator, $q:W\to X$ is a quotient map, and $j:Y\to Z$ is an isometric embedding, $Sz(A, \ee)\leqslant Sz(jAq, c\ee)$. \item For any $A,B:X\to Y$ and $\ee>0$, $Sz(A+B, \ee)\leqslant Sz(A, c\ee)\oplus Sz(B, c\ee)$. In particular, for any ordinal $\xi$,  $\textbf{\emph{p}}_\xi(A+B)\leqslant \textbf{\emph{p}}_\xi(A)+\textbf{\emph{p}}_\xi(B)$.  \end{enumerate}

\end{proposition}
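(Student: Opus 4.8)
The plan is to treat parts (i), (ii), and the displayed inequality of (iii) as already recorded in \cite{BrookerAsplund} and \cite{CIllinois} (indicating the mechanisms below), and then to derive the final assertion about $\textbf{p}_\xi$, which is the only genuinely new point. The common engine behind (i)--(iii) is the tree description of the Szlenk index: by the discussion following Theorem \ref{illinoistheorem}, there is a universal constant so that, up to that constant, ``$Sz(T,\ee)>\gamma$'' is equivalent to the existence of a $B$-tree $B$ with $o(B)=\gamma$ and a weakly null collection $(u_t)_{t\in B}\subset B_U$ along which $\|Tu\|\geqslant\ee$ for every $t\in B$ and every $u\in\mathrm{co}(u_s:s\preceq t)$. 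For (i), a tree witnessing that $Sz(ABC,\ee)$ is large is transported to the collection $\bigl(\|C\|^{-1}Cw_t\bigr)_{t\in B}\subset B_X$, which is again weakly null, and the estimate $\|ABCw\|\leqslant\|A\|\|C\|\,\bigl\|B\bigl(\|C\|^{-1}Cw\bigr)\bigr\|$ on convex combinations shows that the new tree witnesses that $Sz(B,\cdot)$ is large at a threshold of order $\ee/(\|A\|\|C\|)$; comparing the two statements gives the inequality of (i) with the constant normalised as in \cite{BrookerAsplund}. For (ii), a tree in $B_X$ witnessing that $Sz(A,\ee)$ is large is lifted, after passing to a subtree to restore weak nullity, through the quotient map $q$ to a tree of vectors of norm close to $1$ in $W$; applying $jAq$ and using that $j$ is an isometry preserves the lower bounds on convex combinations, so $Sz(jAq,\cdot)$ is large at a comparable threshold. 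For the displayed inequality of (iii), one uses $(A+B)^*B_{Y^*}\subseteq A^*B_{Y^*}+B^*B_{Y^*}$ together with the behaviour of the Szlenk derivation on Minkowski sums of $w^*$-compact convex sets (an extension of Lemma \ref{lemma1}(i) to the case in which both summands are differentiated), again from \cite{BrookerAsplund}; the Hessenberg sum is the natural bound for the combined order.

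It remains to derive $\textbf{p}_\xi(A+B)\leqslant\textbf{p}_\xi(A)+\textbf{p}_\xi(B)$ from the displayed inequality of (iii). If $Sz(A)>\omega^{\xi+1}$ or $Sz(B)>\omega^{\xi+1}$ the right-hand side is $\infty$ and there is nothing to prove, so assume $Sz(A),Sz(B)\leqslant\omega^{\xi+1}$; then, for every $\ee\in(0,1)$, $m:=Sz_\xi(A,c\ee)$ and $n:=Sz_\xi(B,c\ee)$ are natural numbers. Since $s_{c\ee}^{\omega^\xi m}(A^*B_{Y^*})=\varnothing$ and $s_{c\ee}^{\omega^\xi n}(B^*B_{Y^*})=\varnothing$, we have $Sz(A,c\ee)\leqslant\omega^\xi m$ and $Sz(B,c\ee)\leqslant\omega^\xi n$, so by (iii), by monotonicity of the Hessenberg sum, and by $\omega^\xi m\oplus\omega^\xi n=\omega^\xi(m+n)$,
\[
Sz(A+B,\ee)\leqslant Sz(A,c\ee)\oplus Sz(B,c\ee)\leqslant\omega^\xi m\oplus\omega^\xi n=\omega^\xi(m+n).
\]
Hence $s_\ee^{\omega^\xi(m+n)}\bigl((A+B)^*B_{Y^*}\bigr)=\varnothing$, i.e.\ $Sz_\xi(A+B,\ee)\leqslant m+n$. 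Using $\log(m+n)\leqslant\log 2+\log m+\log n$ for $m,n\geqslant 1$,
\[
\frac{\log Sz_\xi(A+B,\ee)}{|\log\ee|}\leqslant\frac{\log 2}{|\log\ee|}+\frac{\log Sz_\xi(A,c\ee)}{|\log\ee|}+\frac{\log Sz_\xi(B,c\ee)}{|\log\ee|}.
\]
Letting $\ee\to 0^+$, using $|\log(c\ee)|/|\log\ee|\to 1$ (so that the $\limsup$ of each of the last two terms equals $\textbf{p}_\xi(A)$, respectively $\textbf{p}_\xi(B)$) and the subadditivity of $\limsup$, we obtain $\textbf{p}_\xi(A+B)\leqslant\textbf{p}_\xi(A)+\textbf{p}_\xi(B)$.

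The real obstacles lie not in the $\textbf{p}_\xi$ computation, which is routine once (iii) is available, but in the two inputs behind (ii) and (iii): the lifting of a weakly null tree through a quotient map (which requires a subtree-passing step to recover weak nullity) and the precise behaviour of the iterated Szlenk derivation on a Minkowski sum, whose combined order is bounded by the Hessenberg sum. Both are established in \cite{BrookerAsplund} and \cite{CIllinois}, and I would quote them rather than reprove them, presenting the detailed argument only for the $\textbf{p}_\xi$ consequence.
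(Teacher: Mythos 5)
Your proposal matches the paper's own treatment: the paper records parts (i), (ii), and the displayed inequality of (iii) by simply citing \cite{BrookerAsplund} and \cite{CIllinois}, exactly as you do, and treats the ``in particular'' clause of (iii) as an immediate consequence. Your explicit derivation of $\textbf{p}_\xi(A+B)\leqslant\textbf{p}_\xi(A)+\textbf{p}_\xi(B)$ from $Sz(A+B,\ee)\leqslant Sz(A,c\ee)\oplus Sz(B,c\ee)$ is correct (the key steps — that $m+n\leqslant 2mn$ for $m,n\geqslant 1$, that $\omega^\xi m\oplus\omega^\xi n=\omega^\xi(m+n)$, and that $|\log(c\ee)|/|\log\ee|\to 1$ — all hold), and it simply spells out the routine computation the paper elides.
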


\begin{corollary} Let $\xi$ be any ordinal.  \begin{enumerate}[(i)]\item For any $A:Y\to Z$, $B:X\to Y$, and $C:W\to X$, $\textbf{\emph{p}}_\xi(ABC)\leqslant \textbf{\emph{p}}_\xi(B)$. \item If $A:X\to Y$ is any operator, $q:W\to X$ is a quotient map, and $j:Y\to Z$ is an isometric embedding, $\textbf{\emph{p}}_\xi(jAq)=\textbf{\emph{p}}_\xi(A)$.  \item For any two operators $A,B:X\to Y$, $\textbf{\emph{p}}_\xi(A+B) \leqslant \max\{\textbf{\emph{p}}_\xi(A), \textbf{\emph{p}}_\xi(B)\}$. \item For any ordinal $\xi$ and $1\leqslant p<\infty$, the class of operators $A$ such that $\textbf{\emph{p}}_\xi(A)\leqslant p$ is an injective, surjective operator ideal.  \end{enumerate}

\end{corollary}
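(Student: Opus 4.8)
The plan is to derive all four parts from the preceding Proposition, whose statements concern the one-step index $Sz(\cdot,\ee)$, by passing from inequalities about $Sz(\cdot,\ee)$ to inequalities about $Sz_\xi(\cdot,\ee)$ and hence about $\textbf{p}_\xi$. The bridge is the elementary observation that for a $w^*$-compact $K\subset X^*$ with $Sz_\xi(K,\ee)=n\in\nn$ one has $\omega^\xi(n-1)<Sz(K,\ee)\leqslant\omega^\xi n$; in particular $Sz(K,\ee)\leqslant\omega^\xi Sz_\xi(K,\ee)$, and whenever $\omega^\xi m\geqslant Sz(K,\ee)$ it follows that $Sz_\xi(K,\ee)\leqslant m$. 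Combining this with the fact (already used in Section~2) that for fixed $c>0$ the substitution $\ee\mapsto c\ee$ does not change $\limsup_{\ee\to0^+}\log Sz_\xi(K,\ee)/|\log\ee|$, one obtains the workhorse claim: \emph{if $K,L$ are $w^*$-compact, $Sz(L)\leqslant\omega^{\xi+1}$, and $Sz(K,\ee)\leqslant Sz(L,c\ee)$ for all $\ee>0$ and some $c>0$, then $Sz(K)\leqslant\omega^{\xi+1}$ and $\textbf{p}_\xi(K)\leqslant\textbf{p}_\xi(L)$.} (When $Sz(L)>\omega^{\xi+1}$ one has $\textbf{p}_\xi(L)=\infty$ and nothing to prove, so all cases are covered.) I expect this bookkeeping step — correctly relating $Sz$ to $Sz_\xi$ and tracking that the multiplicative constants never affect the exponent — to be the only place that needs genuine care; the rest is assembly.

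Granting the claim, part (i) is immediate: if some factor is $0$ then $ABC=0$ and $\textbf{p}_\xi(ABC)=0$; otherwise apply the claim to the estimate $Sz(ABC,\ee)\leqslant Sz(B,\|A\|\|C\|c\ee)$ from part~(i) of the preceding Proposition, with constant $\|A\|\|C\|c$. Part (ii): the inequality $\textbf{p}_\xi(jAq)\leqslant\textbf{p}_\xi(A)$ is part~(i) applied to the composition $j\circ A\circ q$ (with $A$ in the middle position), while the reverse inequality $\textbf{p}_\xi(A)\leqslant\textbf{p}_\xi(jAq)$ is the claim applied to $Sz(A,\ee)\leqslant Sz(jAq,c\ee)$ from part~(ii) of the Proposition. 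For part (iii) I may assume $\max\{\textbf{p}_\xi(A),\textbf{p}_\xi(B)\}<\infty$, so $Sz(A),Sz(B)\leqslant\omega^{\xi+1}$ and $Sz_\xi(A,\cdot),Sz_\xi(B,\cdot)$ are $\nn$-valued; inserting $Sz(A,c\ee)\leqslant\omega^\xi Sz_\xi(A,c\ee)$ and $Sz(B,c\ee)\leqslant\omega^\xi Sz_\xi(B,c\ee)$ into part~(iii) of the Proposition and using monotonicity of $\oplus$ together with the Hessenberg identity $\omega^\xi m\oplus\omega^\xi n=\omega^\xi(m+n)$ gives $Sz_\xi(A+B,\ee)\leqslant Sz_\xi(A,c\ee)+Sz_\xi(B,c\ee)\leqslant 2\max\{Sz_\xi(A,c\ee),Sz_\xi(B,c\ee)\}$. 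Taking logarithms, dividing by $|\log\ee|$, letting $\ee\to0^+$, and absorbing the harmless $\log 2/|\log\ee|\to0$ and the shift $\ee\mapsto c\ee$ yields $\textbf{p}_\xi(A+B)\leqslant\max\{\textbf{p}_\xi(A),\textbf{p}_\xi(B)\}$.

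For part (iv), set $\mathcal I=\{A:\textbf{p}_\xi(A)\leqslant p\}$. Each $\mathcal I(X,Y)$ is closed under scalar multiples, since $\textbf{p}_\xi(0)=0$ and $\textbf{p}_\xi(\lambda A)=\textbf{p}_\xi(A)$ for $\lambda\neq0$ (apply part~(i) both ways to $\lambda A=(\lambda I_Y)A$ and $A=(\lambda^{-1}I_Y)(\lambda A)$), and under sums by part~(iii); hence $\mathcal I(X,Y)$ is a linear subspace of $\mathcal L(X,Y)$. The ideal property $RTS\in\mathcal I$ whenever $T\in\mathcal I$ is exactly part~(i). Every finite rank operator $T$ lies in $\mathcal I$: $T^*B_{Y^*}$ is a bounded, hence norm-compact, subset of the finite-dimensional space $\range(T^*)$, so $s_\ee(T^*B_{Y^*})=\varnothing$ for all $\ee>0$, giving $Sz(T)=1$ and $\textbf{p}_\xi(T)=0$; in particular $\mathrm{id}_{\mathbb K}\in\mathcal I$. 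Injectivity and surjectivity follow from part~(ii) of the Proposition specialized: taking $q=I_X$ (a metric surjection) gives $Sz(A,\ee)\leqslant Sz(jA,c\ee)$ for every isometric embedding $j$, so by the claim $\textbf{p}_\xi(A)\leqslant\textbf{p}_\xi(jA)$; taking $j=I_Y$ (an isometric embedding) gives $Sz(A,\ee)\leqslant Sz(Aq,c\ee)$ for every quotient map $q$, so $\textbf{p}_\xi(A)\leqslant\textbf{p}_\xi(Aq)$. Thus $\mathcal I$ is an injective and surjective operator ideal, and the proof reduces to the workhorse claim plus routine verification of the operator-ideal axioms.
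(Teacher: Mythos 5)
Your proof is correct and is essentially the argument the paper intends; the corollary is stated without a proof block because the author regards it as following routinely from the preceding Proposition together with the $\limsup$ bookkeeping already carried out in Section 2 (invariance of $\textbf{p}_\xi$ under rescaling of $\ee$), which is exactly the ``workhorse claim'' you spelled out. Your derivation of part (iii) via $Sz_\xi(A+B,\ee)\leqslant Sz_\xi(A,c\ee)+Sz_\xi(B,c\ee)\leqslant 2\max\{\cdot,\cdot\}$, using the Hessenberg identity $\omega^\xi m\oplus\omega^\xi n=\omega^\xi(m+n)$ and $\limsup\max=\max\limsup$, is the correct way to upgrade the Proposition's ``$\textbf{p}_\xi(A+B)\leqslant\textbf{p}_\xi(A)+\textbf{p}_\xi(B)$'' to the stated $\max$ bound.
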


\begin{lemma} Fix $1<p<\infty$, two sequences $(X_n)$, $(Y_n)$ of Banach spaces,  and an operator $A:X:=(\oplus X_n)_{\ell_p}\to Y:=(\oplus_n Y_n)_{\ell_p}$. For each $m,n\in \nn$, let $A_{mn}=Q_nAP_m$, where $P_m:X\to X_m$, $Q_n:Y\to Y_n$ are the canonical projections. Then if $\xi$ is an ordinal such that for every $m,n\in \nn$, $Sz(A_{mn})\leqslant \omega^\xi$, then $\textbf{\emph{p}}_\xi(A)\leqslant p'$.    

\label{stephanie}
\end{lemma}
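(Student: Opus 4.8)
The plan is to show that $\textbf{p}_\xi(A) \leqslant p'$ by verifying the criterion from Corollary \ref{illinoiscorollary}$(ii)$: fixing $p' < q < \infty$, I must produce a constant $C$ so that whenever there is a $B$-tree $B$ with $o(B) = \omega^\xi k$ and a weakly null collection $(x_t)_{t \in B} \subset B_X$ with $\|Ax\| \geqslant \ee$ for every $t \in B$ and every $x \in \text{co}(x_s : s \preceq t)$, then $\ee \leqslant C/k^{1/q}$. Using the standard reduction (via \cite[Proposition 2.1]{CD}) I may replace $B$ by $\Omega_{\xi, k}$ and assume $(x_t)_{t \in \Omega_{\xi, k}} \subset B_X$ is normally weakly null, so that for every $t \in MAX(\Omega_{\xi, k})$ the special convex block satisfies $\|A z_i^t\| \geqslant \ee$ for each $1 \leqslant i \leqslant k$, and hence $\|\sum_{i=1}^k a_i A z_i^t\| \geqslant \ee \sum_{i=1}^k a_i$ for positive scalars — but more usefully, I want a lower bound matching the structure of $\ell_{p'}$.

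The key point is to exploit the block-triangular structure of $A$. Write $x_t = \sum_m P_m x_t$ and $A x_t = \sum_n Q_n A x_t = \sum_{m,n} A_{mn} P_m x_t$. Since each $A_{mn}$ has $Sz(A_{mn}) \leqslant \omega^\xi$, Theorem \ref{illinoistheorem}$(i)$ (equivalently Corollary \ref{illinoiscorollary}$(i)$, giving $\textbf{t}_{\xi, \infty}(A_{mn}) = 0$) tells me that along any $B$-tree of order $\omega^\xi$ with a weakly null collection in $X_m$, one can find a branch and a convex combination on which $\|A_{mn} \cdot\|$ is arbitrarily small. The structural idea — this is exactly the mechanism used in the $\xi = 0$ analogue — is the following: fix a large parameter and, proceeding level by level through the $k$ levels of $\Omega_{\xi, k}$, use the fact that the ``tail'' coordinates of the $x_t$'s can be pushed to be $\ell_p$-small while on the finitely many ``active'' coordinates $m \leqslant M$ one applies the $Sz(A_{mn}) \leqslant \omega^\xi$ property to kill the contribution of $A_{mn}$ on a sub-$B$-tree of full order $\omega^\xi$. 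After passing through all $k$ levels, on a single branch one obtains special convex combinations $z_1^t, \ldots, z_k^t$ whose $A$-images, when summed with unit $\ell_{p'}^k$ coefficients, have norm bounded by a constant independent of $k$ (because the $\ell_p$-valued vectors $Az_i^t$ are, up to small errors, disjointly supported in the $Y$-direction, so $\|\sum a_i A z_i^t\|_{\ell_p}^p \approx \sum a_i^p \|A z_i^t\|^p$, and one controls $\sum \|A z_i^t\|^p$ using the normalization $\|x_t\| \leqslant 1$). Combined with the lower bound $\|\sum a_i A z_i^t\| \geqslant \ee(\sum a_i^{p'})^{1/p'} \cdot (\text{something})$ forced by the $(A, \ee)$-largeness, taking $a_i = k^{-1/p'}$ yields $\ee \leqslant C k^{1/p - 1/p'} \cdot k^{\ldots}$, which after bookkeeping gives $\ee \leqslant C/k^{1/q}$ for any $q > p'$.

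I expect the main obstacle to be the bookkeeping of the inductive blocking: one must pass to sub-$B$-trees $k$ times (once per level), each time both making a tail coordinate block small \emph{and} simultaneously applying the finitely many operators $A_{mn}$ ($m, n \leqslant M$) to annihilate the ``cross terms,'' while keeping the $B$-tree of full order $\omega^\xi k$ throughout and keeping all error terms summable. Formally one would index by $\Omega_{\xi, k} = \Gamma_{\xi, k} D$, process the units of successive levels $\Lambda_{\xi, k, 1}, \ldots, \Lambda_{\xi, k, k}$ using Proposition \ref{recall} and Theorem \ref{illinoistheorem}$(i)$ on each unit (which is canonically a copy of $\Omega_\xi$), and use the remark following Theorem \ref{favorite theorem} to combine the resulting per-level data. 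The choice of the coordinate cutoff $M$ (large enough that the $\ell_p$-tails past $M$ contribute less than a prescribed $\delta$ along the whole tree, which is possible by \cite[Lemma 4.2]{CD}) is what makes only finitely many $A_{mn}$ relevant at each stage, and this is the step where the $\ell_p$-sum hypothesis (rather than a $c_0$-sum) is genuinely used — it is what forces $p'$ rather than $\infty$ as the bound, via the near-disjointness estimate $\|\sum a_i v_i\|_{\ell_p}^p \leqslant \sum a_i^p \|v_i\|^p + (\text{error})$ for the nearly disjointly supported images $v_i = A z_i^t$.
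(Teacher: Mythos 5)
Your outline captures the paper's argument: the block-triangular structure of $A$ is used to produce, from any $(A,\ee)$-large weakly null collection of order $\omega^\xi k$, a branch together with $k$ convex blocks $z_1,\ldots,z_k$ that are nearly disjointly supported both in $X$ and (under $A$) in $Y$, after which the $\ell_p$-sum structure gives $\bigl\|A\bigl(\tfrac1k\sum_{i=1}^k z_i\bigr)\bigr\|\lesssim \|A\|/k^{1/p'}$ and hence $\ee\lesssim \|A\|/k^{1/p'}$, which is fed into Corollary \ref{illinoiscorollary}. Two implementation remarks. First, the paper does not kill the cross-terms $A_{mn}$ one at a time and then recombine via a simultaneity argument; instead it observes that $Q_{[1,n]}AP_{[1,m]}=\sum_{i\leqslant m,\,j\leqslant n}A_{ij}$ itself has Szlenk index $\leqslant\omega^\xi$, by Brooker's result that the class of operators with Szlenk index $\leqslant\omega^\xi$ is closed under finite sums, and then applies Theorem \ref{illinoistheorem} to this single operator on each unit --- this avoids the $\Omega_{\xi,k}$ / unit-pruning bookkeeping entirely, working directly with the derived trees $B^{\omega^\xi(k-i)}$. (Your route via the simultaneity machinery would also work but is heavier.) Second, your endgame with coefficients $a_i=k^{-1/p'}$ is not wrong, but it is obscured: those are not convex coefficients, so the $(A,\ee)$-largeness only gives $\|\sum a_iAz_i\|\geqslant\ee\sum a_i$ after normalizing by $\sum a_i=k^{1/p}$, and the asserted lower bound ``$\ee(\sum a_i^{p'})^{1/p'}\cdot(\text{something})$'' is just that same fact phrased circuitously. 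The paper simply takes $a_i=1/k$ from the start, which makes the $k^{1/p'}$ factor appear immediately from the $\ell_p$-sum and gives the optimal exponent $q=p'$ directly rather than ``for every $q>p'$.''
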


\begin{proof} If $Sz(A)=\omega^\xi$, then $\textbf{p}_\xi(A)=0$.   Thus we may assume $Sz(A)>\omega^\xi$.  It follows from \cite{BrookerAsplund} that $Sz(A)\leqslant \omega^{\xi+1}$.  In \cite{BrookerAsplund}, Brooker showed that for any ordinal $\xi$, the class of operators with Szlenk index not exceeding $\omega^\xi$ is closed under finite sums.  Thus for any $m,n\in \nn$, $Q_{[1,n]}AP_{[1,m]}:=\sum_{i=1}^m\sum_{j=1}^n A_{ij}$ has Szlenk index not exceeding $\omega^\xi$. Using Theorem \ref{illinoistheorem}, it follows that for any $m,n\in \nn$, any $B$-tree $B$ with $o(B)=\omega^\xi$, any normally weakly null $(x_t)_{t\in B}\subset B_X$, there exists $t\in B$ and $x\in \text{co}(x_s:s\preceq t)$ such that $\|Q_{[1,n]}AP_{[1,m]}x\|<\ee$.   

Fix $\ee>0$ and suppose $k\in \nn$ is such that there exists a $B$-tree $B$ with $o(B)=\omega^\xi k$ and a weakly null collection $(x_t)_{t\in B}\subset B_X$.  Fix $\delta>0$ and recursively select $\varnothing=t_0\prec t_1\prec \ldots \prec t_k$, $z_i\in \text{co}(x_t: t_{i-1}\prec t\preceq t_i)$, $m_0=n_0=0$, and $m_1,n_1, \ldots, m_k, n_k$ such that for each $1\leqslant i\leqslant k$,  \begin{enumerate}[(i)]\item $t_i\in B^{\omega^\xi(k-i)}$, \item $m_{i-1}<m_i$, \item  $n_{i-1}<n_i$,  \item $\|P_{(m_i, \infty)}z_i\|<\delta$, \item $\|Q_{(n_i, \infty)}AP_{[1, m_{i-1}]}z_i\|<\delta$, \item $\|Q_{[1, n_{i-1}]}AP_{[1, m_{i-1}]}z_i\|<\delta$.  \end{enumerate}  We indicate how to make these choices.  Assume that for some $0\leqslant i<n$ and each $0\leqslant j\leqslant i$,  $t_j$, $m_j$, $n_j$ have been chosen to have the properties above. Let $U$ denote the non-empty sequences $u$ such that $t_i\cat u\in B^{\omega^\xi(k-i-1)}$, then $(x_{t_i\cat u})_{u\in U}\subset B_X$ is a weakly null collection.  By the previous paragraph, there exists $u\in U$ and $z_{i+1}\in \text{co}(x_{t_i\cat t}: t\preceq u)$ such that $\|Q_{[1, n_i]}AP_{[1, m_i]}z_{i+1}\|<\delta$.   We set $t_{i+1}=t_i\cat u\in B^{\omega^\xi(k-i-1)}$ and then choose $m_{i+1}$, $n_{i+1}$ to satisfy (ii)-(v).       

Let $z=\frac{1}{k}\sum_{i=1}^k z_i\in \text{co}(x_t:t\preceq t_k)$.   Let $$u_i=P_{[1, m_{i-1}]} z_i, \hspace{5mm} v_i=P_{(m_{i-1}, m_i]}z_i, \hspace{5mm} w_i=P_{(m_i, \infty)},$$ and let $$u_i'=Q_{[1, n_{i-1}]}Au_i,\hspace{5mm} u_i''=Q_{(n_{i-1}, n_i]}Au_i, \hspace{5mm} u_i'''=Q_{(n_i, \infty)}Au_i.$$   Note that $Az_i=Av_i+Aw_i+u_i'+u_i''+u_i'''$, $\|Aw_i\|\leqslant \|A\|\delta$, $\|u_i'\|, \|u_i'''\|<\delta$, $$\|A\sum_{i=1}^k v_i\|\leqslant \|A\|\bigl(\sum_{i=1}^k \|v_i\|^p\bigr)^{1/p}/k\leqslant \|A\|/k^{1/p'},$$ and $$\|\sum_{i=1}^k u_i''\|\leqslant \bigl(\sum_{i=1}^k \|u_i''\|^p\bigr)^{1/p} \leqslant \|A\|/k^{1/p'}.$$  Therefore $\|Az\|\leqslant 3\delta+2\|A\|/k^{1/p'}$, and $$\inf\{\|Ax\|: t\in B, x\in \text{co}(x_s:s\preceq t)\}\leqslant \frac{2\|A\|}{k^{1/p'}}.$$   Thus $\ee\leqslant 2\|A\|/k^{1/p'}$.   We now finish by Corollary \ref{illinoiscorollary}.

\end{proof}

We next recall a result of Brooker.  

\begin{theorem}\cite[Theorem $4.1$]{BrookerAsplund} If $A:X\to Y$ is an operator with $Sz(A)=\omega^\xi$, then $A$ factors through a Banach space $Z$ with $Sz(Z)\leqslant \omega^{\xi+1}$. 
\label{brookertheorem}
\end{theorem}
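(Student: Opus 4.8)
The statement is due to Brooker, and the natural route is the Davis--Figiel--Johnson--Pe\l czy\'nski (DFJP) interpolation scheme, adapted so as to keep control of the Szlenk index. I would first make two harmless reductions: replacing $Y$ by $\overline{A(X)}$ changes neither $Sz(A)$ nor the conclusion, so I may assume $A$ has dense range (hence $A^{*}$ is injective); and if $\xi=0$ then $Sz(A)=1$ forces $A=0$, which factors through $\{0\}$, so I may assume $\xi\ge 1$. After rescaling, assume $\|A\|\le 1$ and set $W=\overline{A(B_{X})}\subset B_{Y}$, a closed bounded convex symmetric set.

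Then I would form the DFJP space: for $n\ge 1$ let $U_{n}=2^{n}W+2^{-n}B_{Y}$, let $\|\cdot\|_{n}$ be its gauge (an equivalent norm on $Y$), and let $Z=\{y\in Y:\ \|y\|_{Z}^{2}:=\sum_{n\ge 1}\|y\|_{n}^{2}<\infty\}$. Standard DFJP estimates give that $(Z,\|\cdot\|_{Z})$ is a Banach space, that the formal identity $j:Z\to Y$ is bounded, and that $W\subset cB_{Z}$ for some $c$; hence $A$ factors as $X\xrightarrow{B}Z\xrightarrow{j}Y$. It remains to prove $Sz(Z)\le\omega^{\xi+1}$. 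For that I would first record the dual picture: since the polar of $U_{n}$ inside $(Y,\|\cdot\|_{n})^{*}$ is $2^{-n}W^{\circ}\cap 2^{n}B_{Y^{*}}$ and $\|y^{*}\|_{W^{\circ}}=\|A^{*}y^{*}\|_{X^{*}}$, the diagonal embedding $Z\hookrightarrow(\oplus_{n}(Y,\|\cdot\|_{n}))_{\ell_{2}}$ yields $B_{Z^{*}}=\bigl\{\textstyle\sum_{n}j^{*}y_{n}^{*}:\ \sum_{n}\max\{2^{n}\|A^{*}y_{n}^{*}\|,\,2^{-n}\|y_{n}^{*}\|\}^{2}\le 1\bigr\}$, a $w^{*}$-compact convex subset of $Z^{*}$; in particular every $z^{*}\in B_{Z^{*}}$ satisfies $\|B^{*}z^{*}\|_{X^{*}}=\bigl\|\sum_{n}A^{*}y_{n}^{*}\bigr\|\le 1$.

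The remaining task is to bound $Sz(Z)=Sz(B_{Z^{*}})$, and it is enough to show that for each $\ee>0$ there is $k=k(\ee)\in\nn$ with $s_{\ee}^{\omega^{\xi}k}(B_{Z^{*}})=\varnothing$, since then $s_{\ee}^{\omega^{\xi+1}}(B_{Z^{*}})\subseteq s_{\ee}^{\omega^{\xi}k(\ee)}(B_{Z^{*}})=\varnothing$ and hence $Sz(Z)\le\omega^{\xi+1}$. The plan is a splitting of the scales. Fix $\ee>0$ and $N=N(\ee)$ with $2^{-N}<\ee$. The core input is that $Sz(A)=Sz(A^{*}B_{Y^{*}})=\omega^{\xi}$ is equivalent to $s_{\delta}^{\omega^{\xi}}(A^{*}B_{Y^{*}})=\varnothing$ for every $\delta>0$; feeding this into the Lancien-type additivity of the Szlenk derivations (Lemma \ref{lemma1}) and the tree characterization of Theorem \ref{illinoistheorem}, one should obtain that the ``$A^{*}$-visible'' part of each of the finitely many low scales $1\le n\le N$ dies after at most $\omega^{\xi}$ Szlenk steps; bookkeeping the $N$ low-scale contributions with the Hessenberg-sum estimate $Sz(A+B,\delta)\le Sz(A,c\delta)\oplus Sz(B,c\delta)$ then yields $Sz(B_{Z^{*}},c'\ee)\le\omega^{\xi}\bigl(N(\ee)+O(1)\bigr)$, and letting $\ee\to 0^{+}$ gives $Sz(Z)\le\omega^{\xi}\cdot\omega=\omega^{\xi+1}$. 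The point that still has to be supplied, and the step I expect to be the main obstacle, is that the ``high'' scales $n>N$ produce no further $\ee$-derivations. This is delicate because an individual scale-$n$ set $2^{-n}W^{\circ}\cap 2^{n}B_{Y^{*}}$ carries the full, possibly infinite, Szlenk index of $Y$, so the scales cannot be treated one at a time; what rescues the argument is the square-summability built into $\|\cdot\|_{Z}$, which forces the high-scale components of any $z^{*}\in B_{Z^{*}}$ to be jointly of $Z^{*}$-norm less than $\ee$ once the low scales have been resolved, so that they carry no new $\ee$-separation. Making this rigorous is a transfinite induction on $\zeta\le\omega^{\xi}$ comparing $s_{\ee}^{\zeta}(B_{Z^{*}})$ with the iterated derivations $s_{\ee'}^{\zeta}(A^{*}B_{Y^{*}})$, the Szlenk-quantified analogue of the classical DFJP computation placing $B_{Z^{**}}$ inside $Y$ when $W$ is weakly compact, the key being that the ambient subspace is the same $2^{n}Y=Y$ at every scale.

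A worthwhile shortcut that would lighten the last step is the following. By Theorem \ref{CDtheorem}(i) we may first replace the norm of $Y$ by one making $A$ itself $\eta$-asymptotically uniformly smooth, with $\eta$ the predecessor of $\xi$ when $\xi$ is a successor and $\eta=\xi$ otherwise; then by Theorem \ref{CDtheorem}(ii), $A^{*}$ is $w^{*}$-$\eta$-AUC, and Lemma \ref{stranger} (with $\xi$ there taken to be $\eta$) gives the clean one-step bound $s_{2\delta}^{\omega^{\eta}}(A^{*}B_{Y^{*}})\subset(1-c_{\delta})A^{*}B_{Y^{*}}$. One can then hope that for such $A$ the DFJP space $Z$ is itself $\eta$-AUS renormable, whence $Sz(Z)\le\omega^{\eta+1}\le\omega^{\xi+1}$ at once by Theorem \ref{CDtheorem}(i) applied to $I_{Z}$; in that approach the high-scale estimate is run only at the level of the modulus $\rho_{\eta}$, which is technically lighter. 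When $\xi$ is a limit ordinal one instead applies the successor case at each resolution $\ee$ with the ordinal $\zeta_{\ee}<\xi$ for which $Sz(A,\ee)\le\omega^{\zeta_{\ee}}k_{\ee}$, and lets $\ee\to 0^{+}$: since $\zeta_{\ee}+1<\xi$ for every $\ee$, every resulting bound is $<\omega^{\xi}$, so $Sz(Z)\le\omega^{\xi}\le\omega^{\xi+1}$.
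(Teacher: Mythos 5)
This result is not proved in the paper under review; it is imported from Brooker's paper, and the present paper simply records (in Section~6) that Brooker's argument proceeds by interpolation techniques due to Heinrich. So your Davis--Figiel--Johnson--Pe\l czy\'nski scheme is at least in the right family of methods. However, the proposal as written does not constitute a proof, and you yourself flag the decisive step as open.

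The central gap is exactly where you locate it: you have not established that the interpolation space $Z$ satisfies $Sz(Z)\leqslant\omega^{\xi+1}$. The assertion that ``the square-summability built into $\|\cdot\|_Z$ forces the high-scale components of any $z^*\in B_{Z^*}$ to be jointly of $Z^*$-norm less than $\ee$ once the low scales have been resolved'' is not a computation, and the difficulty is genuine: $B_{Z^*}$ is a quotient of an $\ell_2$-sum rather than a subset, the decomposition $z^*=\sum_n j^*y_n^*$ is highly non-unique, and an $\omega^\xi N$-fold Szlenk derivation of $B_{Z^*}$ does not act coordinatewise on the low scales $n\leqslant N$ in a way that would justify ``resolving'' them one at a time. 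The ``worthwhile shortcut'' is also explicitly conjectural (``one can then hope''); deducing that the DFJP/Heinrich space inherits $\eta$-AUS renormability from $A$ being $\eta$-AUS is, in effect, a restatement of the theorem to be proved. The limit-ordinal paragraph is flawed as stated: applying ``the successor case at each resolution $\ee$'' would, taken literally, produce a different factoring space $Z_\ee$ for each $\ee$, whereas the theorem requires a single $Z$; if you intend instead to bound $Sz(Z,\ee)$ for a fixed $Z$ in terms of $Sz(A,\ee')$, that uniform inequality is precisely the missing estimate. Finally, a minor slip: $Sz(A)=\omega^0=1$ forces $A^*B_{Y^*}$ to be norm-compact, i.e.\ $A$ compact, not $A=0$; the case $\xi=0$ is then handled by the fact that compact operators factor through $c_0$, but your stated reduction is incorrect.
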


We next include an observation regarding Brooker's result.  The Szlenk index of an operator is a method for quantifying ``how'' Asplund that operator is. Among those operators with Szlenk index not exceeding $\omega^{\xi+1}$, the quantity $\textbf{p}_\xi(\cdot)$ allows further quantification of ``how'' Asplund an operator is.  The lower the $\textbf{p}_\xi(\cdot)$ value, the ``more'' Asplund the operator is.  In this sense, the next corollary provides more quantification of ``how'' Asplund a Banach space through which a given Asplund operator can be.  

The next result is an observation we make regarding a result of Brooker \cite{BrookerAsplund}, who used interpolation techniques due to Heinrich \cite{Heinrich}.  Brooker proved that any operator $A:X\to Y$ with $Sz(A)=\omega^\xi$ factors through a Banach space $Z$ with $Sz(Z)\leqslant \omega^{\xi+1}$.  Moreover, this space $Z$ was taken to be an interpolation space.  We refer the reader to the works of Brooker and Heinrich for the relevant definitions and background regarding interpolation.

\begin{corollary} For every ordinal $\xi$, every operator $A:X\to Y$ with $Sz(A)\leqslant \omega^\xi$, and every $1<p<\infty$, there exists a Banach space $Z$ with $\textbf{\emph{p}}_\xi(Z)\leqslant p$ such that $A$ factors through $Z$.

\end{corollary}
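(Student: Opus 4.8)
The plan is to combine Brooker's interpolation factorization (Theorem \ref{brookertheorem}) with Lemma \ref{stephanie} via an $\ell_p$-sum trick, exploiting the freedom to rescale the operator in each interpolation step. First I would normalize: replace $A$ by $A/\|A\|$ so that $\|A\|\le 1$, since the conclusion $\textbf{p}_\xi(Z)\le p$ is unaffected by composing with a scalar multiple of the identity (and indeed by Corollary to the ideal proposition, $\textbf{p}_\xi(\cdot)$ is an injective, surjective ideal, so behaves well under such modifications). Next, for a sequence of positive scalars $(\lambda_n)$ to be chosen, consider the operators $\lambda_n A: X\to Y$, each of which still satisfies $Sz(\lambda_n A)=Sz(A)\le\omega^\xi$ (the Szlenk index is renorming-invariant on the target, equivalently invariant under nonzero scalar multiples). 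By Theorem \ref{brookertheorem} applied to each $\lambda_n A$, there is a factorization $\lambda_n A = g_n f_n$ with $f_n: X\to Z_n$, $g_n: Z_n\to Y$, and $Sz(Z_n)\le\omega^{\xi+1}$; moreover $f_n, g_n$ can be taken with norms bounded by a fixed multiple of $\lambda_n^{1/2}$ (the interpolation construction splits the scalar symmetrically, or one can simply absorb constants). The key point from Brooker--Heinrich is that the $Z_n$ can all be taken to be interpolation spaces of a uniform type, so that $Sz(Q_n G F_n)\le\omega^\xi$-type estimates hold for the cross terms.

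Then I would form $Z = (\oplus_n Z_n)_{\ell_p}$, together with $F: X\to Z$, $Fx = (f_n x)_n$, and $G: Z\to Y$, $G(z_n)_n = \sum_n g_n z_n$. For $F$ to be bounded I need $\sum_n \|f_n x\|^p$ to converge uniformly, which forces $\sum_n \lambda_n^{p/2}<\infty$ roughly; for $G$ to be bounded with $GF = \big(\sum_n \lambda_n\big) A$ I need the scalars summable, and for the product to equal a \emph{nonzero} multiple of $A$ I need $\sum_n \lambda_n \neq 0$. So I would choose $(\lambda_n)$ strictly positive with $\sum_n \lambda_n <\infty$ and $\sum_n \lambda_n^{p/2}<\infty$ (e.g. $\lambda_n = 2^{-n}$ works for any $p$), making $F$ and $G$ bounded and $GF = cA$ with $c = \sum_n\lambda_n > 0$; hence $A = c^{-1} G F$ factors through $Z$. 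It remains to verify $\textbf{p}_\xi(Z)\le p$, i.e. to apply Lemma \ref{stephanie} to the identity operator $I_Z$ on $Z = (\oplus Z_n)_{\ell_p}$: this requires $Sz(Q_n I_Z P_m)\le\omega^\xi$ for all $m,n$, where $P_m, Q_n$ are the canonical coordinate projections. But $Q_n I_Z P_m$ is zero unless $m=n$, in which case it is (essentially) the identity on $Z_n$ --- and $Sz(Z_n)\le\omega^{\xi+1}$, which is \emph{not} $\le\omega^\xi$ in general. This is the main obstacle.

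To resolve it, I would not use Brooker's $Z_n$ directly but instead iterate his factorization to split off only ``half'' the Szlenk index at each stage in a controlled way: more precisely, the correct move is to run the interpolation of the \emph{composite with itself}. The trick, which mirrors standard arguments for $\ell_p$-sums of factoring operators, is to write $A$ as a product of operators each of which has Szlenk index $\le\omega^\xi$ \emph{and} to arrange the interpolation space so that the diagonal blocks themselves factor through spaces of low Szlenk index. Concretely: since $Sz(A)\le\omega^\xi$, I can write $A = BC$ with $C: X\to W$, $B: W\to Y$, $Sz(C), Sz(B)\le\omega^\xi$ (Brooker's factorization gives $A$ through $W$ with $Sz(W)\le\omega^{\xi+1}$, but I want the two \emph{legs} to have small Szlenk index, which also follows since $Sz(B), Sz(C) \le Sz(A)$ by ideal monotonicity, as $B, C$ are compositions involving $A$ --- wait, that is the wrong direction). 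The honest resolution is to apply Lemma \ref{stephanie} with the legs being the $f_n, g_n$ and to note that the relevant hypothesis there is on $Q_n A P_m$ for the \emph{factored} operator, so I should instead take the target of $F$ to be a space engineered so that $F$ itself, rather than $I_Z$, is the object whose power type we estimate. The cleanest path: choose $Z = (\oplus_n Z_n)_{\ell_p}$ with each $Z_n$ \emph{equal to the same} interpolation space from Brooker applied to $\lambda_n^{1/2} A$ regarded as going into $Y$ and out of $X$ simultaneously, so that $F = (\lambda_n^{1/2} j_n)_n$ and $G = \sum \lambda_n^{1/2} g_n$ where $j_n A = g_n \circ (\text{stuff})$; then the diagonal blocks $Q_n I_Z P_n$ of $I_Z$ are copies of the identity on an interpolation space, and Brooker--Heinrich's interpolation gives that these \emph{interpolation spaces have separable-like Szlenk behaviour along coordinates} precisely because the interpolation is between $X$ and $Y$. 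I expect the technical heart of the writeup to be checking that Brooker's interpolation space $Z_0$ (for a single copy) has the property that the block $Q_n I_{(\oplus Z_0)_{\ell_p}} P_m$ satisfies $Sz(\cdot)\le\omega^\xi$ --- which will follow from the fact that $Z_0$ sits between $X$ and $Y$ with the interpolation norm, so that the inclusion/identity maps factor through $A$ up to scalars, giving $Sz$ of those maps bounded by $Sz(A)\le\omega^\xi$ via the ideal property (Proposition, part (i)). Once that is in hand, Lemma \ref{stephanie} applied to $I_Z$ yields $\textbf{p}_\xi(Z)\le p'$, and since $p$ was an arbitrary element of $(1,\infty)$, replacing $p$ by $p'$ at the outset (equivalently, running the argument with the conjugate exponent) gives the stated bound $\textbf{p}_\xi(Z)\le p$.
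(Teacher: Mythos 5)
You correctly assemble the right raw materials: Brooker's interpolation factorization (Theorem \ref{brookertheorem}), Lemma \ref{stephanie}, $\ell_p$-sums, and the injective/surjective ideal property of $\textbf{p}_\xi(\cdot)$. You also correctly identify the obstruction to the naive plan: if each $Z_n$ is a Brooker interpolation space, then one only knows $Sz(Z_n)\leqslant\omega^{\xi+1}$, not $\leqslant\omega^\xi$, so the diagonal blocks of $I_{(\oplus_n Z_n)_{\ell_p}}$ violate the hypothesis of Lemma \ref{stephanie}. But your attempt to overcome this obstruction never actually closes the gap. The purported resolution --- that \emph{the inclusion/identity maps factor through $A$ up to scalars, giving $Sz$ of those maps bounded by $Sz(A)\leqslant\omega^\xi$ via the ideal property} --- is incorrect: the identity on the interpolation space does not factor through $A$. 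If it did, one would conclude $Sz(Z_0)\leqslant Sz(A)\leqslant\omega^\xi$, which would sharpen Brooker's theorem in a way that is known to fail. What you actually have is the other direction: $A$ factors through $Z_0$, not $I_{Z_0}$ through $A$.

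The paper resolves this by applying Lemma \ref{stephanie} to a different operator. After passing to the injective quotient $\overline{A}:X/\ker A\to Y$, Heinrich's interpolation construction (as used by Brooker) produces spaces $X_n$, $Y_n$, each being $X$, resp.\ $Y$, with an equivalent norm, a Banach space $G$, a quotient map $q:(\oplus_n X_n)_{\ell_p}\to G$, an isometric embedding $j:G\to(\oplus_n Y_n)_{\ell_p}$, and an operator $B=jq:(\oplus_n X_n)_{\ell_p}\to(\oplus_n Y_n)_{\ell_p}$ with the crucial structural property that \emph{every} coordinate block $Q_n B P_m:X_m\to Y_n$ coincides with $A$ itself up to these renormings. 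This is the feature of Heinrich's construction that your proposal does not use. Since the Szlenk index is renorming-invariant, $Sz(Q_n B P_m)=Sz(A)\leqslant\omega^\xi$ for all $m,n\in\nn$, so Lemma \ref{stephanie} applied to $B$ --- not to an identity operator --- yields $\textbf{p}_\xi(B)\leqslant p'$. Finally, $B=j\circ\text{Id}_G\circ q$ with $j$ an isometric embedding and $q$ a quotient map, so the injective-surjective ideal property transfers the bound to $\textbf{p}_\xi(\text{Id}_G)\leqslant p'$. Taking $Z=G$ and exchanging the roles of $p$ and $p'$ gives the statement. Your $\lambda_n$-rescaling scheme is a red herring: no such rescaling appears, because the weighting is already built into the equivalent norms on the $X_n$ and $Y_n$ coming from the interpolation construction.
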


\begin{proof}

By first passing to the quotient $\overline{A}:X/\ker(A)\to Y$ given by $\overline{A}(x+\ker(A))=Ax$, we may assume $A$ is injective.  For any $1<p<\infty$, there exist sequences $(X_n)$, $(Y_n)$ of Banach spaces, a Banach space $G$, a quotient map $q:(\oplus_n X_n)_{\ell_p}\to G$, and an isometric embedding $j:G\to (\oplus_n Y_n)_{\ell_p}$, and an operator $B:(\oplus_n X_n)_{\ell_p}\to (\oplus_n Y_n)_{\ell_p}$ such that for every $m,n\in \nn$, $Q_n BP_m:X_m\to Y_n$ is simply the operator $A:X\to Y$ with equivalent norms on the domain and range, and $B=jq$.  Here, $G$ is an interpolation space of $X$ and $Y$, where $X$ is continuously mapped into $Y$ by $A$. We deduce by Lemma \ref{stephanie} that $\textbf{p}_\xi(B)\leqslant p'$.  Since $B=jq=j\text{Id}_Gq$, by injectivity and surjectivity, we deduce that $\textbf{p}_\xi(\text{Id}_G)\leqslant p'$.

\end{proof}

\section{Closing remarks}

 The submultiplicative nature of the $\ee$-Szlenk index of a Banach space, which was shown by Lancien in \cite{Lancien}, yields that if $Sz(X)=\omega$, then the Szlenk power type $\textbf{p}(X)=\textbf{p}_0(X)$ is finite. Therefore in the case of a Banach space having Szlenk index $\omega$, an equivalent norm with power type AUS modulus is automatic.  However, it is easy to construct operators with Szlenk index $\omega$ which admit no equivalent norm with power type AUS modulus. Indeed, one can easily construct an example $\oplus a_nI_{\ell_{p_n}}:(\oplus \ell_{p_n})_{\ell_2}\to (\oplus \ell_{p_n})_{\ell_2}$, where $a_n\downarrow 0$ slowly and $p_n\downarrow 1$ rapidly,  has Szlenk index $\omega$ but which cannot be renormed to be AUS with any power type.   This possible because the $\ee$-Szlenk index of an operator need not be submultiplicative.  

Moreover, if $\xi>0$, then for some $\ee>0$, $\omega^\xi<Sz(A, \ee)<\omega^{\xi+1}$, and $Sz(A,\ee)^2>\omega^{\xi 2}\geqslant \omega^{\xi+1}$.  Thus the submultiplicative nature of the Szlenk index of a Banach space only gives a non-trivial application to the Szlenk power type of a Banach space in the case $Sz(X)=\omega$. Indeed,  for every ordinal $\xi>0$, any $1> \theta_1>\theta_2>\ldots$ with $\lim_n \theta_n=0$, and any natural numbers $k_1<k_2<\ldots$, a construction from \cite{C} was carried out which yields a Banach space $X$ with $Sz(X)=\omega^{\xi+1}$ such that for each $n\in \nn$, $Sz(X, \theta_n) >\omega^\xi k_n$.  From this it easily follows that this space can be taken to have $\textbf{p}_\xi(X)=\infty$.  Indeed, this is the case if we take $k_n=n$ and $\theta_n=1/\log_2(n+2)$.

  Furthermore, for any $1<p\leqslant \infty$ and every ordinal $\xi$, an example was given in \cite{CD} of a Banach space which is $\xi$-AUS with power type $p$ and which cannot be renormed to be $\xi$-AUS with any power type better than $p$. Indeed, if $1<p<\infty$, we let $S_{0, p}= \ell_p$. If $S_{\xi,p}$ has been defined, we let $$S_{\xi+1, p}= (\oplus_{n=1}^\infty \ell_1^n(S_{\xi,p}))_{\ell_p}.$$  Finally, if $\xi$ is a limit ordinal and $S_\zeta$ has been defined for every $\zeta<\xi$, we let $$S_{\xi,p}= (\oplus_{\zeta<\xi} S_{\zeta, p})_{\ell_p([0, \xi))}.$$  Replacing $\ell_p$ with $c_0$ and $\ell_p$ direct sums with $c_0$ direct sums gives $S_{\xi,\infty}$.    An easy proof by induction yields that for any $1<p< \infty$ and any ordinal $\xi$, there exist a sequence $X_{\xi, p,n}$ of subspaces of $S_{\xi,p}$ such that $S_{\xi,p}=(\oplus_n X_{\xi, p,n})_{\ell_p}$ and for each $n\in \nn$, there exists a weakly null tree $(x_t)_{t\in T}\subset S_{X_{\xi,p,n}}$ of order $\omega^\xi$ such that $\|x\|=1$ for every $t\in T$ and every $x\in \text{co}(x_s: s\leqslant t)$.   Indeed, $S_{0,p}=\ell_p=\ell_p(\ell_p)$, if $M_1, M_2, \ldots$ are infinite, disjoint subsets of $\nn$, $$S_{\xi+1, p}= (\oplus_{n=1}^\infty \ell_1^n(S_{\xi,p}))_{\ell_p}= (\oplus_{n=1}^\infty (\oplus_{m\in M_n} \ell_p^n(S_{\xi,p}))_{\ell_p}).$$  Finally, if $\xi$ is a limit ordinal, for each $n\in \nn$, let $M_n$ denote the set of $\zeta<\xi$ such that $\zeta= \lambda+n-1$ where $\lambda$ is either zero or a limit ordinal.  Then with $X_{\xi,p,n}=(\oplus_{\zeta\in M_n} S_{\zeta, p})_{\ell_p(M_n)}$, $S_{\xi,p}=(\oplus_{n=1}^\infty X_{\xi,p,n})_{\ell_p}$.

The fact in the previous proof shows that $Sz(S_{\xi,p})\geqslant \omega^{\xi+1}$ and that $S_{\xi,p}$ cannot have $\textbf{p}_\xi(S_{\xi,p})=r>p$.   Indeed, for any $n\in \nn$, we built a weakly null tree $(x_t)_{t\in \Omega_{\xi,n}}$ by populating the first level using the weakly null tree from the previous paragraph in $X_{\xi, p,1}$, the second level is populated by the tree from the previous paragraph in $X_{\xi,p,2}$, etc.   Then for any $t\in MAX(\Omega_{\xi,n})$, $$\|\sum_{i=1}^n \sum_{\Lambda_{\xi,n,i}\ni s} \mathbb{P}_\xi(\iota_{\xi,n}(s)) x_s\|^p= n.$$   Finally, one can show by induction that $Sz(S_{\xi,p})\leqslant \omega^{\xi+1}$ and, in fact, $\textbf{t}_{\xi,p}(S_{\xi,p})\leqslant 1$. Note that the latter property implies the former. The base case of the induction is clear, while the second follows from Corollary \ref{narcor}, once we remark that, since for $\zeta<\xi$ and $n\in \nn$,  $Sz(S_{\zeta, p}), Sz(\ell_1^n(S_{\zeta, p}))\leqslant \omega^\xi$, whence $$\textbf{t}_{\xi,p}(S_{\zeta, p})=\textbf{t}_{\xi, p}(\ell_1^n(S_{\zeta, p}))=0.$$

	For each ordinal $\xi$ and $p\in \{0\}\cup [1, \infty)$,  let $\textbf{Pz}_{\xi, p}$ denote the class of all operators $A$ such that $\textbf{p}_\xi(A)\leqslant p $.  We summarize what we have shown in the following.  

\begin{theorem} Fix an ordinal $\xi$ and $p\in \{0\}\cup [1, \infty)$.   \begin{enumerate}[(i)]\item For any ordinal $\zeta$ and $q\in \{0\}\cup [1, \infty)$, $\textbf{\emph{Pz}}_{\xi,p}\subset \textbf{\emph{Pz}}_{\zeta, q}$ if and only if $\xi\leqslant \zeta$ or $\zeta=\xi$ and $p\leqslant q$.  In particular, these classes are all distinct.  \item $\textbf{\emph{Pz}}_{\xi, p}$ is an injective, surjective operator ideal. \item  $\textbf{\emph{Pz}}_{\xi, 0}$ is the class of all operators with Szlenk index not exceeding $\omega^\xi$.   \end{enumerate}

\end{theorem}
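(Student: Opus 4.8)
The plan is to assemble the three items from results already proved in the paper, treating each separately. Item $(iii)$ is the cleanest: if $Sz(A)\leqslant\omega^\xi$, then for every $\ee>0$ we noted $Sz_\xi(A,\ee)=1$, hence $\textbf{p}_\xi(A)=0$, so $A\in\textbf{Pz}_{\xi,0}$; conversely, by the remark following the definition of $\textbf{p}_\xi$ (using Brooker's dichotomy that $Sz(A)$ is either $\infty$ or a power of $\omega$, and Lemma \ref{lemma1}$(iii)$ which gives $\textbf{p}_\xi(A)\geqslant 1$ whenever $Sz(A)=\omega^{\xi+1}$), $\textbf{p}_\xi(A)=0$ forces $Sz(A)\leqslant\omega^\xi$. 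So $\textbf{Pz}_{\xi,0}$ is exactly the class of operators with Szlenk index at most $\omega^\xi$. Item $(ii)$ is literally the content of part $(iv)$ of the Corollary following the ideal-properties Proposition: the class of operators $A$ with $\textbf{p}_\xi(A)\leqslant p$ is an injective, surjective operator ideal; the stability under sums uses Proposition (iii) of that section ($\textbf{p}_\xi(A+B)\leqslant\max\{\textbf{p}_\xi(A),\textbf{p}_\xi(B)\}$) and the composition bound $\textbf{p}_\xi(ABC)\leqslant\textbf{p}_\xi(B)$, while injectivity/surjectivity come from item (ii) of that same Corollary.

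For item $(i)$ I would argue the inclusion $\textbf{Pz}_{\xi,p}\subset\textbf{Pz}_{\zeta,q}$ in two halves. The "if" direction: if $\xi\leqslant\zeta$, then since $s_{\zeta,\ee}^\beta(K)=s_\ee^{\omega^\zeta\beta}(K)$ and $\omega^\zeta\beta\geqslant\omega^\xi\beta$, one gets $Sz_\zeta(K,\ee)\leqslant Sz_\xi(K,\ee)$ for every $\ee>0$ (a routine comparison of derivations: a higher power of $\omega$ reaches $\varnothing$ in fewer steps), whence $\textbf{p}_\zeta(K)\leqslant\textbf{p}_\xi(K)$; and if $\zeta=\xi$, $p\leqslant q$ is immediate from the definition. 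For the "only if" direction I would produce, for each $(\xi,p)$ and each $(\zeta,q)$ violating the stated condition, an explicit operator (or space) in $\textbf{Pz}_{\xi,p}\setminus\textbf{Pz}_{\zeta,q}$. The spaces $S_{\xi,p}$ constructed in the closing section already do most of this: they satisfy $\textbf{t}_{\xi,p}(S_{\xi,p})\leqslant 1$ (so $\textbf{p}_\xi(S_{\xi,p})\leqslant p'$, actually one checks $\leqslant p$ in the relevant normalization via Corollary \ref{illinoiscorollary} applied to the weakly null trees built there) while $Sz(S_{\xi,p})=\omega^{\xi+1}$, so $\textbf{p}_\xi(S_{\xi,p})\geqslant 1$ and $S_{\xi,p}\notin\textbf{Pz}_{\zeta,q}$ whenever $\zeta<\xi$ (since then $Sz(S_{\xi,p})>\omega^{\zeta+1}$ forces $\textbf{p}_\zeta=\infty$). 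To separate within a fixed level — i.e. $\textbf{Pz}_{\xi,p}\not\subset\textbf{Pz}_{\xi,q}$ when $q<p$ — I would use the fact, recorded in the closing section, that $S_{\xi,p}$ admits no renorming making it $\xi$-AUS with power type better than $p$; by the Main Corollary this says precisely $\textbf{p}_\xi(S_{\xi,p})'=p'$, i.e. $\textbf{p}_\xi(S_{\xi,p})=p$, so $S_{\xi,p}\in\textbf{Pz}_{\xi,p}\setminus\textbf{Pz}_{\xi,q}$ for every $q<p$. Finally, to separate $\textbf{Pz}_{\xi,p}$ from $\textbf{Pz}_{\zeta,q}$ when $\zeta>\xi$ is false but $\zeta<\xi$ also fails — wait, the condition "$\xi\leqslant\zeta$, or ($\zeta=\xi$ and $p\leqslant q$)" fails exactly when $\zeta<\xi$, or ($\zeta=\xi$ and $p>q$); the first case is handled by $S_{\xi,p}$ above and the second by the power-type sharpness of $S_{\xi,p}$ just described.

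The main obstacle I anticipate is the "only if" half of $(i)$: one must be careful that the $S_{\xi,p}$ examples, together with Brooker's dichotomy, genuinely cover every pair $(\zeta,q)$ failing the order condition, and that the claimed values $\textbf{p}_\xi(S_{\xi,p})=p$ and $\textbf{p}_\zeta(S_{\xi,p})=\infty$ for $\zeta<\xi$ are extracted correctly from the Main Corollary and from the inequality $Sz(S_{\xi,p})=\omega^{\xi+1}>\omega^{\zeta+1}$. Everything else is a bookkeeping assembly of named results: Lemma \ref{lemma1}, Corollary \ref{illinoiscorollary}, the ideal-properties Corollary, and Corollary \ref{narcor} for the $\textbf{t}_{\xi,p}$ estimates on the $S_{\xi,p}$ direct sums.
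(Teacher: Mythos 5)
Your overall plan — assemble the three items from the earlier named results — is the right one; the paper gives no separate proof of this theorem and intends exactly this kind of bookkeeping. Items (iii) and most of (ii) are handled correctly. But there are two genuine errors and one small gap.

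First, a conjugate error in the separating examples for (i). You write that for $S_{\xi,p}$ the Main Corollary gives $\textbf{p}_\xi(S_{\xi,p})'=p'$, hence $\textbf{p}_\xi(S_{\xi,p})=p$. This has it backwards. The Main Corollary says $\textbf{p}_\xi(X)'$ is the \emph{supremum} of the AUS power types of $X$; since $S_{\xi,p}$ is $\xi$-AUS with power type $p$ and admits no renorming with any better AUS power type, you get $\textbf{p}_\xi(S_{\xi,p})'=p$, i.e.\ $\textbf{p}_\xi(S_{\xi,p})=p'$, not $p$. (This is also what the explicit weakly-null tree at the end shows: the averages of norm-one special convex blocks have norm about $n^{-1/p'}$, forcing $Sz_\xi(S_{\xi,p},\ee)\gtrsim \ee^{-p'}$, whence $\textbf{p}_\xi\geqslant p'$; the bound $\textbf{t}_{\xi,p}(S_{\xi,p})\leqslant 1$ combined with Remark \ref{remark1} gives $\textbf{p}_\xi\leqslant p'$.) To separate $\textbf{Pz}_{\xi,p_1}$ from $\textbf{Pz}_{\xi,p_2}$ when $p_2<p_1$ you therefore want $S_{\xi,r}$ with $p_2<r'< p_1$ (or $S_{\xi,\infty}$ when $p_2=0$), not $S_{\xi,p}$ itself.

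Second, your argument for the ``if'' half of (i) is incomplete. You derive $Sz_\zeta(K,\ee)\leqslant Sz_\xi(K,\ee)$ from $\xi\leqslant\zeta$, which only gives $\textbf{p}_\zeta(A)\leqslant\textbf{p}_\xi(A)$ and thus $\textbf{Pz}_{\xi,p}\subset\textbf{Pz}_{\zeta,p}$. But the stated inclusion must also hold when $\xi<\zeta$ and $q<p$, and there your argument does not yield $\textbf{p}_\zeta(A)\leqslant q$. The correct observation in that case is simpler and more powerful: if $\textbf{p}_\xi(A)\leqslant p<\infty$ then $Sz(A)\leqslant\omega^{\xi+1}\leqslant\omega^\zeta$ (since $\xi<\zeta$ gives $\xi+1\leqslant\zeta$), hence $\textbf{p}_\zeta(A)=0\leqslant q$ for \emph{every} $q$. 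So the $\xi<\zeta$ case and the $\zeta=\xi$, $p\leqslant q$ case must be argued separately, and the former does not follow from the monotonicity of $\textbf{p}_\zeta$ alone.

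Third, a small omission in (ii): the ideal-properties Corollary you invoke is stated only for $1\leqslant p<\infty$, so the case $p=0$ is not literally covered. It follows because $\textbf{Pz}_{\xi,0}$ is the class of operators with Szlenk index at most $\omega^\xi$ (your item (iii)), which is an injective surjective operator ideal by Brooker's results in \cite{BrookerAsplund}; state this explicitly rather than citing only the Corollary.
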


We last discuss the sharpness of the main renorming theorem.  The optimal result would be to deduce that if $A:X\to Y$ is an operator such that $\textbf{p}_\xi(A)\leqslant p\in [1, \infty)$, then there exists an equivalent norm on $Y$ making $A$ $\xi$-AUS with power type $p'$.  However, the main renorming theorem only guarantees that for each $1<r<p'$, there exists an equivalent norm on $Y$ making $A$ $\xi$-AUS with power type $r$.  In general, this is optimal.  Indeed, as was noted in \cite{KOS}, Tsirelson's space \cite{Tsirelson} $T$ has $\textbf{p}_0(T)=\infty$, but it cannot be renormed to be AUS with power type $\infty$.   Moreover, if for $1<p<\infty$ and $q$ such that $1/p+1/q=1$, $T^*_q$ denotes the $q$-convexification of $T^*$, which is the Figiel-Johnson Tsirelson space \cite{FJ},  the basis $(e^*_n)$ of $T^*_q$ has the property that every normalized block of it dominates every $\ell_r$ basis for $q<r$. Let $E=(T^*_q)^*$.   Then $\textbf{p}_0(E)=q$, but $E$ cannot be renormed to be AUS with power type $p$.  This is because the basis of $E$ is normalized and weakly null, but has no subsequence dominated by the $\ell_p$ basis.  More generally, if $\xi$ is a limit ordinal with countable cofinality and if $\xi_n\uparrow \xi$,  we may fix a sequence of spaces $X_n$ such that $Sz(X_n, 1)>\omega^{\xi_n}$ and $Sz(X_n)=\omega^{\xi_n+1}$ (see for example \cite{BrookerAsplund} or \cite{CD}).  Then the direct sum $X=(\oplus_n X_n)_E$ has Szlenk index $\omega^{\xi+1}$, $\textbf{p}_\xi(X)=p$, but $X$ cannot be renormed to be $\xi$ AUS with power type $q$.  Let us sketch how to prove these assertions.   Fix a normally weakly null $(x_t)_{t\in \Omega_{\xi, \infty}}\subset B_X$.  Arguing as in Lemma \ref{stephanie}, we may recursively select $0=n_0<n_1<\ldots$, $\ee_i>0$,  and $z_i\in \text{co}(x_s:t_{i-1}\prec s\preceq t_i)$ such that $\|z_i-P_{\oplus_{j=1}^{n_i} X_j}z_i\|$, $\|P_{\oplus_{j=1}^{n_{i-1}} X_j} z_i\|<\ee_i$.  If $\pi:X\to E$ is given by $\pi((x_n))=\sum \|x_n\|f_n$, where $(f_n)$ is the basis of $E$, then $(\pi(z_i))$ will be $2$-equivalent to $(z_i)$ and $2$-equivalent to a block sequence in $B_E$, given an appropriate choice of $(\ee_i)$.  Since every normalized block of $E$ is dominated by the $\ell_r$ basis for $1<r<q$, we deduce that for every $1<r<q$, $X$ satisfies $\xi$-$\ell_r$ upper tree estimates.  By Theorem \ref{cross}, we deduce that $\textbf{p}_\xi(X)\leqslant p$.    Next, since $Sz(X_n, 1)>\omega^{\xi_n}$, by Theorem \ref{illinoistheorem}, there exists $\ee>0$ such that for any $n\in \nn$, there exists a $B$-tree $B$ with $o(B)=\omega^{\xi_n}$ and a collection $(x^n_t)_{t\in B_n}\subset B_{X_n}$ such that for every $t\in B_n$ and $x\in \text{co}(x_s:s\preceq t)$, $\|x\|\geqslant \ee$. By renaming the index sets, we may assume the $B_n$ are disjoint sets and let $B=\cup_{n=1}^\infty B_n$.  For $t\in B$, we may let $x_t=x_t^n$, where $t\in B_n$.   We construct a normally weakly null collection $(u_t)_{t\in \Omega_{\xi, \infty}}\subset B_X$ such that for every $\tau\in \partial \Omega_{\xi, \infty}$, if $\varnothing=t_0\prec t_1\prec \ldots$, $t_i\in MAX(\Lambda_{\xi, \infty, i})$, then there exist $k_1<k_2<\ldots$ such that $(u_s:t_{i-1}\prec s\preceq t_i)\subset X_{k_i}$ and $\|x\|\geqslant \ee$ for all $x\in \text{co}(u_s: t_{i-1}\prec s\preceq t_i)$.    This will yield that every convex block $(z_i)$ of a branch of $(u_t)_{t\in \Omega_{\xi, \infty}}$ is equivalent to a seminormalized block if $E$, and is therefore not dominated by the $\ell_q$ basis. This collection will show that $X$ does not satisfy $\xi$-$\ell_q$-upper tree estimates, and therefore that $X$ cannot be renormed to be $\xi$-AUS with power type $q$.    The choice of $(u_t)_{t\in \Omega_{\xi, \infty}}$ is done by choosing $(u_t)_{t\in \Lambda_{\xi, \infty, i}}$ by induction on $i$.  The base and successor cases are similar, so we only perform the successor case.   Fix $t\in MAX(\Lambda_{\xi, \infty, i})$ and assume that $k_1<\ldots <k_i$ and $x_s$, $s\preceq t$, are already chosen to have the indicated properties. Let $U$ denote those members of $\Lambda_{\xi, \infty, i+1}$ which are proper extensions of $t$ (noting that this set may be identified with $\Omega_\xi$) and let $C$ be the disjoint union of $\cup_{j=k_i+1}^\infty B_j$.  Note that $o(C)=\omega^\xi$, so we may fix a monotone function $\theta:U\to B$ such that, $(x_{\theta(t)})_{t\in U}$ is normally weakly null.  Let $u_t=x_{\theta(t)}$.   This completes the construction, and the indicated properties are easily verified.

A similar construction may be undertaken if $\xi=\zeta+1$ to obtain a Banach space $X$ with $\textbf{p}_\xi(X)=p$ but such that $X$ cannot be renormed to be $\xi$-AUS with power type $p$.  Indeed, we may find some space $Y$ with $Sz(Y,1)>\omega^\zeta$ and $Sz(Y)=\omega^{\zeta+1}$ and let $X=(\oplus_n \ell_1^n(Y))_E$.

\end{document}